\newcommand{\Vv}{{\tilde{U}}}
\let\ldash=\l
\let\cedilla=\c
\let\Hung=\H
\renewcommand{\theenumi}{\tu{(\alph{enumi})}}
\renewcommand{\labelenumi}{\theenumi}
\theoremstyle{plain}
\newtheorem{tw}{Theorem}[section]
\newtheorem{thm}[tw]{Theorem}
\newtheorem {lem} [tw]{Lemma}
\newtheorem {prop}[tw] {Proposition}
\newtheorem{cor}[tw]{Corollary}
\newtheorem*{theorem*}{Theorem}
\theoremstyle{definition}
\newtheorem {deft}[tw]{Definition}
\newtheorem {defn}[tw]{Definition}
\newtheorem {rem}[tw]{Remark}
\newcommand{\WW}{{\mathds{V}\!\!\text{\reflectbox{$\mathds{V}$}}}}
\newcommand{\Ww}{\mathds{W}}
\newcommand{\wW}{\text{\reflectbox{$\Ww$}}\:\!} 
\newcommand{\cst}{\ifmmode\mathrm{C}^*\else{$\mathrm{C}^*$}\fi}
\newcommand{\CC}{\mathbb{C}}
\newcommand{\ot}{\otimes}
\newcommand{\Tinv}{\mathsf{T}}
\newcommand{\la}{\langle}
\newcommand{\ra}{\rangle}
\newcommand{\id}{\mathrm{id}}
\newcommand{\Ind}{\mathcal{I}}
\newcommand{\Jnd}{\mathcal{J}}
\newcommand{\hh}[1]{\widehat{#1}}
\newcommand{\Hil}{\mathsf{H}}
\newcommand{\sH}{\mathsf{H}}
\newcommand{\RepGH}{\tu{Rep}_{\QG} \Hil}
\newcommand{\Com}{\Delta}
\newcommand{\B}{\mc B}
\newcommand{\Kil}{\mathsf{K}}
\newcommand{\sK}{\mathsf{K}}
\newcommand{\br}{\mathbb{R}}
\newcommand{\cA}{\mathscr{A}}
\newcommand{\sA}{\mathsf{A}}
\newcommand{\bn}{\mathbb{N}}
\newcommand{\bc}{\mathbb{C}}
\newcommand{\Pol}{\tu{Pol}}
\newcommand{\ol}{\overline}
\newcommand{\wt}{\widetilde}
\newcommand{\hCou}{\widehat{\epsilon}}
\newcommand{\alg} {\mathsf{A}}
\newcommand{\blg} {\mathsf{B}}
\newcommand{\G}{\mathbb{G}}
\newcommand{\QH}{\mathbb{H}}
\newcommand{\sB}{\mathsf{B}}
\newcommand{\tu}{\textup}
\newcommand{\QG}{\mathbb{G}}
\newcommand{\hQG}{\hh{\mathbb{G}}}
\newcommand{\mc}{\mathcal}
\newcommand{\hQH}{\hh{\mathbb{H}}}
\newcommand{\Cou}{\epsilon}
 \newcommand{\N}{\mathbb{N}}
\DeclareMathOperator{\C}{C}
\DeclareMathOperator{\c0}{c_0}
\DeclareMathOperator{\M}{M}
\DeclareMathOperator{\Mor}{Mor}
\DeclareMathOperator{\Irr}{Irr}
\numberwithin{equation}{section}
\newcommand{\inv}{\operatorname{Inv}}
\newcommand{\ip}[2]{\langle{#1},{#2}\rangle}
\global\long\def\e{\varepsilon}
\global\long\def\RR{\mathbb{R}}
\global\long\def\H{\mathsf{H}}
\global\long\def\J{\mathcal{J}}
\global\long\def\K{\mathsf{K}}
\global\long\def\a{\alpha}
\global\long\def\be{\beta}
\global\long\def\om{\omega}
\global\long\def\z{\zeta}
\global\long\def\gnsmap{\upeta}
\global\long\def\conv{*}
\global\long\def\Ree{\operatorname{Re}}
\global\long\def\Img{\operatorname{Im}}
\global\long\def\linspan{\operatorname{span}}
\global\long\def\presb#1#2{\prescript{}{#1}{#2}}
\global\long\def\tensor{\otimes}
\global\long\def\tensorn{\mathbin{\overline{\otimes}}}
\global\long\def\i{\mathrm{id}}
\global\long\def\one{\mathds{1}}
\global\long\def\tr{\mathrm{tr}}
\global\long\def\op{\mathrm{op}}
\global\long\def\CUcomp#1{\mathrm{C}^{u}(#1)}
\global\long\def\Linfty#1{L^{\infty}(#1)}
\global\long\def\Lone#1{L^{1}(#1)}
\global\long\def\LoneSharp#1{L_{\sharp}^{1}(#1)}
\global\long\def\Ltwo#1{L^{2}(#1)}
\global\long\def\Cz#1{\mathrm{C}_{0}(#1)}
\global\long\def\CzU#1{\mathrm{C}_{0}^{u}(#1)}
\global\long\def\CU#1{\mathrm{C}^{u}(#1)}
\global\long\def\linfty#1{\ell^{\infty}(#1)}
\global\long\def\lone#1{\ell^{1}(#1)}
\global\long\def\ltwo#1{\ell^{2}(#1)}
\global\long\def\cz#1{\mathrm{c}_{0}(#1)}
\global\long\def \Irred#1{\Irr(#1)}
\global\long\def\VN#1{\mathrm{VN}(#1)}
\global\long\def\tp{\mathbin{\xymatrix{*+<.7ex>[o][F-]{\scriptstyle \top}}
 } }
\global\long\def\tpsmall{\mathbin{\xymatrix{*+<.5ex>[o][F-]{\scriptscriptstyle \top}}
 } }
\global\long\def\tpr{\mathbin{\xymatrix{*+<.7ex>[o][F-]{\scriptstyle \bot}}
 } }
\global\long\def\tprsmall{\mathbin{\xymatrix{*+<.5ex>[o][F-]{\scriptscriptstyle \bot}}
 } }
\begin{document}

\title{Around Property (T) for quantum groups}
\author{Matthew Daws}
\address{Leeds, United Kingdom}
\email{matt.daws@cantab.net}

\author{Adam Skalski}
\address{Institute of Mathematics of the Polish Academy of Sciences, ul.~\'Sniadeckich 8, 00-656 Warszawa, Poland}
\email{a.skalski@impan.pl}

\author{Ami Viselter}
\address{Department of Mathematics, University of Haifa, 31905 Haifa, Israel}
\email{aviselter@staff.haifa.ac.il}

\begin{abstract}
We study Property (T) for locally compact quantum groups, providing several new characterisations, especially related to operator algebraic ergodic theory. Quantum Property (T) is described in terms of the existence of various Kazhdan type pairs, and some earlier structural results of Kyed, Chen and Ng are strengthened and generalised. For second countable discrete unimodular quantum groups with low duals Property (T) is shown to be equivalent to Property (T)$^{1,1}$ of Bekka and Valette. This is used to extend to this class of quantum groups classical theorems on `typical' representations (due to Kerr and Pichot), and on connections of Property (T) with spectral gaps (due to Li and Ng) and with strong ergodicity of weakly mixing actions on a particular von Neumann algebra (due to Connes and Weiss). Finally we discuss in the Appendix equivalent characterisations of the notion of a quantum group morphism with dense image.
\end{abstract}

\maketitle

\section*{Introduction}
The discovery of Property (T) by Kazhdan in \cite{Kazhdan__T} was a major advance in group theory. It has numerous applications, in particular in abstract harmonic analysis, ergodic theory and operator algebras, which can be found in the recent book \cite{Bekka_de_la_Harpe_Valette__book} of Bekka, de~la Harpe and Valette dedicated to this property.
Recall that a locally compact group $G$ has \emph{Property (T)} if every unitary representation of $G$ that has almost-invariant vectors actually has a non-zero invariant vector. Property (T) is understood as a very strong type of \emph{rigidity}. It is `antipodal' to \emph{softness} properties such as amenability -- indeed, the only locally compact groups that have both properties are the ones that have them trivially, namely compact groups.
Property (T) for $G$ has many equivalent conditions (some under mild hypotheses), among them are the following: the trivial representation is isolated in $\widehat G$; every net of normalised positive-definite functions on $G$, converging to $1$ uniformly on compact sets, converges uniformly on all of $G$; there exists a compact Kazhdan pair for $G$; there exists a compact Kazhdan pair with `continuity constants'; for every unitary representation $\pi$ of $G$, the first cohomology space $H^1(G,\pi)$ of $1$-cocycles vanishes; and all conditionally negative definite functions on $G$ are bounded.

In the context of von Neumann algebras, Property (T) was employed by Connes in \cite{Connes__fundamental_group} to establish the first rigidity phenomenon of von Neumann algebras, namely the countability of the fundamental group. Property (T) and its relative versions have been introduced for von Neumann algebras in \cite{Connes__classification_des_facteurs,Connes_Jones__prop_T_vN_alg,Popa_correspondences,Anan_Delaroche__on_Connes_prop_T_vN_alg,Peterson_Popa__relative_prop_T_vN_alg,Popa__factors_Betti_num}. The various forms of Property (T) have been a key component of Popa's deformation/rigidity theory (see for example \cite{Popa__factors_Betti_num}  and references therein). Thus now Property (T) appears in operator algebras both as an important ingredient related to the groups and their actions, and as a notion intrinsic to the theory.

It is therefore very natural to consider Property (T) for \emph{locally compact quantum groups} in the sense of Kustermans and Vaes. The definition is the same as for groups, using the quantum notion of a unitary (co-) representation. Property (T) was first introduced for Kac algebras by Petrescu and Joi{\cedilla{t}}a in \cite{Petrescu_Joita__prop_T_Kac_alg} and then for algebraic quantum groups by B{\'e}dos, Conti and Tuset \cite{Bedos_Conti_Tuset__amen_co_amen_alg_QGs_coreps}. Property (T) for discrete quantum groups in particular was studied in depth by Fima \cite{Fima__prop_T}, and later by Kyed \cite{Kyed__cohom_prop_T_QG} and  Kyed with So{\l}tan \cite{Kyed_Soltan__prop_T_exotic_QG_norms}.  They established various characterisations of Property (T), extending the ones mentioned above and others, produced examples, and demonstrated the usefulness of Property (T) in the quantum setting.
Property (T) for general locally compact quantum groups was first formally defined by Daws, Fima, Skalski and White \cite{Daws_Fima_Skalski_White_Haagerup_LCQG} and then studied by Chen and Ng \cite{Chen_Ng__prop_T_LCQGs}.
Recently  Arano \cite{Arano__unit_sph_rep_Drinfeld_dbls} and Fima, Mukherjee and Patri \cite{Fima_Mukherjee_Patri_compact_bicross_prod} developed new methods of constructing examples of Property (T) quantum groups; the first of these papers showed also that for unimodular discrete quantum groups Property (T) is equivalent to \emph{Central} Property (T), which will be of importance for the second half of our paper. It is also worth noting that these results spurred interest in studying Property (T) also in the context of general \cst-categories (\cite{Popa_Vaes__rep_thy_subfactors_latt_tens_cat,Neshveyev_Yamashita__Drinfeld_ctr_rep_mon_cat}).

Most of our results establish further characterisations of Property (T), which can be roughly divided into two classes. The first is based on conditions that do not involve representations directly. These are interesting in their own right, and also provide a tool to approach the conditions in the second class. The latter includes conditions that are a formal weakening of Property (T) obtained either by restricting the class of considered representations or by replacing lack of ergodicity by a weaker requirement. Some of these results extend celebrated theorems about groups that have proven to admit many applications. The most crucial single result along these lines is a generalisation of a theorem of Bekka and Valette \cite{Bekka_Valette__prop_T_amen_rep}, \cite[Theorem 2.12.9]{Bekka_de_la_Harpe_Valette__book}, stating that for $\sigma$-compact locally compact groups Property (T) is equivalent to the condition that every unitary representation with almost-invariant vectors is \emph{not weakly mixing}.

The paper is structured as follows. Its first part, consisting of Sections \ref{sec:prelim}--\ref{sec:actions}, is devoted to background material, and the main results are presented in its second part, consisting of Sections \ref{sec:prop_T}--\ref{sec:Connes_Weiss}.
\prettyref{sec:prelim} discusses preliminaries on locally compact quantum groups, in particular their representations. Sections \ref{sec:inv_alm_inv_vects} and \ref{sec:actions} contain more preliminaries as well as new results on (almost-) invariant vectors and actions, respectively.
\prettyref{sec:prop_T} introduces various notions of what a `Kazhdan pair' for a locally compact quantum group should be, and shows that each of these is equivalent to Property (T). It also proves that for locally compact quantum groups $\QH$ and $\QG$, if $\QH$ has Property (T) and there is a morphism from $\QH$ to $\QG$ `with dense range', then $\QG$ has Property (T) -- this substantially strengthens one of the results of \cite{Chen_Ng__prop_T_LCQGs}.
In \prettyref{sec:Kazhdan_pairs_apps}, Kazhdan pairs are applied to present characterisations of Property (T) in terms of positive-definite functions and generating functionals of particular types, at least for particular classes of locally compact quantum groups.

For a discrete quantum group, Fima showed in \cite{Fima__prop_T} that Property (T) implies that the quantum group is finitely generated, thus second countable, and unimodular.  In \prettyref{sec:TT11} we introduce a further notion for a discrete quantum group -- that of having a `low dual'.  We then provide the above-mentioned generalisation of the Bekka--Valette theorem for second countable unimodular discrete quantum groups with low duals (\prettyref{thm:T11difficult}); its consequences -- Theorems \ref{thm:dense_gdelta}, \ref{thm:property_T_chars} and \ref{thm:Connes_Weiss} -- will also have these hypotheses on the quantum group. Our proof is closer to those of \cite[Theorem 1.2]{Jolissaint__prop_T_pairs} (see also \cite[Lemma 4.4]{Jolissaint__prop_T_discrete_grps_reg_rep}), \cite[Theorem 9]{Bekka__prop_T_cstr_alg} and \cite{Peterson_Popa__relative_prop_T_vN_alg} than to the original one \cite{Bekka_Valette__prop_T_amen_rep}.  The low dual condition is a significant restriction, but nevertheless still allows the construction of non-trivial examples -- see Remark~\ref{Remark:low}. The attempts to drop it led us to believe that in fact it might be necessary for the quantum Bekka--Valette theorem to hold; however we do not have a theorem or a counterexample which would confirm or disprove this intuition.

As a first consequence of our version of the Bekka--Valette theorem we establish that lack of Property (T) is equivalent to the genericity of weak mixing for unitary representations (\prettyref{thm:dense_gdelta}). This extends a remarkable theorem of Kerr and Pichot \cite{Kerr_Pichot__asym_abel_WM_T}, which goes back to a famous result of Halmos about the integers \cite{Halmos__gen_meas_pres_trans_mixing}. Next, in Sections \ref{sec:spectral_gaps} and \ref{sec:Connes_Weiss} we prove several results roughly saying that to deduce Property (T) it is enough to consider certain \emph{actions} of the quantum group.  Since, as is the case for groups, every action of a locally compact quantum group is unitarily implemented on a suitable Hilbert space, the conditions obtained this way are indeed formally weaker than Property (T).

Motivated by Li and Ng \cite{Li_Ng__spect_gap_inv_states}, \prettyref{sec:spectral_gaps} deals with spectral gaps for representations and actions of locally compact quantum groups. After proving a few general results, we show that one can characterise Property (T) by only considering actions of the (discrete) quantum group on $\B(\K)$ with $\K$ being a Hilbert space (\prettyref{thm:property_T_chars}). The purpose of \prettyref{sec:Connes_Weiss} is to present a non-commutative analogue of the influential theorem of Connes and Weiss \cite{Connes_Weiss}, \cite[Theorem 6.3.4]{Bekka_de_la_Harpe_Valette__book}, originally stated only for discrete groups. The latter asserts that a second countable locally compact group $G$ has Property (T) if and only if every ergodic (or even weakly mixing) measure-preserving action of $G$ on a probability space is strongly ergodic; note that strong ergodicity is weaker than the absence of almost-invariant vectors. Probabilistic tools play a central role in the proof of the Connes--Weiss theorem. Naturally, in the more general, highly non-commutative framework of locally compact quantum groups, actions on probability spaces will not do. Instead, our result (\prettyref{thm:Connes_Weiss}) talks about actions on a particular case of Shlyakhtenko's free Araki--Woods factors, namely $\VN{\mathbb{F}_{\infty}}$, that preserve the trace. Thus, probability theory is essentially replaced by free probability theory. What makes it possible is a result of Vaes \cite{Vaes_strict_out_act} that facilitates the construction of such actions.

The Appendix treats our notion of a morphism between two locally compact quantum groups having dense range. It presents several equivalent conditions, one of which is required in \prettyref{sec:prop_T}.

\subsection*{Acknowledgements} The work on this paper was initiated during the visit of AS to the University of Leeds in June 2012, funded by the EPSRC grant EP/I026819/1.
We thank Stuart White and Jan Cameron for valuable input at that time; we also acknowledge several later discussions with Stuart White without which this paper would have never been written.
AV thanks Orr Shalit and Baruch Solel for discussions on issues related to this work.
Finally, we are grateful to the referees for their remarks.
AS  was partially supported by the NCN (National Centre of Science) grant
2014/14/E/ST1/00525.

\section{Preliminaries}\label{sec:prelim}

We begin by establishing some conventions and notation to be used throughout the paper.
Inner products will be linear on the right side. For a Hilbert space $\Hil$ the symbols $\mc B(\Hil), \mc K(\Hil)$ will denote the algebras of bounded operators and compact operators on $\Hil$, respectively, and if $\xi, \eta \in \Hil$, then $\omega_{\xi,\eta}\in \mc B(\Hil)_*$ will be the usual vector functional, $T\mapsto \la\xi, T \eta\ra$. We will also write simply $\omega_\xi$ for $\omega_{\xi, \xi}$.

The symbols $\ot,\tensorn$ will denote the spatial/minimal tensor product of $\cst$-algebras and the normal spatial tensor product of von Neumann algebras, respectively. If $\alg$ is a $\cst$-algebra then $\M(\alg)$ denotes its multiplier algebra. For $\om \in \alg^*$, we denote by $\overline{\om} \in \alg^*$ its adjoint given by $\overline{\om}(x) := \overline{\om(x^*)}$, $x \in \alg$.

A \emph{morphism} between $\cst$-algebras $\alg$ and $\blg$ is a non-degenerate $*$-homomorphism from $\alg$ to $\M(\blg)$, and the set of all such maps is denoted by $\Mor(\alg,\blg)$. Representations of $\cst$-algebras in this paper are always non-degenerate $*$-representations.
We will tacitly identify elements of $\alg^*$ and $\Mor(\alg,\blg)$ with their unique extensions to $\M(\alg)$ that are strictly continuous on the closed unit ball \cite[Proposition 2.5 and Corollary 5.7]{Lance}. This allows composition of morphisms in the usual manner. We will freely use the basic facts of the theory of multiplier algebras and Hilbert modules -- again, see \cite{Lance} for details. We will also occasionally use the `leg' notation for operators acting on tensor products of Hilbert spaces or $\cst$-algebras.

For a normal, semi-finite, faithful (n.s.f.)~weight $\theta$ on a
von Neumann algebra $N$, we denote by $\left(\Ltwo{N,\theta},\gnsmap_{\theta}\right)$
the associated GNS construction and by $\mathcal{N}_{\theta}$ the left
ideal $\left\{ x\in N:\theta(x^{*}x)<\infty\right\} $. We write $\nabla_{\theta}$,
$J_{\theta}$ and $T_{\theta}$ for the modular operator and modular
conjugation of $\theta$ and for the closure of the anti-linear map $\gnsmap_{\theta}(x)\mapsto\gnsmap_{\theta}(x^{*})$,
$x\in\mathcal{N}_{\theta}\cap\mathcal{N}_{\theta}^{*}$, respectively,
all acting on $\Ltwo{N,\theta}$ \cite{Stratila__mod_thy,Takesaki__book_vol_2}.
In particular, $T_{\theta}=J_{\theta}\nabla_{\theta}^{1/2}$. For
a locally compact quantum group $\G$, we write $\nabla,J,T,\gnsmap$
for the objects associated with the left Haar weight of $\G$ (see the following subsection).

\subsection{Quantum groups} \label{Subsect:quantum groups}
The fundamental objects studied in this paper will be \emph{locally compact quantum groups} in the sense of Kustermans and Vaes. We refer the reader to \cite{Kustermans_Vaes__LCQG_C_star,Kustermans_Vaes__LCQG_von_Neumann,Van_Daele__LCQGs} and to the lecture notes \cite{Kustermans__LCQG_lecture_notes} for an introduction to the general theory -- we will in general use the conventions of these papers and also those of \cite{Daws_Fima_Skalski_White_Haagerup_LCQG}.

A locally compact quantum group is a pair $\G = (\Linfty{\G},\Delta)$, with $\Linfty{\G}$ being a von Neumann algebra and $\Delta:\Linfty{\G} \to \Linfty{\G} \tensorn \Linfty{\G}$ a unital normal $*$-homomorphism called the \emph{comultiplication} (or \emph{coproduct})
that is coassociative:
$$(\Delta\tensor\i)\Delta=(\i\tensor\Delta)\Delta,$$
such that there exist n.s.f.~weights $\varphi,\psi$ on $\Linfty{\G}$, called
the left and right \emph{Haar weights}, respectively, that satisfy
\[
\begin{gathered}\varphi((\om\tensor\i)\Delta(x))=\varphi(x)\om(\one)\quad\text{for all }\om\in\Linfty{\G}_{*}^{+},x\in\Linfty{\G}^{+}\text{ with }\varphi(x)<\infty,\\
\psi((\i\tensor\om)\Delta(x))=\psi(x)\om(\one)\quad\text{for all }\om\in\Linfty{\G}_{*}^{+},x\in\Linfty{\G}^{+}\text{ with }\psi(x)<\infty.
\end{gathered}
\]

For a locally compact quantum group $\QG$, the predual of $L^{\infty}(\QG)$ will be denoted by $\Lone{\QG}$ and the corresponding algebra of `continuous functions on $\QG$ vanishing at infinity', which is a weakly dense $\cst$-subalgebra of $\Linfty{\G}$, will be denoted by $\C_0(\QG)$. The comultiplication reduces to a map $\Delta\in\Mor\bigl(\C_0(\QG),\C_0(\QG)\ot \C_0(\QG)\bigr)$. The \emph{dual locally compact quantum group} of $\QG$ will be denoted by $\hh \QG$.
The GNS constructions with respect to the left Haar weights of $\G, \widehat\G$ give the same Hilbert space.
Therefore, we usually assume that both $\Linfty{\G}$ and $\Linfty{\widehat\G}$ act standardly on the Hilbert space $L^2(\QG)$.
The comultiplication is implemented by the
multiplicative unitary $W \in \M(\C_0(\QG)\otimes \C_0(\hh\QG))$: $\Com(x) = W^* (\one \ot x)W$ for all $x \in L^\infty(\QG)$. The multiplicative unitary of $\hQG$, to be denoted $\hh{W}$, is equal to $\sigma(W)^*$, where $\sigma:\M(\C_0(\QG)\otimes
\C_0(\hh\G)) \rightarrow \M(\C_0(\hh\QG) \otimes \C_0(\QG))$ is the flip map.
The `universal' version of $\C_0(\QG)$ (see \cite{Kustermans__LCQG_universal}) will be denoted by $\C_0^u(\QG)$, with the (coassociative) comultiplication $\Delta_u\in\Mor\bigl(\CzU{\QG},\CzU{\QG}\ot\CzU{\QG}\bigr)$, the canonical \emph{reducing morphism}
$\Lambda:\C_0^u(\G)\rightarrow \C_0(\G)$ and the \emph{counit} $\epsilon:\C_0^u(\G) \to \bc$.

The \emph{antipode} $S$ is an ultraweakly closed, densely defined, generally unbounded
linear operator on $\Linfty{\G}$. It maps a dense subspace of $\Cz{\G}$ into $\Cz{\G}$ and admits a `polar decomposition'
$S=R\circ\tau_{-i/2}$, where $R$ is an anti-automorphism of $\Linfty{\G}$ called the \emph{unitary antipode} and $\left(\tau_{t}\right)_{t\in\RR}$ is a group of automorphisms of $\Linfty{\G}$ called the \emph{scaling group}. In general the respective maps related to the dual quantum group will be adorned with hats, and the `universal versions' of objects we introduce will be adorned with the  index $u$, so that for example the right invariant weight on $\widehat{\QG}$ will be denoted by $\widehat{\psi}$ and the unitary antipode of $\C_0^u(\hQG)$ by $\widehat{R}^u$; if necessary we will also decorate the symbols with the index of the quantum group to which they are associated.

Both $R$ and $\tau_t$ leave $\Cz{\G}$ invariant and are implemented on $\Ltwo{\G}$ as follows (recall the convention introduced above: $\widehat{\nabla}$ is the modular operator of ($\Linfty{\hQG}, \widehat{\varphi})$):
\begin{equation} \label{eq:J_tau_impl}
R(x)=\widehat{J} x^* \widehat{J}, \quad \tau_t (x) = \widehat{\nabla}^{it} x \widehat{\nabla}^{-it} \qquad \forall_{x\in \Linfty{\G}} \; \forall_{t\in\RR}.
\end{equation}

Using the inclusion $\Cz{\G} \subseteq \Linfty{\G}$ and the epimorphism $\Lambda:\C_0^u(\G)\rightarrow \C_0(\G)$, one obtains the natural embeddings $\Lone{\G} \hookrightarrow \Cz{\G}^* \hookrightarrow \CzU{\G}^*$.

The comultiplication induces the \emph{convolution} product on $\CzU{\G}^*$ by the formula $\om_1 \conv \om_2 :=(\om_1 \tensor \om_2)\circ \Delta_u$ for $\om_1,\om_2 \in\CzU{\G}^*$, turning $\CzU{\G}^*$, $\Cz{\G}^*$ and $\Lone{\G}$ into completely contractive Banach algebras. The embeddings from the last paragraph respect the convolution product and make the smaller spaces closed ideals in the larger ones.
The subspace
\[
\LoneSharp{\G}:=\left\{ \om\in\Lone{\G}:\exists_{\rho\in\Lone{\G}} \; \forall_{x\in D(S)}\quad\rho(x)=\overline{\omega}(S(x))\right\}
\]
is a dense subalgebra of $\Lone{\G}$. For $\om \in \LoneSharp{\G}$,
let $\om^{\sharp}$ be the unique element $\rho\in\Lone{\G}$ such that
$\rho(x)=\overline{\om}(S(x))$ for each $x\in D(S)$. Then $\om\mapsto\om^{\sharp}$
is an involution on $\LoneSharp{\G}$.

Sometimes we will assume that the locally compact quantum groups we study are \emph{second countable}, by which we mean that $\C_0(\QG)$ is separable.

If the left and right Haar weights of $\QG$ coincide, we say that $\QG$ is \emph{unimodular}. In the case when $\QG$ is \emph{compact} (so that $\C_0(\QG)$ is unital, and we denote it simply by $\C(\QG)$) recall that $\QG$ is  \emph{of Kac type} if its antipode $S$ is bounded, or equivalently, $\widehat{\QG}$ is unimodular.

The multiplicative unitary $W$ admits `semi-universal' versions (see \cite{Kustermans__LCQG_universal}),
namely unitaries $\Ww \in \M(\C_0^u(\QG) \otimes \C_0(\hh\QG))$ and $\wW \in \M(\C_0(\QG) \otimes \C_0^u(\hh\QG))$, characterised by the following properties. For every $\cst$-algebra $\blg$, there is a bijection between
\begin{itemize}
\item unitary elements $U\in \M(\blg \otimes \C_0(\hh\QG))$ with
$(\id\otimes\hh\Delta)(U) = U_{13} U_{12}$ and
\item non-degenerate $*$-homomorphisms $\phi=\phi_U:\C_0^u(\QG)\rightarrow \M(\blg)$,
\end{itemize}
given by the relation $(\phi\otimes\id)(\Ww) = U$.

Similarly, for every $\cst$-algebra $\blg$, there is a bijection between
\begin{itemize}
\item unitary elements $U\in \M(\C_0(\QG) \otimes \blg)$ with
$(\Delta \otimes \id)(U) = U_{13} U_{23}$ and
\item non-degenerate $*$-homomorphisms $\phi=\phi_U:\C_0^u(\hh\QG)\rightarrow \M(\blg)$,
\end{itemize}
given by the relation $(\id \otimes \phi)(\wW) = U$.

There is also a truly
universal bicharacter $\WW\in\M(\C_0^u(\G)\otimes \C_0^u(\hh\G))$. It satisfies the equalities $\Ww = (\id\otimes\Lambda_{\hh\G})(\WW)$ and $\wW= (\Lambda_{\G}\otimes\id)(\WW)$.

We shall use repeatedly that $\Ww$ implements `one-half' of the coproduct at the
universal level, see \cite[Proposition~6.2]{Kustermans__LCQG_universal}, in that
\begin{equation} \label{reducedimplementation} (\id\otimes\Lambda_{\G})\Delta_u(x) = \Ww^*(\one\otimes\Lambda_{\G}(x))\Ww
\qquad (x\in \C_0^u(\G)). \end{equation}

An element $a \in \M (\C_0(\G))$ is said to be a \emph{positive-definite function/element} if there exists
$\mu \in \CzU{\hh{\G}}^*_+$ such that $a = (\i \tensor \mu)(\wW^*)$. We say that $a$ is \emph{normalised} if $\mu$ is a state. For more information on positive-definite functions, including equivalent definitions, see \cite{Daws__CPM_LCQGs_2012,Daws_Salmi__CPD_LCQGs_2013}.

\subsection{Representations of quantum groups}\label{sub:reps}
In this subsection we recall basic facts concerning unitary representations of quantum groups.

\begin{deft} A unitary representation of $\QG$ (or a unitary corepresentation
of $\C_0(\G)$) on a Hilbert space $\Hil$ is a unitary $U\in \M(\C_0(\QG)\otimes
\mathcal K(\Hil))$ with $(\Delta\otimes\id)U = U_{13} U_{23}$.  We will often write $\Hil_U$ for the Hilbert space upon which $U$ acts.
The trivial representation of $\QG$, i.e.\ $U=\one \ot 1 \in  \M(\C_0(\QG)\otimes
\mathcal \bc)$, will be denoted simply by $\one$.
\end{deft}

In the above definition, it suffices to require that $U \in L^\infty(\G) \tensorn \mc B(\Hil)$, as this automatically implies that $U\in \M(\C_0(\QG)\otimes \mathcal K(\Hil))$.  This is folklore, see for example \cite[Theorem 4.12]{Brannan_Daws_Samei__cb_rep_of_conv_alg_of_LCQGs}.

As in this paper we will only consider unitary representations, we will most of the time simply talk about `representations of $\QG$'.

\begin{deft}
The \emph{contragradient} representation of a
representation $U\in \M(\C_0(\QG)\otimes \mc K(\sH))$ is defined to be $U^c=(R\otimes\top)U
\in \M(\C_0(\QG) \otimes \mc K(\overline{\sH}))$.
Here $\overline{\sH}$ is the complex conjugate Hilbert space and $\top:\mc K(\sH) \rightarrow
\mc K(\overline{\sH})$ is the `transpose' map, defined by
$\top(x)(\overline{\xi}) = \overline{ x^*(\xi) }$.
\end{deft}

Evidently, if $\J$ is an anti-unitary from $\sH$ onto another Hilbert space $\J \sH$ and we consider the $*$-anti-isomorphism $j:\mc K(\sH) \to \mc K(\J \sH)$ given by $j(x) := \J x^* \J^*$, $x\in \mc K(\sH)$, then the representation $(R \otimes j)(U)$ is unitarily equivalent to $U^c$.

Recall that an anti-unitary on a Hilbert space is \emph{involutive} if its square is equal to $\one$.

\begin{deft}\label{def:cond_R}
Say that $U$ satisfies condition $\mathscr{R}$ if there exists an involutive anti-unitary $\J$ on $\sH$ such that, with $j : \mc K(\sH) \to \mc K(\sH)$ as above, we have $(R \otimes j)(U) = U$.
\end{deft}

By the foregoing, if $U$ satisfies condition $\mathscr{R}$ then $U^c$ is unitarily equivalent to $U$.

Every representation $U\in \M(\C_0(\QG)\otimes \mc K(\sH))$ satisfies the formal condition
$(S \tensor \id)(U) = U^*$, which means that for all $\om \in \mc B(\H)_*$,
\begin{equation}\label{eq:reps_S}
(\id \tensor \om)(U) \in D(S) \text{ and } S((\id \tensor \om)(U)) = (\id \tensor \om)(U^*).
\end{equation}
Due to the polar decomposition of the antipode and \prettyref{eq:J_tau_impl}, this is equivalent to (see the paragraph before Subsection \ref{Subsect:quantum groups} for the notation)
\begin{equation}\label{eq:reps_T_hat}
(\i\tensor\om)(U)\widehat{T}\subseteq\widehat{T}(\i\tensor\overline{\om})(U), \qquad \forall_{\om\in\mathcal{B}(\Hil)_{*}}.
\end{equation}
The map $\Lone{\G} \to \mc B(\H)$ given by $\om \mapsto (\om \tensor \id)(U)$ is an algebra homomorphism, and its restriction to $\LoneSharp{\G}$ is a $*$-homomorphism. Thus, the norm closure of $\{(\om \tensor \id)(U) : \om \in \Lone{\G} \}$ is a $\cst$-algebra.

We define the \emph{tensor product} of two representations in two ways: namely as the representations
\[ U \tp V = U_{12} V_{13} \qquad \text{and} \qquad U \tpr V :=V_{13} U_{12},\] both acting on $\sH_U \otimes \sH_V$.  (Note that the notation `$\tpr$' is not consistent with \cite{Woronowicz__CMP}.) There is also a natural notion of the direct sum of two representations $U$ and $V$, which is the obvious representation acting on $\sH_U \oplus \sH_V$.
A representation of $\QG$ is called \emph{irreducible} if it is not (unitarily equivalent to) a direct sum of two non-zero representations.

We will often use the fact (a consequence of the correspondence described in the previous subsection) that there is a natural bijection between representations of $\QG$ and representations of the \cst-algebra $\C_0^u(\hQG)$, implemented by the semi-universal multiplicative unitary. It interacts naturally with the passing to the contragredient and the tensor product operations, as the two following lemmas show.

\begin{lem}\label{lem:Contragredient-vs-antipode}
Let $\QG$ be a locally compact quantum group. Let $\mu\in \C_0^u(\hQG)^*$ be a state, let $\pi: \C_0^u(\hQG)\to \B(\K)$ be the GNS representation of $\mu$ and let $U$ be the representation of $\QG$ on $\K$ associated with $\pi$. If $\mu$ is $\widehat{R}^u$-invariant then $U$ satisfies condition $\mathscr{R}$.
\end{lem}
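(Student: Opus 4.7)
The plan is to build the involutive anti-unitary $\J$ directly out of the GNS data for $\mu$, identify $j \circ \pi$ with $\pi \circ \widehat{R}^u$, and combine this with a standard invariance property of the universal bicharacter $\WW$.

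First, let $\xi_\mu \in \K$ be the cyclic vector of the GNS construction, so that $\mu(\cdot) = \langle \xi_\mu, \pi(\cdot) \xi_\mu \rangle$ and $\pi(\C_0^u(\hQG)) \xi_\mu$ is dense in $\K$. I define
$$\J\, \pi(a) \xi_\mu := \pi(\widehat{R}^u(a^*)) \xi_\mu \qquad (a \in \C_0^u(\hQG)).$$
Using that $\widehat{R}^u$ is a $*$-anti-automorphism of $\C_0^u(\hQG)$ and that $\mu \circ \widehat{R}^u = \mu$, a direct computation gives
$$\|\J\, \pi(a) \xi_\mu\|^2 = \mu\bigl(\widehat{R}^u(a) \widehat{R}^u(a^*)\bigr) = \mu\bigl(\widehat{R}^u(a^* a)\bigr) = \mu(a^* a) = \|\pi(a) \xi_\mu\|^2,$$
so $\J$ is well-defined, anti-linear and isometric, and thus extends uniquely to an anti-unitary on $\K$. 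The identity $(\widehat{R}^u)^2 = \id$ then forces $\J^2 = \id$; in particular $\J^* = \J$.

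Second, with $j(x) := \J x^* \J$ on $\B(\K)$, I verify on vectors of the form $\pi(b) \xi_\mu$ (which span a dense subspace) the intertwining $j \circ \pi = \pi \circ \widehat{R}^u$: for $a, b \in \C_0^u(\hQG)$,
$$j(\pi(a))\, \pi(b) \xi_\mu = \J\, \pi\bigl(a^* \widehat{R}^u(b^*)\bigr) \xi_\mu = \pi\bigl(\widehat{R}^u(\widehat{R}^u(b)\, a)\bigr) \xi_\mu = \pi(\widehat{R}^u(a))\, \pi(b) \xi_\mu,$$
using anti-multiplicativity and involutivity of $\widehat{R}^u$ together with $\widehat{R}^u(b^*)^* = \widehat{R}^u(b)$.

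Third, I invoke the universal invariance $(R^u \otimes \widehat{R}^u)(\WW) = \WW$ of the bicharacter, a standard fact in the framework of \cite{Kustermans__LCQG_universal}. Applying $\Lambda \otimes \id$ and using $R \circ \Lambda = \Lambda \circ R^u$, this descends to $(R \otimes \widehat{R}^u)(\wW) = \wW$. Since $U = (\id \otimes \pi)(\wW)$, the second step yields
$$(R \otimes j)(U) = \bigl(R \otimes (j \circ \pi)\bigr)(\wW) = (\id \otimes \pi)\bigl((R \otimes \widehat{R}^u)(\wW)\bigr) = (\id \otimes \pi)(\wW) = U,$$
which is exactly condition $\mathscr{R}$. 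The main obstacle I expect is the careful bookkeeping in the first step, namely the well-definedness and involutivity of $\J$, together with locating the universal identity $(R^u \otimes \widehat{R}^u)(\WW) = \WW$ in precisely the form needed; once both are in place, the proof reduces to the clean linear computation in the third step.
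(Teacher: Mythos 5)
Your proof is correct and follows essentially the same route as the paper: build the involutive anti-unitary $\J$ on the GNS space via $\gnsmap(a)\mapsto\gnsmap(\widehat{R}^u(a^*))$, check $j\circ\pi=\pi\circ\widehat{R}^u$, and conclude from $(R\otimes\widehat{R}^u)(\wW)=\wW$. The only difference is that you spell out the verifications (well-definedness, involutivity, the dense-subspace computation, and the descent from $\WW$ to $\wW$) that the paper leaves implicit.
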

\begin{proof}
Consider the GNS construction $(\K,\pi,\gnsmap)$
of $(\CzU{\widehat{\G}},\mu)$. The map $\gnsmap(x)\mapsto\gnsmap(\widehat{R}^{u}(x^{*}))$,
$x\in\CzU{\widehat{\G}}$, extends to an involutive anti-unitary $\J$ on $\K$
by invariance of $\mu$ and because $\widehat{R}^{u}$ is an anti-automorphism
of $\CzU{\widehat{\G}}$ satisfying $(\widehat{R}^{u})^2 = \i$. Now, defining an anti-automorphism of $\mathcal{B}(\K)$
by $j(x):=\J x^{*}\J$, $x\in\mathcal{B}(\K)$,
we have $j\circ\pi=\pi\circ\widehat{R}^{u}$. As a result,
\[
(R\tensor j)(U)=(\i\tensor\pi)(R\tensor\widehat{R}^{u})(\wW)=(\i\tensor\pi)(\wW)=U. \qedhere
\]
\end{proof}

\begin{lem}\label{lem:tensprod}
Let $U, V$ be representations of a locally compact quantum group $\QG$ on Hilbert spaces $\Hil_U, \Hil_V$, and let $\phi_U:\C_0^u(\hQG) \to B(\Hil_U)$, $\phi_V:\C_0^u(\hQG) \to B(\Hil_V)$  be the corresponding representations of $\C^u(\hQG)$. Then $\phi_{U \tpsmall V}=\phi_U \star \phi_V$, where $\phi_U \star \phi_V= (\phi_U \ot \phi_V) \circ \sigma \circ \hh \Com_u$, and $\sigma: \M(\C_0^u(\hQG)\ot \C_0^u(\hQG))\to \M(\C_0^u(\hQG)\ot \C_0^u(\hQG))$ is the tensor flip.
\end{lem}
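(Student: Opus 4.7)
The plan is to invoke the bijection recalled in \prettyref{sub:reps} between non-degenerate $*$-homomorphisms $\C_0^u(\hh\QG) \to \B(\K)$ and unitary representations of $\QG$ on $\K$, and identify the representation associated with $\phi_U \star \phi_V$ as $U \tp V$; uniqueness will then force $\phi_U \star \phi_V = \phi_{U \tp V}$.

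First I observe that $\phi_U \star \phi_V$ is a well-defined non-degenerate $*$-homomorphism $\C_0^u(\hh\QG) \to \B(\H_U \otimes \H_V)$: indeed, $\hh\Delta_u$ is a non-degenerate $*$-homomorphism into $\M(\C_0^u(\hh\QG) \otimes \C_0^u(\hh\QG))$, the flip $\sigma$ is a $*$-automorphism, and $\phi_U \otimes \phi_V$ is a non-degenerate $*$-homomorphism on the minimal tensor product which extends strictly continuously to the multiplier algebra. Hence it corresponds under the bijection to the unitary representation $(\id \otimes (\phi_U \star \phi_V))(\wW)$ of $\QG$ on $\H_U \otimes \H_V$, and it suffices to show this unitary equals $U \tp V = U_{12} V_{13}$.

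The key ingredient is the bicharacter property of $\wW$: in $\M(\C_0(\QG) \otimes \C_0^u(\hh\QG) \otimes \C_0^u(\hh\QG))$ one has
\[
(\id \otimes \hh\Delta_u)(\wW) = \wW_{13}\, \wW_{12},
\]
which is standard, see for instance \cite{Kustermans__LCQG_universal}. Applying $\id \otimes \sigma$ — the transposition of the second and third legs — sends $\wW_{13}\wW_{12}$ to $\wW_{12}\wW_{13}$, and then applying $\id \otimes \phi_U \otimes \phi_V$, together with the non-degeneracy of the $\phi$'s and the relations $(\id \otimes \phi_U)(\wW) = U$, $(\id \otimes \phi_V)(\wW) = V$, yields
\[
(\id \otimes (\phi_U \star \phi_V))(\wW) = U_{12}\, V_{13} = U \tp V,
\]
as required.

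The only genuine obstacle is ensuring that the bicharacter identity is used with the correct leg ordering, since reversing the order on the right-hand side would produce $U \tpr V$ instead; but with the conventions fixed in \prettyref{sec:prelim} and the definition of $\tp$, this is forced by consistency (e.g., by testing against the case $\phi_U = \phi_V = \id_{\C_0^u(\hh\QG)}$, which must recover $\wW \tp \wW$). The remainder of the argument is a routine leg-numbering manipulation.
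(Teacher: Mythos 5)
Your proposal is correct and follows essentially the same route as the paper: both proofs start from the identity $(\id\otimes\hh{\Delta}_u)(\wW)=\wW_{13}\wW_{12}$, apply the flip to obtain $\wW_{12}\wW_{13}$, slice with $\phi_U\otimes\phi_V$ to identify $(\id\otimes(\phi_U\star\phi_V))(\wW)$ with $U_{12}V_{13}=U\tp V$, and conclude by the universal property of $\wW$. The closing worry about leg ordering is already settled by the stated form of the identity, so no further consistency check is needed.
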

\begin{proof}
Since $(\i \tensor \hh \Com_u)(\wW) = \wW_{13} \wW_{12}$, the non-degenerate $*$-homomorphism $\phi_U \star \phi_V : \C_0^u(\hQG) \to \mc B(\Hil_U \tensor \Hil_V)$ satisfies
\begin{equation*}\begin{split}
(\i \tensor (\phi_U \star \phi_V))(\wW) & =
(\i \tensor (\phi_U \tensor \phi_V)\sigma)(\wW_{13} \wW_{12}) =
(\i \tensor (\phi_U \tensor \phi_V))(\wW_{12} \wW_{13}) \\ & =
(\i \tensor \phi_U)(\wW)_{12} (\i \tensor \phi_V)(\wW)_{13} = U_{12} V_{13} = U \tp V.
\end{split}
\end{equation*}
By the universal property of $\wW$, we have $\phi_{U \tpsmall V}=\phi_U \star \phi_V$.
\end{proof}

We will also sometimes use another `picture' of representations, as follows.
For $i=1,2$, let $\alg_i$ be a $\cst$-algebra and $\mathsf{X}_i$ a Hilbert $\alg_i$-module.
The exterior tensor product $\mathsf{X}_1 \tensor \mathsf{X}_2$ \cite[Chapter 4]{Lance}
is the Hilbert $\alg_1 \tensor \alg_2$-module obtained from the algebraic tensor product $\mathsf{X}_1 \odot \mathsf{X}_2$  using the $\alg_1 \odot \alg_2$-valued inner product
\[ \langle x_1 \tensor x_2, x_1'  \tensor x_2'  \rangle := \langle x_1, x_1' \rangle \tensor \langle x_2, x_2' \rangle\]
after completion. Now, we deduce from \cite[Theorem 2.4 and p.~37]{Lance} that
\begin{equation} \label{eq:adj_ext_tens_prod} \mc L(\mathsf{X}_1 \tensor \mathsf{X}_2) \cong \M (\mc K (\mathsf{X}_1 \tensor \mathsf{X}_2))
\cong \M (\mc K (\mathsf{X}_1) \tensor \mc K(\mathsf{X}_2))
\end{equation}
canonically as $\cst$-algebras.
In the particular case when $\mathsf{X}_1$ is a $\cst$-algebra $\alg$ and $\mathsf{X}_2$ is a Hilbert space $\H$, the Hilbert $\alg$-module $\alg \tensor \H$ satisfies $\mc L(\alg \tensor \H) \cong \M(\alg \tensor \mc K (\H))$ by \prettyref{eq:adj_ext_tens_prod} since $\mc K(\alg) \cong \alg$. This $*$-isomorphism takes $U \in \M(\alg \tensor \mc K (\H))$ to the unique element $\mc U \in \mc L(\alg \tensor \H)$ characterised by
\[ \langle b \tensor \eta, \mc U (a \tensor \z) \rangle = b^* (\i \tensor \om_{\eta,\z})(U) a \qquad \forall_{a,b\in\alg} \; \forall_{\z,\eta\in\H}.\]
When $\G$ is a locally compact quantum group and $U\in\M(\Cz{\G}\tensor\mathcal{K}(\H))$ is a representation of $\G$ on $\H$,
we henceforth use the correspondences $U \leftrightarrow \mc U \in\mc L(\C_0(\QG)\otimes\sH)\leftrightarrow \phi_U \in \Mor(\C_0^u(\hQG), \mc K (\Hil))$ without further comment.

\subsection{Compact/discrete quantum groups}\label{sub:compact_discrete}

 Recall that a locally compact quantum group is called \emph{compact} if the algebra $\C_0(\G)$ is unital (we then denote it simply by $\C(\G)$), or, equivalently, the Haar weight is in fact a bi-invariant state. It is said to be \emph{discrete} if $\C_0(\G)$ is a direct sum of matrix algebras (and is then denoted $\c0(\G)$, and likewise we write $\linfty{\G}$, $\lone{\G}, \ltwo{\G}$), or, equivalently, $\widehat{\QG}$ is compact. See \cite{Woronowicz__symetries_quantiques,Effros_Ruan__discrete_QGs,Van_Daele__discrete_CQs} for the original definitions and \cite{Runde__charac_compact_discr_QG} for the equivalent characterisations.

For a compact quantum group $\QG$  the symbol $\Irred{\QG}$ will denote the family of all equivalence classes of (necessarily finite-dimensional) irreducible unitary representations of $\QG$. The trivial representation will be denoted simply by $\one$. We will always assume that for each $\alpha\in \Irred{\QG}$ a particular representative has been chosen and moreover identified with a unitary matrix $U^{\alpha}= (u_{ij}^{\alpha})_{i,j=1}^{n_{\alpha}} \in M_{n_{\alpha}} (\C^u(\QG))$. So for all $\a \in \Irred{\QG}$ and $1\leq i,j \leq n_\a$,
\begin{equation}\label{eq:fin_dim_reps}
\Delta_u(u^\a_{ij}) = \sum_{k=1}^{n_\a} u^\a_{ik} \tensor u^\a_{kj}.
\end{equation}
The span of all \emph{coefficients} $u_{ij}^{\alpha}$ is a dense (Hopf) $*$-subalgebra of $\C^u(\QG)$, denoted $\Pol(\QG)$. The algebra of (vanishing at infinity) functions on the dual discrete quantum group is given by the equality $\c0(\widehat{\QG})= \bigoplus_{\alpha \in \Irred{\QG}} M_{n_\alpha}$. Thus the elements affiliated to $\c0(\widehat{\QG})$ can be identified with functionals on $\Pol(\QG)$. Note that as the Haar state of $\QG$ is faithful on $\Pol(\QG)$ we can also view the latter algebra as a subalgebra of $\C(\QG)$. The universal multiplicative unitary of $\G$ is then given by the formula
\begin{equation} \label{unitcompact} \Ww = \sum_{\alpha \in \Irred{\QG}}  u_{ij}^\alpha \ot e_{ij}^{\alpha} \in
\prod_{\alpha \in \Irred{\G}} \C^u(\G) \otimes M_{n_\alpha} = \M(\C^u(\G) \ot \c0(\widehat{\QG})). \end{equation}

The following definition will play an important role later on: a compact quantum group $\QG$ is \emph{low} if the set $\{n_{\alpha}:\alpha \in \Irred{\G}\}$ is bounded in $\mathbb N$. The dual of every discrete group satisfies this trivially.
Finally note that a compact quantum group $\QG$ is second countable if and only if $\Irred{\QG}$ is countable.

\begin{rem}\label{Remark:low}
A classical locally compact group is low if and only if it has an abelian subgroup of finite index, as shown by Moore (\cite{Moore_fin_dim_irreds}). We do not know if an analogous result holds for quantum groups: one would of course need to replace the `abelian group' in the statement above by a `cocommutative quantum group'. In Sections \ref{sec:TT11}--\ref{sec:Connes_Weiss} of our paper a key role will be played by `second countable discrete unimodular quantum groups with low duals'.
Non-trivial examples of such objects can be produced via the bi-crossed
product construction of \cite{Fima_Mukherjee_Patri_compact_bicross_prod}. Indeed, if we start with a \emph{matched pair} of a finite group $G$ and a countable discrete
group $\Gamma$, Theorem 3.4 of \cite{Fima_Mukherjee_Patri_compact_bicross_prod}  produces a quantum
group $\QG$ in the class above (and Theorem 4.3 of the same paper shows that $\QG$  has Property (T) -- to be discussed in \prettyref{sec:prop_T} --
if and only if $\Gamma$ has Property (T)).
Another construction of the quantum groups in the class above is as follows: take any discrete (quantum) group $\QG$ in the class (so for example the
dual of a second countable compact group which has an abelian subgroup of finite index) and an action of a
countable discrete group $\Gamma$ on the dual of $\QG$ by automorphisms. Then the
crossed product construction of Wang gives another quantum group in the same class (see Theorem
6.1 of \cite{Fima_Mukherjee_Patri_compact_bicross_prod}). We refer for the details to \cite{Fima_Mukherjee_Patri_compact_bicross_prod}.
\end{rem}

By a \emph{state} on $\Pol(\G)$ we mean a linear functional $\mu:\Pol(\G)\rightarrow\mathbb C$ which is positive in the sense that $\mu(a^*a)\geq0$ for all $a\in\Pol(\G)$.  The algebra $\C^u(\G)$ is the enveloping $\cst$-algebra of $\Pol(\G)$ and there is a bijection between states of $\C^u(\G)$ and states of $\Pol(\G)$ by \cite[Theorem 3.3]{Bedos_Murphy_Tuset__coam_of_CQGs}. Note that any functional $\mu$ on $\Pol(\QG)$ can be identified with a sequence of matrices $(\mu^\alpha)_{\alpha \in \Irred{\QG}}$, with $\mu^\alpha \in M_{n_\alpha}$ defined by $(\mu^\alpha)_{i,j}= \mu (u_{ij}^{\alpha})$, $i,j=1,\ldots, n_{\alpha}$.

A \emph{generating functional} on a compact quantum group $\QG$ is a functional $L:\Pol (\QG)
\rightarrow \bc$ which is selfadjoint ($L(a^*) = \overline{L(a)}$ for all $a \in \Pol (\QG)$), vanishes at $\one$ and is conditionally negative definite, i.e.\ negative on the kernel of the counit (formally: if $a\in \Pol(\QG)$ and $\Cou(a)=0$, then $L(a^*a) \leq 0$). Note a sign difference with, for example, \cite{Das_Franz_Kula_Skalski__one_to_one_corres} -- here we choose to work with conditionally \emph{negative definite functions}, as in \cite{Kyed__cohom_prop_T_QG,Daws_Fima_Skalski_White_Haagerup_LCQG}. A (weakly continuous) \emph{convolution semigroup of states} on $\C^u(\QG)$ is a family $(\mu_t)_{t\geq 0}$ of states of $\C^u(\QG)$ such that
\begin{enumerate}[label=(\roman*)]
\item $ \mu_{s+t} = \mu_s \conv \mu_t$  for all $s,t \geq 0$;
\item $\mu_0 = \Cou$;
\item $\mu_t \xrightarrow{t \to 0^+} \Cou$ in the weak$^*$ topology of $\C^u(\QG)^*$.
\end{enumerate}

We will need at some point the next lemma (following from  \cite{Lindsay_Skalski__quant_stoch_conv_cocyc_2} -- see also \cite{Schurmann__white_noise_bialg}).

\begin{lem} \label{lem:convgen}
Let $\QG$ be a compact quantum group.
There is a one-to-one correspondence between
\begin{enumerate}
\item convolution semigroups of states $(\mu_t)_{t\geq 0}$ of $\C^u(\QG)$;
\item generating functionals $L$ on $\QG$.
\end{enumerate}
It is given by the following formulas: for each $a\in \Pol(\QG) \subseteq \C^u(\QG)$ we have
\[ L(a) = \lim_{t\to 0^+} \frac{\Cou(a) - \mu_t(a)}{t},\]
\[ \mu_t (a) = \exp_{\conv}(-tL) (a):= \sum_{n=0}^{\infty} \frac{(-t)^n}{n!} L^{\conv n} (a).\]
\end{lem}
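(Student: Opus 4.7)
The plan is to exploit the block structure of $\Pol(\QG)$ coming from \eqref{eq:fin_dim_reps}: for every functional $\mu$ on $\Pol(\QG)$ and every $\a\in\Irr(\QG)$, we collect the values into a matrix $\mu^{\a}=(\mu(u_{ij}^{\a}))_{ij}\in M_{n_{\a}}$, and the coproduct formula \eqref{eq:fin_dim_reps} translates convolution into matrix multiplication: $(\mu\conv\nu)^{\a}=\mu^{\a}\nu^{\a}$. Similarly $\Cou^{\a}=I_{n_{\a}}$. Thus each direction of the correspondence will be established block-by-block on $\Irr(\QG)$, and then recombined. Since the Haar state is faithful on $\Pol(\QG)$, distinct functionals on $\Pol(\QG)$ yield distinct matrix data $(\mu^\alpha)_{\alpha\in\Irr(\QG)}$.

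For $(1)\Rightarrow(2)$: given a convolution semigroup of states $(\mu_t)_{t\geq0}$, the blocks $(\mu_t^{\a})_{t\geq0}$ form a norm-continuous one-parameter semigroup of matrices in $M_{n_{\a}}$ (continuity at zero in the weak$^{*}$ topology upgrades to norm continuity on each finite-dimensional block). Standard matrix semigroup theory produces a generator $A_{\a}\in M_{n_{\a}}$ with $\mu_t^{\a}=\exp(-tA_{\a})$; setting $L(u_{ij}^{\a}):=(A_{\a})_{ij}$ and extending linearly defines $L$ on $\Pol(\QG)$, matching the limit formula $L(a)=\lim_{t\to0^{+}}t^{-1}(\Cou(a)-\mu_t(a))$. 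Self-adjointness of $L$ follows from each $\mu_t$ being $*$-preserving, $L(\one)=0$ because $\mu_t(\one)=1$, and conditional negativity follows: for $a\in\ker\Cou$ one has $\Cou(a^{*}a)=0$, hence $L(a^{*}a)=-\lim_{t\to0^{+}}t^{-1}\mu_t(a^{*}a)\leq0$ since each $\mu_t(a^{*}a)\geq0$.

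For $(2)\Rightarrow(1)$: given a generating functional $L$, define $\mu_t^{\a}:=\exp(-tL^{\a})\in M_{n_{\a}}$; this prescribes a functional $\mu_t$ on $\Pol(\QG)$ whose power-series expansion in $L$ is precisely $\exp_{\conv}(-tL)$, with convergence on each coefficient guaranteed because on the block $\a$ the $n$-fold convolution contributes $(L^{\a})^{n}$. The algebraic identities $\mu_{s+t}=\mu_s\conv\mu_t$, $\mu_0=\Cou$, and the weak$^{*}$ continuity at $0$ are immediate from the corresponding matrix exponential identities applied blockwise. The one genuine obstacle is \textbf{positivity}: one must show that each $\mu_t$ is a state on $\Pol(\QG)$, equivalently positive on $\Pol(\QG)$, after which \cite[Theorem 3.3]{Bedos_Murphy_Tuset__coam_of_CQGs} provides the extension to a state on $\C^u(\QG)$. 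This is the Schoenberg correspondence in the Hopf $*$-algebra setting, proved in Schürmann's theory \cite{Schurmann__white_noise_bialg} (see also \cite{Lindsay_Skalski__quant_stoch_conv_cocyc_2}): conditional negative definiteness of $L$ is exactly the condition needed to ensure positivity of $\exp_{\conv}(-tL)$ for all $t\geq0$. Finally, the two constructions are mutually inverse because $\frac{d}{dt}\big|_{0}\exp(-tL^{\a})=-L^{\a}$ on every block, so the limit formula recovers $L$ from $(\mu_t)$ and conversely the matrix exponential recovers $(\mu_t^{\a})$ from $L^{\a}$.
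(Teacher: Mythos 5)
Your proposal is correct, and it follows essentially the route the paper itself delegates to the literature: the paper gives no proof of this lemma, citing \cite{Lindsay_Skalski__quant_stoch_conv_cocyc_2} and \cite{Schurmann__white_noise_bialg} instead, and your blockwise reduction via $(\mu\conv\nu)^{\a}=\mu^{\a}\nu^{\a}$ together with the matrix-semigroup generator argument is exactly the standard skeleton of those references. You correctly isolate the only non-elementary ingredient, namely the Schoenberg correspondence guaranteeing positivity of $\exp_{\conv}(-tL)$, and citing Sch\"urmann for that step is legitimate here since the paper does the same; every other step (self-adjointness, vanishing at $\one$, conditional negativity via $\Cou(a^*a)=|\Cou(a)|^2=0$, the extension of states from $\Pol(\QG)$ to $\C^u(\QG)$ via \cite[Theorem 3.3]{Bedos_Murphy_Tuset__coam_of_CQGs}, and the mutual inverseness of the two constructions) is verified correctly.
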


We say that a generating functional $L:\Pol(\QG) \to \bc$ is \emph{strongly unbounded} if for each $M>0$ there exists $\alpha \in \Irred{\QG}$ such that $L^{\alpha}$ is a positive matrix of norm larger than $M$.
Note that if $L$ is strongly unbounded then it is also unbounded with respect to the universal norm on $\C^u(\QG)$ (if $L$ were bounded we could identify $L=(L^{\alpha})_{\alpha \in \Irred{\QG}}\in \M(\c0(\QG))$ with  $l = (L \ot \id)(\Ww)$; but then $l$ would have to be bounded).

\subsection{Morphisms of quantum groups}

Let $\QG, \QH$ be locally compact quantum groups. As shown in \cite{Meyer_Roy_Woronowicz__hom_quant_grps} (see also \cite{Kustermans__LCQG_universal}), there is a natural correspondence between the following classes of objects, each of which should be thought of as representing a \emph{quantum group morphism from $\QH$ to $\QG$}:
\begin{enumerate}
\item morphisms
\begin{equation*}
\pi\in\Mor\bigl(\C_0^u(\QG),\C_0^u(\QH)\bigr)
\end{equation*}
such that
\begin{equation*}
(\pi\ot\pi)\circ\Delta_\QG^u=\Delta_\QH^u\circ\pi;
\end{equation*}
\item bicharacters (from $\QH$ to $\QG$), i.e.\ unitaries
\begin{equation*}
V\in\M\bigl(\C_0(\QH)\ot \C_0(\hQG)\bigr)
\end{equation*}
such that
\begin{equation*}\label{DelDel}
\begin{split}
(\id_{\C_0(\QH)} \ot \Delta_{\hQG})(V)&=V_{13}V_{12},\\
(\Delta_{\QH}\ot\id_{\C_0(\hQG)})(V)&=V_{13}V_{23}.
\end{split}
\end{equation*}
\end{enumerate}
The correspondence is given by
\begin{equation}\label{eq:morphs_bichars}
V = (\Lambda_{\QH} \pi \tensor \i)(\Ww_\G).
\end{equation}
The apparent difference with \cite{Meyer_Roy_Woronowicz__hom_quant_grps} stems from the fact that we are using the conventions of \cite{Kustermans__LCQG_universal} and \cite{Daws_Fima_Skalski_White_Haagerup_LCQG} rather than these of \cite{Meyer_Roy_Woronowicz__hom_quant_grps} or \cite{Daws_Kasprzak_Skalski_Soltan__closed_q_subgroups_LCQGs}.

To each morphism from $\QH$ to $\QG$ as above corresponds a \emph{dual morphism} from $\hQG$ to $\hQH$, described (for example) by the morphism $\hh \pi \in \Mor(\C_0^u(\hQH), \C_0^u(\hQG))$ determined uniquely by the equality
\begin{equation}\label{eq:dual_morphism}
(\pi \ot \id)(\WW_{\G}) = (\id \ot \hh\pi)(\WW_{\QH}).
\end{equation}

\begin{deft}\label{Def:dense_image}
Let $\QG$ and $\QH$ be locally compact quantum groups. A morphism from $\QH$ to $\QG$ is said to have \emph{dense image} if the map $\Lone{\hQG} \rightarrow \M(\C_0^u(\QH))$ induced by the restriction of the associated morphism
$\pi \in \Mor(\C_0^u(\G),\C_0^u(\QH))$, namely
$\omega\mapsto \pi( (\id\otimes\omega)\Ww^\G )$,
is injective.
\end{deft}

For a detailed discussion of this notion and several equivalent formulations we refer to the Appendix.

\section{Invariant and almost invariant vectors for representations} \label{sec:inv_alm_inv_vects}
In this section we recall and expand some facts concerning the notions of invariant and almost invariant vectors (as defined for example in \cite{Daws_Fima_Skalski_White_Haagerup_LCQG}) and connect these to the classical Hilbert space convexity arguments. In particular, we extend a classical result of Godement
about an invariant mean on the Fourier--Stieltjes algebra of a locally compact group.

Let $U\in\M(\Cz{\G}\tensor\mathcal{K}(\H))$ be a representation
of a locally compact quantum group $\G$ on a Hilbert space $\H$.

\begin{lem}[{\cite[Proposition 3.4]{Daws_Fima_Skalski_White_Haagerup_LCQG}}]\label{lem:inv_vec}
For $\z\in\H$, the following conditions are equivalent:
\begin{enumerate}
\item \label{enu:inv_vec_1} $U(\eta\tensor\z)=\eta\tensor\z$ for every $\eta\in\Ltwo{\G}$, when
considering $U$ as acting on $\Ltwo{\G}\tensor\H$;
\item \label{enu:inv_vec_2} $(\om\tensor\i)(U)\z=\om(\one)\z$ for every $\om\in\Lone{\G}$;
\item \label{enu:inv_vec_3} $\phi_{U}(\cdot)\z=\hat{\epsilon}(\cdot)\z$.
\end{enumerate}
\end{lem}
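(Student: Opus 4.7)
The three conditions all express that $\zeta$ is fixed by the representation, so the plan is to show each is just a different way of pairing $U$ (or its associated $*$-homomorphism $\phi_U$) with $\zeta$.

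For \tu{(a)} $\Leftrightarrow$ \tu{(b)}, the plan is a standard duality argument. For $\eta,\eta'\in\Ltwo{\G}$ and $\zeta'\in\H$ we have
\[
\ip{\eta'\tensor\zeta'}{U(\eta\tensor\zeta)}=\ip{\zeta'}{(\omega_{\eta',\eta}\tensor\i)(U)\zeta},\qquad \ip{\eta'\tensor\zeta'}{\eta\tensor\zeta}=\omega_{\eta',\eta}(\one)\ip{\zeta'}{\zeta}.
\]
Since vector functionals are norm-total in $\Lone{\G}$, condition \tu{(b)} tested against such functionals is equivalent to the equality of the two inner products for all $\eta,\eta',\zeta'$, which is precisely \tu{(a)}.

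For \tu{(b)} $\Leftrightarrow$ \tu{(c)}, the key observation is that the defining relation $(\i\tensor\phi_U)(\wW)=U$ gives, for every $\om\in\Lone{\G}$,
\[
\phi_U\bigl((\om\tensor\i)(\wW)\bigr)=(\om\tensor\i)(U),\qquad \hCou\bigl((\om\tensor\i)(\wW)\bigr)=\om\bigl((\i\tensor\hCou)(\wW)\bigr)=\om(\one),
\]
where the second identity uses the counit property $(\i\tensor\hCou)(\wW)=\one$. Thus \tu{(c)} applied to the element $(\om\tensor\i)(\wW)\in\M(\CzU{\hQG})$ is exactly \tu{(b)}. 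Conversely, the set $\{(\om\tensor\i)(\wW):\om\in\Lone{\G}\}$ is norm-dense in $\CzU{\hQG}$ (this is essentially how $\CzU{\hQG}$ is built in \cite{Kustermans__LCQG_universal}, starting from $\LoneSharp{\G}$ and completing in the universal norm), so \tu{(b)} together with continuity of $\phi_U$ and $\hCou$ yields $\phi_U(a)\zeta=\hCou(a)\zeta$ for all $a\in\CzU{\hQG}$.

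The only non-routine input is the density of $\{(\om\tensor\i)(\wW):\om\in\Lone{\G}\}$ in $\CzU{\hQG}$; apart from invoking this, the argument is simply bookkeeping with the correspondence between $U$ and $\phi_U$. Since both $\phi_U$ and $\hCou$ are bounded $*$-homomorphisms, the extension from the dense subset to all of $\CzU{\hQG}$ is automatic, so no further analytic difficulty arises.
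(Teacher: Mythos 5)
Your argument is correct, and it is the standard one: the paper itself gives no proof here but simply cites \cite[Proposition 3.4]{Daws_Fima_Skalski_White_Haagerup_LCQG}, where essentially the same two steps appear (the slice-map/vector-functional duality for \tu{(a)}$\Leftrightarrow$\tu{(b)}, and the correspondence $(\i\tensor\phi_U)(\wW)=U$ together with $(\i\tensor\hCou)(\wW)=\one$ and the norm-density of $\{(\om\tensor\i)(\wW):\om\in\Lone{\G}\}$ in $\CzU{\hQG}$ for \tu{(b)}$\Leftrightarrow$\tu{(c)}). The only ingredient you flag as non-routine, the density of the slices of $\wW$, is indeed exactly how $\CzU{\hQG}$ is built in \cite{Kustermans__LCQG_universal}, so nothing is missing.
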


\begin{rem}\label{rem:inv_vec}
Conditions \prettyref{enu:inv_vec_1} and \prettyref{enu:inv_vec_2} are clearly equivalent for every $U \in \mc B(\Ltwo{\G} \tensor \H)$. When $U$ is unitary, they are thus equivalent to the same conditions with $U^*$ in place of $U$.
\end{rem}

\begin{defn}
A vector that satisfies the equivalent conditions of the previous
lemma is said to be \emph{invariant under $U$}. Denote by $\inv(U)$
the closed subspace of all vectors in $\H$ that are invariant under
$U$, and by $p^{U}$ the projection of $\H$ onto $\inv(U)$. If
$U$ has no non-zero invariant vectors, it is called \emph{ergodic}.
\end{defn}
Note that the restriction $U(\one\tensor(\one-p^{U}))$ of $U$ to
$\inv(U)^{\perp}$ is a representation of $\G$ on $\inv(U)^{\perp}$.
\begin{lem}
\label{lem:almost_inv_vec}For a net $\left(\z_{i}\right)_{i \in \Ind}$ of unit
vectors in $\H$, the following conditions are equivalent:
\begin{enumerate}[label=\tu{(I.\alph*)}]
\item \label{enu:almost_inv_vect_1}for every
$\eta\in\Ltwo{\G}$, $\left\Vert U(\eta\tensor\z_{i})-\eta\tensor\z_{i}\right\Vert \stackrel{i \in \Ind}{\longrightarrow} 0$;
\item \label{enu:almost_inv_vect_2}the net $(\i \tensor \om_{\z_i})(U)$ converges in the weak$^*$ topology of $\Linfty{\G}$ to $\one$;
\item \label{enu:almost_inv_vect_3}for every $a\in \Cz{\G}$, $\left\Vert \mathcal{U} (a \tensor \z_i) - a \tensor \z_i \right\Vert \stackrel{i \in \Ind}{\longrightarrow} 0$;
\item \label{enu:almost_inv_vect_4}for every $a\in\CzU{\widehat{\G}}$, $\left\Vert \phi_{U}(a)\z_{i}-\widehat{\epsilon}(a)\z_{i}\right\Vert \stackrel{i \in \Ind}{\longrightarrow}0$.
\end{enumerate}

Furthermore, the following conditions are equivalent:
\begin{enumerate}[label=\tu{(II.\alph*)}]
\item \label{enu:almost_inv_vects_1}there exists
a net satisfying the above equivalent conditions;
\item \label{enu:almost_inv_vects_2}the trivial representation of $\G$
is weakly contained in $U$, that is, there exists a state $\Psi$
on $\Img\phi_{U}$ such that $\Psi\circ\phi_{U}=\widehat{\epsilon}$;
\item \label{enu:almost_inv_vects_3}there exists a state $\Psi$ on $\mathcal{B}(\H)$
such that $(\i\tensor\Psi)(U)=\one$, or equivalently, $(\Psi|_{\Img\phi_{U}})\circ\phi_{U}=\widehat{\epsilon}$.
\end{enumerate}

A state $\Psi$ as in \prettyref{enu:almost_inv_vects_2} or \prettyref{enu:almost_inv_vects_3}
satisfies $xp^{U}=\Psi(x)p^{U}$ for all $x\in\Img\phi_{U}$.

\end{lem}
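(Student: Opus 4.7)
The plan is to reduce all four conditions in Part~I to assertions about the slice $T_i := (\i\tensor\om_{\z_i})(U) \in \M(\C_0(\G))$ and its dual avatar $\mu_i := \om_{\z_i}\circ\phi_U \in \CzU{\hQG}^*$, which satisfy $T_i = (\i\tensor\mu_i)(\wW)$ and $\|T_i\|,\|\mu_i\|\le 1$.  I begin by assembling three identities.  Unitarity of $U$ together with $\|\z_i\|=1$ gives
\[ \|U(\eta\tensor\z_i) - \eta\tensor\z_i\|^2 = 2\|\eta\|^2 - 2\Ree\,\om_\eta(T_i); \]
the formula characterising the correspondence $U\leftrightarrow\mathcal U$ in Subsection~\ref{sub:reps} yields
\[ \|\mathcal U(a\tensor\z_i) - a\tensor\z_i\|^2 = \|a^*(2\one - T_i - T_i^*)a\|_{\C_0(\G)}; \]
and direct expansion produces
\[ \|\phi_U(a)\z_i - \widehat\epsilon(a)\z_i\|^2 = \mu_i(a^*a) - 2\Ree\bigl(\overline{\widehat\epsilon(a)}\mu_i(a)\bigr) + |\widehat\epsilon(a)|^2. \]

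From the first identity, \ref{enu:almost_inv_vect_1}$\Leftrightarrow$\ref{enu:almost_inv_vect_2} follows from polarisation and the density of vector functionals in $\Lone\G$, promoted to weak$^*$ convergence by the uniform bound $\|T_i\|\le 1$.  For \ref{enu:almost_inv_vect_2}$\Leftrightarrow$\ref{enu:almost_inv_vect_4}, I rewrite \ref{enu:almost_inv_vect_2} as the weak$^*$ convergence $\mu_i \to \widehat\epsilon$ in $\CzU{\hQG}^*$, using $T_i=(\i\tensor\mu_i)(\wW)$ and the norm-density of $\{(\om\tensor\i)(\wW) : \om\in\Lone\G\}$ in $\CzU{\hQG}$.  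Because $\widehat\epsilon$ is a character, the right-hand side of the third identity tends to $|\widehat\epsilon(a)|^2 - 2|\widehat\epsilon(a)|^2 + |\widehat\epsilon(a)|^2 = 0$, giving \ref{enu:almost_inv_vect_4}; the converse is immediate from $\mu_i(a) = \langle \z_i, \phi_U(a)\z_i\rangle \to \widehat\epsilon(a)$ together with uniform boundedness.

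For \ref{enu:almost_inv_vect_3}, the estimate $(T_i - \one)^*(T_i - \one) \le 2\one - T_i - T_i^*$ (valid since $T_i^* T_i \le \one$) combined with the second identity gives $\|(T_i - \one)a\|_{\op}^2 \le \|a^*(2\one - T_i - T_i^*)a\|$.  Thus \ref{enu:almost_inv_vect_3} forces $(T_i - \one)a \to 0$ in norm for every $a \in \C_0(\G)$ and, by density of $\C_0(\G)\Ltwo\G$ in $\Ltwo\G$, $T_i \to \one$ in the strong operator topology on $\Ltwo\G$, from which \ref{enu:almost_inv_vect_1} follows via the first identity.  For the converse direction I intend to pass through \ref{enu:almost_inv_vect_4}, using the description of $\C_0(\G)$ as the norm-closure of $\{(\i\tensor\mu)(\wW) : \mu\in\CzU{\hQG}^*\}$ and the convolution identity $T_i\cdot(\i\tensor\mu)(\wW) = (\i\tensor\mu\conv\mu_i)(\wW)$ to transport the strong content of \ref{enu:almost_inv_vect_4} into norm convergence $T_i a \to a$ on this dense subset of $\C_0(\G)$, whence \ref{enu:almost_inv_vect_3} via the second identity.

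Part~II and the final assertion are more direct.  \ref{enu:almost_inv_vects_2}$\Leftrightarrow$\ref{enu:almost_inv_vects_3} is Hahn--Banach restriction/extension of states on $\Img\phi_U \subseteq \B(\H)$, with $\Psi\circ\phi_U = \widehat\epsilon$ and $(\i\tensor\Psi)(U)=\one$ being equivalent via $U=(\i\tensor\phi_U)(\wW)$ paired against any $\om\in\Lone\G$.  For \ref{enu:almost_inv_vects_1}$\Rightarrow$\ref{enu:almost_inv_vects_3}, let $\Psi$ be a weak$^*$-cluster point of $(\om_{\z_i})$ in the state space of $\B(\H)$; along a convergent subnet, $\om((\i\tensor\Psi)(U)) = \lim(\om\tensor\om_{\z_i})(U) = \om(\one)$ for every $\om\in\Lone\G$ by \ref{enu:almost_inv_vect_2}.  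For the reverse, the weak$^*$-density of vector states in the state space of $\B(\H)$ permits approximation of $\Psi$ by vector states $\om_{\z_i}$, yielding \ref{enu:almost_inv_vect_2} for this net.  Finally, for $x = \phi_U(a)\in\Img\phi_U$, \prettyref{lem:inv_vec}\ref{enu:inv_vec_3} gives $\phi_U(a)\xi = \widehat\epsilon(a)\xi$ for every $\xi\in\inv(U)$, while \ref{enu:almost_inv_vects_2} gives $\Psi(\phi_U(a)) = \widehat\epsilon(a)$; hence for every $\eta\in\H$, $x p^U \eta = \phi_U(a) p^U\eta = \widehat\epsilon(a) p^U\eta = \Psi(x) p^U\eta$.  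I expect the main obstacle to be \ref{enu:almost_inv_vect_4}$\Rightarrow$\ref{enu:almost_inv_vect_3}, where pointwise convergence in $\H$ must be promoted to operator-norm convergence in $\C_0(\G)$ via the convolution identity above.
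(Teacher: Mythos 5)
Your three identities and the easy equivalences are sound: \ref{enu:almost_inv_vect_1}$\iff$\ref{enu:almost_inv_vect_2}$\iff$\ref{enu:almost_inv_vect_4} and \ref{enu:almost_inv_vect_3}$\implies$\ref{enu:almost_inv_vect_1} are correctly handled (the paper itself proves none of Part I, citing \cite{Daws_Fima_Skalski_White_Haagerup_LCQG} instead). But the step you flag as the main obstacle is a genuine gap, and the route you sketch will not close it. Writing $a=(\i\tensor\mu)(\wW)$ with $\mu\in\Lone{\hQG}$, the convolution identity gives $T_ia-a=(\i\tensor(\mu\conv\mu_i-\mu))(\wW)$, and the only estimate this yields is $\|T_ia-a\|\le\|\mu\conv\mu_i-\mu\|_{\CzU{\hQG}^*}$; equivalently $\|T_ia-a\|=\sup\{|(\mu\conv\mu_i-\mu)((\om_{\delta,\gamma}\tensor\i)(\wW))| : \|\delta\|,\|\gamma\|\le1\}$, a supremum over a bounded but non-compact subset of $\CzU{\hQG}$. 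Condition \ref{enu:almost_inv_vect_4} only gives $(\mu\conv\mu_i)(x)\to\mu(x)$ for each fixed $x$, i.e.\ weak$^*$ convergence of the functionals, and that is strictly weaker than the norm convergence you need: indeed your second identity shows $\max\bigl(\|(\one-T_i)a\|^{2},\|(\one-T_i^{*})a\|^{2}\bigr)\le\|a^{*}(2-T_i-T_i^{*})a\|\le\|a\|\bigl(\|(\one-T_i)a\|+\|(\one-T_i^{*})a\|\bigr)$, so \ref{enu:almost_inv_vect_3} is precisely the assertion that $T_i\to\one$ \emph{strictly} in $\M(\Cz{\G})$, whereas \ref{enu:almost_inv_vect_2} is weak$^*$ convergence, and a bounded net of contractions can converge weak$^*$ to $\one$ without converging strictly. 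The missing ingredient is the corepresentation identity $(\Delta\tensor\id)(U)=U_{13}U_{23}$, i.e.\ $W_{12}^{*}U_{23}W_{12}=U_{13}U_{23}$ (a Fell-absorption statement): it lets one realise $\mu\conv\mu_i$ and $\mu=\mu\conv\hCou$ as coefficients of one and the same representation, with respect to the vectors $\alpha\tensor\z_i$, $\beta\tensor\z_i$ and their images under (an amplification of) $U^{*}$, whence $\|\mu\conv\mu_i-\mu\|\le\|\alpha\|\,\|U^{*}(\beta\tensor\z_i)-\beta\tensor\z_i\|+\|U^{*}(\alpha\tensor\z_i)-\alpha\tensor\z_i\|\,\|\beta\|\to0$ by \ref{enu:almost_inv_vect_1}. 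This (or an appeal to a result such as \cite[Theorem 4.6]{Runde_Viselter_LCQGs_PosDef}) is what actually promotes the pointwise data to strict convergence; the convolution identity alone does not.

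There is a second, smaller error in Part II: vector states are \emph{not} weak$^*$-dense in the state space of $\B(\H)$ (the state $\tfrac12(\om_{e_1}+\om_{e_2})$ is separated from every vector state by the rank-one operator $e_1e_2^{*}$), so your argument for \ref{enu:almost_inv_vects_3}$\implies$\ref{enu:almost_inv_vects_1} fails as written. What is weak$^*$-dense is the convex hull of the vector states: approximate $\Psi$ by normal states $\psi=\sum_m\lambda_m\om_{\eta_m}$, use $2-b-b^{*}\ge(\one-b)^{*}(\one-b)$ for the contractions $b=(\om\tensor\i)(U)$ to convert $\Ree\psi(b)\approx1$ into $\sum_m\lambda_m\|(\one-b)\eta_m\|^{2}\approx0$, and pigeonhole over a finite set of states $\om\in\Lone{\G}$ to extract a single unit vector. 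The rest of Part II and the final assertion (which is the only part the paper proves directly, in the same way you do) are correct.
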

\begin{proof}
Apart from the last sentence, everything is taken from \cite[Proposition 3.7 and Corollary 3.8]{Daws_Fima_Skalski_White_Haagerup_LCQG}.

As for all $a\in\CzU{\widehat{\G}}$ and $\z\in\inv(U)$, we have $\phi_{U}(a)\z=\widehat{\epsilon}(a)\z$, if $\Psi$ is as in \prettyref{enu:almost_inv_vects_2} or \prettyref{enu:almost_inv_vects_3},
so that $\Psi\circ\phi_{U}=\widehat{\epsilon}$, we get $xp^{U}=\Psi(x)p^{U}$
for all $x\in\Img\phi_{U}$.\end{proof}
\begin{defn}
A net satisfying the equivalent conditions \prettyref{enu:almost_inv_vect_1}--\prettyref{enu:almost_inv_vect_4} of the previous lemma is said
to be \emph{almost invariant under $U$}. If such a net exists (namely,
\prettyref{enu:almost_inv_vects_1}--\prettyref{enu:almost_inv_vects_3}
are fulfilled), $U$ is said to \emph{admit almost-invariant vectors}.\end{defn}
The next corollary will not be required later, but should be mentioned
in this context. For analogous results in the classical context see \cite{Li_Ng__spect_gap_inv_states},
\cite[Proposition 1.5.5]{Peterson__erg_thy_lect_notes}, and for quantum groups see {\cite[Theorem 2.3]{Bedos_Murphy_Tuset__coam_of_CQGs}.

\begin{cor}
Let $U$ be a representation of a discrete quantum group
$\G$ on a Hilbert space $\H$ and $\phi_{U}$ be the associated representation
of $\CU{\widehat{\G}}$. The following conditions are equivalent:
\begin{enumerate}
\item \label{enu:one_weakly_subeq_1}$U$ has almost-invariant vectors;
\item \label{enu:one_weakly_subeq_2}there exists a state $\Psi$ of $\mathcal{B}(\H)$
such that
\begin{equation*}
\Psi(\phi_{U}(u_{ik}^{\gamma}))=\delta_{ik}\qquad\forall_{\gamma\in\Irred{\widehat{\G}}} \; \forall_{1\leq i,k\leq n_{\gamma}};
\end{equation*}

\item \label{enu:one_weakly_subeq_3}for every finite $F\subseteq\Irred{\widehat{\G}}$,
the operator $a_{F}:=\frac{1}{\left|F\right|}\sum_{\gamma\in F}\frac{1}{n_{\gamma}}\sum_{i=1}^{n_{\gamma}}\phi_{U}(u_{ii}^{\gamma})$
satisfies $1\in\sigma(\Ree a_{F})$.
\end{enumerate}
\end{cor}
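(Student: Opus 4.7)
The equivalence of \prettyref{enu:one_weakly_subeq_1} and \prettyref{enu:one_weakly_subeq_2} is essentially a restatement of condition \prettyref{enu:almost_inv_vects_2} in \prettyref{lem:almost_inv_vec}. Indeed, \prettyref{enu:one_weakly_subeq_2} says precisely that $\Psi\circ\phi_U$ and $\hh\Cou$ agree on every matrix coefficient $u^\gamma_{ij}$. Since $\linspan\{u^\gamma_{ij}\}=\Pol(\hh\G)$ is norm dense in $\CU{\hh\G}$ and both functionals are bounded, they agree on $\CU{\hh\G}$, giving \prettyref{enu:almost_inv_vects_2}; the converse is trivial.

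For \prettyref{enu:one_weakly_subeq_1}$\Rightarrow$\prettyref{enu:one_weakly_subeq_3}, note first that the unitarity relation $\sum_k u^\gamma_{ik}(u^\gamma_{ik})^*=\one$ forces $\|u^\gamma_{ii}\|\le 1$, so $a_F$ is a convex combination $\sum_{\gamma\in F}\sum_{i=1}^{n_\gamma}\lambda_{\gamma,i}\phi_U(u^\gamma_{ii})$ (with $\lambda_{\gamma,i}:=(|F|n_\gamma)^{-1}$, $\sum\lambda_{\gamma,i}=1$) of operators of norm at most $1$. Hence $\|\Ree a_F\|\le 1$. A state $\Psi$ provided by \prettyref{enu:one_weakly_subeq_2} satisfies $\Psi(a_F)=1$, so $\Psi(\one-\Ree a_F)=0$ while $\one-\Ree a_F\ge 0$; were $1$ absent from $\sigma(\Ree a_F)$, then $\one-\Ree a_F\ge\e\one$ for some $\e>0$, contradicting $\Psi(\one-\Ree a_F)=0$.

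The substance is in \prettyref{enu:one_weakly_subeq_3}$\Rightarrow$\prettyref{enu:one_weakly_subeq_1}. For each finite $F\subseteq\Irred{\hh\G}$, the hypothesis together with $\|\Ree a_F\|\le 1$ yields a unit vector $\z_F\in\H$ with $\la\z_F,\Ree a_F\z_F\ra\ge 1-|F|^{-2}$. Since every $\Ree\om_{\z_F}(\phi_U(u^\gamma_{ii}))\le 1$, expanding the convex combination gives
\[
1-\Ree\om_{\z_F}(\phi_U(u^\gamma_{ii}))\le\frac{|F|^{-2}}{\lambda_{\gamma,i}}=\frac{n_\gamma}{|F|}\qquad\text{whenever }\gamma\in F.
\]
Let $\Psi$ be any weak-$*$ cluster point of $(\om_{\z_F})$ in the state space of $\mathcal B(\H)$. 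For each fixed $(\gamma,i)$ this bound forces $\Ree\Psi(\phi_U(u^\gamma_{ii}))\ge 1$, and since $|\Psi(\phi_U(u^\gamma_{ii}))|\le 1$ we conclude $\Psi(\phi_U(u^\gamma_{ii}))=1$.

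To upgrade these diagonal identities to the full statement of \prettyref{enu:one_weakly_subeq_2}, set $x:=\phi_U(u^\gamma_{ii})$; then $\Psi(x^*)=\overline{\Psi(x)}=1$ and Cauchy--Schwarz combined with $\|xx^*\|\le 1$ forces $\Psi(xx^*)=1$. Applying $\Psi$ to $\sum_k\phi_U(u^\gamma_{ik})\phi_U(u^\gamma_{ik})^*=\one$ (a consequence of unitarity of $(u^\gamma_{ij})_{i,j}$), the $k=i$ summand already contributes the full value $1$, so the remaining positive terms must all vanish: $\Psi(\phi_U(u^\gamma_{ik})\phi_U(u^\gamma_{ik})^*)=0$ for $k\ne i$. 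A final Cauchy--Schwarz then gives $|\Psi(\phi_U(u^\gamma_{ik}))|^2\le\Psi(\phi_U(u^\gamma_{ik})\phi_U(u^\gamma_{ik})^*)=0$, completing the argument. The principal obstacle lies in \prettyref{enu:one_weakly_subeq_3}$\Rightarrow$\prettyref{enu:one_weakly_subeq_1}: the diagonal information comes cleanly from the spectral hypothesis and a convexity argument, but extracting the off-diagonal information requires unitarity of $(u^\gamma_{ij})$ combined with Cauchy--Schwarz, and one must choose $\delta_F=|F|^{-2}$ so that the bound $n_\gamma/|F|$ vanishes in the weak-$*$ limit for each fixed irreducible $\gamma$.
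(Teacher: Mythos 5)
Your argument for (a)$\iff$(b) and for (b)$\implies$(c) is correct, and your route for (c)$\implies$(a) is genuinely different from the paper's: the paper works at the level of vectors, using uniform convexity to show that for each fixed finite $F$ a sequence of unit vectors with $(\Ree a_F)\z_n-\z_n\to 0$ already satisfies $\phi_U(u^\gamma_{ik})\z_n-\delta_{ik}\z_n\to 0$ for all $\gamma\in F$ and all $i,k$, and then lets $F$ vary to produce an almost-invariant net directly; you instead pass to a weak$^*$ cluster state $\Psi$ and use Cauchy--Schwarz to kill the off-diagonal coefficients, landing in condition (b) rather than (a). The two uses of unitarity of $u^\gamma$ are the same trick in dual guises, and your state-level version is a clean alternative.

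There is, however, one concrete flaw. The tolerance $\delta_F=|F|^{-2}$ yields only $1-\Ree\om_{\z_F}(\phi_U(u^\gamma_{ii}))\le n_\gamma/|F|$, and your limiting argument needs this to vanish along the net of finite subsets ordered by inclusion; that happens exactly when $\Irred{\widehat{\G}}$ is infinite. The corollary is stated for an arbitrary discrete quantum group, which includes finite ones: there the directed set of finite subsets has a maximum $F_{\max}=\Irred{\widehat{\G}}$, any cluster point of your net is just $\om_{\z_{F_{\max}}}$, and the bound $1-n_\gamma/|F_{\max}|$ does not force $\Psi(\phi_U(u^\gamma_{ii}))=1$ once $n_\gamma\ge 2$. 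The repair is immediate: decouple the tolerance from $F$, e.g.\ index the net by pairs $(F,m)$ with $m\in\N$ and choose $\z_{F,m}$ with $\la\z_{F,m},(\Ree a_F)\z_{F,m}\ra\ge 1-\bigl(m\,|F|\,\max_{\gamma\in F}n_\gamma\bigr)^{-1}$, so that $1-\Ree\om_{\z_{F,m}}(\phi_U(u^\gamma_{ii}))\le 1/m$ for all $\gamma\in F$; the rest of your argument then goes through verbatim. (The paper's uniform-convexity proof avoids the issue because it extracts full information from each finite $F$ separately, without ever needing $|F|$ to grow.)
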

\begin{proof}
\prettyref{enu:one_weakly_subeq_1}$\iff$\prettyref{enu:one_weakly_subeq_2}
is immediate from \prettyref{lem:almost_inv_vec} because the operators of the form
$\phi_{U}(u_{ik}^{\gamma})$ are linearly dense in $\phi_{U}(\CUcomp{\widehat{\G}})$.

\prettyref{enu:one_weakly_subeq_1}$\implies$\prettyref{enu:one_weakly_subeq_3}:
let $\left(\z_{\iota}\right)_{\iota \in \Ind}$ be a net of unit vectors that is almost-invariant
under $U$. Then for every finite $F\subseteq\Irred{\widehat{\G}}$, we have $a_{F}\z_{\iota}-\z_{\iota} \stackrel{\iota \in \Ind}{\longrightarrow}0$
and $(\Ree a_{F})\z_{\iota}-\z_{\iota}\stackrel{\iota \in \Ind}{\longrightarrow}0$. Hence
$1\in\sigma(\Ree a_{F}).$

\prettyref{enu:one_weakly_subeq_3}$\implies$\prettyref{enu:one_weakly_subeq_1}:
suppose that a finite $F\subseteq\Irred{\widehat{\G}}$ satisfies $1\in\sigma(\Ree a_{F})$.
Then there is a sequence $\left(\z_{n}\right)_{n \in \bn}$ of unit vectors in $\Hil$ such that $(\Ree a_{F})\z_{n}-\z_{n}\stackrel{n \to \infty}{\longrightarrow}0$.
Since each $\phi_{U}(u_{ii}^{\gamma})$ is contractive,
by uniform convexity we get $\phi_{U}(u_{ii}^{\gamma})\z_{n}-\z_{n}\stackrel{n \to \infty}{\longrightarrow}0$
for every $\gamma\in F, 1\leq i\leq n_{\gamma}$. Since
$\phi_{U}^{(n_{\gamma})}(u^{\gamma})$ is unitary, thus contractive,
we infer that $\phi_{U}(u_{ik}^{\gamma})\z_{n}-\delta_{ik}\z_{n}\stackrel{n \to \infty}{\longrightarrow}0$
for $\gamma\in F, 1\leq i,k\leq n_{\gamma}$. As $F$ was
arbitrary, $U$ has almost-invariant vectors.\end{proof}

\begin{defn} \label{def:wm}
Let $\G$ be a locally compact quantum group.
\begin{itemize}
\item A finite-dimensional representation $u\in\M(\Cz{\G}\tensor M_{n})$
of $\G$ is \emph{admissible} \cite[Definition 2.2]{Soltan__quantum_Bohr_comp}
if, when viewed as a matrix $\left(u_{ij}\right)_{1\leq i,j\leq n}$
of elements in $\M(\Cz{\G})$, its transpose $\left(u_{ji}\right)_{1\leq i,j\leq n}$
is invertible.
\item A unitary representation $U$ of $\G$ is \emph{weakly mixing} \cite[Definition 2.9]{Viselter__LCQGs_weak_mixing}
if it admits no non-zero admissible finite-dimensional sub-representation.
\end{itemize}
\end{defn}

Let $\G$ be a locally compact quantum group with \emph{trivial scaling group}. The characterisations
of weak mixing in \cite[Theorem 2.11]{Viselter__LCQGs_weak_mixing}
become much simpler under this assumption, as follows. For starters,
every finite-dimensional (unitary) representation of $\G$ is admissible.
In \cite{Viselter__LCQGs_weak_mixing}, a `(unitary) representation
of $\G$ on $\H$' (called there a `corepresentation') is a unitary $X\in\mathcal{B}(\H)\tensorn\Linfty{\G}$
such that $(\i\tensor\Delta)(X)=X_{12}X_{13}$. Such $X$ is weakly
mixing (namely, it has no non-zero finite-dimensional sub-representation)
if and only if $(\top\tensor R)(X)_{13}X_{23}$ is ergodic, if and only
if $Y_{13}X_{23}$ is ergodic for all representations $Y$
of $\G$ as above. Let $U\in\Linfty{\G}\tensorn\mathcal{B}(\H)$ be
unitary. Then $U$ is a representation of $\G$ in our sense if
and only if $X:=\sigma(U^{*})$ is a representation of the opposite
locally compact quantum group $\G^{\op}$ \cite[Section 4]{Kustermans_Vaes__LCQG_von_Neumann}
in the above sense. In this case, since $R^{\op}=R$, we have $(\top\tensor R^{\op})(\sigma(U^{*}))=\sigma(U^{c*})$.
Also, for any other representation $V\in\Linfty{\G}\tensorn\mathcal{B}(\K)$
of $\G$ on a Hilbert space $\K$, setting $Y:=\sigma(V^{*})$ we
get $\sigma_{12,3}(Y_{13}X_{23})^{*}=U_{13}V_{12}=V\tpr U$. Notice
also that a representation is ergodic, respectively weakly
mixing, if and only if its contragradient has this property. Putting
all this together, weak mixing of $U$ is equivalent to ergodicity
of any of $U^{c}\tp U$, $U\tp U^{c}$, $U^{c}\tpr U$ and $U\tpr U^{c}$,
and also to ergodicity -- and thus, \emph{a posteriori}, to weak mixing -- of
any of $V\tp U$, $U\tp V$, $V\tpr U$ and $U\tpr V$ for every unitary
representation $V$ of $\G$ (for instance, $V\tp U$ is weakly
mixing because $(V\tp U)^{c}\tp(V\tp U)=((V\tp U)^{c}\tp V)\tp U$
is ergodic). Similar results can be also found in \cite{Chen_Ng__prop_T_LCQGs}.

The notions of (almost-) invariant vectors discussed above are framed in terms of the algebra $\C_0(\G)$ and not $\C_0^u(\G)$.  Recall from \cite[Proposition~6.6]{Kustermans__LCQG_universal} that there is a bijection between representations $U\in \M(\C_0(\G)\otimes\mc K(\sH))$ and unitaries $U^u\in \M(\C_0^u(\G)\otimes\mc K(\sH))$ satisfying the `representation equation' $(\Delta_u\otimes\id)U^u = U^u_{13} U^u_{23}$. The bijection is given by the relation $(\Lambda_\G \otimes\id)(U^u)=U$; thus we have $U^u = (\id \tensor \phi_U)(\WW)$.

\begin{deft}
Let $V\in \M(\C_0^u(\G)\otimes\mc K(H))$ be a representation in the sense discussed above.
Then $\xi\in \sH$ is \emph{invariant} for $V$ if $(\mu\otimes\id)(V)\xi=
\mu(\one)\xi$ for all $\mu\in \C_0^u(\G)^*$.  A net of unit vectors
$(\xi_i)_{i \in \Ind}$ in $\sH$ is \emph{almost invariant} for $V$ if
$\| (\mu\otimes\id)(V)\xi_i - \xi_i \| \stackrel{i \in \Ind}{\longrightarrow} 0$ for
each state $\mu\in \C_0^u(\G)^*$.
\end{deft}

\begin{prop}\label{Prop:invUniv}
With notation as above, $\inv(U)=\inv(U^u)$ and a net $(\xi_i)_{i \in \Ind}$
of unit vectors in $\Hil$ is almost invariant for $U$ if and only if it is almost
invariant for $U^u$.
\end{prop}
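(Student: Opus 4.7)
The plan is to exploit the factorisation $U^{u}=(\i\tensor\phi_{U})(\WW)$, which for any $\mu \in \CzU{\G}^{*}$ gives, with $b_\mu := (\mu \tensor \i)(\WW) \in \M(\CzU{\hQG})$, the identities
\[ (\mu\tensor\i)(U^{u}) = \phi_{U}(b_{\mu}) \qquad \text{and} \qquad \widehat{\epsilon}(b_\mu) = \mu(\one). \]
Here the first identity makes sense because $\phi_U$ is a non-degenerate $*$-homomorphism and so extends strictly-continuously to $\M(\CzU{\hQG})$, while the second uses the standard fact $(\i\tensor\widehat{\epsilon})(\WW)=\one$ together with the analogous strict extension of the counit $\widehat\epsilon$.

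For the invariance claim, the inclusion $\inv(U^u) \subseteq \inv(U)$ will be immediate: given $\xi \in \inv(U^u)$ and $\omega \in \Lone{\G}$, I apply the definition with $\mu = \omega \circ \Lambda_\G$, using that $(\mu \tensor \i)(U^u) = (\omega \tensor \i)(U)$ and $\mu(\one) = \omega(\one)$. For the reverse direction, suppose $\xi \in \inv(U)$; by \prettyref{lem:inv_vec}(c), $\phi_U(a)\xi = \widehat{\epsilon}(a)\xi$ for all $a \in \CzU{\hQG}$. Along an approximate identity $(e_\lambda)$ of $\CzU{\hQG}$, non-degeneracy together with strict continuity of $\widehat\epsilon$ gives $\phi_U(b)\xi = \lim_\lambda \phi_U(b e_\lambda)\xi = \lim_\lambda \widehat\epsilon(be_\lambda)\xi = \widehat\epsilon(b)\xi$ for every $b \in \M(\CzU{\hQG})$. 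Specialising $b=b_\mu$ yields $\xi\in\inv(U^u)$.

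The easy half of the almost-invariance assertion proceeds analogously: applying the $U^u$-almost-invariance condition to $\mu = \omega \circ \Lambda_\G$ with $\omega \in \Lone{\G}$ a state gives $\|(\omega \tensor \i)(U)\xi_i - \xi_i\| \to 0$, hence $\omega((\i\tensor\omega_{\xi_i})(U)) \to \omega(\one)$ by Cauchy--Schwarz, and linearity then yields the weak-$*$ convergence from \prettyref{lem:almost_inv_vec}\ref{enu:almost_inv_vect_2}.

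The main obstacle is the converse direction, because $b_\mu$ is only a multiplier of $\CzU{\hQG}$ so \prettyref{lem:almost_inv_vec}\ref{enu:almost_inv_vect_4} cannot be applied to it directly. My plan is to multiply by a test element: fix $a \in \CzU{\hQG}$. Then $b_\mu a \in \CzU{\hQG}$, so $U$-almost-invariance gives $\|\phi_U(b_\mu a)\xi_i - \widehat\epsilon(b_\mu a)\xi_i\| \to 0$, while boundedness of $\phi_U(b_\mu)$ (by $\|\mu\|=1$, since $\mu$ is a state and $\WW$ is unitary) combined with almost-invariance applied to $a$ gives $\|\phi_U(b_\mu a)\xi_i - \widehat\epsilon(a)\phi_U(b_\mu)\xi_i\| \to 0$. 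Subtracting and using multiplicativity of $\widehat\epsilon$ yields
\[ \widehat\epsilon(a)\bigl[\phi_U(b_\mu)\xi_i - \widehat\epsilon(b_\mu)\xi_i\bigr] \to 0. \]
Since $\widehat\epsilon$ is a non-zero character on $\CzU{\hQG}$, one may choose $a$ with $\widehat\epsilon(a)\neq 0$ (for instance an $e_\lambda$ with sufficiently large index, as $\widehat\epsilon(e_\lambda)\to 1$), and dividing out concludes the argument.
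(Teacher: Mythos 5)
Your proof is correct, and it takes a genuinely different route from the one in the paper. You work entirely through the factorisation $U^u=(\i\tensor\phi_U)(\WW)$ and reduce both halves of the hard direction to a purely algebraic statement about the strict extension of $\phi_U$ to $\M(\CzU{\hQG})$: invariance in the sense of \prettyref{lem:inv_vec}, and almost-invariance in the sense of condition \prettyref{enu:almost_inv_vect_4}, automatically upgrade from $\CzU{\hQG}$ to its multiplier algebra --- the first via an approximate identity and non-degeneracy, the second via the test-element trick $b_\mu a\in\CzU{\hQG}$ together with multiplicativity of the extended counit and a choice of $a$ with $\hh\epsilon(a)\neq 0$. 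The paper instead uses the identity $U^u_{13}U_{23}=\Ww^*_{12}U_{23}\Ww_{12}$ from \prettyref{eq:morphs_bichars}'s neighbourhood (i.e.\ \eqref{reducedimplementation}), realises a general $\mu\in\CzU{\G}^*$ as a vector functional through a representation $\pi$ of $\CzU{\G}$, and carries out explicit Hilbert-space estimates --- in the almost-invariance direction an orthonormal-basis expansion of $X(\beta\otimes\gamma)$ and a three-$\e$ argument. Your version is shorter and more conceptual, at the price of invoking the standard (but genuinely needed) facts that slicing commutes with applying $\phi_U$ on the other leg, that $(\i\tensor\hh\epsilon)(\WW)=\one$ so that $\hh\epsilon(b_\mu)=\mu(\one)$, and that the strict extension of $\hh\epsilon$ remains a character; the paper's computation avoids any appeal to strict extensions of the counit and keeps all estimates at the level of vectors in $\Ltwo{\G}\otimes\Hil$. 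Both arguments are complete; no gaps.
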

\begin{proof}
Applying the reducing morphism shows that $\inv(U^u)\subseteq\inv(U)$.
Conversely, let $\xi$ be invariant for $U$.
By \eqref{reducedimplementation}
\[ U^u_{13} U_{23} = ((\id\otimes\Lambda_\G)\Delta_u\otimes\id)(U^u)
= \Ww^*_{12} U_{23} \Ww_{12}. \]
Fix a state $\omega\in \Lone{\QG}$ , and let $\mu\in \C_0^u(\G)^*$.  Then
\[ (\mu\otimes\omega\otimes\id)(U^u_{13} U_{23}) \xi
= (\mu\otimes\id)(U^u) (\omega\otimes\id)(U)\xi
= (\mu\otimes\id)(U^u)\xi, \]
as $\xi$ is invariant.  Let $\pi:\C_0^u(\G)\rightarrow\mc B(\sK)$ be a
representation such that there are $\alpha,\beta\in \sK$ with
$\mu = \omega_{\alpha,\beta}\circ\pi$. Let $\omega=\omega_\gamma$ for some
$\gamma\in L^2(\G)$.  Then, with $X=(\pi\otimes\id)(\Ww)$ and $\zeta \in \sH$,
\begin{align*} \la \zeta, (\mu\otimes\id)(U^u)\xi \ra
&= \la \zeta, (\mu\otimes\omega\otimes\id)(\Ww^*_{12} U_{23} \Ww_{12})\xi \ra
= \la \alpha\otimes\gamma\otimes\zeta, X^*_{12} U_{23} X_{12}
(\beta\otimes\gamma\otimes\xi) \ra \\
&= \la \alpha\otimes\gamma\otimes\zeta,  X^*_{12} X_{12} (\beta\otimes\gamma\otimes\xi) \ra = \la \zeta, \mu(\one) \xi\ra,
\end{align*}
using that $\xi$ is invariant, so $U(\eta\otimes\xi)=\eta\otimes\xi$
for all $\eta\in L^2(\G)$, and that $X$ is unitary.  Thus $\xi$ is
invariant for $U^u$ as claimed.

If $(\xi_i)_{i \in \Ind}$ is a net of almost invariant vectors for $U^u$, then let $\omega\in \Lone{\QG}$
be a state and set $\mu=\omega\circ\Lambda_{\G}$.  It follows that
$\|(\omega\otimes\id)(U)\xi_i-\xi_i\|=\|(\mu\otimes\id)(U^u)\xi_i-\xi_i\|\stackrel{i \in\Ind}{\longrightarrow} 0$.  By linearity
$\|(\omega\otimes\id)(U)\xi_i- \om(\one) \xi_i\|\stackrel{i \in\Ind}{\longrightarrow}0$ for all $\omega
\in \Lone{\QG}$ and so it follows that $(\id\otimes\omega_{\xi_i})(U)
\stackrel{i \in\Ind}{\longrightarrow}\one$ in the weak$^*$ topology of $L^\infty(\G)$, thus verifying condition \prettyref{enu:almost_inv_vect_2}
of \prettyref{lem:almost_inv_vec}.
So $(\xi_i)_{i \in \Ind}$ is almost invariant for $U$.

Conversely, we use the same argument as above, but with more care.
Let $\mu=\omega_\beta\circ\pi$ be a state of $\C_0^u(\G)$, and
$\omega=\omega_\gamma$ a state in $\Lone{\QG}$, with $\pi:\C_0^u(\G)\rightarrow\mc B(\sK)$ a
representation, $\beta \in \K$ and $\gamma \in L^2(\G)$.  If $(\xi_i)_{i \in \Ind}$ is
almost invariant for $U$,
\begin{align*} \lim_{i \in \Ind} \| (\mu\otimes\omega\otimes\id)(U^u_{13} U_{23})
\xi_i - \xi_i \|
&= \lim_{i \in \Ind} \| (\mu\otimes\id)(U^u) (\omega\otimes\id)(U)
\xi_i - \xi_i \| \\
&= \lim_{i \in \Ind} \| (\mu\otimes\id)(U^u) \xi_i - \xi_i \|.
\end{align*}
Let $(e_j)_{j\in \mathcal{J}}$ be an orthonormal basis for $\sK$, let $X$ be as above and let
$X(\beta\otimes\gamma) = \sum_{j\in \mathcal{J}} e_j\otimes\gamma_j$, so $\sum_{j\in \mathcal{J}} \|\gamma_j\|^2
= 1$.  Then, for $\eta\in \sH$,
\begin{align*}
& \big| \la \eta,(\mu\otimes\omega\otimes\id)(\Ww^*_{12} U_{23} \Ww_{12})\xi_i
\ra - \la \eta, \xi_i \ra \big| \\
&= \big| \la  \beta\otimes\gamma\otimes\eta,  X^*_{12} U_{23} X_{12} (\beta\otimes\gamma\otimes\xi_i)
\ra -  \la \eta, \xi_i \ra \big| \\
&= \Big| \sum_{j\in \mathcal{J}} \la \gamma_j\otimes\eta,U(\gamma_j\otimes\xi_i)\ra
- \la \eta, \xi_i \ra  \Big|.
\end{align*}
For $\epsilon>0$ there is a finite set $F\subseteq \mathcal{J}$ with $\sum_{j\not\in F}
\|\gamma_j\|^2 < \epsilon$, and then there is $i_0\in \Ind$ so that if
$i \geq i_0$ and $j \in F$ then $\| U(\gamma_j\otimes\xi_i) -
\gamma_j\otimes\xi_i \| < \epsilon / \sqrt{|F|}$.  Thus for such $i$,
\begin{align*}
\Big| \sum_{j\in \mathcal{J}} \la \gamma_j\otimes\eta,  U(\gamma_j\otimes\xi_i)\ra
& - \la \eta, \xi_i \ra \Big| \leq
\Big| \sum_{j\in F} \la \gamma_j\otimes\eta,U(\gamma_j\otimes\xi_i)\ra
- \sum_{j\in F} \la \gamma_j\otimes\eta,\gamma_j\otimes\xi_i\ra \Big|\\
&\qquad + \Big| \sum_{j\in F} \la \gamma_j\otimes\eta, \gamma_j\otimes\xi_i \ra
-  \la \eta, \xi_i \ra \Big|
+ \Big| \sum_{j\not\in F} \la \gamma_j\otimes\eta, U(\gamma_j\otimes\xi_i) \ra
\Big| \\
&\leq \sum_{j\in F} \epsilon |F|^{-1/2} \|\gamma_j\otimes\eta\|
+ |\la \eta, \xi_i \ra| \Big| 1 - \sum_{j\in F} \|\gamma_j\|^2 \Big|
+ \sum_{j\not\in F} \|\gamma_j\|^2 \|\eta\| \\
&\leq \|\eta\|\Big( \epsilon + \epsilon + \epsilon \Big)
\leq 3\epsilon \|\eta\|.
\end{align*}
So $\lim_{i\in \Ind} \| (\mu\otimes\omega\otimes\id)(\Ww^*_{12} U_{23} \Ww_{12})
\xi_i - \xi_i \| = 0$ and hence $\lim_{i\in \Ind} \|(\mu\otimes\id)(U^u)
\xi_i - \xi_i \| =0$, as required.
\end{proof}

We will later need to see invariant vectors of a given representation $U$ as arising from a version of an averaging procedure. Similar arguments can be found in \cite[Section 4]{Das_Daws__quantum_Eberlein}.

\begin{deft}
Let $U$ be representation of a locally compact quantum group $\QG$ on a Hilbert space $\Hil$. A subset $\mc S\subseteq\mc B(\sH)$ is an \emph{averaging semigroup for $U$}
if $\mc S$ is a semigroup of contractions such that
\begin{enumerate}
\item \label{enu:avrg_sg_1} each $T\in\mc S$
leaves $\inv(U)$ pointwise invariant;
\item \label{enu:avrg_sg_2} each $T\in\mc S$ leaves $\inv(U)^\perp$ invariant;
\item \label{enu:avrg_sg_3} if $\xi\in \sH$ is such that
$T(\xi)=\xi$ for all $T\in\mc S$, then $\xi\in\inv(U)$.
\end{enumerate}
Given such a semigroup, the \emph{$\mc S$-average} of $\xi\in \sH$ is the
unique vector of minimal norm in the closed convex hull of
$\{ T(\xi) : T\in\mc S \}$.
\end{deft}

We will now give two examples of averaging semigroups.

\begin{lem}\label{lem:avg_semigp_one}
For any representation $U$ of $\G$ the family $\mc S=\{ (\omega\otimes\id)(U)
: \omega\in \Lone{\QG} \text{ is a state}\}$ is an averaging semigroup for $U$.
\end{lem}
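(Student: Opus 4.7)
The plan is to verify the four requirements — semigroup of contractions, (a), (b), (c) — in sequence, drawing on the algebra homomorphism $\Lone{\QG} \ni \omega \mapsto (\omega \otimes \id)(U) \in \mc B(\sH)$ recalled in Subsection \ref{sub:reps}, together with \prettyref{lem:inv_vec} and \prettyref{rem:inv_vec}.

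For the semigroup structure, each $(\omega \otimes \id)(U)$ is a contraction since $U$ is unitary and $\|\omega\|=1$. Because $\omega \mapsto (\omega \otimes \id)(U)$ is a homomorphism with respect to convolution, $\mc S$ is closed under composition once one knows that the convolution of two normal states is a normal state; this is immediate from $\omega_1 \conv \omega_2 = (\omega_1 \otimes \omega_2) \circ \Delta$ and the fact that $\Delta$ is a unital normal $*$-homomorphism. Condition (a) is then immediate: for a state $\omega$ and $\xi \in \inv(U)$, \prettyref{lem:inv_vec} gives $(\omega \otimes \id)(U)\xi = \omega(\one)\xi = \xi$.

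For condition (b), a routine computation with matrix coefficients yields $((\omega \otimes \id)(U))^* = (\overline{\omega} \otimes \id)(U^*)$; since $\overline{\omega}$ is again a normal state and, by \prettyref{rem:inv_vec}, $\inv(U^*) = \inv(U)$, for any $\eta \in \inv(U)$ and $\xi \in \inv(U)^\perp$ we obtain
\begin{equation*}
\langle \eta, (\omega \otimes \id)(U) \xi \rangle = \langle (\overline{\omega} \otimes \id)(U^*) \eta, \xi \rangle = \langle \eta, \xi \rangle = 0,
\end{equation*}
so $(\omega \otimes \id)(U)\xi \in \inv(U)^\perp$. Finally, for (c), assume $T\xi = \xi$ for every $T \in \mc S$. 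Any normal functional $\omega \in \Lone{\QG}$ admits a decomposition $\omega = \sum_{k=1}^{4} c_k \omega_k$ with $\omega_k$ normal states and $c_k \in \bc$ (the standard polar-type decomposition of normal functionals in the predual of a von Neumann algebra), and therefore
\begin{equation*}
(\omega \otimes \id)(U)\xi = \sum_{k=1}^{4} c_k \xi = \Big(\sum_{k=1}^{4} c_k \omega_k(\one)\Big) \xi = \omega(\one)\xi,
\end{equation*}
so \prettyref{lem:inv_vec} gives $\xi \in \inv(U)$. The only non-trivial input is the four-state decomposition, which is classical; I anticipate no genuine obstacle in the argument.
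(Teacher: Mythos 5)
Your proof is correct and follows essentially the same route as the paper: the semigroup property comes from the convolution homomorphism $\omega\mapsto(\omega\otimes\id)(U)$, condition (b) is verified by the identical adjoint computation $\langle\eta,(\omega\otimes\id)(U)\xi\rangle=\langle(\overline\omega\otimes\id)(U^*)\eta,\xi\rangle=0$ via \prettyref{rem:inv_vec}, and (a), (c) are the easy parts (the paper dismisses them as "more or less trivial", while you spell out the standard four-state decomposition for (c)). No gaps.
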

\begin{proof}
The fact that $U$ is a representation shows that $\mc S$ is a semigroup of
contractions.  We now check the conditions: \prettyref{enu:avrg_sg_1} and \prettyref{enu:avrg_sg_3} hold more or less trivially.
Now let $\beta\in\inv(U)^\perp$ and $\omega\in \Lone{\QG}$. For $\alpha\in\inv(U)$, we find that
\[ \la  \alpha,   (\omega\otimes\id)(U)\beta \ra
	= \la (\overline\omega\otimes\id)(U^*) \alpha, \beta \ra
	= \la \overline\omega(\one) \alpha, \beta \ra
	= \la  \alpha, \beta \ra \overline{\overline\omega(\one)} = 0\]
using that $\alpha$ is invariant, so that $(\overline\omega\otimes\id)(U^*) \alpha
= \overline\omega(\one) \alpha$ by \prettyref{rem:inv_vec}.  It follows that
$(\omega\otimes\id)(U) \beta \in \inv(U)^\perp$, as required to show \prettyref{enu:avrg_sg_2}.
\end{proof}

\begin{lem}\label{lem:avg_semigp_two}
For any representation $U$ of $\G$ the family $\mc S=\{ (\omega\otimes\id)(U^*)
: \omega\in \Lone{\QG} \text{ is a state}\}$ is an averaging semigroup for $U$.
\end{lem}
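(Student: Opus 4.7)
The argument will closely mirror the proof of \prettyref{lem:avg_semigp_one}, with $U$ replaced by $U^*$ in the appropriate places; the main subtlety is to check that $\mathcal{S}$ is still a semigroup and to use \prettyref{rem:inv_vec} at the right moments, since $U^*$ is \emph{not} itself a representation.

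First I would verify that $\mathcal{S}$ is a semigroup of contractions. Contractivity is immediate since $U^*$ is unitary. For the semigroup property, given states $\omega_1,\omega_2\in\Lone{\QG}$, apply $(\Delta\tensor\i)$ to $U^*$ to get $U_{23}^{*}U_{13}^{*}$; pairing with $\omega_1\otimes\omega_2$ yields
\[
(\omega_1\tensor\i)(U^{*})(\omega_2\tensor\i)(U^{*})=((\omega_2\conv\omega_1)\tensor\i)(U^{*}),
\]
and $\omega_2\conv\omega_1=(\omega_2\tensor\omega_1)\circ\Delta$ is again a state because $\Delta$ is a unital $*$-homomorphism.

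Next, for condition \prettyref{enu:avrg_sg_1}, let $\xi\in\inv(U)$. By \prettyref{rem:inv_vec}, the equivalent conditions of \prettyref{lem:inv_vec} hold with $U^{*}$ in place of $U$, so $(\omega\tensor\i)(U^{*})\xi=\omega(\one)\xi=\xi$ for every state $\omega$. For condition \prettyref{enu:avrg_sg_2}, let $\beta\in\inv(U)^{\perp}$, $\omega$ a state, and $\alpha\in\inv(U)$. Since $\omega$ is positive we have $\overline{\omega}=\omega$, hence
\[
\la\alpha,(\omega\tensor\i)(U^{*})\beta\ra=\la(\omega\tensor\i)(U)\alpha,\beta\ra=\la\omega(\one)\alpha,\beta\ra=0,
\]
using invariance of $\alpha$ under $U$ via \prettyref{lem:inv_vec}\prettyref{enu:inv_vec_2}. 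Thus $(\omega\tensor\i)(U^{*})\beta\in\inv(U)^{\perp}$.

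Finally, for condition \prettyref{enu:avrg_sg_3}, suppose $\xi\in\sH$ satisfies $(\omega\tensor\i)(U^{*})\xi=\xi$ for every state $\omega\in\Lone{\QG}$. Since the states linearly span $\Lone{\QG}$, we deduce $(\omega\tensor\i)(U^{*})\xi=\omega(\one)\xi$ for every $\omega\in\Lone{\QG}$, which by \prettyref{rem:inv_vec} (applied to $U^{*}$) means exactly that $\xi\in\inv(U)$. I expect no genuine obstacle here; the only care needed is to remember that conjugation of a state is trivial and that \prettyref{rem:inv_vec} legitimately lets us pass between $U$ and $U^{*}$ when characterising invariant vectors.
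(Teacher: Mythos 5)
Your proof is correct and follows essentially the same route as the paper's (which simply notes that $\omega\mapsto(\omega\otimes\id)(U^*)$ is an anti-homomorphism, invokes \prettyref{rem:inv_vec} for conditions \prettyref{enu:avrg_sg_1} and \prettyref{enu:avrg_sg_3}, and adapts the orthogonality computation from \prettyref{lem:avg_semigp_one} for \prettyref{enu:avrg_sg_2}). You have merely written out the details the paper leaves implicit, and all of them — the anti-homomorphism identity, the use of $\overline{\omega}=\omega$ for states, and the span argument for condition \prettyref{enu:avrg_sg_3} — check out.
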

\begin{proof}
As $\omega\mapsto (\omega\otimes\id)(U^*)$ is an anti-homomorphism, $\mc S$
is a semigroup of contractions.
Conditions \prettyref{enu:avrg_sg_1} and \prettyref{enu:avrg_sg_3} hold by \prettyref{rem:inv_vec}, and the argument used in the previous proof is easily adapted to show \prettyref{enu:avrg_sg_2}.
\end{proof}

\begin{prop}\label{prop:ave_equals_proj_gen}
Let $U$ be a representation of $\G$ on a Hilbert space $\sH$, and let $\mc S$ be an averaging semigroup
for $U$.  Then:
\begin{enumerate}
\item\label{prop:ave_equals_proj_gen:one}
$U$ is ergodic if and only if, for all $\xi\in\sH$,
the closed convex hull of $\{ T(\xi) : T\in\mc S \}$ contains $0$;
\item\label{prop:ave_equals_proj_gen:two}
for $\xi\in \sH$, the $\mc S$-average of $\xi$, the unique vector of minimal norm
in the closed convex hull of $\{ T(\xi) : T\in\mc S \}$, is equal to
the orthogonal projection of $\xi$ onto $\inv(U)$.
\end{enumerate}
\end{prop}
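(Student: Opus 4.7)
The plan is to prove (ii) first and deduce (i) as an immediate consequence, since ergodicity is the statement that $p^{U} = 0$.

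\smallskip

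\noindent\textbf{Setup and decomposition.} For $\xi \in \sH$, write $\xi = p^{U}\xi + \xi'$ with $\xi' := (\one - p^{U})\xi \in \inv(U)^{\perp}$. By condition \prettyref{enu:avrg_sg_1}, every $T \in \mc S$ fixes $p^{U}\xi$, and by condition \prettyref{enu:avrg_sg_2}, $T(\xi') \in \inv(U)^{\perp}$. Hence $T(\xi) = p^{U}\xi + T(\xi')$ for all $T \in \mc S$, and taking closed convex hulls,
\[
C_{\xi} := \overline{\co}\{T(\xi) : T \in \mc S\} = p^{U}\xi + K, \qquad K := \overline{\co}\{T(\xi') : T \in \mc S\} \subseteq \inv(U)^{\perp}.
\]
Because these summands are orthogonal, the element of minimal norm in $C_{\xi}$ is precisely $p^{U}\xi + \eta'$, where $\eta'$ is the unique element of minimal norm in $K$ (which exists by the standard Hilbert space nearest-point argument, as $K$ is a bounded closed convex set).

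\smallskip

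\noindent\textbf{Showing $\eta'$ is $\mc S$-fixed.} The semigroup property of $\mc S$ together with the norm-continuity of each $T \in \mc S$ gives $T(K) \subseteq K$, so $T(\eta') \in K$ and $\|T(\eta')\| \leq \|\eta'\|$ since $T$ is a contraction. The midpoint $\tfrac{1}{2}(\eta' + T(\eta'))$ lies in $K$ and has norm at most $\|\eta'\|$. By the parallelogram identity
\[
\bigl\|\tfrac{\eta' + T(\eta')}{2}\bigr\|^{2} + \bigl\|\tfrac{\eta' - T(\eta')}{2}\bigr\|^{2} = \tfrac{1}{2}\bigl(\|\eta'\|^{2} + \|T(\eta')\|^{2}\bigr) \leq \|\eta'\|^{2},
\]
combined with the fact that $\eta'$ is the unique minimiser of the norm on $K$, we force $T(\eta') = \eta'$.

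\smallskip

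\noindent\textbf{Concluding (ii) and then (i).} Since $\eta'$ is fixed by every $T \in \mc S$, condition \prettyref{enu:avrg_sg_3} gives $\eta' \in \inv(U)$. But $\eta' \in K \subseteq \inv(U)^{\perp}$, so $\eta' = 0$. Therefore the minimiser in $C_{\xi}$ is $p^{U}\xi$, proving \prettyref{prop:ave_equals_proj_gen:two}. For \prettyref{prop:ave_equals_proj_gen:one}, $U$ is ergodic exactly when $p^{U} = 0$, which by \prettyref{prop:ave_equals_proj_gen:two} is equivalent to the minimiser in $C_{\xi}$ being $0$ for every $\xi$, i.e.\ $0 \in C_{\xi}$ for every $\xi$.

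\smallskip

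\noindent The only mildly delicate point is the second step: one must verify that $T$ genuinely leaves $K$ invariant (which requires both the semigroup structure and norm-continuity) and that the parallelogram/strict convexity argument forces $T(\eta') = \eta'$ without needing any additional continuity hypothesis on $\mc S$. Everything else is a routine orthogonal decomposition combined with the three defining properties of an averaging semigroup.
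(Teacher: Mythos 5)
Your proof is correct and follows essentially the same route as the paper's: decompose $\xi$ orthogonally along $\inv(U)\oplus\inv(U)^\perp$, use the semigroup property and contractivity to show the minimal-norm vector is fixed by every $T\in\mc S$, invoke condition \prettyref{enu:avrg_sg_3} to place it in $\inv(U)$, and intersect with $\inv(U)^\perp$ to conclude it vanishes. The only cosmetic difference is that you run the fixed-point argument on the minimiser of the perpendicular part $K$ (via the parallelogram identity) while the paper runs it directly on the minimiser of the full hull $C$; both are valid.
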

\begin{proof}
For \ref{prop:ave_equals_proj_gen:two}, let $\xi\in \sH$ and let $C$ be the
closed convex hull of $\{ T(\xi) : T\in\mc S \}$.  As $\mc S$ is a semigroup,
it follows that $T(C) \subseteq C$ for each $T\in\mc S$.  If $\eta$ denotes the
$\mc S$-average of $\xi$, then as each $T\in\mc S$ is a contraction,
$\|T(\eta)\| \leq \|\eta\|$ and so by uniqueness, $T(\eta)=\eta$.  By the
assumptions on $\mc S$, it follows that $\eta\in\inv(U)$.  Now let
$\xi=\xi_0+\xi_1\in \inv(U) \oplus \inv(U)^\perp$ be the orthogonal
decomposition of $\xi$.  For $T\in\mc S$, we have $T(\xi_0) = \xi_0$,
and so
\[ \{ T(\xi) : T\in\mc S \} = \{ \xi_0 + T(\xi_1) : T\in\mc S \}. \]
If we denote by $C_0$ the closed convex hull of $\{ T(\xi_1) : T\in\mc S \}$,
then $C = \xi_0 + C_0$ and so $\eta=\xi_0 + \eta_0$ for some $\eta_0\in C_0$.
As $\inv(U)^\perp$ is invariant for $\mc S$, it follows that
$C_0\subseteq\inv(U)^\perp$, and so $\eta_0\in\inv(U)^\perp$.  We hence conclude
that $\eta-\xi_0 = \eta_0 \in \inv(U) \cap \inv(U)^\perp=\{0\}$, and so
$\eta=\xi_0$ as required.
(We remark that actually, by Pythagoras's Theorem, $\eta_0$ is the unique vector of minimal norm in $C_0$.  It follows that $0\in C_0$.)

\ref{prop:ave_equals_proj_gen:one} now follows immediately, as $U$ is
ergodic if and only if $\inv(U) = \{0\}$.
\end{proof}

From the last result we obtain the following quantum analogue of the ergodic theorem in \cite[Section 146]{Riesz_Nagy__FA_book}.

\begin{prop}
\label{prop:conv_to_Inv_U_proj}For a representation $U$ of $\G$
and an averaging semigroup $\mathcal{S}$ for $U$, the orthogonal projection onto $\inv(U)$
belongs to the strong closure of the convex hull of $\mathcal{S}$.
\end{prop}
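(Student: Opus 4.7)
The plan is to reduce the statement, for each finite tuple of test vectors, to an application of \prettyref{prop:ave_equals_proj_gen}\ref{prop:ave_equals_proj_gen:two} applied on a direct sum representation. Fix $\xi_{1},\ldots,\xi_{n}\in\H$ and $\varepsilon>0$; by definition of the strong operator topology it suffices to produce $T\in\co(\mathcal{S})$ with $\|T\xi_{i}-p^{U}\xi_{i}\|<\varepsilon$ for every $i\in\{1,\ldots,n\}$.

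Consider the $n$-fold direct sum representation $U^{\oplus n}$ on $\H^{\oplus n}=\H\oplus\cdots\oplus\H$, together with the diagonal embedding
\[
\mathcal{S}^{(n)}:=\left\{T\oplus\cdots\oplus T : T\in\mathcal{S}\right\}\subseteq\mc B(\H^{\oplus n}).
\]
I would first check that $\mathcal{S}^{(n)}$ is an averaging semigroup for $U^{\oplus n}$. Being the diagonal image of a semigroup of contractions, $\mathcal{S}^{(n)}$ is itself such a semigroup. Since $\inv(U^{\oplus n})=\inv(U)^{\oplus n}$ and $\inv(U)^{\perp,\oplus n}=\bigl(\inv(U)^{\oplus n}\bigr)^{\perp}$, conditions \prettyref{enu:avrg_sg_1} and \prettyref{enu:avrg_sg_2} of the definition follow coordinatewise from the corresponding properties for $\mathcal{S}$; similarly, if $\eta_{1}\oplus\cdots\oplus\eta_{n}$ is fixed by every element of $\mathcal{S}^{(n)}$, then each $\eta_{i}$ is fixed by every $T\in\mathcal{S}$, hence lies in $\inv(U)$ by \prettyref{enu:avrg_sg_3} for $\mathcal{S}$, giving \prettyref{enu:avrg_sg_3} for $\mathcal{S}^{(n)}$.

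Next I apply \prettyref{prop:ave_equals_proj_gen}\ref{prop:ave_equals_proj_gen:two} to the averaging semigroup $\mathcal{S}^{(n)}$ and the vector $\xi:=\xi_{1}\oplus\cdots\oplus\xi_{n}\in\H^{\oplus n}$. The orthogonal projection of $\xi$ onto $\inv(U^{\oplus n})$ equals $p^{U}\xi_{1}\oplus\cdots\oplus p^{U}\xi_{n}$, and the proposition says that this vector lies in the norm-closed convex hull of
\[
\{T\xi_{1}\oplus\cdots\oplus T\xi_{n} : T\in\mathcal{S}\}.
\]
Thus there exists a convex combination $T=\sum_{j}\lambda_{j}T_{j}$ with $T_{j}\in\mathcal{S}$ such that
\[
\bigl\|(T\oplus\cdots\oplus T)\xi-(p^{U}\xi_{1}\oplus\cdots\oplus p^{U}\xi_{n})\bigr\|<\varepsilon.
\]
Because the norm on $\H^{\oplus n}$ dominates each coordinate norm, this forces $\|T\xi_{i}-p^{U}\xi_{i}\|<\varepsilon$ for every $i$, and clearly $T\in\co(\mathcal{S})$. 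Since $\xi_{1},\ldots,\xi_{n}$ and $\varepsilon$ were arbitrary, $p^{U}$ belongs to the strong operator closure of $\co(\mathcal{S})$. The only non-routine step is the verification that $\mathcal{S}^{(n)}$ remains an averaging semigroup for $U^{\oplus n}$; this is immediate from the coordinatewise structure of $\inv(U^{\oplus n})$, so no serious obstacle arises.
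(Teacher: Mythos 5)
Your argument is correct, and it takes a genuinely different route from the paper's. The paper first replaces $\mathcal{S}$ by its convex hull (which is again an averaging semigroup), endows it with the preorder $T_{1}\preceq T_{2}$ iff $T_{2}=TT_{1}$ for some $T\in\mathcal{S}$, and then handles the test vectors $\z_{1},\ldots,\z_{n}\in\inv(U)^{\perp}$ one at a time: having found $T_{1}$ with $\|T_{1}\z_{1}\|<\e$, it uses the fact that $T_{1}\z_{2}$ is again in $\inv(U)^{\perp}$ to find $T_{2}'$ shrinking it, sets $T_{2}=T_{2}'T_{1}$, and observes that contractivity preserves the smallness already achieved for $\z_{1}$; an induction then produces a single element working for all $n$ vectors. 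Your amplification trick — passing to $U^{\oplus n}$ with the diagonal semigroup $\mathcal{S}^{(n)}$ and invoking \prettyref{prop:ave_equals_proj_gen}\ref{prop:ave_equals_proj_gen:two} once on the vector $\xi_{1}\oplus\cdots\oplus\xi_{n}$ — replaces that iteration by a single application of the already-proved single-vector statement, at the modest cost of checking that $\inv(U^{\oplus n})=\inv(U)^{\oplus n}$ and that the averaging-semigroup axioms pass to the diagonal image (both of which you verify correctly, and both of which are immediate). The two proofs deliver the same conclusion; yours is arguably shorter and avoids introducing the preorder, while the paper's stays on the original Hilbert space and makes visible the dynamical mechanism (composing elements of the semigroup) that drives the approximation. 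One cosmetic remark: since \prettyref{prop:ave_equals_proj_gen}\ref{prop:ave_equals_proj_gen:two} characterises the projection of $\xi$ as the \emph{minimal-norm} element of the closed convex hull, it in particular asserts membership in that hull, which is all you use — so no extra argument is needed there.
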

\begin{proof}
The convex hull of $\mathcal{S}$ is also an averaging semigroup for
$U$. Thus, we can and do assume the $\mathcal{S}$ is convex. Endow
$\mathcal{S}$ with a preorder `$\preceq$' by saying that $T_{1}\preceq T_{2}$
if $T_{1}=T_{2}$ or there exists $T\in\mathcal{S}$ such that $T_{2}=TT_{1}$.

Let $\z\in\inv(U)^{\perp}$ and $\e>0$. By \prettyref{prop:ave_equals_proj_gen} there
exists $T_{0}\in\mathcal{S}$ such that $\left\Vert T_{0}\z\right\Vert <\e$.
Since $\mathcal{S}$ consists of contractions, it follows that $\left\Vert T\z\right\Vert <\e$
for all $T_{0}\preceq T\in\mathcal{S}$.

Fix $\z_{1},\ldots,\z_{n}\in\inv(U)^{\perp}$ and $\e>0$. By the
foregoing, there is $T_{1}\in\mathcal{S}$ such that $\left\Vert T\z_{1}\right\Vert <\e$
for all $T_{1}\preceq T\in\mathcal{S}$. Since $T_{1}\z_{2}\in\inv(U)^{\perp}$
there is, again by the foregoing, $T_{2}'\in\mathcal{S}$ such that
$\left\Vert TT_{1}\z_{2}\right\Vert <\e$ for all $T_{2}'\preceq T\in\mathcal{S}$.
Setting $T_{2}:=T_{2}'T_{1}$, we have $\left\Vert T\z_{j}\right\Vert <\e$
for all $j\in\left\{ 1,2\right\} $ and $T_{2}\preceq T\in\mathcal{S}$.
Proceeding by induction, there is $T_{n}\in\mathcal{S}$ such that
$\left\Vert T\z_{j}\right\Vert <\e$ for all $j\in\left\{ 1,\ldots,n\right\} $
and $T_{n}\preceq T\in\mathcal{S}$, and in particular for $T=T_{n}$.
This proves the assertion because $\mathcal{S}$ leaves $\inv(U)$
pointwise invariant.
\end{proof}

As another consequence of \prettyref{prop:ave_equals_proj_gen}, we generalise a classical result of Godement \cite[Section 23]{Godement__positive_def_func}
about the existence of an invariant mean on the Fourier--Stieltjes
algebra of an arbitrary (not necessarily amenable) locally compact
group. This should be compared with Section 4, and in particular Theorem
4.4, of \cite{Das_Daws__quantum_Eberlein}.

Recall that the module
actions of the Banach algebra $\Lone{\G}$ on its dual $\Linfty{\G}$
are given by
\[
\om\cdot a=(\i\tensor\om)(\Delta(a)),\quad a\cdot\om=(\om\tensor\i)(\Delta(a))\qquad(a\in\Linfty{\G},\om\in\Lone{\G}).
\]
Write $\mathcal{E}$ for the set of all $a\in\Linfty{\G}$ such that
each of the sets
\[
\overline{\left\{ \om\cdot a:\om\in\Lone{\G}\text{ is a state}\right\} }^{\left\Vert \cdot\right\Vert },\quad\overline{\left\{ a\cdot\om:\om\in\Lone{\G}\text{ is a state}\right\} }^{\left\Vert \cdot\right\Vert }
\]
intersects $\CC\one$.

For $a\in\mathcal{E}$, the intersections of the above sets with $\CC\one$
are equal, and are a singleton. Indeed, if $\left(\om_{n}^{1}\right)_{n=1}^{\infty}$,
$\left(\om_{n}^{2}\right)_{n=1}^{\infty}$ are states in $\Lone{\G}$
and $\om_{n}^{1}\cdot a\xrightarrow{n\to\infty}\lambda_{1}\one$,
$a\cdot\om_{n}^{2}\xrightarrow{n\to\infty}\lambda_{2}\one$ for some
$\lambda_{1},\lambda_{2}\in\CC$, then
\[
\lambda_{1}\one\xleftarrow{n\to\infty}(\om_{n}^{1}\cdot a)\cdot\om_{n}^{2}=\om_{n}^{1}\cdot(a\cdot\om_{n}^{2})\xrightarrow{n\to\infty}\lambda_{2}\one,
\]
so that $\lambda_{1}=\lambda_{2}$. Let us denote this common scalar
by $M(a)$. It follows readily that $\mathcal{E}$ is norm closed
and selfadjoint, and that for every $a\in\mathcal{E}$ and $\lambda,\mu\in\CC$,
we have $\lambda a+\mu\one\in\mathcal{E}$, $M(\lambda a+\mu\one)=\lambda M(a)+\mu$,
$M(a^{*})=\overline{M(a)}$, $\left|M(a)\right|\leq\left\Vert a\right\Vert $,
and if $\rho\in\Lone{\G}$ is such that $\rho\cdot a\in\mathcal{E}$
(respectively, $a\cdot\rho\in\mathcal{E}$), then $M(\rho\cdot a)=\rho(\one)M(a)$
(respectively, $M(a\cdot\rho)=\rho(\one)M(a)$).

Denote by $B(\G)$ the Fourier--Stieltjes algebra of $\G$,
namely the subalgebra of $\M(\Cz{\G})$ consisting of the coefficients
of all representations of $\G$:
\[
\begin{split}B(\G) & :=\left\{ (\i\tensor\om)(U):U\text{\text{ is a representation of }\ensuremath{\G}\text{ on a Hilbert space }\ensuremath{\H}}\text{ and }\om\in\mathcal{B}(\H)_{*}\right\} \\
 & =\left\{ (\i\tensor\om_{\z,\eta})(U):U\text{\text{ is a representation of }\ensuremath{\G}\text{ on a Hilbert space }\ensuremath{\H}}\text{ and }\z,\eta\in\H\right\} .
\end{split}
\]
Write $B(\G)^{*}$ for $\left\{ a^{*}:a\in B(\G)\right\} $. We were informed that a result essentially  equivalent to the following proposition had been recently obtained by B.~Das.
\begin{prop}
\label{prop:B_G__E}The subalgebras $\overline{B(\G)}^{\left\Vert \cdot\right\Vert }$
and $\overline{B(\G)^{*}}^{\left\Vert \cdot\right\Vert }$ of $\M(\Cz{\G})$
are contained in $\mathcal{E}$, and $M$ is linear on each of them.
Thus, $M$ restricts to an invariant mean on both $\overline{B(\G)}^{\left\Vert \cdot\right\Vert }$
and $\overline{B(\G)^{*}}^{\left\Vert \cdot\right\Vert }$.
\end{prop}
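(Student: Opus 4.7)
The plan is to use the averaging-semigroup machinery from Lemmas~\prettyref{lem:avg_semigp_one} and \prettyref{lem:avg_semigp_two} together with \prettyref{prop:ave_equals_proj_gen} to identify $M$ on an arbitrary coefficient. I would start by fixing a representation $U$ of $\G$ on a Hilbert space $\H$ and a coefficient $a = (\i\tensor\omega_{\zeta,\eta})(U) \in B(\G)$, and setting $T_\omega := (\omega\tensor\i)(U) \in \mathcal{B}(\H)$ for $\omega \in \Lone{\G}$. A direct unfolding of the leg notation, using the representation equation $(\Delta\tensor\i)(U) = U_{13}U_{23}$, should yield
\[
\omega \cdot a = (\i\tensor\omega_{\zeta,\,T_\omega\eta})(U)
\qquad\text{and}\qquad
a\cdot\omega = (\i\tensor\omega_{T_\omega^*\zeta,\,\eta})(U).
\]
For states $\omega$ one has $T_\omega^* = (\omega\tensor\i)(U^*)$, so $\{T_\omega : \omega \text{ a state}\}$ is precisely the averaging semigroup of \prettyref{lem:avg_semigp_one} for $U$, while $\{T_\omega^* : \omega \text{ a state}\}$ is the one of \prettyref{lem:avg_semigp_two}.

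Next I would apply \prettyref{prop:ave_equals_proj_gen}, which places $p^U\eta$ in the norm-closed convex hull of $\{T_\omega\eta : \omega \text{ a state}\}$ and, dually, $p^U\zeta$ in the norm-closed convex hull of $\{T_\omega^*\zeta : \omega \text{ a state}\}$. Hence for every $\eps > 0$ there exists a state $\omega' \in \Lone{\G}$ (a finite convex combination of states, itself a state) with $\|T_{\omega'}\eta - p^U\eta\| < \eps$, and then
\[
\|\omega'\cdot a - (\i\tensor\omega_{\zeta,\,p^U\eta})(U)\| \leq \|\zeta\|\,\eps.
\]
Since $p^U\eta \in \inv(U)$, a short verification using \prettyref{rem:inv_vec} gives $(\i\tensor\omega_{\zeta,\,p^U\eta})(U) = \langle\zeta, p^U\eta\rangle\one$; the symmetric argument approximates $a\cdot\omega$ in norm by the same scalar $\langle\zeta, p^U\eta\rangle\one$. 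This will prove that $a \in \mathcal{E}$ and identify $M(a) = \langle\zeta, p^U\eta\rangle$.

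Linearity of $M$ on $B(\G)$ should then come from a direct-sum trick: given $a_i = (\i\tensor\omega_{\zeta_i,\eta_i})(U_i)$ for $i=1,2$, the direct sum $U_1 \oplus U_2$ on $\H_1 \oplus \H_2$ satisfies $a_1 + a_2 = (\i\tensor\omega_{\zeta_1\oplus\zeta_2,\,\eta_1\oplus\eta_2})(U_1 \oplus U_2)$ with $p^{U_1\oplus U_2} = p^{U_1}\oplus p^{U_2}$, so the inner-product formula for $M$ yields $M(a_1+a_2) = M(a_1) + M(a_2)$; scalar homogeneity is already among the general properties of $M$ listed before the proposition. Because $|M|\leq\|\cdot\|$ and $\mathcal{E}$ is norm closed, $M$ extends linearly and continuously to $\overline{B(\G)}^{\|\cdot\|}$, and the conclusion for $\overline{B(\G)^*}^{\|\cdot\|}$ is then automatic from selfadjointness of $\mathcal{E}$ together with the identity $M(a^*) = \overline{M(a)}$. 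Invariance of the resulting mean is the already-noted identity $M(\rho\cdot a) = \rho(\one)M(a)$. The only genuinely delicate step is the initial leg-notation computation giving the two formulas for $\omega\cdot a$ and $a\cdot\omega$; once these are in hand, the argument is a clean application of the convex-averaging results already proved in the paper.
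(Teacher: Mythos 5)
Your proposal is correct, and for the core step it follows the same route as the paper: you compute $\om\cdot a$ and $a\cdot\om$ as coefficients of $U$ with one vector moved by $(\om\tensor\i)(U)$ resp.\ $(\overline{\om}\tensor\i)(U^*)$, and then invoke Lemmas~\prettyref{lem:avg_semigp_one}, \prettyref{lem:avg_semigp_two} and \prettyref{prop:ave_equals_proj_gen} to push that vector onto $p^U\eta$ (resp.\ $p^U\zeta$) by convex combinations of states, which yields $a\in\mathcal{E}$ together with the explicit value $M(a)=\la\zeta,p^U\eta\ra$. Where you genuinely diverge is additivity: the paper does not record the formula $M(a)=\la\zeta,p^U\eta\ra$ and instead proves $M(a_1+a_2)=M(a_1)+M(a_2)$ by an $\varepsilon$-argument, choosing states $\rho^1,\rho^2$ with $\rho^1\cdot a_1$ and $a_2\cdot\rho^2$ close to $M(a_1)\one$ and $M(a_2)\one$ and estimating $M\bigl(\rho^1\cdot(a_1+a_2)\cdot\rho^2\bigr)$. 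Your direct-sum trick — realising $a_1+a_2$ as a coefficient of $U_1\oplus U_2$ and using $p^{U_1\oplus U_2}=p^{U_1}\oplus p^{U_2}$ — is a clean alternative; it is legitimate because $M$ is already known (from the discussion preceding the proposition) to be well defined on $\mathcal{E}$ independently of how an element is realised as a coefficient, so the inner-product formula computed from any one realisation must agree with the intrinsic value. The remaining points (norm continuity from $|M|\leq\|\cdot\|$ and closedness of $\mathcal{E}$, passage to $\overline{B(\G)^*}^{\|\cdot\|}$ via $M(a^*)=\overline{M(a)}$, and invariance from $M(\rho\cdot a)=\rho(\one)M(a)$ once one knows $\rho\cdot a, a\cdot\rho\in B(\G)$) match the paper. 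One small benefit of your variant is that the explicit formula $M\bigl((\i\tensor\om_{\zeta,\eta})(U)\bigr)=\la\zeta,p^U\eta\ra$ is exactly what is exploited later in \prettyref{prop:act_fixed_point_cond_exp}, so making it explicit here is harmless and arguably clarifying.
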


\begin{proof}
Let $U$ be a representation of $\G$ on a Hilbert space $\H$. Fix
$\z,\eta\in\H$, and consider $a:=(\i\tensor\om_{\eta,\z})(U)$. For
every $\rho\in\Lone{\G}$,
\[
\begin{split}\rho\cdot a & =(\i\tensor\rho)(\Delta(a))=(\i\tensor\rho)(\i\tensor\i\tensor\om_{\eta,\z})(U_{13}U_{23})\\
 & =(\i\tensor\om_{\eta,(\rho\tensor\i)(U)\z})(U)
\end{split}
\]
and
\[
a\cdot\rho=(\rho\tensor\i)(\i\tensor\i\tensor\om_{\eta,\z})(U_{13}U_{23})=(\i\tensor\om_{(\overline{\rho}\tensor\i)(U^{*})\eta,\z})(U).
\]
Denote by $p$ the orthogonal projection onto $\inv(U)$. By Lemmas
\ref{lem:avg_semigp_one}, \ref{lem:avg_semigp_two} and \prettyref{prop:ave_equals_proj_gen}, there are sequences $\left(\rho_{n}^{1}\right)_{n=1}^{\infty}$,
$\left(\rho_{n}^{2}\right)_{n=1}^{\infty}$ of states in $\Lone{\G}$
such that $(\rho_{n}^{1}\tensor\i)(U)\z\xrightarrow{n\to\infty}p\z$
and $(\rho_{n}^{2}\tensor\i)(U^{*})\eta\xrightarrow{n\to\infty}p\eta$.
Consequently,
\[
\rho_{n}^{1}\cdot a\xrightarrow{n\to\infty}(\i\tensor\om_{\eta,p\z})(U)=\left\langle \eta,p\z\right\rangle \one
\]
and
\[
a\cdot\rho_{n}^{2}\xrightarrow{n\to\infty}(\i\tensor\om_{p\eta,\z})(U)=\left\langle p\eta,\z\right\rangle \one
\]
in norm. This implies that $a\in\mathcal{E}$. Hence, $B(\G)\subseteq\mathcal{E}$.
Furthermore, the above calculation shows that $\rho\cdot a,a\cdot\rho\in B(\G)\subseteq\mathcal{E}$,
and thus $M(\rho\cdot a)=M(a)=M(a\cdot\rho)$, for every $a\in B(\G)$
and every state $\rho\in\Lone{\G}$.

Let $a_{1},a_{2}\in B(\G)$. Given $\e>0$, find states $\rho^{1},\rho^{2}\in\Lone{\G}$
such that $\left\Vert \rho^{1}\cdot a_{1}-M(a_{1})\one\right\Vert \leq\e$
and $\left\Vert a_{2}\cdot\rho^{2}-M(a_{2})\one\right\Vert \leq\e$.
Then
\[
M(a_{1}+a_{2})=M(\rho^{1}\cdot(a_{1}+a_{2})\cdot\rho^{2}),
\]
so that
\[
M(a_{1}+a_{2})-(M(a_{1})+M(a_{2}))=M\left((\rho^{1}\cdot a_{1}-M(a_{1})\one)\cdot\rho^{2}+\rho^{1}\cdot(a_{2}\cdot\rho^{2}-M(a_{2})\one)\right).
\]
But the norm of the element to which $M$ is applied on the right-hand
side does not exceed $2\e$. As a result, $\left|M(a_{1}+a_{2})-(M(a_{1})+M(a_{2}))\right|\leq2\e$.
Therefore, $M$ is additive, hence linear, on $B(\G)$. This entails
that $M$ is norm-continuous, and so linear, on $\overline{B(\G)}^{\left\Vert \cdot\right\Vert }$,
thus also on $\overline{B(\G)^{*}}^{\left\Vert \cdot\right\Vert }$.
\end{proof}

The set of all representations of a second countable locally compact quantum group $\QG$ on a fixed infinite-dimensional separable Hilbert space $\Hil$, denoted $\RepGH$, has a natural Polish topology.
This is the topology it inherits as a closed subset from the unitary group of $\M(\Cz{\G} \tensor \mc K(\H))$ equipped with the strict topology, which itself is a Polish space since $\Cz{\G} \tensor \mc K(\H)$ is a separable $\cst$-algebra. For more details, see \cite[Section 5]{Daws_Fima_Skalski_White_Haagerup_LCQG}.

Once we fix a unitary $\mathsf{u}:\sH \rightarrow
\sH\otimes\sH$, we can equip $\RepGH$ with the product
\[ U \boxtimes V = (\one\otimes \mathsf{u}^*)(U\tp V)(\one\otimes \mathsf{u}). \]

The lemmas below will be needed in Section \ref{sec:TT11}.

\begin{lem}\label{lem:tp_cts}
The maps $\RepGH \times \RepGH \to \RepGH$, $(U,V)\mapsto U \boxtimes V$ and
$\RepGH \to \RepGH$, $U \mapsto U^c$ are continuous (here $U^c$ is formed using some fixed anti-unitary from $\H$ onto itself).
Hence, so is the map $\RepGH \to \RepGH$, $U\mapsto U \boxtimes U^c$.
\end{lem}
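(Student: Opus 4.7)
The plan is to prove both continuity statements by working inside the multiplier $\cst$-algebra $\M(\Cz{\G}\otimes\mc K(\H))$ with the strict topology, of which $\RepGH$ is a topological subspace. The two key facts I would rely on are: (i) multiplication is jointly strictly continuous on norm-bounded subsets of a multiplier algebra, and $\RepGH$ consists of unitaries hence is norm-bounded; and (ii) a bijective, norm-continuous, $*$-preserving linear (anti-)homomorphism of a $\cst$-algebra extends uniquely to a strictly continuous bijection of the corresponding multiplier algebra.

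For the tensor product, write $U\boxtimes V=(\one\otimes\mathsf{u}^*)(U_{12}V_{13})(\one\otimes\mathsf{u})$. Conjugation by the fixed unitary $\one\otimes\mathsf{u}$ is strictly continuous, so it is enough to handle the map $(U,V)\mapsto U_{12}V_{13}$ taking values in $\M(\Cz{\G}\otimes\mc K(\H)\otimes\mc K(\H))$. First I would verify that the leg-insertion maps $X\mapsto X_{12}$ and $X\mapsto X_{13}$ are strictly continuous: for simple tensors $a\otimes b\otimes c$ one has $X_{12}(a\otimes b\otimes c)=(X(a\otimes b))\otimes c$, and similarly, up to a flip, for the $_{13}$-leg; by density and boundedness this extends to all of the spatial tensor product. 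Given nets $U_i\to U$ and $V_i\to V$ strictly, the inserted legs $(U_i)_{12}\to U_{12}$ and $(V_i)_{13}\to V_{13}$ converge strictly in the three-fold multiplier algebra, and joint strict continuity of multiplication on bounded sets yields $(U_i)_{12}(V_i)_{13}\to U_{12}V_{13}$ strictly, hence $U_i\boxtimes V_i\to U\boxtimes V$ in $\RepGH$.

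For the contragredient, fix an involutive anti-unitary $\J:\H\to\H$ and set $j(x):=\J x^*\J$, a $*$-anti-automorphism of $\mc K(\H)$. The unitary antipode $R$ restricts to a $*$-anti-automorphism of $\Cz{\G}$, and so $\alpha:=R\otimes j$ extends to a norm-continuous bijective $*$-anti-automorphism of $\Cz{\G}\otimes\mc K(\H)$, with $U^c=\alpha(U)$ at the level of multipliers. To promote this to strict continuity, let $U_i\to U$ strictly (the net being bounded by unitarity) and fix $a\in\Cz{\G}\otimes\mc K(\H)$. Writing $a=\alpha(b)$ via bijectivity of $\alpha$ on the $\cst$-algebra and using anti-multiplicativity gives
\[
\alpha(U_i)\,a=\alpha(U_i)\alpha(b)=\alpha(bU_i),\qquad a\,\alpha(U_i)=\alpha(U_ib);
\]
since $bU_i\to bU$ and $U_ib\to Ub$ in norm by strict convergence, and $\alpha$ is norm-continuous, we obtain $\alpha(U_i)a\to\alpha(U)a$ and $a\alpha(U_i)\to a\alpha(U)$ in norm, i.e.\ $U_i^c\to U^c$ strictly. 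Finally, the map $U\mapsto U\boxtimes U^c$ is the composition of the continuous diagonal-type map $U\mapsto(U,U^c)$ with $\boxtimes$, so continuity follows.

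The only mild subtlety is the strict continuity of leg-insertion into $\M(\Cz{\G}\otimes\mc K(\H)\otimes\mc K(\H))$; however, since we are working with the spatial tensor product and the identifications $\mc K(\H)\otimes\mc K(\H)\cong\mc K(\H\otimes\H)$, this reduces to a direct computation on elementary tensors together with a boundedness-plus-density argument, so no real obstacle arises.
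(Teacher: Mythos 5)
Your argument is correct and follows essentially the same route as the paper: the paper proves continuity of $\boxtimes$ by exactly the telescoping norm estimate that underlies your appeal to strict continuity of leg insertion plus joint strict continuity of multiplication on bounded sets, and it dismisses the contragredient map in one line as immediate from the definitions. Your explicit treatment of $U\mapsto U^c$ via the strictly continuous extension of the $*$-anti-automorphism $R\otimes j$ is a valid (and more detailed) justification of that one-line claim.
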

\begin{proof}
The second map is continuous by the definitions of $U^c$ and of the strict topology.

Notice that if $U_n\stackrel{n \in \bn}{\longrightarrow} U$ in $\RepGH$, that is, in the strict topology on $\M(\C_0(\G)\otimes\mc K(\sH))$, then $(U_n)_{12} - U_{12} \to 0$ in the strict topology on $\M(\Cz{\G} \tensor \mc K(\H \tensor \H))$ as $(U_n)_{n \in \bn}$ is a sequence of unitaries, so that in particular it is bounded.
Let $U_n\stackrel{n \in \bn}{\longrightarrow} U$ and $V_n\stackrel{n \in \bn}{\longrightarrow} V$ in $\RepGH$. To prove that $U_n\boxtimes V_n\stackrel{n \in \bn}{\longrightarrow}
U\boxtimes V$, we should show that
\[ \big\| (U_n\tp V_n)A - (U\tp V)A \big\| \stackrel{n \in \bn}{\longrightarrow} 0 \]
for all $A \in \Cz{\G} \tensor \mc K(\H \tensor \H)$. However, this follows from the inequality
\begin{align*}
\big\| (U_n\tp V_n)A & -
(U\tp V)A \big\| \\
&\leq \big\| (U_n)_{12} (V_n)_{13}A -
(U_n)_{12} V_{13}A \big\| + \big\| (U_n)_{12} V_{13}A - U_{12} V_{13}A \big\| \\
&= \big\| \big( (V_n)_{13} - V_{13}\big)
   A \big\|
+ \big\| \big((U_n)_{12}-U_{12}\big)
   V_{13}A \big\|,
\end{align*}
where both summands tend to zero as explained above.
\end{proof}

\begin{lem}\label{lem:ergodic_gdelta}
The collection of all ergodic representations is a $G_\delta$ subset of $\RepGH$. If the scaling group of $\QG$ is trivial, so is the collection of all weakly mixing representations.
\end{lem}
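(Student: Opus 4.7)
For the first assertion my plan is to realise the collection of ergodic representations as the joint zero set of a countable family of non-negative upper semi-continuous functions on $\RepGH$. The starting point, for each $\xi \in \Hil$, will be the function $f_\xi(U) := \|p^U \xi\|^2$, which I analyse via the averaging semigroup $\mc S_U := \{(\omega \tensor \id)(U) : \omega \in \Lone{\QG} \text{ a state}\}$ from \prettyref{lem:avg_semigp_one}. Since the set $\{T\xi : T \in \mc S_U\}$ is already convex in $\Hil$ (the states of $\Lone{\QG}$ form a convex set and $\omega \mapsto (\omega \tensor \id)(U)\xi$ is affine), \prettyref{prop:ave_equals_proj_gen} yields the variational formula
\[
f_\xi(U) = \inf\bigl\{\|(\omega \tensor \id)(U)\xi\|^2 : \omega \in \Lone{\QG} \text{ a state}\bigr\}.
\]
Second countability of $\QG$ makes $\Lone{\QG}$ norm-separable; since $\|(\omega \tensor \id)(U)\| \leq \|\omega\|$, I then restrict the infimum to a countable sequence $(\omega_n)_{n \in \bn}$ dense among the states.

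Next I verify that each $g_n(U) := \|(\omega_n \tensor \id)(U)\xi\|^2$ is continuous on $\RepGH$. The slice map $\omega_n \tensor \id$ is strictly continuous on bounded subsets of $\M(\Cz{\G} \tensor \mc K(\Hil))$ with values in $\M(\mc K(\Hil)) = \mc B(\Hil)$, and on bounded sets the strict and $*$-strong operator topologies on $\mc B(\Hil)$ coincide; evaluating at $\xi$ and squaring the norm produces the required continuity. Thus $f_\xi = \inf_n g_n$ is upper semi-continuous. Fixing a countable norm-dense sequence $(\xi_k)_{k \in \bn}$ in $\Hil$, ergodicity of $U$ amounts to $p^U = 0$, equivalently to $f_{\xi_k}(U) = 0$ for every $k$. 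Since each $f_{\xi_k}$ is non-negative and upper semi-continuous,
\[
\{U \in \RepGH : f_{\xi_k}(U) = 0\} = \bigcap_{m \in \bn} \{U \in \RepGH : f_{\xi_k}(U) < 1/m\}
\]
is a $G_\delta$, and intersecting over $k$ exhibits the ergodic representations as $G_\delta$.

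For the second assertion I invoke the equivalence, available under triviality of the scaling group (see the discussion preceding \prettyref{def:wm}), between weak mixing of $U$ and ergodicity of $U \tp U^c$; through the fixed unitary $\mathsf{u}$ this is the same as ergodicity of $U \boxtimes U^c$. Since \prettyref{lem:tp_cts} provides continuity of $U \mapsto U \boxtimes U^c$ on $\RepGH$, the weakly mixing representations form the preimage of the (just established) $G_\delta$ set of ergodic representations under a continuous map, and are therefore $G_\delta$ themselves.

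The step I expect to carry the real content is the identification $\|p^U\xi\|^2 = \inf_\omega \|(\omega \tensor \id)(U)\xi\|^2$: it converts the existential condition ``there exists a non-zero invariant vector'' into a variational quantity, after which slice-map continuity and the general fact that an infimum of continuous functions is upper semi-continuous close the argument in a formal manner.
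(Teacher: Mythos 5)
Your proof is correct and follows essentially the same route as the paper's: both rest on \prettyref{lem:avg_semigp_one} together with \prettyref{prop:ave_equals_proj_gen} to express ergodicity as the vanishing of $\inf_\omega\|(\omega\otimes\id)(U)\xi_k\|$ over a dense sequence of vectors, use strict continuity of slice maps to get openness of the relevant sublevel sets, and reduce the weak mixing case to the ergodic one via \prettyref{lem:tp_cts}. Your repackaging through upper semi-continuous functions (and the reduction to countably many states, which is not actually needed since an arbitrary infimum of continuous functions is still upper semi-continuous) is only a cosmetic variation.
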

\begin{proof}
By Lemma \ref{lem:avg_semigp_one} and Proposition \ref{prop:ave_equals_proj_gen} if $\mathfrak{S}$ denotes the collection of states
in $\Lone{\QG}$, then $U$ is ergodic if and only if for every non-zero
$\xi\in \sH$ the closure of $\{ (\omega\otimes\id)(U)\xi : \omega\in \mathfrak{S} \}$
contains $0$.  Let $(\xi_n)_{n \in \bn}$ be a dense sequence in the unit ball of $\sH$.
As each operator $(\omega\otimes\id)(U)$ is a contraction, we see that $U$
is ergodic if and only if
\[ U \in \bigcap_{n,m\in \bn} \bigcup_{\omega\in \mathfrak{S}} \big\{ V\in\RepGH :
\| (\omega\otimes\id)(V)\xi_n \| < 1/m \big \}. \]
So the proof of the assertion on ergodic representations will be complete if we show that for each $n,m \in \bn$ and $\om \in \mathfrak{S}$ the set $\{ V\in\RepGH :
\| (\omega\otimes\id)(V)\xi_n \| < 1/m \}$ is open, equivalently, if
$\{ V\in\RepGH : \| (\omega\otimes\id)(V)\xi_n \| \geq 1/m \}$ is closed.
However, if $(V_n)_{n \in \bn}$ is a sequence in this set converging strictly to $V$, then,
as slice maps are strictly continuous, $(\omega\otimes\id)(V_n)\rightarrow
(\omega\otimes\id)(V)$ strictly in $\mc B(\sH)=\M(\mc K(\sH))$, and hence
converges strongly, showing the result.
Combining the foregoing with the discussion after Definition \ref{def:wm} and with \prettyref{lem:tp_cts} implies the assertion on weakly mixing representations.
\end{proof}

\begin{lem} \label{lem:dense_orbit}
There exists $U \in \RepGH$ whose equivalence class in $\RepGH$, namely $\{(\one \tensor u^*)U(\one \tensor u) : u\in\B(\Hil)\text{ is unitary}\}$, is dense in $\RepGH$.
\end{lem}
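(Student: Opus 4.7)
The plan is to build $U$ as a universal-type representation containing a countable dense family as subrepresentations, in the spirit of the classical Halmos/Kerr--Pichot argument. Since $\RepGH$ is a Polish space it is separable; fix a countable dense sequence $(V_n)_{n\in\bn}\subseteq\RepGH$. Identify $\H$ with a direct sum $\bigoplus_{n,k\in\bn}\H_{n,k}$ of countably many copies of itself via some unitary, let $\iota_{n,k}:\H\to\H$ denote the isometric embedding with image $\H_{n,k}$, and define $U\in\RepGH$ to be the block-diagonal representation that acts as $V_n$ on $\H_{n,k}$, so each $V_n$ sits inside $U$ with infinite multiplicity.

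Given $V\in\RepGH$ and a basic strict-topology neighborhood of $V$, one may assume the neighborhood is determined by $\e>0$ and finitely many elementary tensors $a_i\tensor\xi_i\in\Cz{\G}\tensor\H$; recall that elementary tensors are dense in the Hilbert $\Cz{\G}$-module $\Cz{\G}\tensor\H$ and strict convergence of bounded nets in $\M(\Cz{\G}\tensor\mc K(\H))$ amounts to $*$-strong convergence on this module. By density choose $n$ with $\|(V_n-V)(a_i\tensor\xi_i)\|$ and $\|(V_n-V)^*(a_i\tensor\xi_i)\|$ smaller than $\e/2$ for every $i$. Approximate each $V_n(a_i\tensor\xi_i)$ and each $V_n^*(a_i\tensor\xi_i)$ within $\e/4$ by finite sums of elementary tensors $\sum_j b_{ij}\tensor\eta_{ij}$, let $F\subseteq\H$ be the finite-dimensional subspace spanned by all the $\xi_i$'s together with all the $\eta_{ij}$'s appearing, and extend the isometry $\iota_{n,1}|_F$ to a unitary $v:\H\to\H$ (possible since $\H$ is infinite-dimensional).

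For this $v$, since $U$ acts as $V_n$ on $\H_{n,1}$, a direct computation gives
\begin{equation*}
(\one\tensor v^*)U(\one\tensor v)(a_i\tensor\xi_i)=(\id\tensor v^*\iota_{n,1})V_n(a_i\tensor\xi_i);
\end{equation*}
because the contraction $v^*\iota_{n,1}:\H\to\H$ restricts to the identity on $F$, and the approximating finite sum lies in $\Cz{\G}\tensor F$, the right-hand side is within $\e/2$ of $V_n(a_i\tensor\xi_i)$, hence within $\e$ of $V(a_i\tensor\xi_i)$. The analogous calculation with $U^*$ handles the adjoint direction, so $(\one\tensor v^*)U(\one\tensor v)$ lies in the specified neighborhood of $V$. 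The main technical point is arranging a single unitary $v$ whose restriction to the relevant finite-dimensional subspace equals $\iota_{n,1}|_F$, which is where the infinite-dimensionality of $\H$ is used; enlarging $F$ to absorb the vectors from both the $U$ and the $U^*$ approximations makes the argument symmetric in the two directions.
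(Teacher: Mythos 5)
Your proof is correct and is essentially the paper's own argument: form the direct sum of a countable dense sequence $(V_n)$, and conjugate by a unitary that extends the embedding of the $n$-th summand on a finite-dimensional subspace chosen large enough to absorb both the test vectors and the elementary-tensor approximants of $V_n(a_i\tensor\xi_i)$ and $V_n^*(a_i\tensor\xi_i)$. The only cosmetic differences are that you phrase the strict estimates in the Hilbert-module picture $\Cz{\G}\tensor\Hil$ rather than via finite-rank operators in $\Cz{\G}\tensor\mathcal{K}(\Hil)$, and that you build in infinite multiplicity, which is harmless but not needed.
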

\begin{proof}
Choose a sequence $(U_n)_{n=1}^\infty$ that is dense in $\RepGH$. One can view $\bigoplus_{n=1}^{\infty} U_n$ as an element $U$ in $\RepGH$ by fixing a unitary $v : \Hil \to \Hil \tensor \ltwo{\N}$ and letting $U := (\one \tensor v^*) \bigoplus_{n=1}^{\infty} U_n (\one \tensor v)$, where the direct sum is calculated according to an orthonormal basis $(\eta_i)_{i=1}^\infty$ of $\ltwo{\N}$. Let $V \in \RepGH$, $\e > 0$, $a_1 ,\ldots, a_m \in \Cz{\G}$ and $k_1 ,\ldots, k_m \in \mc K(\Hil)$ be such that the operators $k_i$ are of finite rank. By definition, there is $n \in \N$ such that $\|(U_n - V)(\sum_{i=1}^m a_i \tensor k_i)\| < \e / 3$.
Since $U_{n}(\sum_{i=1}^ma_{i}\tensor k_{i})\in\Cz{\G}\tensor\mathcal{K}(\Hil)$,
we can find finite collections $\left(b_{j}\right)_{j}$ in $\Cz{\G}$
and $\left(k_{j}'\right)_{j}$ of finite rank operators in $\mathcal{K}(\Hil)$
such that $\left\Vert U_{n}(\sum_{i}a_{i}\tensor k_{i})-\sum_{j}b_{j}\tensor k_{j}'\right\Vert <\e/3$.
Write $\Hil_{1}$ for the (finite-dimensional) linear span of the
ranges of all operators $k_{j}$ and $k_{j}'$. Let $u$ be a unitary
on $\Hil$ such that $u\z=v^{*}(\z\tensor\eta_{n})$ for all $\z\in\Hil_{1}$.
Then
\[
\Bigl\Vert\left[(\one\tensor u^{*})U(\one\tensor u)-U_{n}\right](\sum_{i=1}^m a_{i}\tensor k_{i})\Bigr\Vert<2\e/3,
\]
and so
\[
\Bigl\Vert\left[(\one\tensor u^{*})U(\one\tensor u)-V\right](\sum_{i=1}^m a_{i}\tensor k_{i})\Bigr\Vert<\e.
\]
The same technique works also for finite families of the form $\sum_{i=1}^m a_{i}\tensor k_{i}$,
applied to the left and to the right of $U,V$. We deduce that the
equivalence class of $U$ is dense in $\RepGH$.
\end{proof}

\section{Actions of locally compact quantum groups on von Neumann algebras}\label{sec:actions}

In this section we discuss actions of locally compact quantum groups and their properties. After quoting the basic definitions we discuss the canonical unitary implementation of such actions due to Vaes, various notions of ergodicity, and (almost) invariant states. We obtain in particular a new characterisation of the canonical unitary implementation of discrete quantum group actions (\prettyref{prop:act_pos_cone_applications},
\prettyref{enu:act_pos_cone_applications_3} and \prettyref{cor:U_preserves_pos_cone}). These technical results will be of crucial use in Sections~\ref{sec:spectral_gaps}
and~\ref{sec:Connes_Weiss}.

Let $\G$ be a locally compact quantum group and let $N$ be a von Neumann algebra. By an \emph{action of $\QG$ on $N$} we understand an injective normal unital $*$-homomorphism $\a:N\to\Linfty{\G}\tensorn N$ satisfying the \emph{action equation}
\[ (\Delta \ot \id_N) \circ \a = (\id_{\Linfty{\QG}} \ot \a)\circ \a.\]
The \emph{crossed product of $N$ by the action $\alpha$} is then the von Neumann subalgebra of $\mathcal{B}(\Ltwo{\QG}) \tensorn N$ generated by $\Linfty{\hQG} \ot \mathds{1}$ and $\alpha(N)$.

We say that $\alpha$ is \emph{implemented by} a unitary $V\in \mathcal{B}(\Ltwo{\QG} \ot \Kil)$ if $N \subseteq \mathcal{B}(\Kil)$ and
\[ \alpha(x) = V^* (\one \ot x)V, \;\; x \in N.\]
In \cite{Vaes__unit_impl_LCQG} Vaes shows that every action of a locally compact quantum group is implemented in a canonical way.
We use the notation of that paper,
but our version of a representation (so also what we call the unitary
implementation of an action) is the adjoint of the one in \cite{Vaes__unit_impl_LCQG}.
Let us then fix an n.s.f.~weight $\theta$ on $N$. Define
\[
\mathcal{D}_{0}:=\linspan\left\{ (\widehat{a}\tensor\one)\a(x):\widehat{a}\in\mathcal{N}_{\widehat{\varphi}},x\in\mathcal{N}_{\theta}\right\} ,
\]
and consider the linear map $\tilde{\gnsmap}_{0}:\mathcal{D}_{0}\to\Ltwo{\G}\tensor\Ltwo{N,\theta}$
given by
\[
\tilde{\gnsmap}_{0}((\widehat{a}\tensor\one)\a(x)):=\widehat{\gnsmap}(\widehat{a})\tensor\gnsmap_{\theta}(x)\qquad(\widehat{a}\in\mathcal{N}_{\widehat{\varphi}},x\in\mathcal{N}_{\theta}).
\]
Then $\tilde{\gnsmap}_{0}$ is indeed well-defined, and is $*$-ultrastrong--norm
closable. Denote its closure by $\tilde{\gnsmap}:\mathcal{D}\to\Ltwo{\G}\tensor\Ltwo{N,\theta}$.
Now $\mathcal{D}$ is a weakly dense left ideal in the crossed product
$\G\presb{\a}{\ltimes}N$, and there exists a (unique) n.s.f.~weight
$\tilde{\theta}_{0}$ on $\G\presb{\a}{\ltimes}N$ such that $(\Ltwo{\G}\tensor\Ltwo{N,\theta},\i,\tilde{\gnsmap})$
is a GNS construction for $\tilde{\theta}_{0}$ (see \cite[Lemma 3.3 and the preceding discussion, as well as Definition 3.4]{Vaes__unit_impl_LCQG}).
We let $\tilde{J},\tilde{\nabla}$ stand for the corresponding modular
conjugation and modular operator, respectively, and set $\tilde{T}:=\tilde{J}\tilde{\nabla}^{1/2}$.
\emph{The unitary implementation} of $\a$ is then the unitary $U:=(\widehat{J}\tensor J_{\theta})\tilde{J}$.
It satisfies
\begin{equation}
\a(x)=U^{*}(\one\tensor x)U\qquad \forall_{x\in N}\label{eq:unitary_impl}
\end{equation}
and
\begin{equation}
U(\widehat{J}\tensor J_{\theta})=(\widehat{J}\tensor J_{\theta})U^{*}.\label{eq:U_Js}
\end{equation}
Furthermore, $U\in\M(\Cz{\G}\tensor\mathcal{K}(\Ltwo{N,\theta}))$
is a representation of $\G$ on $\Ltwo{N,\theta}$ \cite[Definition 3.6, Proposition 3.7, Proposition 3.12 and Theorem 4.4]{Vaes__unit_impl_LCQG}. The choice of the weight $\theta$ is of no importance up to unitary equivalence by \cite[Proposition 4.1]{Vaes__unit_impl_LCQG}. It is easy to see that $U$ satisfies condition $\mathscr{R}$ of \prettyref{def:cond_R} with respect to $J_\theta$.

\begin{lem}[{\cite[Lemma 3.11]{Vaes__unit_impl_LCQG}}]
\label{lem:T_tilde}Denoting by $\widehat{T}$ the closure of the anti-linear
map $\widehat{\gnsmap}(\widehat{x})\mapsto\widehat{\gnsmap}(\widehat{x}^{*})$, $\widehat{x}\in\mathcal{N}_{\widehat{\varphi}}\cap\mathcal{N}_{\widehat{\varphi}}^{*}$,
we see that the subspace
\[
\linspan\bigl\{\a(x^{*})(\eta\tensor\gnsmap_{\theta}(y)):x,y\in\mathcal{N}_{\theta},\eta\in D(\widehat{T})\bigr\}
\]
is a core for $\tilde{T}$ and for every $x,y\in\mathcal{N}_{\theta}$
and $\eta\in D(\widehat{T})$,
\begin{equation}
\tilde{T}\a(x^{*})(\eta\tensor\gnsmap_{\theta}(y))=\a(y^{*})(\widehat{T}\eta\tensor\gnsmap_{\theta}(x)).\label{eq:T_tilde_T_hat}
\end{equation}

\end{lem}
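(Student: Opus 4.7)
The strategy is first to verify the identity for vectors of the special form $\eta = \widehat{\gnsmap}(\widehat{a})$ with $\widehat{a} \in \mathcal{N}_{\widehat{\varphi}} \cap \mathcal{N}_{\widehat{\varphi}}^*$, then to extend it by closedness of $\tilde{T}$ to all $\eta \in D(\widehat{T})$, and finally to obtain the core property by a density argument in the crossed product. The last step is the main technical obstacle.

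For the explicit formula, fix $\widehat{a} \in \mathcal{N}_{\widehat{\varphi}} \cap \mathcal{N}_{\widehat{\varphi}}^*$ and $x,y \in \mathcal{N}_\theta$ and set
\[
X := \a(x^*)(\widehat{a} \tensor \one)\a(y) \in \G \presb{\a}{\ltimes} N.
\]
Since $(\widehat{a} \tensor \one)\a(y) \in \mathcal{D}_0$ has $\tilde{\gnsmap}_0$-image $\widehat{\gnsmap}(\widehat{a}) \tensor \gnsmap_\theta(y)$, and $\a(x^*) \in \a(N)$ sits inside the crossed product while $\mathcal{D}$ is a weakly dense left ideal there, we obtain $X \in \mathcal{D}$ with
\[
\tilde{\gnsmap}(X) = \a(x^*)\bigl(\widehat{\gnsmap}(\widehat{a}) \tensor \gnsmap_\theta(y)\bigr).
\]
Applying the same reasoning to $X^* = \a(y^*)(\widehat{a}^* \tensor \one)\a(x)$, and using that $\widehat{\gnsmap}(\widehat{a}^*) = \widehat{T}\widehat{\gnsmap}(\widehat{a})$, yields $X^* \in \mathcal{D}$ with
\[
\tilde{\gnsmap}(X^*) = \a(y^*)\bigl(\widehat{T}\widehat{\gnsmap}(\widehat{a}) \tensor \gnsmap_\theta(x)\bigr).
\]
Hence $X \in \mathcal{D} \cap \mathcal{D}^*$, and by the very definition of $\tilde{T}$ as the closure of $\tilde{\gnsmap}(Y) \mapsto \tilde{\gnsmap}(Y^*)$, the asserted formula \prettyref{eq:T_tilde_T_hat} holds for such $\eta = \widehat{\gnsmap}(\widehat{a})$.

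To extend the formula to arbitrary $\eta \in D(\widehat{T})$, use that $\widehat{\gnsmap}(\mathcal{N}_{\widehat{\varphi}} \cap \mathcal{N}_{\widehat{\varphi}}^*)$ is a core for $\widehat{T}$: pick $\widehat{a}_n$ with $\widehat{\gnsmap}(\widehat{a}_n) \to \eta$ and $\widehat{T}\widehat{\gnsmap}(\widehat{a}_n) = \widehat{\gnsmap}(\widehat{a}_n^*) \to \widehat{T}\eta$ in norm. Both $\a(x^*)(\widehat{\gnsmap}(\widehat{a}_n) \tensor \gnsmap_\theta(y))$ and the corresponding $\tilde{T}$-images $\a(y^*)(\widehat{T}\widehat{\gnsmap}(\widehat{a}_n) \tensor \gnsmap_\theta(x))$ then converge in norm to the expected vectors, and closedness of $\tilde{T}$ yields the stated formula on the whole subspace of the assertion while showing in particular that this subspace is contained in $D(\tilde{T})$.

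For the core assertion, let $\mathcal{D}_1$ denote the linear span of the elements $X$ constructed above, so $\mathcal{D}_1 \subseteq \mathcal{D} \cap \mathcal{D}^*$ and $\tilde{\gnsmap}(\mathcal{D}_1)$ is contained in the subspace of the statement; it therefore suffices to show that $\tilde{\gnsmap}(\mathcal{D}_1)$ is itself a core for $\tilde{T}$. I would establish this by a Tomita--Takesaki approximation: given $Y \in \mathcal{D} \cap \mathcal{D}^*$, produce a sequence $Y_n \in \mathcal{D}_1$ with $\tilde{\gnsmap}(Y_n) \to \tilde{\gnsmap}(Y)$ and $\tilde{\gnsmap}(Y_n^*) \to \tilde{\gnsmap}(Y^*)$ both in norm. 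Starting from the fact that $\mathcal{D}_0$ is a core for $\tilde{\gnsmap}$, one first approximates $Y$ by sums of generators $(\widehat{a} \tensor \one)\a(y)$; then, multiplying on the left by $\a(x^*)$ for $x$ ranging over a bounded approximate identity in $N$ chosen inside $\mathcal{N}_\theta \cap \mathcal{N}_\theta^*$, and cutting $\widehat{a}$ by spectral projections of $\widehat{\nabla}$ so as to land in $\mathcal{N}_{\widehat{\varphi}} \cap \mathcal{N}_{\widehat{\varphi}}^*$, one pushes the approximants into $\mathcal{D}_1$ while maintaining simultaneous norm convergence of both the GNS images and the GNS images of the adjoints. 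The crux is to perform these two cut-downs in a compatible fashion, which is exactly where the left ideal property of $\mathcal{D}$ in the crossed product and the polar decomposition $\widehat{T} = \widehat{J}\widehat{\nabla}^{1/2}$ enter in an essential way.
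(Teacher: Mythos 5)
This lemma is quoted by the paper directly from Vaes \cite[Lemma 3.11]{Vaes__unit_impl_LCQG} and no proof is given in the text, so there is no in-paper argument to compare against; I am judging your proposal against the argument in that reference. Your treatment of the identity \prettyref{eq:T_tilde_T_hat} is correct and complete, and is the standard one: setting $X=\a(x^{*})(\widehat{a}\tensor\one)\a(y)$, using that $\mathcal{D}=\mathcal{N}_{\tilde{\theta}_{0}}$ is a left ideal on which $\tilde{\gnsmap}$ is a GNS map (so $\tilde{\gnsmap}(zY)=z\tilde{\gnsmap}(Y)$), noting that $X^{*}$ has the same form, and then passing from $\eta=\widehat{\gnsmap}(\widehat{a})$ to general $\eta\in D(\widehat{T})$ via the definition of $\widehat{T}$ as a closure, the boundedness of $\a(x^{*})$ and the closedness of $\tilde{T}$. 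This also shows the subspace in question is contained in $D(\tilde{T})$.

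The core assertion, however, is not proved, and this is where the real content of Vaes's lemma lies. You need, for each $Y\in\mathcal{N}_{\tilde{\theta}_{0}}\cap\mathcal{N}_{\tilde{\theta}_{0}}^{*}$, approximants $Y_{n}\in\mathcal{D}_{1}$ with \emph{both} $\tilde{\gnsmap}(Y_{n})\to\tilde{\gnsmap}(Y)$ and $\tilde{\gnsmap}(Y_{n}^{*})\to\tilde{\gnsmap}(Y^{*})$. Your first step --- approximating $Y$ by elements of $\mathcal{D}_{0}$ using that $\mathcal{D}_{0}$ is a core for $\tilde{\gnsmap}$ --- controls only the first convergence; worse, a generator $(\widehat{a}\tensor\one)\a(y)$ has adjoint $\a(y^{*})(\widehat{a}^{*}\tensor\one)$, which in general does not even belong to $\mathcal{N}_{\tilde{\theta}_{0}}$ (the set $\mathcal{D}_{0}$ is a left ideal generator set and is not adjoint-stable), so after this step all control over the adjoint side is lost and the later cut-downs must recreate it from nothing. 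Two of the specific devices you invoke are also not the right ones: spectral projections of $\widehat{\nabla}$ do not act on the algebra element $\widehat{a}$, so they cannot be used to push $\widehat{a}$ into $\mathcal{N}_{\widehat{\varphi}}\cap\mathcal{N}_{\widehat{\varphi}}^{*}$ (the standard smoothing is $\widehat{a}\mapsto\sqrt{n/\pi}\int e^{-nt^{2}}\widehat{\sigma}_{t}(\widehat{a})\,dt$); and controlling $\tilde{\gnsmap}$ of a right product $Y\a(e)$ requires $\a(e)$ to be analytic for the modular group of $\tilde{\theta}_{0}$ together with the right-multiplication formula for GNS maps --- but the behaviour of $\sigma^{\tilde{\theta}_{0}}$ on $\a(N)$ is itself part of what Vaes is in the process of establishing at this point, so it cannot be assumed. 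Your closing sentence, that "the crux is to perform these two cut-downs in a compatible fashion", is an accurate diagnosis of where the difficulty sits, but it is an acknowledgement that the essential step is missing rather than a proof of it.
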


\begin{defn}
Let $\a:N\to\Linfty{\G}\tensorn N$ be an action of a locally compact quantum group $\G$ on
a von Neumann algebra $N$. A (not necessarily normal) state $\theta$ of $N$ is \emph{invariant
under} $\a$ if $(\i\tensor\theta)\a=\theta(\cdot)\one$, or equivalently,
if $\theta(\om\tensor\i)\a=\om(\one)\theta(\cdot)$ for every $\om\in\Lone{\G}$.\end{defn}
\begin{rem}
\label{rem:inv_states}Suppose that $\G=G$, a locally compact group.
The above notion of invariance, which perhaps should have been called \emph{topological} invariance,
is stronger than the classical one,
i.e., $\theta\circ\a_{t}=\theta$ for all $t\in G$. Indeed, it
is not difficult to observe that the former entails the latter. The
converse, however, is not always true: for every compact (quantum)
group $\G$, there exists only one $\Delta$-invariant state of $\Linfty{\G}$,
namely the Haar state $h$, as any $\Delta$-invariant state $\theta$
satisfies $h(\cdot)=\theta(h(\cdot)\one)=\theta(h\tensor\i)\Delta=\theta(\cdot)$;
but, for instance, taking $\G$ to be the complex unit circle, $h$
is not the only state that is classically invariant under the translation action, represented in our context by $\Delta$
\cite[Proposition 2.2.11]{Lubotzky__book_1994}. Nevertheless,
if $G$ is discrete or $\theta$ is normal the two notions of invariance are easily seen to
be equivalent.\end{rem}

When $\a$ is an action of $\G$ on a von Neumann algebra $N$ that leaves invariant a \emph{faithful normal} state $\theta$, its unitary implementation takes a particularly simple form. To elaborate, one verifies that the formula
\begin{equation} \label{eq:unit_impl_inv_state_1}
(\i \tensor \om_{\gnsmap_\theta (y),\gnsmap_\theta(x)})(U^*) = (\id \tensor \theta)((\one \tensor y^*) \a(x))\qquad(x,y \in N),
\end{equation}
or equivalently
\begin{equation} \label{eq:unit_impl_inv_state_2}
(\om \tensor \id)(U^*)\gnsmap_\theta(x) = \gnsmap_\theta((\om \tensor \id)(\a(x)))\qquad(x\in N, \om \in \Lone{\G}),
\end{equation}
defines (uniquely) an isometry $U^*$, and that its adjoint $U$ is a representation of $\G$ on $\Ltwo{N,\theta}$, which is indeed a unitary by \cite[Corollary 4.15]{Brannan_Daws_Samei__cb_rep_of_conv_alg_of_LCQGs}, and which satisfies \prettyref{eq:unitary_impl}. One can essentially repeat the argument in the proof of \cite[Proposition 4.3]{Vaes__unit_impl_LCQG}, with $\one$ in place of $\delta^{-1}$ and the fact that $\widehat{\nabla} \tensor \nabla_\theta$ commutes with $U$ (see the proof of \cite[Theorem A.1]{Runde_Viselter_LCQGs_Ergodic_Thy}, and take into account the difference in the terminology) in lieu of \cite[Proposition 2.4, last formula]{Vaes__unit_impl_LCQG}, to infer that $U$ is the unitary implementation of $\a$.

\begin{defn} \label{def:ergodicity}
Let $\a:N\to\Linfty{\G}\tensorn N$ be an action
of a locally compact quantum group $\G$ on a von Neumann algebra $N$ leaving invariant a faithful normal state
$\theta$ of $N$. We say that $\a$ is \emph{ergodic} (respectively, \emph{weakly mixing}) if its implementing
unitary, when restricted to $\Ltwo{N,\theta} \ominus \CC \gnsmap_{\theta}(\one)$,
is ergodic (respectively, weakly mixing).
\end{defn}

For an arbitrary action $\a$ of $\G$ on $N$, one normally defines ergodicity of $\a$ as the equality of
its fixed-point algebra $N^{\a}:=\left\{ a\in N:\a(a)=\one\tensor a\right\} $
 and $\CC\one$. In the context of \prettyref{def:ergodicity}, this definition is known to be equivalent to ours when $\G$ is a group: see
\cite[Lemma 2.2]{Jadczyk__aut_groups} or \cite[Section 2, Theorem]{Herman_Takesaki__states_aut_groups},
where the abelianness assumption is unnecessary. We now show that this holds for all locally compact quantum groups (\prettyref{cor:operator_erg}).

The next result is inspired by \cite{Kovacs_Szucs__erg_thm_vN_alg}
as presented in \cite[Theorem 1]{Doplicher_Kastler_Stormer__inv_states}
and \cite[Section 2]{Jadczyk__aut_groups}; compare \cite{Duvenhage__erg_thm_quantum}
and \cite[Theorem 2.2]{Runde_Viselter_LCQGs_Ergodic_Thy}.
\begin{prop}
\label{prop:act_fixed_point_cond_exp}Let $\a:N\to\Linfty{\G}\tensorn N$
be an action of $\G$ on a von Neumann algebra $N$. Assume that $\a$
preserves a faithful normal state $\theta$ of $N$. Write $U\in\Linfty{\G}\tensorn\mathcal{B}(\Ltwo{N,\theta})$
for the unitary implementation of $\a$, and $p$ for the orthogonal
projection of $\Ltwo{N,\theta}$ onto $\inv(U)$. Then using the notation of \prettyref{prop:B_G__E}, there
exists a faithful normal conditional expectation $E$ of $N$ onto $N^{\alpha}$
given by
\begin{equation}
\om(E(a))=M\left((\i\tensor\om)\a(a)\right)\qquad(a\in N,\om\in N_{*}).\label{eq:fixed_point_cond_exp_def}
\end{equation}
Moreover, $E$ satisfies $E(a)p=pap$ for all $a\in N$. This identity
determines $E$ uniquely. Additionally, $E$ is $\a$-invariant: $E\left((\om\tensor\i)\a(a)\right)=E(a)$
for every $a\in N$ and every state $\om\in\Lone{\G}$.
\end{prop}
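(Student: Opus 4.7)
My plan is to establish the identity $E(a)p = pap$ as the pivotal technical fact, from which everything else flows. The first order of business is to verify that the formula defining $E(a)$ makes sense: for $a \in N$ and $\om \in N_*$, the slice $(\i \tensor \om)\a(a)$ must belong to $\mathcal{E}$ so that $M$ can be applied to it. Writing $\om = \om_{\xi,\eta}|_N$ with $\xi, \eta \in \Ltwo{N,\theta}$ (possible by standardness of the GNS representation) and using $\a(a) = U^*(\one \tensor a)U$, expanding in an orthonormal basis $\{e_i\}$ of $\Ltwo{N,\theta}$ exhibits $(\i \tensor \om_{\xi,\eta})\a(a)$ as a series in products of coefficients of $U$ and $U^*$, which (with some care concerning convergence) identifies it as an element of $\overline{B(\G)}^{\|\cdot\|} \subseteq \mathcal{E}$. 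The formula then defines $E(a) \in N$ with $\|E(a)\| \le \|a\|$.

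The crucial simplification is that when $\eta \in \inv(U)$, the relation $U(\z \tensor \eta) = \z \tensor \eta$ together with $\a(a) = U^*(\one \tensor a)U$ directly gives
\[
(\i \tensor \om_{\xi,\eta})\a(a) = (\i \tensor \om_{\xi, a\eta})(U^*) \qquad (\xi \in \Ltwo{N,\theta}).
\]
Since the $M$-value of this coefficient of $U^*$ equals $\la p\xi, a\eta \ra = \la \xi, pa\eta \ra$ by the calculation performed inside the proof of \prettyref{prop:B_G__E}, one reads off $\la \xi, E(a)\eta \ra = \la \xi, pa\eta \ra$ for all $\xi$, which is precisely $E(a)p = pap$. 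Applying the identity to $a^*$ together with $E(a^*) = E(a)^*$ (a consequence of $M(f^*) = \overline{M(f)}$) yields the dual identity $pE(a) = pap$, so $E(a)$ commutes with $p$. Because $\gnsmap_\theta(\one) \in \inv(U)$ by invariance of $\theta$ and $\theta$ is faithful, a short argument based on \prettyref{eq:unit_impl_inv_state_2} shows that $N^\a$ coincides with the commutant of $p$ inside $N$, and hence $E(a) \in N^\a$.

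From this point the remaining assertions follow essentially formally. The idempotent property $E|_{N^\a} = \i$ is immediate from $\a(b) = \one \tensor b$ for $b \in N^\a$, and combined with contractivity and positivity of $M$ yields that $E$ is a conditional expectation via Tomiyama's theorem. Faithfulness follows from faithfulness of $\theta$ and the implication $E(b^*b) = 0 \Rightarrow pb^*bp = 0 \Rightarrow bp = 0 \Rightarrow \gnsmap_\theta(b) = 0$. Uniqueness of $E$ under $E(a)p = pap$ is proved by the same commutation-and-faithfulness argument applied to the difference of two candidates. Normality of $E$ is extracted from the derived identity $\gnsmap_\theta(E(a)) = p\gnsmap_\theta(a)$ (obtained by evaluating $E(a)p = pap$ at $\gnsmap_\theta(\one)$) combined with the standard fact that $b \mapsto b\gnsmap_\theta(\one)$ is a $\sigma$-weak-to-weak homeomorphism on norm-bounded subsets of $N^\a$. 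Lastly, $\a$-invariance follows from $(\i \tensor \om)\a((\rho \tensor \i)\a(a)) = (\i \tensor \om)\a(a) \cdot \rho$ (a consequence of the coaction axiom) together with the basic property $M(f \cdot \rho) = \rho(\one) M(f)$ recorded after \prettyref{prop:B_G__E}. The main obstacle will be the preliminary step placing the slices into $\mathcal{E}$; beyond that, the proof is driven by the single calculation yielding $E(a)p = pap$.
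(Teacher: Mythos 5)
The one step that does not go through as written is the preliminary one. Expanding $(\i\tensor\om_{\xi,\eta})\a(a)=(\i\tensor\om_{\xi,\eta})(U^{*}(\one\tensor a)U)$ along an orthonormal basis $(e_{i})_{i}$ produces the series $\sum_{i}(\i\tensor\om_{\xi,e_{i}})(U^{*})\,(\i\tensor\om_{a^{*}e_{i},\eta})(U)$, whose partial sums converge only in the weak (or strong) operator topology of $\Linfty{\G}$, never in norm in general. Membership in $\overline{B(\G)}^{\left\Vert \cdot\right\Vert }$ cannot be extracted from such a limit: weak$^{*}$ limits of elements of $B(\G)$ already exhaust $\Linfty{\G}$, so ``some care concerning convergence'' is not a repairable gloss -- the argument must be replaced. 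The fix is the observation you already isolated for the key identity, combined with a commutant trick: it suffices to treat $\om=\om_{J_{\theta}\gnsmap_{\theta}(c),J_{\theta}\gnsmap_{\theta}(b)}$, and since $J_{\theta}bJ_{\theta}\in N'$ while $\a(a)\in\Linfty{\G}\tensorn N$, one moves $J_{\theta}bJ_{\theta}$ across $\a(a)$ to reduce to $\om_{J_{\theta}\gnsmap_{\theta}(b^{*}c),\gnsmap_{\theta}(\one)}$; now $\gnsmap_{\theta}(\one)\in\inv(U)$ by invariance of $\theta$, and your own computation gives $(\i\tensor\om_{\xi,\gnsmap_{\theta}(\one)})\a(a)=(\i\tensor\om_{\xi,\gnsmap_{\theta}(a)})(U^{*})$, a single coefficient of $U^{*}$, hence an element of $B(\G)^{*}\subseteq\mathcal{E}$. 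This is exactly how the paper settles well-definedness.

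With that step repaired, the remainder of your argument is correct, though organised differently from the paper's. You derive $E(a)p=pap$ directly from the $M$-values of coefficients of $U^{*}$ (using $\eta\in\inv(U)$), deduce via adjoints that $E(a)$ commutes with $p$, and then obtain $E(N)\subseteq N^{\a}$ from the identification $N^{\a}=N\cap\{p\}'$; the nontrivial inclusion $N\cap\{p\}'\subseteq N^{\a}$ does follow from \prettyref{eq:unit_impl_inv_state_2} as you indicate, since $bp=pb$ forces $\gnsmap_{\theta}(b)\in\inv(U)$ and hence $(\om\tensor\i)\a(b)=\om(\one)b$ for all $\om$. The paper proceeds in the opposite order: it first proves $E(N)\subseteq N^{\a}$ using coassociativity and the module property $M(\om\cdot x)=\om(\one)M(x)$, then shows $N^{\a}$ commutes with $p$, and only then establishes $E(a)p=pap$. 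Your route trades the module-property computation for the commutant characterisation of $N^{\a}$; both are sound. The remaining points -- Tomiyama, faithfulness, uniqueness, normality via $\gnsmap_{\theta}(E(a))=p\gnsmap_{\theta}(a)$, and $\a$-invariance via $M(x\cdot\rho)=\rho(\one)M(x)$ -- match the paper's reasoning.
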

\begin{proof}
First, let us explain why the right-hand side of \prettyref{eq:fixed_point_cond_exp_def}
makes sense by proving that $(\i\tensor\om)\a(a)\in\overline{B(\G)^{*}}^{\left\Vert \cdot\right\Vert }$
for each $a\in N$, $\om\in N_{*}$.
It suffices to prove that $(\i\tensor\om_{J_{\theta}\gnsmap_{\theta}(c),J_{\theta}\gnsmap_{\theta}(b)})\a(a)\in B(\G)^{*}$
for every $b,c\in N$. But since $J_{\theta}\gnsmap_{\theta}(b)=J_{\theta}bJ_{\theta}\gnsmap_{\theta}(\one)$,
$J_{\theta}bJ_{\theta}\in N'$ and $\a(a)\in\Linfty{\G}\tensorn N$,
we get $(\i\tensor\om_{J_{\theta}\gnsmap_{\theta}(c),J_{\theta}\gnsmap_{\theta}(b)})\a(a)=(\i\tensor\om_{J_{\theta}\gnsmap_{\theta}(b^{*}c),\gnsmap_{\theta}(\one)})\a(a)$,
which by \prettyref{eq:unit_impl_inv_state_1} equals $(\i\tensor\om_{J_{\theta}\gnsmap_{\theta}(b^{*}c),\gnsmap_{\theta}(a)})(U^{*})\in B(\G)^{*}$.

By \prettyref{prop:B_G__E}, $M$ is a linear contraction on $\overline{B(\G)^{*}}^{\left\Vert \cdot\right\Vert }$.
Hence, \prettyref{eq:fixed_point_cond_exp_def} determines a well-defined
contraction $E:N\to N$. To show that $E$ maps into $N^{\a}$, pick
$a\in N$ and states $\om\in\Lone{\G}$, $\rho\in N_{*}$. We should
show that $(\om\tensor\rho)\a(E(a))=\rho(E(a))$. Setting $\nu:=(\om\tensor\rho)\circ\a$,
we have $(\om\tensor\rho)\a(E(a))=M\left((\i\tensor\nu)\a(a)\right)$
and $\rho(E(a))=M\left((\i\tensor\rho)\a(a)\right)$ by the definition
of $M$. However,
\[
\begin{split}(\i\tensor\nu)(\a(a)) & =(\i\tensor\om\tensor\rho)(\i\tensor\a)(\a(a))=(\i\tensor\om\tensor\rho)(\Delta\tensor\i)(\a(a))\\
 & =(\i\tensor\om)\Delta((\i\tensor\rho)\a(a))=\om\cdot((\i\tensor\rho)\a(a)).
\end{split}
\]
As a result, $(\om\tensor\rho)\a(E(a))=M\left((\i\tensor\nu)\a(a)\right)=M\left((\i\tensor\rho)\a(a)\right)=\rho(E(a))$,
proving that $E(a)\in N^{\a}$. It is also clear that $E(b)=b$ for
every $b\in N^{\a}$. In conclusion, $E$ is a conditional expectation
of $N$ onto $N^{\a}$.

Notice that $N^{\a}$ commutes with $p$. Indeed, it suffices to show
that if $b\in N^{\a}$ then $pbp=bp$. But $\a(b)=\one\tensor b$
if and only if $\one\tensor b$ commutes with $U$, if and only if
$b$ commutes with $(\om\tensor\i)(U)$ for every $\om\in\Lone{\G}$.
Let $\z\in\Ltwo{N,\theta}$. Then $pbp\z$ is in the closure of $\left\{ (\om\tensor\i)(U)bp\z:\om\in\Lone{\G}\text{ is a state}\right\} $
by \prettyref{lem:avg_semigp_one} and \prettyref{prop:ave_equals_proj_gen}. Now $\left(\om\tensor\i\right)(U)bp\z=b\left(\om\tensor\i\right)(U)p\z=bp\z$
for every $\om$, showing that $pbp\z=bp\z$.

Let $a\in N$. To observe that $E(a)p=pap$, take $\z,\eta\in\Ltwo{N,\theta}$.
Then
\[
\begin{split}\om_{\z,\eta}(E(a)p)=\om_{p\z,p\eta}(E(a)) & =M\left((\i\tensor\om_{p\z,p\eta})\a(a)\right)\\
 & =M\left((\i\tensor\om_{p\z,p\eta})(U^{*}(\one\tensor a)U)\right)=M(\om_{\z,\eta}(pap)\one)=\om_{\z,\eta}(pap),
\end{split}
\]
proving the desired equality. Since $p\gnsmap_{\theta}(\one)=\gnsmap_{\theta}(\one)$
and this vector is separating for $N$, the equality $E(a)p=pap$
determines $E$ uniquely, and $E$ is faithful and normal.

Let $a\in N$ and let $\om\in\Lone{\G}$, $\rho\in N_{*}$ be states.
Then
\[
(\i\tensor\rho)\a\left((\om\tensor\i)\a(a)\right)=(\om\tensor\i\tensor\rho)(\i\tensor\a)\a(a)=(\om\tensor\i\tensor\rho)(\Delta\tensor\i)\a(a)=((\i\tensor\rho)\a(a))\cdot\om.
\]
Hence
\[
\begin{split}\rho\left[E\left((\om\tensor\i)\a(a)\right)\right] & =M\left[(\i\tensor\rho)\a\left((\om\tensor\i)\a(a)\right)\right]\\
 & =M\left(((\i\tensor\rho)\a(a))\cdot\om\right)=M\left((\i\tensor\rho)\a(a)\right)=\rho(E(a)),
\end{split}
\]
implying that $E\left((\om\tensor\i)\a(a)\right)=E(a)$. That is,
$E$ is $\a$-invariant.
\end{proof}
\begin{proof}[We now present an alternative proof of the existence and properties of $E$ not using
the mean $M$]
By \prettyref{lem:avg_semigp_two} and \prettyref{prop:conv_to_Inv_U_proj}, there is a net $\left(\om_{i}\right)_{i\in \Ind}$
of states in $\Lone{\G}$ such that $\left((\om_{i}\tensor\i)(U^{*})\right)_{i\in \Ind}$
converges strongly to $p$. Let $a\in N$. Then since $U(\one\tensor p)=\one\tensor p$
by \prettyref{lem:inv_vec}, we have
\begin{equation}
pap=\lim_{i\in \Ind}(\om_{i}\tensor\i)(U^{*})ap=\lim_{i\in \Ind}(\om_{i}\tensor\i)\left(U^{*}(\one\tensor a)U\right)p=\lim_{i\in \Ind}(\om_{i}\tensor\i)\left(\a(a)\right)p\label{eq:fixed_point_cond_exp_lim}
\end{equation}
strongly. The bounded net $\left[(\om_{i}\tensor\i)\left(\a(a)\right)\right]_{i\in \Ind}$
in $N$ has a subnet $\left[(\om_{j}\tensor\i)\left(\a(a)\right)\right]_{j\in \Jnd}$
that converges in the weak$^{*}$ topology to an element $b\in N$.
By \prettyref{eq:fixed_point_cond_exp_lim}, $pap=bp$. This equality
determines $b$ uniquely in $N$ since $p\gnsmap_{\theta}(\one)=\gnsmap_{\theta}(\one)$
and this vector is separating for $N$. Moreover, as $p\gnsmap_{\theta}(a)=\gnsmap_{\theta}(b)$,
it follows from \prettyref{eq:unit_impl_inv_state_1} that $b\in N^{\alpha}$. Consequently, there
exists a linear map $E:N\to N^{\alpha}$ given by $E(a)p=pap$. From
\prettyref{eq:fixed_point_cond_exp_lim} it is clear that $E$ is
a contractive projection, thus a conditional expectation, which is
faithful and normal as in the first proof. Also, similarly to \prettyref{eq:fixed_point_cond_exp_lim},
\[
E((\om\tensor\i)\a(a))p=p(\om\tensor\i)\a(a)p=pap=E(a)p,\text{ thus }E((\om\tensor\i)\a(a))=E(a),
\]
for every $a\in N$ and every state $\om\in\Lone{\G}$, so $E$ is
$\a$-invariant.
\end{proof}
\begin{cor}\label{cor:operator_erg}
\label{cor:oper_erg}Under the assumptions of \prettyref{prop:act_fixed_point_cond_exp},
$\a$ is ergodic (namely: the restriction of $U$ to $\Ltwo{N,\theta}\ominus\CC\gnsmap_{\theta}(\one)$
is ergodic) if and only if $N^{\alpha}=\CC\one$.
\end{cor}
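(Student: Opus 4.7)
My plan is to exploit the key identity $E(a)p = pap$ from \prettyref{prop:act_fixed_point_cond_exp}, together with the observation that $\gnsmap_\theta(\one) \in \inv(U)$ (which is immediate from \prettyref{eq:unit_impl_inv_state_2} and $\theta$-invariance of $\alpha$, since the $*$-homomorphism $(\om \tensor \id)\circ\alpha$ is norm-continuous and $\theta(\alpha(\cdot))=\theta$) and that $\gnsmap_\theta(\one)$ is separating for $N$ acting on $\Ltwo{N,\theta}$.

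For the direction ($N^\alpha=\CC\one \Rightarrow$ ergodicity), I would first verify $\theta \circ E = \theta$: applying \prettyref{eq:fixed_point_cond_exp_def} with $\om = \theta$ gives $\theta(E(a)) = M((\i\tensor\theta)\alpha(a)) = M(\theta(a)\one) = \theta(a)$. Combined with $E(a)\in N^\alpha = \CC\one$, this forces $E(a)=\theta(a)\one$. Then the identity $E(a)p = pap$ becomes $pap = \theta(a)p$; applying this to $\gnsmap_\theta(\one)$ yields $p(a\gnsmap_\theta(\one)) = \theta(a)\gnsmap_\theta(\one) = \la \gnsmap_\theta(\one), a\gnsmap_\theta(\one)\ra\,\gnsmap_\theta(\one)$ for all $a \in N$. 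Since $N\gnsmap_\theta(\one)$ is norm-dense in $\Ltwo{N,\theta}$ (faithfulness of $\theta$) and $p$ is bounded, it follows that $p$ is the rank-one projection onto $\CC\gnsmap_\theta(\one)$. Hence $U$ restricted to $\Ltwo{N,\theta}\ominus\CC\gnsmap_\theta(\one)$ has no non-zero invariant vectors, i.e.\ $\alpha$ is ergodic.

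For the converse ($\alpha$ ergodic $\Rightarrow N^\alpha=\CC\one$), ergodicity of $\alpha$ means $\inv(U) = \CC\gnsmap_\theta(\one)$, so $p$ is again the rank-one projection onto $\CC\gnsmap_\theta(\one)$. Now let $a \in N^\alpha$; then $E(a) = a$, so the identity $E(a)p = pap$ reads $ap = pap$. Applying this to $\gnsmap_\theta(\one)$ gives $a\gnsmap_\theta(\one) = pap\gnsmap_\theta(\one) \in \CC\gnsmap_\theta(\one)$, say $a\gnsmap_\theta(\one) = c\gnsmap_\theta(\one)$. Since $\gnsmap_\theta(\one)$ is separating for $N$, we conclude $a = c\one \in \CC\one$.

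There is no substantial obstacle here: the work of \prettyref{prop:act_fixed_point_cond_exp} already packages the entire ergodic-theoretic content into the identity $E(a)p = pap$, and the present corollary just reads off the two directions by specialising this identity and invoking the cyclic/separating properties of $\gnsmap_\theta(\one)$. The only mildly non-trivial point is the computation $\theta\circ E = \theta$, used to identify $E$ with the state $\theta$ when $N^\alpha = \CC\one$.
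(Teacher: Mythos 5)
Your proof is correct and follows essentially the same route as the paper: both directions are read off from \prettyref{prop:act_fixed_point_cond_exp} via the identity $E(a)p=pap$ together with the fact that $\gnsmap_\theta(\one)$ is a $U$-invariant cyclic and separating vector. The only cosmetic difference is that the paper handles the implication ($N^\alpha=\CC\one\Rightarrow$ ergodic) by contrapositive (approximating a putative invariant vector $\z\perp\gnsmap_\theta(\one)$ by $\gnsmap_\theta(a_n)$ and using $\gnsmap_\theta(E(a_n))=p\gnsmap_\theta(a_n)\to\z$), whereas you argue directly by first establishing $\theta\circ E=\theta$ and identifying $p$ as the rank-one projection onto $\CC\gnsmap_\theta(\one)$.
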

\begin{proof}
The implication $(\implies)$ is trivial by \prettyref{eq:unit_impl_inv_state_1}. To prove the converse,
suppose that $0\neq\z\in\Ltwo{N,\theta}\ominus\CC\gnsmap_{\theta}(\one)$
is invariant under $U$. Let $\left(a_{n}\right)_{n=1}^{\infty}$
be a sequence in $N$ such that $\gnsmap_{\theta}(a_{n})\xrightarrow{n\to\infty}\z$.
Then $\gnsmap_{\theta}(E(a_{n}))=p\gnsmap_{\theta}(a_{n})\xrightarrow{n\to\infty}p\z=\z$.
Since $\left\langle \gnsmap_{\theta}(\one),\z\right\rangle =0$, we
necessarily have $N^{\alpha}\supsetneqq\CC\one$.
\end{proof}

\begin{defn}
\label{def:asympt_inv}Let $\a$ be an action of a locally compact quantum group $\G$ on a
von Neumann algebra $N$ with an invariant faithful normal state $\theta$.
A bounded net $\left(x_{i}\right)_{i \in \Ind}$ in $N$ is called
\emph{asymptotically invariant under} $\a$ if for every normal state
$\om$ of $\Linfty{\G}$, we have $(\om\tensor\i)\a(x_{i})-x_{i} \stackrel{i \in \Ind}{\longrightarrow} {}0$
strongly. Such a net is said to be \emph{trivial} if $x_{i}-\theta(x_{i})\one \stackrel{i \in \Ind}{\longrightarrow} {}0$
strongly. We say that $\a$ is \emph{strongly ergodic} if
all its asymptotically invariant nets are trivial.\end{defn}

Strong ergodicity evidently implies ergodicity.

In the setting of \prettyref{def:asympt_inv}, denote by $U\in\M(\Cz{\G} \tensor \mc K (\Ltwo{N,\theta}))$
the unitary implementation of $\a$. Observe that when $\left(x_{i}\right)_{i \in \Ind}$
is asymptotically invariant and does not converge strongly to $0$,
we may assume, by passing to a subnet if necessary, that $\left(\left\Vert \gnsmap_{\theta}(x_{i})\right\Vert \right)_{i\in \Ind}$
is bounded from below, and the normalised net $(\frac{1}{\left\Vert \gnsmap_{\theta}(x_{i})\right\Vert }\gnsmap_{\theta}(x_{i}))_{i\in \Ind}$
is then almost invariant under $U$, because by \prettyref{eq:unit_impl_inv_state_2}, for every normal state
$\om$ of $\Linfty{\G}$, since $\a$ preserves $\theta$,
\[
(\om\tensor\i)(U^{*})\gnsmap_{\theta}(x_{i})-\gnsmap_{\theta}(x_{i})=\gnsmap_{\theta}((\om\tensor\i)\a(x_{i})-x_{i}) \stackrel{i \in \Ind}{\longrightarrow} 0.
\]

For the next definition, let $N$ be a von Neumann algebra in standard
form on $\Ltwo N$ with modular conjugation $J$ and let $n\in\N$. We
will consider $M_{n}\tensor N$ as acting standardly on $\Ltwo{M_{n},\tr}\tensor\Ltwo N$
which, as a vector space, is just $M_{n}\tensor\Ltwo N$. The positive
cone \cite{Haagerup__standard_form} of $M_{n}\tensor N$ in this
representation is the closure of the set of sums of vectors of the
form $\left(\xi_{i}J\xi_{j}\right)_{i,j=1}^{n}$ for $\xi_{1},\ldots,\xi_{n}$
in the left Hilbert algebra of $N$ inside $\Ltwo N$.
\begin{defn}[{\cite[2.5]{Sauvageot__impl_canon_co_actions}}]
\label{def:pres_pos_cone}Let $N$ be a von Neumann algebra and $u=\left(u_{ij}\right)_{i,j=1}^{n}\in M_{n}\tensor\mathcal{B}(\Ltwo N)$.
We say that $u$ \emph{preserves the positive cone} if for every matrix
$\left(\z_{ij}\right)_{i,j=1}^{n}\in\Ltwo{M_{n},\tr}\tensor\Ltwo N$
in the positive cone of $M_{n}\tensor N$, the matrix $\left(u_{ij}\z_{ij}\right)_{i,j=1}^{n}$
also belongs to the positive cone of $M_{n}\tensor N$.\end{defn}
\begin{rem}
\label{rem:pres_pos_cone_adj}By self-duality of the positive cone,
$\left(u_{ij}\right)_{i,j=1}^{n}$ in $M_{n}\tensor\mathcal{B}(\Ltwo N)$
preserves it if and only if $\left(u_{ij}^{*}\right)_{i,j=1}^{n}$
preserves it.
\end{rem}
We will require a generalisation of several results of Sauvageot \cite{Sauvageot__impl_canon_co_actions}.
Since the proof of \cite[Remarque 4.6, 2]{Sauvageot__impl_canon_co_actions}
is not explicit, and, as mentioned in \cite{Vaes__unit_impl_LCQG},
\cite[Lemme 4.1]{Sauvageot__impl_canon_co_actions} is incorrect,
we provide full details.

\begin{prop}
\label{prop:scaled_U_preserves_pos_cone}Let $\G$ be a locally compact quantum group acting
on a von Neumann algebra $N$ by an action $\a:N\to\Linfty{\G}\tensorn N$.
Let $U\in\M(\Cz{\G}\tensor\mathcal{K}(\Ltwo N))$ be the unitary
implementation of $\a$. Then for every $\xi_{1},\ldots,\xi_{n}\in D(\widehat{\nabla}^{-1/2})$,
the matrices $((\om_{\widehat{\nabla}^{-1/2}\xi_{i},\xi_{j}}\tensor\i)(U^{*}))_{1\leq i,j\leq n}$
and $((\om_{\xi_{j},\widehat{\nabla}^{-1/2}\xi_{i}}\tensor\i)(U))_{1\leq i,j\leq n}$
preserve the positive cone.\end{prop}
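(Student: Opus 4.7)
The plan is to derive both preservation statements from the explicit description $U = (\widehat{J} \tensor J_\theta)\tilde{J}$ of the unitary implementation coupled with the formula for $\tilde{T}$ given in \prettyref{lem:T_tilde}. The hypothesis $\xi_i \in D(\widehat{\nabla}^{-1/2})$ is arranged precisely so that $\eta_i := \widehat{\nabla}^{-1/2}\xi_i$ lies in $D(\widehat{T}) = D(\widehat{\nabla}^{1/2})$ and $\widehat{T}\eta_i = \widehat{J}\xi_i$, which is exactly what is required in order to substitute into the symmetry identity \eqref{eq:T_tilde_T_hat}. A useful rewriting of $U^{*}$ is $U^{*} = \tilde{J}(\widehat{J} \tensor J_\theta) = \tilde{T}\tilde{\nabla}^{-1/2}(\widehat{J} \tensor J_\theta)$, which ties the slices of $U^{*}$ to the unbounded operator $\tilde{T}$ of the crossed product.

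Taking $x_1, \ldots, x_n \in \mathcal{N}_\theta \cap \mathcal{N}_\theta^{*}$, a fixed dense left-ideal, I would consider a generating element $\bigl(\gnsmap_\theta(x_i)\,J_\theta\,\gnsmap_\theta(x_j)\bigr)_{i,j}$ of the positive cone of $M_n \tensor N$ in the sense of \prettyref{def:pres_pos_cone}. Using \eqref{eq:unit_impl_inv_state_2} (or equivalently, a direct pairing computation with the formula $U^{*} = \tilde{J}(\widehat{J} \tensor J_\theta)$), one rewrites the action of $(\om_{\widehat{\nabla}^{-1/2}\xi_i, \xi_j} \tensor \i)(U^{*})$ on $\gnsmap_\theta(x_i)\,J_\theta\,\gnsmap_\theta(x_j)$ in terms of $\a(x_i^{*})$ and $\a(x_j)$ applied to vectors built from $\widehat{\nabla}^{-1/2}\xi_i$ and $\widehat{J}\xi_j$. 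Invoking \prettyref{lem:T_tilde} with $\eta = \widehat{\nabla}^{-1/2}\xi_j$ (so that $\widehat{T}\eta = \widehat{J}\xi_j$), the symmetry exchanging the roles of $(\xi_i, x_i)$ and $(\xi_j, x_j)$ forces each entry of the resulting matrix to factor as $\mu_i\,J_\theta\,\mu_j$ with $\mu_k$ a vector depending only on $(\xi_k, x_k)$, lying in the left Hilbert algebra of $N$. This places the output matrix in the positive cone of $M_n \tensor N$ on a dense set of inputs; the extension to the full closure of the positive cone is then immediate by boundedness of the slice operators $(\om_{\widehat{\nabla}^{-1/2}\xi_i, \xi_j} \tensor \i)(U^{*})$.

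The principal obstacle will be the careful bookkeeping of unbounded-operator domains: one has to check that every intermediate manipulation stays within the core for $\tilde{T}$ described in \prettyref{lem:T_tilde} (the linear span of $\a(x^{*})(\eta \tensor \gnsmap_\theta(y))$), that the passage from $\tilde{J}$ to $\tilde{T}\tilde{\nabla}^{-1/2}$ is legitimate against the chosen test vectors, and that after the symmetry substitution the outcome really factors as $\mu_i\,J_\theta\,\mu_j$ — the latter being the crux of the computation, since the characterisation of the positive cone in the paragraph before \prettyref{def:pres_pos_cone} is exactly what allows one to recognise the output as belonging to it. Once the first statement is established, the second, concerning $\bigl((\om_{\xi_j, \widehat{\nabla}^{-1/2}\xi_i} \tensor \i)(U)\bigr)_{i,j}$, follows at once from \prettyref{rem:pres_pos_cone_adj}: the entrywise adjoint of $(\om_{\widehat{\nabla}^{-1/2}\xi_i, \xi_j} \tensor \i)(U^{*})$ equals $(\om_{\xi_j, \widehat{\nabla}^{-1/2}\xi_i} \tensor \i)(U)$, and composing with the transposition of indices $(i,j)\mapsto (j,i)$ — which also preserves the positive cone — yields exactly the second matrix.
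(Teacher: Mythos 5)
Your toolkit is the right one --- rewriting $U^{*}=\tilde{J}(\widehat{J}\tensor J_{\theta})$, using $\widehat{T}\widehat{\nabla}^{-1/2}\xi=\widehat{J}\xi$ to feed the vectors into \eqref{eq:T_tilde_T_hat}, and finishing the second claim via \prettyref{rem:pres_pos_cone_adj} --- and this is exactly the machinery the paper deploys. But the step you yourself flag as the crux does not go through: the entries of the output matrix do \emph{not} factor as $\mu_{i}J_{\theta}\mu_{j}$. What the computation actually yields (this is \eqref{eq:U_preserves_pos_cone_stage_1} in the paper) is
$\langle\eta\tensor bJ_{\theta}\gnsmap_{\theta}(d),U^{*}(\xi\tensor aJ_{\theta}\gnsmap_{\theta}(c))\rangle=\langle\a(a^{*})(\eta\tensor\gnsmap_{\theta}(b)),\tilde{\nabla}^{1/2}\a(c^{*})(\widehat{\nabla}^{-1/2}\xi\tensor\gnsmap_{\theta}(d))\rangle$,
and the unbounded operator $\tilde{\nabla}^{1/2}$ sitting in the middle entangles the data $(c,d)$ with $(a,b)$; there is no splitting of the right-hand side into a vector depending on $(\xi_{i},a_{i})$ alone against $J_{\theta}$ of another such vector. (Even classically, $(\om_{\xi,\eta}\tensor\i)(U^{*})$ is an average of the implementing unitaries, so the image of a generating element of the cone is an average of generating elements, not a single one.)

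The missing idea is the \emph{self-duality} of the positive cone, which you never invoke. Summing the displayed identity over $i,j$ with $a=a_{i}$, $c=a_{j}$, $b=b_{i}$, $d=b_{j}$, $\xi=\xi_{j}$, $\eta=\widehat{\nabla}^{-1/2}\xi_{i}$ gives
$\sum_{i,j}\langle b_{i}J_{\theta}\gnsmap_{\theta}(b_{j}),(\om_{\widehat{\nabla}^{-1/2}\xi_{i},\xi_{j}}\tensor\i)(U^{*})a_{i}J_{\theta}\gnsmap_{\theta}(a_{j})\rangle=\bigl\Vert\tilde{\nabla}^{1/4}\sum_{i}\a(a_{i}^{*})(\widehat{\nabla}^{-1/2}\xi_{i}\tensor\gnsmap_{\theta}(b_{i}))\bigr\Vert^{2}\geq0$;
since the $(b_{i}J_{\theta}\gnsmap_{\theta}(b_{j}))_{i,j}$ generate the cone and the cone is self-dual, this forces the image of $(a_{i}J_{\theta}\gnsmap_{\theta}(a_{j}))_{i,j}$ into the cone --- positivity of the pairing is all one can extract, and it suffices. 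A secondary point: \eqref{eq:unit_impl_inv_state_2} is only valid when $\a$ preserves a faithful normal state, which is not assumed in \prettyref{prop:scaled_U_preserves_pos_cone}; you must argue from $U=(\widehat{J}\tensor J_{\theta})\tilde{J}$, \eqref{eq:U_Js} and \prettyref{lem:T_tilde} throughout, as your parenthetical alternative suggests.
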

\begin{proof}
Fix $a\in N$, $b,c,d\in\mathcal{N}_{\theta}$, $\xi\in D(\widehat{\nabla}^{-1/2})$
and $\eta\in\Ltwo{\G}$. Note that $bJ_{\theta}\gnsmap_{\theta}(d)=J_{\theta}dJ_{\theta}\gnsmap_{\theta}(b)$
with $J_{\theta}dJ_{\theta}\in N'$. By \prettyref{eq:unitary_impl},
we have
\begin{equation}
\begin{split}\left\langle \eta\tensor bJ_{\theta}\gnsmap_{\theta}(d),U^{*}(\xi\tensor aJ_{\theta}\gnsmap_{\theta}(c))\right\rangle  & =\left\langle \eta\tensor J_{\theta}dJ_{\theta}\gnsmap_{\theta}(b),\a(a)U^{*}(\xi\tensor J_{\theta}\gnsmap_{\theta}(c))\right\rangle \\
 & =\left\langle \a(a^{*})(\eta\tensor\gnsmap_{\theta}(b)),(\one\tensor J_{\theta}d^{*}J_{\theta})U^{*}(\xi\tensor J_{\theta}\gnsmap_{\theta}(c))\right\rangle .
\end{split}
\label{eq:unitary_impl__inn_prod}
\end{equation}
Using \prettyref{eq:unitary_impl}, \prettyref{eq:U_Js} and \prettyref{eq:T_tilde_T_hat}
we obtain, as $\widehat{J}\xi=\widehat{T}\widehat{\nabla}^{-1/2}\xi$,
\begin{equation}
\begin{split}(\one\tensor J_{\theta}d^{*}J_{\theta})U^{*}(\xi\tensor J_{\theta}\gnsmap_{\theta}(c)) & =(\widehat{J}\tensor J_{\theta})(\one\tensor d^{*})U(\widehat{J}\xi\tensor\gnsmap_{\theta}(c))\\
 & =(\widehat{J}\tensor J_{\theta})U\a(d^{*})(\widehat{J}\xi\tensor\gnsmap_{\theta}(c))\\
 & =\tilde{J}\tilde{T}\a(c^{*})(\widehat{\nabla}^{-1/2}\xi\tensor\gnsmap_{\theta}(d))\\
 & =\tilde{\nabla}^{1/2}\a(c^{*})(\widehat{\nabla}^{-1/2}\xi\tensor\gnsmap_{\theta}(d)).
\end{split}
\label{eq:unitary_impl__T_tilde}
\end{equation}
To conclude,
\begin{equation}
\left\langle \eta\tensor bJ_{\theta}\gnsmap_{\theta}(d),U^{*}(\xi\tensor aJ_{\theta}\gnsmap_{\theta}(c))\right\rangle =\left\langle \a(a^{*})(\eta\tensor\gnsmap_{\theta}(b)),\tilde{\nabla}^{1/2}\a(c^{*})(\widehat{\nabla}^{-1/2}\xi\tensor\gnsmap_{\theta}(d))\right\rangle .\label{eq:U_preserves_pos_cone_stage_1}
\end{equation}

Take now $a_{1},\ldots,a_{n},b_{1},\ldots,b_{n}\in\mathcal{N}_{\theta}$
and $\xi_{1},\ldots,\xi_{n}\in D(\widehat{\nabla}^{-1/2})$. From \prettyref{eq:U_preserves_pos_cone_stage_1},
\begin{multline*}
\sum_{i,j=1}^{n}\left\langle b_{i}J_{\theta}\gnsmap_{\theta}(b_{j}),(\om_{\widehat{\nabla}^{-1/2}\xi_{i},\xi_{j}}\tensor\i)(U^{*})a_{i}J_{\theta}\gnsmap_{\theta}(a_{j})\right\rangle \\
\begin{split} & =\sum_{i,j=1}^{n}\left\langle \widehat{\nabla}^{-1/2}\xi_{i}\tensor b_{i}J_{\theta}\gnsmap_{\theta}(b_{j}),U^*(\xi_{j}\tensor a_{i}J_{\theta}\gnsmap_{\theta}(a_{j}))\right\rangle \\
 & =\sum_{i,j=1}^{n}\left\langle \a(a_{i}^{*})(\widehat{\nabla}^{-1/2}\xi_{i}\tensor\gnsmap_{\theta}(b_{i})),\tilde{\nabla}^{1/2}\a(a_{j}^{*})(\widehat{\nabla}^{-1/2}\xi_{j}\tensor\gnsmap_{\theta}(b_{j}))\right\rangle \\
 & =\left\Vert \tilde{\nabla}^{1/4}\sum_{i=1}^{n}\a(a_{i}^{*})(\widehat{\nabla}^{-1/2}\xi_{i}\tensor\gnsmap_{\theta}(b_{i}))\right\Vert ^{2}\geq0.
\end{split}
\end{multline*}
The desired conclusion follows from the self-duality of the positive
cone and \prettyref{rem:pres_pos_cone_adj}.
\end{proof}

\begin{prop}
\label{prop:act_pos_cone_applications}Let $\a:N\to\linfty{\G}\tensorn N$
be an action of a discrete quantum group $\G$ on a von Neumann algebra
$N$. Denote its unitary implementation by $U$.
\begin{enumerate}
\item \label{enu:act_pos_cone_applications_1}Assume that a state $m$ of
$N$ is invariant under $\a$. Then there exists a net of unit vectors
$\left(\z_{\iota}\right)_{\iota \in \Ind}$ in the positive cone $\mathcal{P}$ of
$N$ in $\Ltwo N$ that is almost invariant under $U$ and such that
$\om_{\z_{\iota}} \stackrel{\iota \in \Ind}{\longrightarrow} m$ in the weak$^{*}$ topology of $N^{*}$.
\item \label{enu:act_pos_cone_applications_2}If a normal positive functional
$\rho$ of $N$ is invariant under $\a$, then the unique vector $\z\in\mathcal{P}$
with $\rho=\om_{\z}$ is invariant under $U$.
\item \label{enu:act_pos_cone_applications_3}The unitary implementation
of $\a$ is the unique unitary $U$ in $\M(\cz{\G}\tensor\mathcal{K}(\Ltwo N))$
that satisfies \prettyref{eq:unitary_impl} and such that for every
$\xi_{1},\ldots,\xi_{n}\in D(\widehat{\nabla}^{-1/2})$, the matrix $((\om_{\xi_{j},\widehat{\nabla}^{-1/2}\xi_{i}}\tensor\i)(U))_{1\leq i,j\leq n}$
preserves the positive cone.
\end{enumerate}
\end{prop}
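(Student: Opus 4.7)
My plan is to handle the three parts in order, exploiting throughout the positive cone preservation of \prettyref{prop:scaled_U_preserves_pos_cone} and the Powers--St{\o}rmer inequality to move between state-level and vector-level statements.

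For~(\ref{enu:act_pos_cone_applications_1}), I would begin by weak$^*$-approximating the invariant state~$m$ by normal positive functionals on~$N$, each uniquely representable by a vector in the positive cone~$\mathcal{P}$. Invariance of~$m$ gives $\om \cdot m = m$ for every state $\om \in \lone{\G}$, so along any weak$^*$-approximating net $\rho_\iota$ of normal states one has $\om \cdot \rho_\iota - \rho_\iota \to 0$ weak$^*$ in~$N_*$. A Mazur/Day-style convexity argument, iterated over finite collections of such~$\om$'s, upgrades this to norm convergence to~$0$ while preserving weak$^*$ convergence to~$m$. Passing to the corresponding vectors $\z_\iota \in \mathcal{P}$, the Powers--St{\o}rmer inequality converts norm convergence of states into norm convergence of vectors, while~\prettyref{eq:unit_impl_inv_state_2} together with \prettyref{prop:scaled_U_preserves_pos_cone} identifies $(\om \tensor \i)(U^*)\z_\iota$ with the positive-cone vector representing $\om \cdot \om_{\z_\iota}$; this yields the required almost invariance under~$U$ via~\prettyref{rem:inv_vec}.

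For~(\ref{enu:act_pos_cone_applications_2}), let $\z \in \mathcal{P}$ be the unique vector with $\om_\z = \rho$. By \prettyref{lem:avg_semigp_two} and \prettyref{prop:ave_equals_proj_gen}, the projection $p^U$ onto $\inv(U)$ is the minimal-norm element of the closed convex hull of $\{(\om \tensor \i)(U^*)\z : \om \in \lone{\G} \text{ a state}\}$. Using \prettyref{prop:scaled_U_preserves_pos_cone} --- promoted by a density argument from the restricted family $\om_{\hat\nabla^{-1/2}\xi,\xi}$ to arbitrary states in $\lone{\G}$, using that matrix coefficients in the discrete case span a suitably dense subset --- each such average lies in~$\mathcal{P}$, and \prettyref{eq:unit_impl_inv_state_1} together with the $\a$-invariance of~$\rho$ shows that each normalised average represents~$\rho$. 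Thus $p^U\z \in \mathcal{P}$ satisfies $\om_{p^U\z} = \om_\z$; uniqueness of the representing vector forces $\z = p^U\z \in \inv(U)$.

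For~(\ref{enu:act_pos_cone_applications_3}), let $V$ be another unitary satisfying the stated properties and $U_0$ the canonical unitary implementation. Both implement~$\a$, so $W := V U_0^*$ commutes with $\one \tensor N$, placing $W \in \mc B(\ltwo{\G}) \tensorn N'$. The strategy is to use positive-cone preservation for both $V$ and~$U_0$ together with \prettyref{rem:pres_pos_cone_adj} to show that for each $\xi \in D(\hat\nabla^{-1/2})$ the slice $(\om_{\xi,\hat\nabla^{-1/2}\xi} \tensor \i)(W)$ preserves~$\mathcal{P}$; since $W$ sits in $\mc B(\ltwo{\G}) \tensorn N'$, each such slice lies in $N'$, and an operator in $N'$ preserving~$\mathcal{P}$ automatically belongs to the positive part of the centre $N \cap N'$. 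A normalisation argument --- invoking part~(\ref{enu:act_pos_cone_applications_2}) (whose proof applies to any unitary fulfilling the two hypotheses, not just~$U_0$) at a vector in~$\mathcal{P}$ that represents an $\a$-invariant normal positive functional built from part~(\ref{enu:act_pos_cone_applications_1}) --- shows that $V$ and $U_0$ act identically on such vectors, pinning down each central slice of~$W$ to a scalar and then to~$\one$. Density of matrix coefficients in $\lone{\G}$ then yields $W = \one$. The main obstacle is that the positive-cone preservation is only given for the restricted family $\om_{\xi,\hat\nabla^{-1/2}\xi}$, and Sauvageot's argument~\cite[Remarque~4.6,~2]{Sauvageot__impl_canon_co_actions} on which uniqueness is traditionally based is non-explicit and relies on a lemma known to be flawed; the delicate point is to ensure rigorously that this restricted family separates unitaries on $\M(\cz{\G}\tensor\mathcal{K}(\Ltwo N))$, using discreteness of~$\G$ to furnish $\hat\nabla^{-1/2}$ with a sufficiently dense core coming from matrix coefficients.
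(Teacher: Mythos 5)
There is a genuine gap, and it occurs at the same point in all three parts: you run the positive-cone/Powers--St{\o}rmer argument vector-by-vector in $\Ltwo N$, whereas the only cone-preservation statement available (\prettyref{prop:scaled_U_preserves_pos_cone}) is a \emph{matrix} statement, and the matrix level is genuinely needed. Concretely, in part (a) you assert that $(\om\tensor\i)(U^{*})\z_{\iota}$ is ``the positive-cone vector representing $\om\cdot\om_{\z_{\iota}}$''. Neither half of this is true: for a genuine state $\om=\om_{\xi}$ the single slice $(\om_{\xi}\tensor\i)(U^{*})$ is not known to preserve $\mathcal{P}$ --- \prettyref{prop:scaled_U_preserves_pos_cone} only covers the twisted functionals $\om_{\widehat{\nabla}^{-1/2}\xi_{i},\xi_{j}}$, which are not states unless $\widehat{\nabla}$ acts trivially, i.e.\ unless $\widehat{\G}$ is of Kac type (a general discrete $\G$ need not satisfy this) --- and even when it does, $(\om_{\xi}\tensor\om_{\z})\a$ decomposes as the \emph{sum} $\sum_{k}\om_{(\om_{\eta_{k},\xi}\tensor\i)(U)\z}$ over an orthonormal basis $(\eta_{k})$, which is not the vector functional of any single slice applied to $\z$. (Also, \prettyref{eq:unit_impl_inv_state_2} presupposes an invariant faithful normal state, which is not among the hypotheses here.) The paper's proof circumvents exactly this: it fixes a finite $F\subseteq\Irred{\widehat{\G}}$ and an orthonormal basis $(\eta_{i})_{i=1}^{n}$ of $\ell^{2}(\G)_{F}$, forms the matrices $Z_{1}=(\langle\eta_{j},\widehat{\nabla}^{-1/2}\eta_{i}\rangle\z)$ and $Z_{2}=((\om_{\eta_{j},\widehat{\nabla}^{-1/2}\eta_{i}}\tensor\i)(U)\z)$, both lying in the positive cone of $M_{n}\tensor N$ by \prettyref{prop:scaled_U_preserves_pos_cone}, bounds $\left\Vert\om_{Z_{1}}-\om_{Z_{2}}\right\Vert$ by the quantitative almost-invariance of the normal state, and applies Powers--St{\o}rmer \emph{in $M_{n}\tensor N$}. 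Part (b) is then literally the case $\e=0$, and part (c) is the same matrix computation combined with uniqueness of the positive-cone representative of a normal positive functional of $M_{n}\tensor N$.

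Your route for (c) has further problems. The slices of $W=VU_{0}^{*}$ do not factor through slices of $V$ and $U_{0}^{*}$, so cone-preservation for $V$ and $U_{0}$ gives no direct information about $W$; and the claim that an operator in $N'$ preserving $\mathcal{P}$ must lie in the positive part of $N\cap N'$ is unsubstantiated. You rightly flag that one must show the restricted family $\om_{\xi_{j},\widehat{\nabla}^{-1/2}\xi_{i}}$ separates the relevant unitaries, but the resolution is not a density argument among states: it is that $\widehat{\nabla}$ restricts to a bijection of each finite-dimensional block $\ell^{2}(\G)_{\gamma}$, so as $\gamma$ and the basis indices vary these functionals already determine any element of $\linfty{\G}\tensorn\mathcal{B}(\Ltwo N)$; the equality $\overline{u}=\overline{v}$ on each block then follows from totality of $\mathcal{P}$ and uniqueness of cone representatives, with no passage through $N'$ at all.
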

\begin{proof}
\prettyref{enu:act_pos_cone_applications_1} From the standard convexity
argument it follows that one can find a net $\left(m_{\iota}\right)_{\iota \in \Ind}$
of normal states of $N$ converging in the weak$^{*}$ topology to the state
$m$ such that 
\begin{equation}\label{eq:almost_inv_states_m_iota}
\left\Vert (\om_{\eta',\eta}\tensor m_{\iota})\a-\left\langle \eta',\eta\right\rangle m_{\iota}\right\Vert \stackrel{\iota \in \Ind}{\longrightarrow} {}0 \qquad \forall_{\eta,\eta'\in\ell^{2}(\G)}.
\end{equation}
For every $\iota \in \Ind$, write $\zeta_\iota$ for the unique vector in $\mathcal{P}$ such that $m_\iota=\om_{\z_\iota}$.

Recall that $\ell^{2}(\G)$ decomposes as $\bigoplus_{\gamma \in \Irred{\widehat{\G}}}\ell^{2}(\G)_{\gamma}$. Moreover, for every $\gamma\in\Irred{\widehat{\G}}$, the operator $\widehat{\nabla}$
restricts to a bijection over $\ell^{2}(\G)_{\gamma}$. Fix a non-empty finite set
$F\subseteq\Irred{\widehat{\G}}$ and $\e>0$. Let $\left(\eta_{i}\right)_{i=1}^{n}$
be an orthonormal basis of $\ell^{2}(\G)_{F} := \bigoplus_{\gamma \in F}\ell^{2}(\G)_{\gamma}$ and set $\overline{u}:=((\om_{\eta_{j},\widehat{\nabla}^{-1/2}\eta_{i}}\tensor\i)(U))_{1\leq i,j\leq n}$ and $c := \Vert \widehat{\nabla}^{1/2} | _{\ell^{2}(\G)_{F}}\Vert$.
Assume that $\rho$ is a normal state of $N$ with
\begin{equation} \label{eq:almost_inv_states_quantitative}
\left\Vert (\om_{\widehat{\nabla}^{-1/2}\eta_{j},\widehat{\nabla}^{-1/2}\eta_{i}}\tensor\rho)\a-\langle\widehat{\nabla}^{-1/2}\eta_{j},\widehat{\nabla}^{-1/2}\eta_{i}\rangle\rho\right\Vert \leq \frac{\e^2}{n^4 c^2} 
\qquad \forall_{1\leq i,j\leq n},
\end{equation}
and write $\z$ for the unique vector
in $\mathcal{P}$ such that $\rho=\om_{\z}$. For every $\eta\in\ell^{2}(\G)_{F}$,
\[
U(\eta\tensor\z)=\sum_{k=1}^{n}\eta_{k}\tensor(\om_{\eta_{k},\eta}\tensor\i)(U)\z.
\]
Therefore, for every $1\leq i,j\leq n$ and $x\in N$,
\[
(\om_{\widehat{\nabla}^{-1/2}\eta_{j},\widehat{\nabla}^{-1/2}\eta_{i}}\tensor\rho)\a(x)=\left\langle U(\widehat{\nabla}^{-1/2}\eta_{j}\tensor\z),(\one\tensor x)U(\widehat{\nabla}^{-1/2}\eta_{i}\tensor\z)\right\rangle =\sum_{k=1}^{n}\left\langle \overline{u}_{jk}\z,x\overline{u}_{ik}\z\right\rangle ,
\]
so that
\begin{equation}
\left|\sum_{k=1}^{n}\left\langle \overline{u}_{jk}\z,x\overline{u}_{ik}\z\right\rangle -\langle\widehat{\nabla}^{-1/2}\eta_{j},\widehat{\nabla}^{-1/2}\eta_{i}\rangle\left\langle \z,x\z\right\rangle \right|\leq\frac{\e^2}{n^4 c^2}\left\Vert x\right\Vert .\label{eq:almost_inv_states_vectors}
\end{equation}

Set $Z_{1}:=(\langle\eta_{j},\widehat{\nabla}^{-1/2}\eta_{i}\rangle\z)_{1\leq i,j\leq n}$
and $Z_{2}:=\left(\overline{u}_{ij}\z\right)_{1\leq i,j\leq n}$.
Both belong to the positive cone of $M_{n}\tensor N$ in $\Ltwo{M_{n},\tr}\tensor\Ltwo N$,
the second by \prettyref{prop:scaled_U_preserves_pos_cone}. Consider
the functionals $\om_{Z_{1}},\om_{Z_{2}}$ over $M_{n}\tensor N$.
For every $X=\left(x_{ij}\right)_{1\leq i,j\leq n}\in M_{n}\tensor N$,
we have
\[
\om_{Z_{1}}(X)=\sum_{i,j,k=1}^{n}\left\langle \langle\eta_{k},\widehat{\nabla}^{-1/2}\eta_{j}\rangle\z,x_{ji}\langle\eta_{k},\widehat{\nabla}^{-1/2}\eta_{i}\rangle\z\right\rangle =\sum_{i,j=1}^{n}\langle\widehat{\nabla}^{-1/2}\eta_{j},\widehat{\nabla}^{-1/2}\eta_{i}\rangle\left\langle \z,x_{ji}\z\right\rangle
\]
and $\om_{Z_{2}}(X)=\sum_{i,j,k=1}^{n}\left\langle \overline{u}_{jk}\z,x_{ji}\overline{u}_{ik}\z\right\rangle $.
Hence, by \prettyref{eq:almost_inv_states_vectors},
\[
\left|(\om_{Z_{1}}-\om_{Z_{2}})(X)\right|\leq\sum_{i,j=1}^{n}\frac{\e^2}{n^4 c^2}\left\Vert x_{ji}\right\Vert \leq\frac{\e^2}{n^2 c^2}\left\Vert X\right\Vert .
\]
In conclusion, $\left\Vert \om_{Z_{1}}-\om_{Z_{2}}\right\Vert \leq \frac{\e^2}{n^2 c^2}$.
By the Powers--St{\o}rmer inequality \cite[Lemma 2.10]{Haagerup__standard_form},
it follows that $\left\Vert Z_{1}-Z_{2}\right\Vert \leq \frac{\e}{n c}$. 
As a result, for each $1 \leq i,j \leq n$,
\[
\left\Vert (\om_{\eta_{j},\widehat{\nabla}^{-1/2}\eta_{i}}\tensor\i)(U - \one_{\Linfty{\G}}\tensor\one_{\B(\Ltwo{N})})\z \right\Vert =
\left\Vert (Z_1)_{ij} - (Z_2)_{ij} \right\Vert \le \frac{\e}{n c}.
\]
A simple calculation thus shows that for every $\a,\be \in \ell^{2}(\G)_{F}$,
\[
\left\Vert (\om_{\a,\be}\tensor\i)(U - \one_{\Linfty{\G}}\tensor\one_{\B(\Ltwo{N})})\z \right\Vert
\leq \frac{\e}{n c} n c \left\Vert \a \right\Vert \left\Vert \be \right\Vert = \e \left\Vert \a \right\Vert \left\Vert \be \right\Vert.
\]

To complete the proof, notice that by \eqref{eq:almost_inv_states_m_iota}, there exists $\iota_0 \in \Ind$ such that $\rho := m_\iota$ satisfies \eqref{eq:almost_inv_states_quantitative} for every $\iota_0 \leq \iota \in \Ind$. Since $\e$ was arbitrary, we get 
\[
\lim_{\iota\in\Ind} (\om_{\a,\be}\tensor\i)(U - \one_{\Linfty{\G}}\tensor\one_{\B(\Ltwo{N})})\z_\iota = 0
\]
for all $\a,\be \in \ell^{2}(\G)_{F}$. Letting $F$ vary, this holds for all $\a,\be \in \ell^{2}(\G)$ by density. From \prettyref{lem:almost_inv_vec}, \prettyref{enu:almost_inv_vect_2}, the net $(\z_\iota)_{\iota \in \Ind}$ is almost invariant under $U$.

\prettyref{enu:act_pos_cone_applications_2} Repeat the proof of \prettyref{enu:act_pos_cone_applications_1}
with $\e=0$.

\prettyref{enu:act_pos_cone_applications_3} Suppose that $U,V\in\linfty{\G}\tensorn\mathcal{B}(\Ltwo N)$
satisfy the given requirements. Fix $\gamma\in\Irred{\widehat{\G}}$, let
$\left(\eta_{i}\right)_{i=1}^{n}$ be an orthonormal basis of $\ell^{2}(\G)_{\gamma}$,
and set $\overline{u}:=((\om_{\eta_{j},\widehat{\nabla}^{-1/2}\eta_{i}}\tensor\i)(U))_{1\leq i,j\leq n}$,
$\overline{v}:=((\om_{\eta_{j},\widehat{\nabla}^{-1/2}\eta_{i}}\tensor\i)(V))_{1\leq i,j\leq n}$.
As in the proof of \prettyref{enu:act_pos_cone_applications_1}, for
every $x\in N$ and $\z\in\Ltwo N$,
\[
\sum_{k=1}^{n}\left\langle \overline{u}_{jk}\z,x\overline{u}_{ik}\z\right\rangle =(\om_{\widehat{\nabla}^{-1/2}\eta_{j},\widehat{\nabla}^{-1/2}\eta_{i}}\tensor\om_{\z})\a(x)=\sum_{k=1}^{n}\left\langle \overline{v}_{jk}\z,x\overline{v}_{ik}\z\right\rangle .
\]
Consequently, if $\z\in\mathcal{P}$ and we put $Z_{u}:=\left(\overline{u}_{ij}\z\right)_{1\leq i,j\leq n}$
and $Z_{v}:=\left(\overline{v}_{ij}\z\right)_{1\leq i,j\leq n}$,
then for all $X=\left(x_{ij}\right)_{1\leq i,j\leq n}$ we have $\om_{Z_{u}}(X)=\sum_{i,j,k=1}^{n}\left\langle \overline{u}_{jk}\z,x_{ji}\overline{u}_{ik}\z\right\rangle =\sum_{i,j,k=1}^{n}\left\langle \overline{v}_{jk}\z,x_{ji}\overline{v}_{ik}\z\right\rangle =\om_{Z_{v}}(X)$.
Since $Z_{u},Z_{v}$ belong to the positive cone of $M_{n}\tensor N$
in $\Ltwo{M_{n},\tr}\tensor\Ltwo N$ by assumption,
this entails $Z_{u}=Z_{v}$. As $\mathcal{P}$ is total in $\Ltwo N$,
we get $\overline{u}=\overline{v}$. Then it is easy to see that in fact $U=V$, as $\gamma \in \Irred{\hQG}$ was arbitrary and $\left(\eta_{i}\right)_{i=1}^{n}$ was a basis of $\ell^{2}(\G)_{\gamma}$. Proposition \ref{prop:scaled_U_preserves_pos_cone} ends the proof.\end{proof}

\begin{cor}[{cf.~\cite[Lemme 4.4 and Remarque 4.6]{Sauvageot__impl_canon_co_actions}}]
\label{cor:U_preserves_pos_cone}Let $\G$ be a locally compact quantum group with trivial
scaling group, acting on a von Neumann algebra $N$ by an action $\a:N\to\Linfty{\G}\tensorn N$.
Let $U\in\M(\Cz{\G}\tensor\mathcal{K}(\Ltwo N))$ be the unitary
implementation of $\a$. Then for every $\xi_{1},\ldots,\xi_{n}\in\Ltwo{\G}$,
the matrices $((\om_{\xi_{i},\xi_{j}}\tensor\i)(U^{*}))_{1\leq i,j\leq n}$
and $((\om_{\xi_{j},\xi_{i}}\tensor\i)(U))_{1\leq i,j\leq n}$ preserve
the positive cone. If $\G$ is discrete, then the unitary implementation
of $\a$ is the unique unitary $U$ in $\M(\cz{\G}\tensor\mathcal{K}(\Ltwo N))$
that satisfies \prettyref{eq:unitary_impl} and such that for every
$\xi_{1},\ldots,\xi_{n}\in\ltwo{\G}$, the matrix $((\om_{\xi_{j},\xi_{i}}\tensor\i)(U))_{1\leq i,j\leq n}$
preserves the positive cone.\end{cor}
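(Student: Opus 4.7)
The plan is to reduce everything to \prettyref{prop:scaled_U_preserves_pos_cone} by using the triviality of the scaling group to absorb the inconvenient factors of $\widehat{\nabla}^{-1/2}$. The first step is to establish an \emph{analytic commutation identity}: by \prettyref{eq:J_tau_impl}, the hypothesis $\tau_t=\i$ for every $t\in\RR$ means that $\widehat{\nabla}^{it}$ commutes with every $x\in\Linfty{\G}$, so by a standard functional-calculus argument each $x\in\Linfty{\G}$ preserves $D(\widehat{\nabla}^{z})$ and commutes with $\widehat{\nabla}^{z}$ on it. It follows that for any entire analytic $\xi,\eta$ for $\widehat{\nabla}$, any $x\in\Linfty{\G}$ and any $z_1,z_2\in\CC$,
\[ \bigl\langle \widehat{\nabla}^{z_1}\xi,\, x\widehat{\nabla}^{z_2}\eta \bigr\rangle = \bigl\langle \xi,\, x\widehat{\nabla}^{\bar z_1+z_2}\eta \bigr\rangle. \]
Specialising to $(z_1,z_2)\in\{(-\tfrac14,\tfrac14),\,(\tfrac14,-\tfrac14)\}$ yields that $\om_{\widehat{\nabla}^{z_1}\xi,\widehat{\nabla}^{z_2}\eta}$ and $\om_{\xi,\eta}$ agree on $\Linfty{\G}$, and so induce identical slice maps of $U$ and $U^{*}$.

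For the first assertion of the corollary, I would take arbitrary $\eta_1,\ldots,\eta_n\in\Ltwo{\G}$ and approximate each one in norm by entire analytic vectors $\eta_i^{(k)}$. Setting $\xi_i^{(k)}:=\widehat{\nabla}^{1/4}\eta_i^{(k)}\in D(\widehat{\nabla}^{-1/2})$ and applying \prettyref{prop:scaled_U_preserves_pos_cone} to the family $\xi_1^{(k)},\ldots,\xi_n^{(k)}$, the matrices
\[ \bigl((\om_{\widehat{\nabla}^{-1/4}\eta_i^{(k)},\,\widehat{\nabla}^{1/4}\eta_j^{(k)}}\tensor\i)(U^{*})\bigr)_{ij}, \qquad \bigl((\om_{\widehat{\nabla}^{1/4}\eta_j^{(k)},\,\widehat{\nabla}^{-1/4}\eta_i^{(k)}}\tensor\i)(U)\bigr)_{ij} \]
preserve the positive cone. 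By the analytic commutation identity, these matrices coincide with $((\om_{\eta_i^{(k)},\eta_j^{(k)}}\tensor\i)(U^{*}))_{ij}$ and $((\om_{\eta_j^{(k)},\eta_i^{(k)}}\tensor\i)(U))_{ij}$, respectively. Since slice maps depend norm-continuously on the functional and the positive cone is norm-closed, letting $k\to\infty$ extends cone preservation to arbitrary $\eta_1,\ldots,\eta_n\in\Ltwo{\G}$.

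For the uniqueness statement in the discrete case, suppose $V\in\M(\cz{\G}\tensor\mathcal{K}(\Ltwo N))$ is a unitary satisfying \prettyref{eq:unitary_impl} and the weaker cone-preservation hypothesis for all $\xi_1,\ldots,\xi_n\in\ltwo{\G}$. The plan is to verify the stronger hypothesis of \prettyref{prop:act_pos_cone_applications}~\prettyref{enu:act_pos_cone_applications_3}, which will force $V=U$. Given $\xi_1,\ldots,\xi_n\in D(\widehat{\nabla}^{-1/2})$, since the entire analytic vectors for $\widehat{\nabla}$ form a core for $\widehat{\nabla}^{-1/2}$, I would choose analytic $\eta_i^{(k)}$ with both $\widehat{\nabla}^{1/4}\eta_i^{(k)}\to\xi_i$ and $\widehat{\nabla}^{-1/4}\eta_i^{(k)}\to\widehat{\nabla}^{-1/2}\xi_i$ in $\Ltwo{\G}$. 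By hypothesis, $((\om_{\eta_j^{(k)},\eta_i^{(k)}}\tensor\i)(V))_{ij}$ preserves the positive cone, and by the analytic commutation identity it equals $((\om_{\widehat{\nabla}^{1/4}\eta_j^{(k)},\,\widehat{\nabla}^{-1/4}\eta_i^{(k)}}\tensor\i)(V))_{ij}$; the norm limit is precisely the matrix $((\om_{\xi_j,\widehat{\nabla}^{-1/2}\xi_i}\tensor\i)(V))_{ij}$, which therefore preserves the positive cone, so $V=U$ by \prettyref{prop:act_pos_cone_applications}~\prettyref{enu:act_pos_cone_applications_3}.

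The main technical hurdle will be the clean verification of the analytic commutation identity, in particular the bookkeeping of the domains of the composition $\widehat{\nabla}^{z_1}\widehat{\nabla}^{z_2}$ on the relevant analytic vectors and the validity of the commutation $x\widehat{\nabla}^{z}=\widehat{\nabla}^{z}x$ on those domains; once this is in place, the remaining approximation steps are routine.
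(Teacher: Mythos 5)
Your proposal is correct and follows essentially the same route as the paper: both exploit that triviality of the scaling group makes $\widehat{\nabla}$ affiliated with $\Linfty{\G}'$ in order to redistribute the powers of $\widehat{\nabla}$ symmetrically inside the vector functionals, then invoke \prettyref{prop:scaled_U_preserves_pos_cone} and \prettyref{prop:act_pos_cone_applications}~\prettyref{enu:act_pos_cone_applications_3} together with a density/norm-continuity argument. The only difference is cosmetic: the paper writes $\om_{\widehat{\nabla}^{-1/2}\xi_i,\xi_j}=\om_{\widehat{\nabla}^{-1/4}\xi_i,\widehat{\nabla}^{-1/4}\xi_j}$ directly on $D(\widehat{\nabla}^{-1/2})$ and uses density of $\widehat{\nabla}^{-1/4}D(\widehat{\nabla}^{-1/2})$, which lets you bypass the entire-analytic-vector bookkeeping you flag as the main technical hurdle.
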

\begin{proof}
Let $\xi_{1},\ldots,\xi_{n}\in D(\widehat{\nabla}^{-1/2})$. Since the
scaling group of $\G$ is trivial, $\widehat{\nabla}$ is affiliated with
the commutant $\Linfty{\G}'$. Hence, as $U\in\Linfty{\G}\tensorn\mathcal{B}(\Ltwo N)$,
we have $(\om_{\widehat{\nabla}^{-1/2}\xi_{i},\xi_{j}}\tensor\i)(U^{*})=(\om_{\widehat{\nabla}^{-1/4}\xi_{i},\widehat{\nabla}^{-1/4}\xi_{j}}\tensor\i)(U^{*})$
for every $i,j$. Thus $((\om_{\widehat{\nabla}^{-1/4}\xi_{i},\widehat{\nabla}^{-1/4}\xi_{j}}\tensor\i)(U^{*}))_{1\leq i,j\leq n}$
preserves the positive cone by \prettyref{prop:scaled_U_preserves_pos_cone}.
Since $\widehat{\nabla}^{-1/4}D(\widehat{\nabla}^{-1/2})$ is dense in $\Ltwo{\G}$,
$((\om_{\xi_{i},\xi_{j}}\tensor\i)(U^{*}))_{1\leq i,j\leq n}$ preserves
the positive cone for all $\xi_{1},\ldots,\xi_{n}\in\Ltwo{\G}$. The
second statement follows similarly by using \prettyref{prop:act_pos_cone_applications},
\prettyref{enu:act_pos_cone_applications_3}.
\end{proof}
For the next lemma, recall that for a Hilbert space $\K$, $\mathcal{B}(\K)$
is standardly represented (e.g., by using the trace on $\mathcal{B}(\K)$)
on $\K\tensor\overline{\K}$ by $\pi_{\K}:\mathcal{B}(\K)\ni x\mapsto x\tensor\one$
with conjugation $\J:\K\tensor\overline{\K}\to\K\tensor\overline{\K}$,
$\z\tensor\overline{\eta}\mapsto\eta\tensor\overline{\z}$. The positive
cone of $M_{n}\tensor\mathcal{B}(\K)$ in $M_{n}\tensor\K\tensor\overline{\K}$
is the closure of the set of vectors of the form $(\sum_{k=1}^{m}\z_{k}^{i}\tensor\overline{\z_{k}^{j}})_{1\leq i,j\leq n}$
with $\z_{k}^{i}\in\K$ ($1\leq i\leq n$, $1\leq k\leq m$).
\begin{lem}
\label{lem:V_V_bar_unit_impl}Let $V\in\M(\cz{\G}\tensor\mathcal{K}(\K))$
be a representation of a discrete quantum group $\G$ on
a Hilbert space $\K$. The unitary implementation of the action $\a$
of $\G$ on $\mathcal{B}(\K)$ given by $\a(x):=V^{*}(\one\tensor x)V$,
$x\in\mathcal{B}(\K)$, on $\K\tensor\overline{\K}$ is $V\tpr V^{c}$.\end{lem}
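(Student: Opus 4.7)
My plan is to apply the uniqueness criterion of \prettyref{prop:act_pos_cone_applications}\prettyref{enu:act_pos_cone_applications_3}, which characterises the unitary implementation of an action of a discrete quantum group $\G$ as the unique unitary in $\M(\cz{\G} \tensor \mc K(\Ltwo N))$ that implements the action via \eqref{eq:unitary_impl} and whose matrices $((\om_{\xi_j, \widehat{\nabla}^{-1/2}\xi_i} \tensor \i)(U))_{i,j}$ preserve the positive cone of $M_n \tensor N$ for all $\xi_1,\dots,\xi_n \in D(\widehat{\nabla}^{-1/2})$. I therefore set $W := V \tpr V^c = V^c_{13}\,V_{12} \in \M(\cz{\G} \tensor \mc K(\K \tensor \overline{\K}))$ and verify both properties.

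The implementation property is immediate. Realising $\B(\K)$ standardly on $\K \tensor \overline{\K}$ via $\pi_\K(x) = x \tensor \one_{\overline{\K}}$, the operator $V^c_{13}$ acts only on legs $1$ and $3$ and hence commutes with $\one \tensor x \tensor \one_{\overline{\K}}$, giving
\[
W^*(\one \tensor x \tensor \one_{\overline{\K}}) W = V_{12}^*(\one \tensor x \tensor \one_{\overline{\K}}) V_{12} = \a(x) \tensor \one_{\overline{\K}}.
\]
Moreover, $W$ is a representation of $\G$ as a $\tpr$-tensor product of the representations $V$ and $V^c$.

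For the cone-preservation condition, which is the chief technical obstacle, I would choose $(e_s)_s$ to be an orthonormal basis of $\ltwo{\G}$ consisting of $\widehat{\nabla}$-eigenvectors, $\widehat{\nabla} e_s = \lambda_s e_s$; such a basis exists because $\ltwo{\G} = \bigoplus_{\gamma \in \Irr(\widehat{\G})} \ltwo{\G}_\gamma$ with each isotypic component finite-dimensional and $\widehat{\nabla}$-invariant. Writing $V[\eta,\xi] := (\om_{\eta,\xi} \tensor \i)(V)$ and similarly for $V^c$, a basis expansion gives
\[
a_{ij} := (\om_{\xi_j, \widehat{\nabla}^{-1/2}\xi_i} \tensor \i)(W) = \sum_s V[e_s, \widehat{\nabla}^{-1/2}\xi_i] \tensor V^c[\xi_j, e_s].
\]
From $V^c = (R \tensor \top)(V)$ together with $R(\cdot) = \widehat{J}(\cdot)^*\widehat{J}$ one obtains $V^c[\xi_j, e_s] = \top(V[\widehat{J}e_s, \widehat{J}\xi_j])$, while $S = R \circ \tau_{-i/2}$ combined with $\tau_t(\cdot) = \widehat{\nabla}^{it}(\cdot)\widehat{\nabla}^{-it}$ and \eqref{eq:reps_S} yields the antipode/modular identity
\[
V[\widehat{J}e_s, \widehat{J}\xi_j]^* = V[\widehat{\nabla}^{1/2}e_s, \widehat{\nabla}^{-1/2}\xi_j] = \lambda_s^{1/2}\,V[e_s, \widehat{\nabla}^{-1/2}\xi_j].
\]
Under the identification of $\K \tensor \overline{\K}$ with the Hilbert--Schmidt operators on $\K$ via $\z \tensor \overline{\z'} \leftrightarrow |\z\ra\la\z'|$, so that $V^c[\xi_j, e_s]$ acts by right multiplication by $V[\widehat{J}e_s, \widehat{J}\xi_j]$, applying $(a_{ij})$ entrywise to a cone vector $(\sum_k \z_k^i \tensor \overline{\z_k^j})_{i,j}$ produces the matrix whose $(i,j)$-entry is
\[
\sum_{k,s} V[e_s, \widehat{\nabla}^{-1/2}\xi_i]\,|\z_k^i\ra\la\z_k^j|\,V[\widehat{J}e_s, \widehat{J}\xi_j] = \sum_{k,s} |Y_{k,s}^i\ra\la Y_{k,s}^j|,
\]
where $Y_{k,s}^i := \lambda_s^{1/4}\,V[e_s, \widehat{\nabla}^{-1/2}\xi_i]\z_k^i \in \K$. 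This is manifestly in the positive cone of $M_n \tensor \B(\K)$, and uniqueness concludes the proof. The delicate point in executing this is that the diagonal $\widehat{\nabla}$-eigenbasis is precisely what permits the scalar $\lambda_s^{1/2}$ appearing in the modular identity to split symmetrically as $\lambda_s^{1/4} \cdot \lambda_s^{1/4}$ between the ket and the bra; in a non-eigenbasis this symmetry is hidden.
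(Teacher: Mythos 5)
Your proof is correct and rests on the same pillars as the paper's: the identity $V\tpr V^{c}=V^{c}_{13}V_{12}=(\widehat{J}\tensor\J)V^{*}(\widehat{J}\tensor\J)V$ makes the implementation of $\a$ immediate, and the uniqueness criterion of \prettyref{prop:act_pos_cone_applications}, \prettyref{enu:act_pos_cone_applications_3} reduces everything to cone preservation. Where you diverge is in how you establish that last point. The paper never exhibits the image of a cone vector explicitly: it pairs $\bigl((\om_{\a_{j},\widehat{\nabla}^{-1/2}\a_{i}}\tensor\i)(V\tpr V^{c})\bigr)_{i,j}$ applied to one cone vector against a second cone vector, uses \prettyref{eq:reps_T_hat} on \emph{second}-leg slices $(\i\tensor\om_{\z_{1},\z_{2}})(V)$ to convert the pairing into an expression of the form $\langle\widehat{\nabla}^{1/2}u,u\rangle\geq 0$ on $\ltwo{\G}$, and then invokes self-duality of the cone. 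You instead work with \emph{first}-leg slices, diagonalise $\widehat{\nabla}$ on the finite-dimensional blocks $\ltwo{\G}_{\gamma}$ (legitimate, since $\widehat{\nabla}$ restricts to a positive invertible operator on each), and use the same modular relation in the form $V[\widehat{J}e_{s},\widehat{J}\xi_{j}]^{*}=\lambda_{s}^{1/2}V[e_{s},\widehat{\nabla}^{-1/2}\xi_{j}]$ to write the image of a cone vector directly as $\sum_{k,s}|Y^{i}_{k,s}\rangle\langle Y^{j}_{k,s}|$; the symmetric split $\lambda_{s}^{1/2}=\lambda_{s}^{1/4}\cdot\lambda_{s}^{1/4}$ is indeed the crux. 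Your route is more constructive and arguably more transparent, but it leans on discreteness in a way the paper's duality argument does not (compare \prettyref{prop:scaled_U_preserves_pos_cone}, which is proved by the same pairing trick and makes sense for general $\G$). Two small points you should make explicit: the expansion over the basis $(e_{s})_{s}$ is an infinite sum, so you need that the partial sums (each a genuine elementary cone vector) converge in norm to the image and that the cone is closed; and the adjoint identity requires checking that the relevant vectors lie in $D(\widehat{\nabla}^{1/2})$, which is exactly what \prettyref{eq:reps_T_hat} guarantees. Neither is a gap, just bookkeeping.
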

\begin{proof}
Observe that by the definition of $V^{c}$ and \prettyref{eq:J_tau_impl},
the operator $V\tpr V^{c}=V_{13}^{c}V_{12}$ acting
on $\ltwo{\G}\tensor\K\tensor\overline{\K}$ is equal to $(\widehat{J}\tensor\J)V^{*}(\widehat{J}\tensor\J)V$,
where $V\in\linfty{\G}\tensorn\mathcal{B}(\K)$ is represented on
$\ltwo{\G}\tensor\K\tensor\overline{\K}$ using $\i\tensor\pi_{\K}$.

The representation $V\tpr V^{c}$ obviously implements $\a$.
By \prettyref{prop:act_pos_cone_applications}, \prettyref{enu:act_pos_cone_applications_3}
it suffices to prove that for every $\a_{1},\ldots,\a_{n}\in D(\widehat{\nabla}^{-1/2})$,
the matrix $((\om_{\a_{j},\widehat{\nabla}^{-1/2}\a_{i}}\tensor\i)(V\tpr V^{c}))_{1\leq i,j\leq n}$
preserves the positive cone. For $\a \in D(\widehat{\nabla}^{-1/2})$, $\be \in \ltwo{\G}$
and $\z_{1},\z_{2},\eta_{1},\eta_{2}\in\K$, we have
\begin{multline*}
\left\langle \be\tensor\z_{2}\tensor\overline{\eta_{2}},(V\tpr V^{c})(\widehat{\nabla}^{-1/2}\a\tensor\z_{1}\tensor\overline{\eta_{1}})\right\rangle   \\
\begin{split} &=\left\langle \be\tensor\z_{2}\tensor\overline{\eta_{2}},(\widehat{J}\tensor\J)V^{*}(\widehat{J}\tensor\J)V(\widehat{\nabla}^{-1/2}\a\tensor\z_{1}\tensor\overline{\eta_{1}})\right\rangle \\
 & =\left\langle (\widehat{J}\tensor\J)V(\widehat{J}\be\tensor\eta_{2}\tensor\overline{\z_{2}}),V(\widehat{\nabla}^{-1/2}\a\tensor\z_{1}\tensor\overline{\eta_{1}})\right\rangle \\
 & =\left\langle \widehat{J}(\i\tensor\om_{\eta_{1},\eta_{2}})(V)\widehat{J}\be,(\i\tensor\om_{\z_{2},\z_{1}})(V)\widehat{\nabla}^{-1/2}\a\right\rangle .
\end{split}
\end{multline*}
Thus further, by \prettyref{eq:reps_T_hat},
\begin{equation}
\begin{split}\left\langle \be\tensor\z_{2}\tensor\overline{\eta_{2}},(V\tpr V^{c})(\widehat{\nabla}^{-1/2}\a\tensor\z_{1}\tensor\overline{\eta_{1}})\right\rangle  & =\left\langle \widehat{J}(\i\tensor\om_{\eta_{1},\eta_{2}})(V)\widehat{J}\be,\widehat{T}(\i\tensor\om_{\z_{1},\z_{2}})(V)\widehat{J}\a\right\rangle \\
 & =\left\langle \widehat{\nabla}^{1/2}(\i\tensor\om_{\z_{1},\z_{2}})(V)\widehat{J}\a,(\i\tensor\om_{\eta_{1},\eta_{2}})(V)\widehat{J}\be\right\rangle .
\end{split}
\label{eq:V_V_bar_pos}
\end{equation}

Take $n,m\in\N$, $\a_{1},\ldots,\a_{n}\in D(\widehat{\nabla}^{-1/2})$
and $\z_{k}^{i},\xi_{k}^{i}\in\K$ ($1\leq i\leq n$, $1\leq k\leq m$).
By \prettyref{eq:V_V_bar_pos}, applying $((\om_{\a_{j},\widehat{\nabla}^{-1/2}\a_{i}}\tensor\i)(V\tpr V^{c}))_{1\leq i,j\leq n}$
to $(\sum_{k}\z_{k}^{i}\tensor\overline{\z_{k}^{j}})_{1\leq i,j\leq n}$
component-wise and taking the inner product with $(\sum_{\ell}\xi_{\ell}^{i}\tensor\overline{\xi_{\ell}^{j}})_{1\leq i,j\leq n}$,
we obtain
\begin{multline*}
\sum_{i,j,k,\ell}\left\langle \a_{j}\tensor\xi_{\ell}^{i}\tensor\overline{\xi_{\ell}^{j}},(V\tpr V^{c})(\widehat{\nabla}^{-1/2}\a_{i}\tensor\z_{k}^{i}\tensor\overline{\z_{k}^{j}})\right\rangle \\
\begin{split} & =\sum_{i,j,k,\ell}\left\langle \widehat{\nabla}^{1/2}(\i\tensor\om_{\z_{k}^{i},\xi_{\ell}^{i}})(V)\widehat{J}\a_{i},(\i\tensor\om_{\z_{k}^{j},\xi_{\ell}^{j}})(V)\widehat{J}\a_{j}\right\rangle \\
 & =\sum_{k,\ell=1}^m\left\langle \widehat{\nabla}^{1/2}\left(\sum_{i=1}^n(\i\tensor\om_{\z_{k}^{i},\xi_{\ell}^{i}})(V)\widehat{J}\a_{i}\right),\sum_{j=1}^n(\i\tensor\om_{\z_{k}^{j},\xi_{\ell}^{j}})(V)\widehat{J}\a_{j}\right\rangle \geq0
\end{split}
\end{multline*}
by the positivity of $\widehat{\nabla}^{1/2}$. By the self-duality of
the positive cone, it is preserved by $((\om_{\a_{j},\widehat{\nabla}^{-1/2}\a_{i}}\tensor\i)(V\tpr V^{c}))_{1\leq i,j\leq n}$,
and the proof is complete.
\end{proof}

\section{Property (T)}\label{sec:prop_T}

In this section we recall the fundamental notion of Property (T) for locally compact quantum groups.  We define different notions of having a `Kazhdan pair' in this setting, and
show that they are all equivalent to Property (T).  We also show that if a locally compact quantum group $\QG$ admits a morphism with a dense image from a quantum group with Property (T), then also $\QG$ itself has Property (T).

\begin{deft}
A locally compact quantum group has \emph{Property (T)} if each of its representations which has almost-invariant vectors has a non-trivial invariant vector.
\end{deft}

\begin{deft} \label{def:T_r_s}
A locally compact quantum group $\QG$ has \emph{Property (T)$^{1,1}$} if for every representation $U$ of $\QG$ with almost invariant vectors the representation $U\tp U^{c}$ has a non-zero invariant vector. Further if $r,s \in \bn$ then we say that $\QG$ has \emph{Property (T)$^{r,s}$} if for every representation $U$ of $\QG$ with almost invariant vectors the representation $U^{\tpsmall r}\tp (U^{c})^{\tpsmall s}$ has a non-zero invariant vector.
\end{deft}

Note that by the discussion after Definition \ref{def:wm} if $\QG$ has a trivial scaling group, then it has Property (T)$^{1,1}$ if and only if each of its representations which has almost-invariant vectors is not weakly mixing.

\begin{prop}\label{prop:T11easy}
If a locally compact quantum group $\G$ has Property (T), then it also has Property  (T)$^{r,s}$ for every $r, s \in \bn$.
\end{prop}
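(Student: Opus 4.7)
The plan is to reduce \prettyref{prop:T11easy} to the single observation that the class of representations of $\G$ admitting almost-invariant vectors is closed under the operations $U\mapsto U^{c}$ and $(U_1,U_2)\mapsto U_1\tp U_2$. Granting this, starting from a representation $U$ of $\G$ with almost-invariant vectors and iterating, the representation $U^{\tpsmall r}\tp(U^{c})^{\tpsmall s}$ admits almost-invariant vectors as well; Property (T) of $\G$ then immediately yields a non-zero vector in $\inv(U^{\tpsmall r}\tp(U^{c})^{\tpsmall s})$, which is exactly what Property (T)$^{r,s}$ demands.

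Stability under contragredients follows from a short computation. Let $(\z_i)_{i\in\Ind}$ be an almost-invariant net of unit vectors for a representation $U$ on $\Hil$. The identity $\om_{\overline{\z_i}}\circ\top=\om_{\z_i}$ is immediate from the definitions of $\top$ and of the inner product on $\overline{\Hil}$; combining it with $U^{c}=(R\tensor\top)(U)$ gives
\begin{equation*}
(\i\tensor\om_{\overline{\z_i}})(U^{c})=R\bigl((\i\tensor\om_{\z_i})(U)\bigr)\qquad(i\in\Ind).
\end{equation*}
Since $R$ is a normal anti-automorphism of $\Linfty{\G}$ with $R(\one)=\one$, condition \ref{enu:almost_inv_vect_2} of \prettyref{lem:almost_inv_vec} passes from $(\z_i)_{i\in\Ind}$ to $(\overline{\z_i})_{i\in\Ind}$, so $U^{c}$ admits almost-invariant vectors.

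For stability under $\tp$, let $(\z_i)_{i\in\Ind}$ and $(\eta_j)_{j\in\Jnd}$ be almost-invariant nets of unit vectors for representations $U_1$ and $U_2$ of $\G$ respectively. Endow $\Ind\times\Jnd$ with the product order and consider the resulting net of unit vectors $(\z_i\tensor\eta_j)$ in $\Hil_{U_1}\tensor\Hil_{U_2}$. For every $\alpha\in\Ltwo{\G}$, the triangle inequality together with the unitarity of $(U_1)_{12}$ and the normalisation $\|\z_i\|=\|\eta_j\|=1$ gives
\begin{equation*}
\bigl\|(U_1\tp U_2)(\alpha\tensor\z_i\tensor\eta_j)-\alpha\tensor\z_i\tensor\eta_j\bigr\|\leq\|U_1(\alpha\tensor\z_i)-\alpha\tensor\z_i\|+\|U_2(\alpha\tensor\eta_j)-\alpha\tensor\eta_j\|,
\end{equation*}
where the two summands arise from separately estimating the contributions of $(U_1)_{12}$ (which leaves $\eta_j$ alone) and $(U_2)_{13}$ (which leaves $\z_i$ alone). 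Both summands tend to zero along the product ordering by condition \ref{enu:almost_inv_vect_1} of \prettyref{lem:almost_inv_vec} applied to $U_1$ and to $U_2$, so $U_1\tp U_2$ admits almost-invariant vectors. No substantive obstacle is anticipated; the only mildly delicate point is the bookkeeping around $\om_{\overline{\z}}\circ\top=\om_{\z}$, which is immediate once the sign conventions for $\overline{\Hil}$ and $\top$ are fixed.
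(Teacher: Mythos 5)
Your argument is correct, but it inverts the paper's logic. The paper's proof is a one-liner: since $U$ has almost-invariant vectors, Property (T) applied to $U$ itself yields a non-zero invariant vector $\xi$, and then $\xi^{\otimes r}\otimes\overline{\xi}^{\otimes s}$ is manifestly invariant for $U^{\tpsmall r}\tp(U^{c})^{\tpsmall s}$ (invariance in the sense of \prettyref{lem:inv_vec}\prettyref{enu:inv_vec_1} visibly passes to contragredients and tensor products). You instead apply Property (T) to the large representation $U^{\tpsmall r}\tp(U^{c})^{\tpsmall s}$, which forces you to first prove that the class of representations admitting almost-invariant vectors is closed under $U\mapsto U^{c}$ and $(U_{1},U_{2})\mapsto U_{1}\tp U_{2}$; your verifications of both closure properties are sound (the identity $\om_{\overline{\z}}\circ\top=\om_{\z}$, the normality of $R$ with $R(\one)=\one$ feeding into condition \prettyref{enu:almost_inv_vect_2} of \prettyref{lem:almost_inv_vec}, and the leg-wise triangle-inequality estimate over the product directed set are all correct). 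The trade-off: the paper's route is shorter and uses the full strength of (T) only once on the smallest possible representation, whereas your route costs more work but isolates a reusable stability fact --- one the paper itself invokes without proof in \prettyref{rem:T_rs} when comparing its Property (T)$^{r,s}$ with the Bekka--Valette formulation --- so your lemma is genuinely useful elsewhere in the paper even though it is overkill here.
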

\begin{proof}
This is trivial, for if a representation $U$ admits a non-zero invariant vector, so does $U^{\tpsmall r}\tp (U^{c})^{\tpsmall s}$.
\end{proof}

\begin{deft}
Let $\G$ be locally compact quantum group. 
\begin{enumerate}
\item A \emph{Kazhdan pair of type 1 for $\G$} is a finite set $Q 
\subseteq L^2(\G)$ and $\e>0$, such that for any representation
$U\in \M(\C_0(\G)\otimes \mc K(\sH))$, if there is a unit vector $\xi\in\sH$
with $\| U(\eta\otimes\xi) - \eta\otimes\xi \| < \e$ for each $\eta \in Q$,
then $U$ has a non-zero invariant vector.
\item A \emph{Kazhdan pair of type 2 for $\G$} is a finite set $Q 
\subseteq \C_0(\G)$ and $\e>0$, such that for any representation
$\mc U\in\mc L(\C_0(\G)\otimes\sH)$, if there is a unit vector $\xi\in\sH$
with $\| \mc U(a\otimes\xi) - a\otimes\xi \| < \e$ for each $a \in Q$,
then $\mc U$ has a non-zero invariant vector.
\item A \emph{Kazhdan pair of type 3 for $\G$} is a finite set $Q
\subseteq \C_0^u(\hh\G)$ and $\e>0$, such that for any representation
$\phi:\C_0^u(\hh\G)\rightarrow\mc B(\sH)$, if there is a unit vector
$\xi\in \sH$ with $\|\phi(x)\xi - \hh\epsilon(x)\xi\| < \e$ for
each $x \in Q$, then there is a non-zero vector $\xi'$ with $\phi(y)\xi'=
\hh\epsilon(y)\xi'$ for all $y\in \C_0^u(\G)$.
\end{enumerate}
For a Kazhdan pair of any type $(Q,\e)$, a vector $\xi \in \H$ that satisfies the inequalities in the respective definition above is said to be $(Q,\e)$-\emph{invariant}.
\end{deft}

\begin{tw}\label{thm:kazpair}
$\G$ has Property (T) if and only if $\G$ has a Kazhdan pair of any
(or equivalently, all) type(s).
\end{tw}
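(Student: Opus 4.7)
The plan is to establish each of the three equivalences (Property~(T) $\Longleftrightarrow$ existence of a Kazhdan pair of type $k$, for $k=1,2,3$) by the same two‑step argument, exploiting the parallelism between the three types of Kazhdan pairs and the three conditions \ref{enu:almost_inv_vect_1}, \ref{enu:almost_inv_vect_3}, \ref{enu:almost_inv_vect_4} of \prettyref{lem:almost_inv_vec}. This gives ``equivalence of all types'' as a by‑product.

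\emph{Easy direction} (Kazhdan pair $\Rightarrow$ Property (T)). Fix a type and a Kazhdan pair $(Q,\e)$ of that type. Let $U$ be a representation of $\G$ admitting almost invariant vectors, and pick a corresponding net $(\xi_i)_{i\in\Ind}$ as in \prettyref{lem:almost_inv_vec}. Since $Q$ is \emph{finite}, the relevant condition \ref{enu:almost_inv_vect_1}, \ref{enu:almost_inv_vect_3} or \ref{enu:almost_inv_vect_4} yields an index $i_0$ for which $\xi_{i_0}$ is $(Q,\e)$-invariant; by definition of a Kazhdan pair, $U$ has a non‑zero invariant vector.

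\emph{Hard direction} (Property (T) $\Rightarrow$ Kazhdan pair), by contrapositive. Assume no Kazhdan pair of the fixed type exists. Then for every admissible parameter $(F,\delta)$ (with $F$ a finite subset of $L^2(\G)$, $\C_0(\G)$ or $\C_0^u(\hh\G)$, as appropriate, and $\delta>0$) we obtain a representation $U_{F,\delta}$ of $\G$ on a Hilbert space $\H_{F,\delta}$ together with a unit vector $\xi_{F,\delta}\in\H_{F,\delta}$ that is $(F,\delta)$-invariant, while $\inv(U_{F,\delta})=\{0\}$. Form $\H:=\bigoplus_{(F,\delta)}\H_{F,\delta}$, pass to the $\cst$‑algebra picture $\phi_{U_{F,\delta}}:\C_0^u(\hh\G)\to\B(\H_{F,\delta})$ via Section~\ref{sub:reps}, and let $\phi:=\bigoplus_{(F,\delta)}\phi_{U_{F,\delta}}$; this is a non‑degenerate $*$-representation of $\C_0^u(\hh\G)$ on $\H$ (non‑degeneracy passes to direct sums by dominated convergence applied to an approximate unit), hence corresponds to a representation $U$ of $\G$ on $\H$. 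Directing the parameter set by $(F,\delta)\preceq(F',\delta')$ iff $F\subseteq F'$ and $\delta\geq\delta'$, the embedded vectors $\xi_{F,\delta}\in\H$ form a net satisfying the relevant condition of \prettyref{lem:almost_inv_vec}, so $U$ has almost invariant vectors; but $\inv(U)=\bigoplus_{(F,\delta)}\inv(U_{F,\delta})=\{0\}$, contradicting Property (T).

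\emph{Main obstacle.} The argument is essentially bookkeeping; the only point that requires a little care is checking that the direct sum of the $U_{F,\delta}$'s is genuinely a representation of $\G$ in the sense of Section~\ref{sub:reps}, i.e.\ an element of $\M(\C_0(\G)\ot\mc K(\H))$. The cleanest way, as indicated above, is to transfer to the universal dual side where direct summation of non‑degenerate $*$-homomorphisms is standard, and then invoke the bijection with representations of $\G$. Everything else (finiteness of $Q$, directedness of the index set, stability of $\inv$ under direct sums) is routine.
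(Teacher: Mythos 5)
Your proposal is correct and follows essentially the same route as the paper: the easy direction is immediate from the equivalent formulations of almost-invariance in \prettyref{lem:almost_inv_vec} together with the finiteness of $Q$, and the hard direction is the standard direct-sum-of-counterexamples argument over the directed set of pairs $(F,\delta)$. The paper phrases the hard direction as a contradiction rather than a contrapositive and does not bother to justify that the direct sum is again a representation, but these are cosmetic differences; your remark about passing to the universal dual side to form the direct sum is a reasonable way to make that step explicit.
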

\begin{proof}
By the various equivalent notions of having almost-invariant vectors in \prettyref{lem:almost_inv_vec}, it is clear that if $\G$ has a Kazhdan pair
of any type, then it has Property (T).

Conversely, suppose $\G$ does not have a Kazhdan pair of type 2, but that
$\G$ has Property (T).  Denote by $\mc F$ the family of all finite subsets of the
unit ball of $\C_0(\G)$.  For each $F\in\mc F$ and $\e>0$, we can find a
representation $\mc U_{F,\e}$ with no non-zero invariant vector, but
which admits a unit vector $\xi_{F,\e}$ with
$\| \mc U_{F,\e}(a\otimes \xi_{F,\e})
- a\otimes \xi_{F,\e} \| <\e$ for $a\in F$.  Ordering
$\mc F\times(0,\infty)$ in the obvious way, we find that
\[ \lim_{(F,\e)} \big\| \mc U_{F,\e} (a\otimes\xi_{F,\e})
- a\otimes\xi_{F,\e} \big\| = 0. \]
It follows easily that $\mc U = \bigoplus \mc U_{F,\e}$ has almost
invariant vectors, and hence has a non-zero invariant vector, say
$\eta=(\eta_{F,\e})$.  However, then if $\eta_{F,\e}\not=0$,
then $\eta_{F,\e}$ is a non-zero invariant vector for
$\mc U_{F,\e}$, a contradiction.

Exactly the same argument applies to Kazhdan pairs of type 1; and a very
similar argument applies to Kazhdan pairs of type 3.
\end{proof}

The following is valid for a Kazhdan pair of any type; we prove the
version most suited to our needs.  If we fix a representation $\phi:\C_0^u(\hh\G)\rightarrow\mc B(\Hil)$, we will define the space $\inv(U)\subseteq \sH$ via the associated
representation $U=(\id\otimes\phi)(\wW)$.  By \prettyref{lem:inv_vec},
$\xi\in\inv(U)$ if and only if $\phi(x)\xi = \hh\epsilon(x)\xi$ for all
$x\in \C_0^u(\hh\G)$.

\begin{lem}\label{lem:cty_consts}
Let $(Q,\e)$ be a Kazhdan pair of type 3 for $\G$, and let $\delta>0$.
For any representation $\phi$ of $\C_0^u(\hh\G)$ on $\sH$, if $\xi\in \H$ is
$(Q,\e\delta)$-invariant, then $\| \xi-P\xi \| \leq \delta\|\xi\|$
where $P:\sH\rightarrow\inv(U)$ is the orthogonal projection.
\end{lem}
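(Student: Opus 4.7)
My plan is to reduce to the Kazhdan pair hypothesis by passing to the restriction of $\phi$ on the orthogonal complement $\inv(U)^\perp$. The first step is to observe that $\inv(U)$ is invariant under every $\phi(x)$ for $x \in \C_0^u(\hh\G)$: this is immediate from the characterisation $\phi(x)\z = \hh\epsilon(x)\z$ for $\z\in\inv(U)$. Taking adjoints (using that $\hh\epsilon$ is a character so that $\overline{\hh\epsilon(x)} = \hh\epsilon(x^*)$), one gets that $\inv(U)^\perp$ is $\phi$-invariant as well. Thus the restriction $\phi' : \C_0^u(\hh\G) \to \mc B(\inv(U)^\perp)$ is a well-defined $*$-representation, and it is non-degenerate because, for any approximate identity $(e_\lambda)$ of $\C_0^u(\hh\G)$, one has $\phi(e_\lambda) \to \one$ strongly and $\inv(U)^\perp$ is closed and $\phi$-invariant. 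Crucially, $\phi'$ has no non-zero vector $\z'$ with $\phi'(y)\z' = \hh\epsilon(y)\z'$ for all $y$, since any such vector would by definition lie in $\inv(U) \cap \inv(U)^\perp = \{0\}$.

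Next I would write the orthogonal decomposition $\xi = P\xi + \eta$ with $\eta := \xi - P\xi \in \inv(U)^\perp$. Since $P\xi \in \inv(U)$ gives $\phi(x)P\xi = \hh\epsilon(x)P\xi$ for each $x \in Q$, we obtain the key identity
\[
\phi(x)\eta - \hh\epsilon(x)\eta = \phi(x)\xi - \hh\epsilon(x)\xi,
\]
so the defect of $\eta$ is controlled by the defect of $\xi$, which by hypothesis is strictly less than $\e\delta$ in norm (taking $\xi$ to be a unit vector; the general case is obtained by rescaling).

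The final step is a proof by contradiction. Suppose $\|\eta\| > \delta$. Then $\eta / \|\eta\|$ is a unit vector in $\inv(U)^\perp$, and for each $x \in Q$ we have
\[
\Bigl\|\phi'(x)\tfrac{\eta}{\|\eta\|} - \hh\epsilon(x)\tfrac{\eta}{\|\eta\|}\Bigr\| \;=\; \frac{\|\phi(x)\xi - \hh\epsilon(x)\xi\|}{\|\eta\|} \;<\; \frac{\e\delta}{\|\eta\|} \;<\; \e.
\]
Thus $\eta/\|\eta\|$ is $(Q,\e)$-invariant for $\phi'$. Since $(Q,\e)$ is a Kazhdan pair of type $3$, we conclude that $\phi'$ admits a non-zero vector $\z'$ with $\phi'(y)\z' = \hh\epsilon(y)\z'$ for all $y \in \C_0^u(\hh\G)$, contradicting the observation above. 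Hence $\|\xi - P\xi\| = \|\eta\| \leq \delta$ as required. I don't anticipate a genuine obstacle here; the only point that needs a moment of care is the preservation of non-degeneracy under restriction to the invariant subspace $\inv(U)^\perp$, which is handled by the approximate-identity argument sketched in the first paragraph.
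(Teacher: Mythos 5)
Your proof is correct and follows essentially the same route as the paper's: restrict $\phi$ to $\inv(U)^\perp$ (which is $\phi$-invariant by the adjoint argument), decompose $\xi = P\xi + \eta$, note that the defect of $\eta$ equals that of $\xi$, and invoke the Kazhdan pair on the restricted representation. The only cosmetic difference is that you argue by contradiction with the normalised vector $\eta/\|\eta\|$, whereas the paper applies the contrapositive directly to produce some $x\in Q$ with $\|\phi(x)\eta-\hh\epsilon(x)\eta\|\geq\e\|\eta\|$; your explicit check of non-degeneracy of the restriction is a small but welcome addition.
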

\begin{proof}
Let $\alpha\in\inv(U)^\perp$ and $\beta\in\inv(U)$.  For $x\in \C_0^u(\hh\G)$,
\[ \la \beta, \phi(x)\alpha \ra = \la \phi(x^*)\beta, \alpha \ra
= \overline{ \hh\epsilon(x^*) } \la \beta, \alpha \ra = 0, \]
so $\phi(x)\alpha \in \inv(U)^\perp$.  Thus $\phi$ restricts to the subspace
$\inv(U)^\perp$.

Let $\xi = P(\xi) + \xi_0$ where $\xi_0\in\inv(U)^\perp$.
As $\inv(U)^\perp$ contains no non-zero invariant vectors, there is
$x\in Q$ with $\| \phi(x)\xi_0 - \hh\epsilon(x)\xi_0 \|
\geq \e \|\xi_0\|$.  By assumption,
$\| \phi(x)\xi - \hh\epsilon(x)\xi \| \leq \e\delta\|\xi\|$, and as
$P(\xi)$ is invariant, $\phi(x)P(\xi) = \hh\epsilon(x)P(\xi)$.
Thus
\begin{align*}
\| \phi(x)\xi_0 - \hh\epsilon(x)\xi_0 \|
&= \| \phi(x)\xi_0 - \phi(x)\xi + \phi(x)\xi - \hh\epsilon(x)\xi_0 \|
= \| -\phi(x)P(\xi) + \phi(x)\xi - \hh\epsilon(x)\xi_0 \| \\
&= \| -\hh\epsilon(x)P(\xi) + \phi(x)\xi - \hh\epsilon(x)\xi_0 \|
= \| \phi(x)\xi - \hh\epsilon(x)\xi \| \leq \e\delta\|\xi\|.
\end{align*}
We conclude that $\e \|\xi_0\| \leq \e\delta\|\xi\|$, showing
that $\|\xi-P\xi\| = \|\xi_0\| \leq \delta\|\xi\|$ as claimed.
\end{proof}

\subsection{Hereditary property}

Classically if a locally compact group $G$ has Property (T) and $K$ is a closed normal subgroup of $G$ then $G/K$ has Property (T).  In fact even if there is only a morphism $G\rightarrow H$ with dense range, where $H$ is another locally compact group, then Property (T) passes from $G$ to $H$.

In \cite{Chen_Ng__prop_T_LCQGs} it is shown under some additional assumptions that the existence of a surjective $*$-homomorphism from $\C_0^u(\hQG)$ to $\C_0^u(\hh\QH)$ implies that Property (T) passes from $\QG$ to $\QH$. Here we strengthen this result, at the same time dropping the extra assumptions.

\begin{tw}\label{thm:passes_to_dense_range}
Let $\QH, \G$ be locally compact quantum groups. If there is a morphism from $\QH$ to $\QG$ with dense image, and $\QH$ has Property (T), then so does $\G$.
\end{tw}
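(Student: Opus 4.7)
The strategy is to pull the given representation back to $\QH$ via the dual morphism, apply Property (T) of $\QH$, and then transport invariance back using the dense image hypothesis.

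Let $\hh\pi\in\Mor(\C_0^u(\hQH),\C_0^u(\hQG))$ be the dual morphism associated with $\pi$ via \eqref{eq:dual_morphism}. Given a representation $V$ of $\QG$ on a Hilbert space $\H$ admitting almost invariant vectors, let $\phi_V\in\Mor(\C_0^u(\hQG),\mc K(\H))$ be the associated $\cst$-algebra representation and set $\phi_W:=\phi_V\circ\hh\pi$. This defines a representation $W$ of $\QH$ on $\H$, and a direct computation using \eqref{eq:dual_morphism} shows $W^u=(\pi\tensor\id)(V^u)$. Since $\hh\pi$ preserves counits ($\hh\epsilon_\QG\circ\hh\pi=\hh\epsilon_\QH$), \prettyref{lem:almost_inv_vec}\ref{enu:almost_inv_vects_2} shows that $W$ also has almost invariant vectors: a state $\Psi$ on $\mathrm{Im}\,\phi_V$ with $\Psi\circ\phi_V=\hh\epsilon_\QG$ automatically satisfies $\Psi\circ\phi_W=\hh\epsilon_\QH$. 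Property (T) of $\QH$ then produces a nonzero $\xi\in\H$ satisfying $\phi_V(\hh\pi(x))\xi=\hh\epsilon_\QG(\hh\pi(x))\xi$ for every $x\in\C_0^u(\hQH)$.

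The main obstacle is to upgrade this partial invariance to full $V$-invariance. My plan is to use two consequences of the dense image hypothesis, each available as an equivalent reformulation (or a short corollary) of Definition \ref{Def:dense_image} proved in the Appendix: (i) the dual map $\mu\mapsto\mu\circ\pi$ sends $\C_0^u(\QH)^*$ onto a weak$^*$-dense subspace of $\C_0^u(\QG)^*$, and (ii) any character of $\C_0^u(\hQG)$ whose pullback along $\hh\pi$ equals $\hh\epsilon_\QH$ is itself equal to $\hh\epsilon_\QG$. Granted (i), for each $\eta\in\H$ with $\eta\perp\xi$ the element $y_\eta:=(\id\tensor\om_{\eta,\xi})(V^u)\in\C_0^u(\QG)$ satisfies, for every $\om=\mu\circ\pi$ with $\mu\in\C_0^u(\QH)^*$,
\[\om(y_\eta)=\la\eta,(\om\tensor\id)(V^u)\xi\ra=\la\eta,(\mu\tensor\id)(W^u)\xi\ra=\mu(\one)\la\eta,\xi\ra=0,\]
using $W^u=(\pi\tensor\id)(V^u)$ and the $W$-invariance of $\xi$; weak$^*$-density then forces $y_\eta=0$. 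Varying $\eta$ shows that $\CC\xi$ is a one-dimensional $V$-invariant subspace whose associated character $\chi$ on $\C_0^u(\hQG)$ satisfies $\chi\circ\hh\pi=\hh\epsilon_\QH$. Applying (ii) gives $\chi=\hh\epsilon_\QG$, i.e.\ $\xi\in\inv(V)$ as desired.
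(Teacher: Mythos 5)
The first half of your argument is sound and follows the paper's own route: you pull the representation back along $\hh\pi$, observe that almost invariance survives (your use of the state criterion of \prettyref{lem:almost_inv_vec} is a slightly slicker substitute for the paper's direct net computation), and obtain a non-zero $\xi$ with $\phi_V(\hh\pi(x))\xi=\hh\epsilon_{\QH}(x)\xi$ for all $x\in\C_0^u(\hQH)$. The gap is your claim (i). Weak$^*$ density of $\{\mu\circ\pi:\mu\in\C_0^u(\QH)^*\}$ in $\C_0^u(\QG)^*$ is, by Hahn--Banach, exactly the statement that its pre-annihilator $\ker\pi$ is trivial, i.e.\ that $\pi$ is injective. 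Injectivity of $\pi$ is \emph{not} a consequence of the dense image hypothesis -- this is precisely the subtlety that Definition \ref{Def:dense_image} is designed to avoid: as recalled at the start of the Appendix, there are lattices $\Gamma$ in Lie groups $G$ for which the canonical morphism in $\Mor(\cst(\Gamma),\cst(G))$ fails to be injective (Bekka--Valette), even though the corresponding morphism from $\hh{G}$ to $\hh{\Gamma}$ does have dense image. So the step ``weak$^*$-density then forces $y_\eta=0$'' is not available; your computation only yields $\pi(y_\eta)=0$.

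The consequence of dense image that actually does the job is condition \ref{sp:four} of \prettyref{thm:dense_image_char}: $\hh\pi$ has \emph{strictly dense} range in $\M(\C_0^u(\hQG))$. Since $\phi_V:\C_0^u(\hQG)\to\mc B(\H)=\M(\mc K(\H))$ is strictly continuous, hence strictly-strongly continuous on bounded sets, the identity $\phi_V(\hh\pi(x))\xi=\hh\epsilon_{\QG}(\hh\pi(x))\xi$ extends over the strict closure of the range of $\hh\pi$ and gives $\phi_V(a)\xi=\hh\epsilon_{\QG}(a)\xi$ for all $a\in\M(\C_0^u(\hQG))$ directly, so no separate character argument (your (ii), which is itself correct) is needed. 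Alternatively, your $y_\eta$ computation can be salvaged via condition \ref{sp:five}: from $\pi(y_\eta)=0$ one gets $\gamma(\om_{\eta,\xi}\circ\phi_V)=\Lambda_{\QH}\pi(y_\eta)=0$, and injectivity of $\gamma$ forces $\om_{\eta,\xi}\circ\phi_V=0$, whence $\phi_V(\cdot)\xi\subseteq\CC\xi$ and your step (ii) finishes. As written, however, the proof rests on a false density claim and has a genuine gap.
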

\begin{proof}
Let the relevant morphism be given by $\pi \in \Mor(\C_0^u(\G), \C_0^u(\QH))$.
Let $U$ be a representation of $\G$ on a Hilbert space $\sH$ which has almost-invariant
vectors, $(\xi_i)_{i \in \Ind}$.  They are also almost invariant for  $U^u$ by Proposition \ref{Prop:invUniv}.
Then $V:=(\Lambda_\QH \circ \pi\otimes\id)(U^u)$ is a representation of $\QH$.
Let $\omega\in \Lone{\QH}$, and note that
\begin{align*}
\lim_{i\in \Ind} \omega((\id\otimes\omega_{\xi_i})(V))=
 \lim_{i\in \Ind} \left \langle \xi_i,  (\omega\circ\Lambda_{\QH}\circ\pi\otimes\id)(U^u)\xi_i \right \rangle
= \omega(\one)
\end{align*}
by the definition of what it means for $(\xi_i)_{i\in \Ind}$ to be invariant for
$U^u$.  Hence $(\id\otimes\omega_{\xi_i})(V) \xrightarrow{i\in\Ind} \one$ in the weak$^*$ topology of
$L^\infty(\QH)$, so by \prettyref{lem:almost_inv_vec} the net $(\xi_i)_{i\in \Ind}$ is
almost invariant for $V$.  As $\QH$ has Property (T), $V$ has a non-zero invariant vector,
say $\xi_0\in \Hil$. Thus, \prettyref{lem:inv_vec} implies that
$\phi_V(a)\xi_0 = \hh\epsilon(a)\xi_0$ for all $a\in \C_0^u(\hh\QH)$.

Let $\omega\in \Lone{\QH}$ and $y=(\omega\otimes\id)(\wW_{\QH})$. Then using \eqref{eq:dual_morphism} we see that
\begin{align*} \phi_U(\hh\pi(y)) &=
\phi_U\big( (\omega\otimes\hh\pi)(\wW_{\QH}) \big)
= \phi_U\big( (\omega\Lambda_\QH\pi\otimes\id)(\WW_{\G}) \big)
= (\omega\Lambda_\QH\pi\otimes\id)(U^u)
= (\omega\otimes\id)(V) \\
&= \phi_V\big( (\omega\otimes\id)(\wW_\QH) \big) = \phi_V(y).
\end{align*}
By density, this holds for all $y\in \C_0^u(\hh\QH)$. The same calculation yields that $\hh\epsilon_{\G}\circ\hh\pi = \hh\epsilon_{\QH}$.
Hence $\phi_U(\hh\pi(y))\xi_0 = \phi_V(y) \xi_0 = \hh\epsilon_{\QH}(y)\xi_0 = \hh\epsilon_{\G}(\hh\pi(y))\xi_0
$ for all $y\in \C_0^u(\hh\QH)$. By condition \ref{sp:four} in Theorem \ref{thm:dense_image_char},
$\hh\pi$ has strictly dense range. In addition,  $\phi_U:\C_0^u(\hh\G)\rightarrow\mc B(\sH)=\M(\mc K(\sH))$ is strictly continuous,
hence strictly-strongly continuous. It follows that
$\phi_U(x) \xi_0 = \hh\epsilon_\G(x)\xi_0$ for all $x\in \M(\C_0^u(\hh\G))$.
In particular, $\xi_0$ is invariant for $U$, as required.
\end{proof}

\section{Applications of Kazhdan pairs}\label{sec:Kazhdan_pairs_apps}

The main aim of this section is to apply the results on Kazhdan pairs to give an alternative proof and strengthen in a crucial way Theorem 3.1 of \cite{Kyed__cohom_prop_T_QG}. In connection with the recent results of \cite{Arano__unit_sph_rep_Drinfeld_dbls} this will enable us in the next section to establish equivalence of Property (T) and Property (T)$^{1,1}$ for a class of discrete quantum groups.

The equivalence of \prettyref{enu:wstar_norm_one} and \prettyref{enu:wstar_norm_two} of the next result was established in \cite[Theorem 3.1]{Kyed__cohom_prop_T_QG} under the additional assumption that $\G$ is a second countable discrete quantum group. However, the following proof, and especially \ref{enu:wstar_norm_three}$\implies$\ref{enu:wstar_norm_one}, is quite different: not having the particular structure of discrete quantum groups at our disposal, we use the averaging semigroups machinery of \prettyref{sec:inv_alm_inv_vects}. Note that outside of the Property (T) context \cite[Theorem 4.6]{Runde_Viselter_LCQGs_PosDef} provides an equivalence between various modes of convergence of normalised positive-definite elements on locally compact quantum groups.

\begin{tw}\label{thm:wstar_norm}
Let $\G$ be a locally compact quantum group.  The following are equivalent:
\begin{enumerate}
\item\label{enu:wstar_norm_one}
$\G$ has Property (T);
\item\label{enu:wstar_norm_two}
if $(\mu_i)_{i \in \Ind}$ is a net of states of $\C_0^u(\hh\G)$ converging
in the weak$^*$ topology to the counit $\hh\epsilon$, then $(\mu_i)_{i \in \Ind}$ converges in norm
to $\hh\epsilon$;
\item\label{enu:wstar_norm_three}
if $(a_i)_{i \in \Ind}$ is a net of normalised positive-definite functions in $\M(\C_0(\G))$, which converges strictly to $\one$, then
$\lim_{i \in \Ind} \| a_i- \one \| =0$.
\end{enumerate}
\end{tw}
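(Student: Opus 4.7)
My plan is to establish $(1) \Rightarrow (2) \Rightarrow (1)$ using the Kazhdan pair machinery of \prettyref{thm:kazpair} and \prettyref{lem:cty_consts}, and $(2) \Leftrightarrow (3)$ via the bijection between states of $\C_0^u(\hh\G)$ and normalised positive-definite functions on $\G$.

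For $(1) \Rightarrow (2)$, I would fix a Kazhdan pair $(Q,\e)$ of type 3 and consider, for a net $(\mu_i)_{i \in \Ind}$ of states converging to $\hh\epsilon$ in the weak$^*$ topology, the GNS triple $(\pi_i, \H_i, \z_i)$ of each $\mu_i$. Since $\hh\epsilon$ is multiplicative,
\[ \|\pi_i(x)\z_i - \hh\epsilon(x)\z_i\|^2 = \mu_i(x^*x) - 2\Re\bigl(\overline{\hh\epsilon(x)}\mu_i(x)\bigr) + |\hh\epsilon(x)|^2 \]
tends to $0$ for each $x \in Q$. Given $\delta > 0$, $\z_i$ is thus $(Q, \e\delta)$-invariant for all sufficiently large $i$, and \prettyref{lem:cty_consts} yields $\|\z_i - P_i\z_i\| \leq \delta$, where $P_i$ denotes the projection onto the invariants of the representation of $\G$ corresponding to $\pi_i$. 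Writing $\z_i = \eta_i + \eta_i'$ with $\eta_i = P_i\z_i$ and exploiting that the orthogonal complement of the invariant subspace is $\pi_i$-stable, a direct expansion yields $\mu_i(x) = \hh\epsilon(x)\|\eta_i\|^2 + \la \eta_i', \pi_i(x)\eta_i'\ra$ for every $x \in \C_0^u(\hh\G)$, whence $\|\mu_i - \hh\epsilon\| \leq 2\|\eta_i'\|^2 \leq 2\delta^2$; arbitrariness of $\delta$ gives norm convergence.

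For $(2) \Rightarrow (1)$, I would take a representation $U$ admitting almost-invariant unit vectors $(\xi_i)_{i \in \Ind}$ and set $\mu_i := \om_{\xi_i}\circ\phi_U$. By \prettyref{lem:almost_inv_vec} we have $\mu_i \to \hh\epsilon$ weak$^*$, and then by (2) also in norm. Suppose for contradiction that $U$ has no non-zero invariant vector. Since $\hh\epsilon$ is a pure state, every vector state $\om_\xi\circ\phi_U$ would then be disjoint from $\hh\epsilon$ -- if not, $\hh\epsilon$ would embed as a subrepresentation of the cyclic subspace $\overline{\phi_U(\C_0^u(\hh\G))\xi}$, supplying a non-zero invariant vector for $\phi_U$. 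The classical fact that a pure state sits at norm distance exactly $2$ from every state disjoint from it then forces $\|\mu_i - \hh\epsilon\| = 2$ for every $i$, contradicting the norm convergence. Hence $U$ has a non-zero invariant vector. I expect this purity/disjointness dichotomy to be the main technical point of the argument.

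For $(2) \Leftrightarrow (3)$, the key is the correspondence $\mu \leftrightarrow a_\mu := (\i \tensor \mu)(\wW^*)$ between states of $\C_0^u(\hh\G)$ and normalised positive-definite functions in $\M(\C_0(\G))$, sending $\hh\epsilon$ to $\one$. Under it, weak$^*$ convergence of states matches strict convergence of the associated positive-definite elements, and norm convergence transports identically; these compatibilities are part of the standard theory of positive-definite functions on locally compact quantum groups developed in \cite{Daws__CPM_LCQGs_2012,Daws_Salmi__CPD_LCQGs_2013}.
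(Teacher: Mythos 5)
Your implications \prettyref{enu:wstar_norm_one}$\implies$\prettyref{enu:wstar_norm_two} and \prettyref{enu:wstar_norm_two}$\implies$\prettyref{enu:wstar_norm_one} are both sound. The first is essentially the paper's argument (the paper bundles the GNS representations into a single direct sum before invoking \prettyref{lem:cty_consts}, but applying the lemma to each $\pi_i$ separately works just as well, and your expansion $\mu_i(x)=\hh\epsilon(x)\|\eta_i\|^2+\la\eta_i',\pi_i(x)\eta_i'\ra$ is correct since $\inv(U)^\perp$ is $\pi_i$-stable). The second is a genuinely different route: the paper never proves \prettyref{enu:wstar_norm_two}$\implies$\prettyref{enu:wstar_norm_one} directly but closes the cycle via \prettyref{enu:wstar_norm_three}$\implies$\prettyref{enu:wstar_norm_one} using the averaging-semigroup machinery of \prettyref{prop:ave_equals_proj_gen}; your purity/disjointness argument (a state whose GNS representation is disjoint from the character $\hh\epsilon$ lies at norm distance exactly $2$ from it, by orthogonality of central covers in $\C_0^u(\hh\G)^{**}$) is a valid and rather slicker soft alternative for that link.

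The gap is in \prettyref{enu:wstar_norm_three}$\implies$\prettyref{enu:wstar_norm_two}. The correspondence $\mu\leftrightarrow a_\mu=(\i\tensor\mu)(\wW^*)$ does \emph{not} transport the relevant modes of convergence "identically" in the directions you need. First, to feed a weak$^*$-convergent net of states into \prettyref{enu:wstar_norm_three} you must show that $\mu_i\to\hh\epsilon$ weak$^*$ forces $a_i\to\one$ \emph{strictly}; this is a Ra\u{\i}kov-type theorem (cf.\ \cite[Theorem 4.6]{Runde_Viselter_LCQGs_PosDef}), not a formality — the easy direction, used in the paper's proof of \prettyref{enu:wstar_norm_two}$\implies$\prettyref{enu:wstar_norm_three} via Cohen factorisation, goes the other way. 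Second, and more seriously, the slice map $\mu\mapsto(\i\tensor\mu)(\wW^*)$ is a contraction but not bounded below, so $\|a_i-\one\|\to 0$ does not formally yield $\|\mu_i-\hh\epsilon\|\to 0$ (in the group case this is the gap between the sup norm and the Fourier--Stieltjes norm of $\varphi_i-1$). That implication is in fact true, but its proof requires converting the uniform estimate $\|a_i-\one\|<\e$ into uniform almost-invariance of the GNS vectors and then producing an honest invariant vector by the minimal-norm/averaging argument — which is precisely the content of the paper's proof of \prettyref{enu:wstar_norm_three}$\implies$\prettyref{enu:wstar_norm_one}. As written, your "transport of structure" step silently assumes the hard part of the theorem; the fix is to abandon the direct equivalence \prettyref{enu:wstar_norm_two}$\iff$\prettyref{enu:wstar_norm_three} and instead prove \prettyref{enu:wstar_norm_three}$\implies$\prettyref{enu:wstar_norm_one} along the paper's lines, closing the cycle $\prettyref{enu:wstar_norm_one}\implies\prettyref{enu:wstar_norm_two}\implies\prettyref{enu:wstar_norm_three}\implies\prettyref{enu:wstar_norm_one}$.
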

\begin{proof}
\ref{enu:wstar_norm_one}$\implies$\ref{enu:wstar_norm_two}:
If $\G$ has Property (T) then for each $i \in \Ind$ let $(\phi_i,\sH_i,\xi_i)$
be the GNS construction for $\mu_i$.  Then let $\phi:=\bigoplus_{i \in \Ind}\phi_i$,
a representation of $\C_0^u(\hh\G)$ on $\sH=\bigoplus_{i\in \Ind}\sH_i$.
By a slight abuse of notation, we treat each $\xi_i$ as a vector in $\sH$.
Then for $a\in \C_0^u(\hh\G)$,
\begin{align*} \lim_{i\in \Ind} \| \phi(a)\xi_i - \hh\epsilon(a)\xi_i \|^2
&= \lim_{i\in \Ind} \|\phi_i(a)\xi_i - \hh\epsilon(a)\xi_i \|^2 \\
&= \lim_{i\in \Ind} \big [ \la \xi_i,\phi_i(a^*a)\xi_i \ra + \hh\epsilon(a^*a)
   - 2\Ree \big \la \hh\epsilon(a)\xi_i, \phi_i(a)\xi_i \big \ra \big ] \\
&= \lim_{i\in \Ind} \big [\mu_i(a^*a) + \hh\epsilon(a^*a)
   - 2\Ree\big( \hh\epsilon(a^*) \mu_i(a)\big) \big ] \\
&= 2\hh\epsilon(a^*a) - 2\Ree \hh\epsilon(a^*a) = 0.
\end{align*}
Let $(Q,\e)$ be a Kazhdan pair of type 3 for $\G$.  For $\delta>0$, if
$i\in \Ind$ sufficiently large, $\xi_i$ is $(Q,\delta\e)$-invariant.
By Lemma \ref{lem:cty_consts} it follows that $\| \xi_i - P(\xi_i) \| \leq
\delta$, and $\eta_i=P(\xi_i)$ is invariant.
Then, for $a\in \C_0^u(\hh\G)$,
\begin{align*} |\mu_i(a) - \hh\epsilon(a)| =
\left| \left\langle \xi_i, \phi(a)\xi_i - \hh\epsilon(a)\xi_i  \right\rangle \right|
= \left| \left\langle \xi_i - \eta_i, \phi(a)\xi_i \right\rangle
- \left\langle \xi_i-\eta_i, \hh\epsilon(a) \xi_i \right\rangle \right|,
\end{align*}
which follows as $\phi(a)^*\eta_i = \phi(a^*)\eta_i =
\hh\epsilon(a^*)\eta_i$.  Thus
\[ |\mu_i(a) - \hh\epsilon(a)| \leq 2\delta\|a\|, \]
so
$\mu_i \stackrel{i \in \Ind}{\longrightarrow} \hh\epsilon$ in norm.

\ref{enu:wstar_norm_two}$\implies$\ref{enu:wstar_norm_three}:
Let $(a_i)_{i\in \Ind}$ be as in the hypotheses, so for each $i\in \Ind$ there is a
state $\mu_i$ on $\C_0^u(\hh\G)$ with $a_i = (\id\otimes\mu_i)
(\wW^*)$.  Let $\omega\in \Lone{\QG}$ and set $a=(\omega\otimes\id)(\wW^*)$;
we note that $a\in \C_0^u(\hh\G)$ and that such elements are norm dense in $\C_0^u(\hh\G)$.  By
Cohen's factorisation theorem, there are elements $\omega'\in \Lone{\QG}$ and $x\in \C_0(\G)$ with
$\omega = x\omega'$.  Thus
\[ \lim_{i\in \Ind} \mu_i(a) = \lim_{i\in \Ind} \omega(a_i)
=  \lim_{i\in \Ind} \omega'(a_i x) = \omega'(x) = \omega(\one)
= \omega((\id\otimes\hh\epsilon)(\wW^*))
= \hh\epsilon(a). \]
As $\{\mu_i:i\in \Ind\}$ is a bounded net, this shows that $\mu_i\rightarrow
\hh\epsilon$ in the weak$^*$ topology, and hence by hypothesis, $\mu_i\rightarrow
\hh\epsilon$ in norm.  It follows immediately that $a_i \rightarrow \one$
in norm, as claimed.

\ref{enu:wstar_norm_three}$\implies$\ref{enu:wstar_norm_one}:
Let $U$ be a representation of $\G$ on $\sH$ which has almost-invariant vectors.
Let $\mc U\in\mc L(\C_0(\G)\otimes \sH)$ be the associated adjointable operator.  By \prettyref{lem:almost_inv_vec}, there is a net $(\xi_i)_{i\in \Ind}$ of unit vectors with
\[ \lim_{i\in \Ind} \big\| \mc U(a\otimes\xi_i) - a\otimes\xi_i
\big\| = 0 \qquad \forall_{a\in \C_0(\G)}. \]
For $\xi\in \sH$ let $(\id\otimes\xi^*):C_0(\G)\otimes \sH\rightarrow \sH$
be the adjointable map defined on elementary tensors by $(\id\otimes\xi^*):
a\otimes\eta \mapsto \la \xi, \eta \ra a$.  As each $\xi_i$ is a unit vector and $\mc U$
is unitary, it follows that
\[ \lim_{i\in \Ind} \big\| (\id\otimes\xi_i^*)\mc U^*(a\otimes\xi_i)
- a \big\| = 0 \qquad \forall_{a\in \C_0(\G)}. \]
Now, the relation between $U$ and $\mc U$ implies that
$(\id\otimes\xi_i^*)\mc U^*(a\otimes\xi_i) =
(\id\otimes\omega_{\xi_i})(U^*) a$.  Thus, if $a_i :=
(\id\otimes\omega_{\xi_i})(U^*)$, then $a_i$ is a normalised positive-definite element, and the net $(a_i)_{i\in \Ind}$ is a left approximate identity for $\C_0(\G)$.
By repeating the same argument with $\mc U$ instead of  $\mc U^*$ and then taking adjoints, we deduce that $(a_i)_{i\in \Ind}$ is also a right approximate identity for $\C_0(\G)$.  Thus
by hypothesis, $a_i\rightarrow \one$ in norm.

For each $i \in \Ind$ let $\eta_i$ be the orthogonal projection of
$\xi_i$ onto $\inv(U)$.  By combining Lemma~\ref{lem:avg_semigp_two}
with Proposition~\ref{prop:ave_equals_proj_gen}, we see that $\eta_i$ is
the unique vector of minimal norm in the closed convex hull of
\[ C = \{ (\omega\otimes\id)(U^*)\xi_i : \omega\in \Lone{\QG}
\text{ is a state} \}. \]
For $\e>0$ choose $i\in \Ind$ sufficiently large so that
$\|a_i-\one\| < \e$.  Then, for $\omega$ a state in $\Lone{\G}$, we see that as
$\xi_i$ is a unit vector,
\[ \big| \la \xi_i, \xi_i - (\omega\otimes\id)(U^*)\xi_i \ra \big|
= |\ip{\one-a_i}{\omega}| < \e. \]
Thus
\begin{align*} \big\| (\omega\otimes\id)(U^*)\xi_i - \xi_i \big\|^2
&= \big\| (\omega\otimes\id)(U^*)\xi_i \big\|^2 + 1
- 2\Ree\la\xi_i, (\omega\otimes\id)(U^*)\xi_i \ra \\
&\leq 2 \big( 1-\Ree\la \xi_i, (\omega\otimes\id)(U^*)\xi_i \ra \big) \\
&\leq 2 \big| \la \xi_i, \xi_i - (\omega\otimes\id)(U^*)\xi_i \ra \big|
< 2\e.
\end{align*}
So every vector in $C$ is at most $\sqrt{2\e}$ from $\xi_i$,
and hence the same is true of $\eta_i$.  In particular, if $\e<1/2$
then $\eta_i$ is non-zero, and invariant, showing that $U$ has non-zero
invariant vectors, as required.
\end{proof}

We will now show that in fact we can have better control over the elements appearing in the above theorem.

\begin{lem}\label{real}
Let $(a_i)_{i\in \Ind}$ be a net of Hilbert space contractions which does not converge to $\one$. Then $(\Ree (a_i))_{i\in \Ind}$ also does not converge to $\one$.
\end{lem}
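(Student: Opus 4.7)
My plan is to prove the contrapositive: assuming $\Ree(a_i) \to \one$ in norm, I will show that $a_i \to \one$ in norm. The whole argument reduces to a single pointwise inequality, so there is no substantive obstacle; the only thing to get right is the direction of the bound.

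The key computation is the following. For any contraction $a\in\mc B(\H)$ and any unit vector $\xi\in \H$, expanding the square of the norm gives
\[
\|(a-\one)\xi\|^{2} = \|a\xi\|^{2} + 1 - 2\Ree\la \xi, a\xi\ra \leq 2 - 2\Ree\la\xi,a\xi\ra = 2\la \xi,(\one-\Ree(a))\xi\ra \leq 2\|\one-\Ree(a)\|,
\]
where the first inequality uses $\|a\xi\|\leq 1$. Taking the supremum over unit vectors $\xi$ yields the operator norm bound
\[
\|a-\one\|^{2} \leq 2\|\one - \Ree(a)\|.
\]

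Applying this inequality to each $a_i$, the assumption $\|\Ree(a_i) - \one\| \to 0$ forces $\|a_i-\one\|\to 0$, contradicting the hypothesis of the lemma. Hence $(\Ree(a_i))_{i\in\Ind}$ cannot converge to $\one$.
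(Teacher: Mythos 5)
Your proof is correct, and it takes a genuinely different (and more streamlined) route than the paper. The paper first reduces the net statement to an equivalent uniform quantitative claim about single contractions, restated for sequences, and then argues by contradiction: if $\|a_n-\one\|>\e$ while $\|\Ree(a_n)-\one\|\le 1/n$, then $\|\Img(a_n)\|>\e/2$ eventually, so one can pick unit vectors $\xi_n$ with $|\la\xi_n,\Img(a_n)\xi_n\ra|>\e/2$, and writing $\la\xi_n,a_n\xi_n\ra=1+\alpha_n+i\beta_n$ with $|\alpha_n|\le 1/n$ and $|\beta_n|>\e/2$ forces $|\la\xi_n,a_n\xi_n\ra|>1$, contradicting contractivity. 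You instead prove the contrapositive directly via the single inequality
\[
\|a-\one\|^{2}\;\le\;2\,\|\one-\Ree(a)\|,
\]
valid for any contraction $a$, obtained by expanding $\|(a-\one)\xi\|^{2}$ and using $\|a\xi\|\le 1$ together with $\Ree\la\xi,a\xi\ra=\la\xi,\Ree(a)\xi\ra$. Both arguments ultimately exploit the same phenomenon — contractivity bounds $|\la\xi,a\xi\ra|$ and hence couples the real and imaginary parts — but your version packages it as an explicit modulus ($\delta=\e^{2}/2$ works in the paper's quantitative reformulation), avoids the detour through the imaginary part and the reduction to sequences, and applies to nets with no extra work. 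It would serve as a drop-in replacement for the paper's proof.
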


\begin{proof}
It is enough to show the statement for sequences, as its sequential version is easily seen to be equivalent to the following fact:
\[ \forall_{\e>0} \; \exists_{\delta>0} \; \forall_{a\in \B(\Hil)} \; (\|a\|\leq 1, \|a-\one\|>\e) \Longrightarrow  \|\Ree (a)-\one\|>\delta.\]
So suppose then that the sequential statement fails, so that we have $\e >0$ and $(a_n)_{n=1}^{\infty}$ a sequence of contractions such that $\|a_n-\one\|>\e$ and $\|\Ree(a_n)-\one\| \leq \frac{1}{n}$ for all $n \in \bn$.  Then
\[\e < \|a_n-\one\| =  \|\Ree(a_n)-\one + i \tu{Im} (a_n) \|\leq \frac{1}{n} + \|\tu{Im} (a_n)\| \]
so for $n$ big enough $\|\tu{Im} (a_n)\|> \frac{\e}{2}$. Thus there are vectors $\xi_n \in \Hil$ of norm 1 such that $|\ip{\xi_n}{\tu{Im} (a_n) \xi_n}|>\frac{\e}{2}$.
But then
\[ 1 \geq|\ip{\xi_n}{a_n \xi_n}| = | \ip{\xi_n}{\xi_n} + \ip{\xi_n}{(\Ree (a_n)-\one) \xi_n} + i \ip{\xi_n}{\tu{Im} (a_n) \xi_n}| = |1+\alpha_n + i\beta_n|,\]
where $\alpha_n, \beta_n$ are real numbers, $|\alpha_n|\leq \frac{1}{n}$, $|\beta_n|>\frac{\e}{2}$. This is clearly contradictory and the proof is finished.
\end{proof}

Of course for normal operators it would have been enough to use the functional calculus. This is how one can prove in elementary fashion the next lemma.

\begin{lem} \label{positive}
Let $(a_i)_{i\in \Ind}$ be a net of selfadjoint Hilbert space contractions which does not converge to $\one$. Then the net (of positive operators)
$(\exp(a_i-\one))_{i\in \Ind}$ does not converge to $\one$.
\end{lem}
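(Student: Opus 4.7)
The plan is to reduce the statement to a one-variable estimate via the continuous functional calculus, which is legitimate here (in contrast to the previous lemma) because the operators $a_i$ are now assumed selfadjoint. Since each $a_i$ is selfadjoint with $\|a_i\|\leq 1$, the operator $a_i-\one$ is selfadjoint with $\sigma(a_i-\one)\subseteq[-2,0]$. Hence by the spectral theorem
\begin{equation*}
\|a_i-\one\|=\sup\{|t|:t\in\sigma(a_i-\one)\},\qquad \|\exp(a_i-\one)-\one\|=\sup\{|e^{t}-1|:t\in\sigma(a_i-\one)\}.
\end{equation*}

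The only analytic input needed is the elementary comparison: there exists $c>0$ such that $|e^{t}-1|\geq c|t|$ for every $t\in[-2,0]$. This follows because the function $t\mapsto (1-e^{t})/(-t)$ extends continuously to $[-2,0]$ (with value $1$ at $0$) and is strictly positive throughout; its minimum on the compact interval is therefore positive, and one may for instance take $c=(1-e^{-2})/2$.

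Feeding this comparison back through the two supremum expressions above yields the operator-norm inequality
\begin{equation*}
\|\exp(a_i-\one)-\one\|\;\geq\;c\,\|a_i-\one\|\qquad(i\in\Ind).
\end{equation*}
Contrapositively, if $\exp(a_i-\one)\to\one$ in norm, then $a_i\to\one$ in norm; this is exactly the claim. There is no serious obstacle: once the selfadjointness assumption lets us invoke functional calculus, the proof collapses to the scalar estimate on $[-2,0]$, which is why the authors describe it as elementary.
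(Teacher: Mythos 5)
Your proof is correct and follows essentially the same route as the paper's: reduce via the spectral theorem to a scalar estimate on the interval where the spectrum of $a_i-\one$ lives. The only difference is cosmetic — you make the scalar fact quantitative with the explicit constant $c=(1-e^{-2})/2$, whereas the paper just records the qualitative $\forall\e\,\exists\delta$ statement.
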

\begin{proof}
Via the spectral theorem it suffices to note the following elementary fact:
\[ \forall_{\e>0} \; \exists_{\delta>0} \; \forall_{a\in \br} \; (|a|\leq 1, |a-1|>\e) \Longrightarrow  |\exp(a-1) - 1|>\delta. \qedhere \]
\end{proof}

We are now able to strengthen Theorem \ref{thm:wstar_norm} under the assumption of triviality of the scaling group.

\begin{prop} \label{proppositiveuniform}
Let $\QG$ be a locally compact quantum group with trivial scaling group. Then the following conditions are equivalent:
\begin{enumerate}
\item \label{enu:proppositiveuniform_1} $\QG$ has Property (T);
\item \label{enu:proppositiveuniform_2} if $(a_i)_{i \in \Ind}$ is a net of normalised positive-definite elements in $\M(\C_0(\QG))_+$ which converges strictly to $\one$, then
$\lim_{i \in \Ind} \| a_i - \one \| =0$.
\end{enumerate}
\end{prop}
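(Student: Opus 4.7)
The implication \ref{enu:proppositiveuniform_1}$\Rightarrow$\ref{enu:proppositiveuniform_2} is an immediate special case of \prettyref{thm:wstar_norm}, since every net of positive normalised positive-definite elements is in particular a net of normalised positive-definite elements. For the converse I plan to argue contrapositively, once more invoking \prettyref{thm:wstar_norm}: supposing $\QG$ does not have Property (T), there exists a net $(a_i)_{i\in\Ind}$ of normalised positive-definite elements of $\M(\Cz{\QG})$ converging strictly to $\one$ without converging in norm, and I will modify it in two steps to produce a net of \emph{positive} normalised positive-definite elements with the same property, contradicting \ref{enu:proppositiveuniform_2}.

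\emph{Step 1 (symmetrisation).} Write $a_i=(\i\tensor\mu_i)(\wW^*)$ for a state $\mu_i$ on $\CzU{\hh\QG}$, and present $a_i=(\i\tensor\omega_{\xi_i})(U_i^*)$ as a coefficient of the representation $U_i$ of $\QG$ arising from the GNS construction of $\mu_i$. Since the scaling group is trivial we have $R=S$, so \eqref{eq:reps_S} combined with $R^2=\i$ gives $(R\tensor\i)(U_i^*)=U_i$; slicing by the selfadjoint functional $\omega_{\xi_i}$ then yields
\[ R(a_i)=(\i\tensor\omega_{\xi_i})(U_i)=a_i^*. \]
Thus $a_i^*$ is itself a normalised positive-definite element (corresponding to the state $\mu_i\circ\hh R^u$), and so
\[ b_i:=\Ree a_i=\tfrac12(a_i+a_i^*)=(\i\tensor\tilde\mu_i)(\wW^*), \qquad \tilde\mu_i:=\tfrac12(\mu_i+\mu_i\circ\hh R^u), \]
is a selfadjoint normalised positive-definite element. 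Plainly $b_i\to\one$ strictly, and since each $a_i$ is a contraction, Lemma \ref{real} ensures that $(b_i)$ does not converge to $\one$ in norm.

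\emph{Step 2 (exponentiation).} The functional $L_i:=\hh\epsilon-\tilde\mu_i$ on $\CzU{\hh\QG}$ is bounded, selfadjoint, vanishes at $\one$, and satisfies $L_i(x^*x)=-\tilde\mu_i(x^*x)\le 0$ whenever $\hh\epsilon(x)=0$; hence it is a bounded generating functional. Since $\hh\epsilon$ is the convolution identity, the $*$-exponential construction (cf.\ \prettyref{lem:convgen}) gives
\[ \nu_i:=\exp_\conv(-L_i)=e^{-1}\sum_{n=0}^{\infty}\frac{1}{n!}\,\tilde\mu_i^{\conv n}, \]
which is a state of $\CzU{\hh\QG}$. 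Combining this with the standard identity $a_{\mu^{\conv n}}=a_\mu^n$ (an iteration of $a_{\mu\conv\nu}=a_\mu a_\nu$, itself a consequence of the comodule property of $\wW$) shows that the normalised positive-definite element associated with $\nu_i$ is
\[ c_i:=e^{-1}\sum_{n=0}^{\infty}\frac{1}{n!}\,b_i^n=\exp(b_i-\one), \]
which is positive because $b_i$ is selfadjoint. Strict convergence $c_i\to\one$ follows from the term-by-term estimate
\[ \|(c_i-\one)x\|\le\sum_{n\ge 1}\frac{1}{n!}\|(b_i-\one)^n x\|\le\tfrac12(e^2-1)\,\|(b_i-\one)x\| \quad (x\in\Cz{\QG}), \]
using $\|b_i-\one\|\le 2$, together with its right-hand counterpart. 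Finally, Lemma \ref{positive} applied to the selfadjoint contractions $b_i$ shows that $(c_i)=(\exp(b_i-\one))$ does not converge to $\one$ in norm either -- contradicting \ref{enu:proppositiveuniform_2}.

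I expect the main obstacle to lie in the identification $a_i^*=R(a_i)$ used to close Step 1: this is precisely where the trivial scaling group hypothesis enters, and without it the class of normalised positive-definite elements need not be closed under the involution, so forming real parts would already fail to produce positive-definite elements. Once this stability under $*$ is in hand, the remainder is essentially a combination of the convolution-exponential construction with the preparatory Lemmas \ref{real} and \ref{positive}.
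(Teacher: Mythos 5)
Your proposal is correct and follows essentially the same route as the paper's proof: reduce to \prettyref{thm:wstar_norm}, use triviality of the scaling group to see that the involution (hence taking real parts) preserves normalised positive-definite elements, then pass to $\exp(\Ree(a_i)-\one)$ via the convolution exponential, invoking Lemmas \ref{real} and \ref{positive} to keep the net away from $\one$ in norm. The only difference is that you spell out explicitly the identification $a_i^*=R(a_i)=(\i\tensor\mu_i\circ\widehat{R}^u)(\wW^*)$ and the strict-convergence estimate for the exponential, which the paper leaves as elementary verifications.
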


\begin{proof}
The implication \prettyref{enu:proppositiveuniform_1}$\Longrightarrow$\prettyref{enu:proppositiveuniform_2} is contained in Theorem \ref{thm:wstar_norm}.

For the converse direction we begin by observing the following: if $a$ is a normalised positive-definite element of $\M(\C_0(\QG))$, then so is $a^*$ -- this follows from the fact that if $\mu$ is a state of $\C_0^u(\hQG)$, and $\hQG$ has a bounded antipode (as we assume here), then so is $\mu \circ \widehat{S}^{u}$. Further also $\Ree(a)$ is a normalised positive-definite element, as a convex combination of elements of this type. Additionally, if $a\in \M(\C_0(\QG))$ is a normalised positive-definite element, then so is $\exp(a-\one)$ -- this time we use \cite[Theorem 6.3]{Lindsay_Skalski__quant_stoch_conv_cocyc_3} and \cite[Theorem 3.7]{Lindsay_Skalski__conv_semigrp_states}, which imply that if $\mu$ is a state of $\C_0^u(\hQG)$, then so is $\exp_{\conv}(\mu - \widehat{\epsilon}) = \sum_{k=0}^{\infty} \frac{(\mu-\widehat{\epsilon})^{\conv k}}{k!}$.

Assume then that $\QG$ does not have Property (T). By \prettyref{thm:wstar_norm} there exists a net $(a_i)_{i \in \Ind}$ of normalised positive-definite  elements of $\M(\C_0(\QG))$ which converges strictly to $\one$, but does not converge to $\one$ in norm. By the considerations above and Lemmas \ref{real} and \ref{positive} the net
$(\exp(\Ree(a_i) - \one))_{i \in \Ind}$ is a net  of  normalised positive-definite elements of $\M(\C_0(\QG))_+$ which does not converge to $\one$ in norm. It is elementary to verify that it converges to $\one$ strictly.
\end{proof}

The following definition appears in  \cite{Arano__unit_sph_rep_Drinfeld_dbls} (for discrete quantum groups). Recall that a functional in $\C_0^u(\hh\G)^*$ is said to be \emph{central} if it commutes with all elements of $\C_0^u(\hh\G)^*$ with respect to the convolution product.

\begin{deft}
A locally compact quantum group $\G$ has \emph{Central Property (T)} if for any  net $(\mu_i)_{i \in \Ind}$ of central states of $\C_0^u(\hh\G)$ converging
in the weak$^*$ topology to the counit $\hh\epsilon$, the net  $(\mu_i)_{i \in \Ind}$ actually converges in norm
to $\hh\epsilon$.
\end{deft}

It is clear from Theorem \ref{thm:wstar_norm} that Property (T) implies Central Property (T). The following result of Arano establishes the converse implication for discrete unimodular quantum groups.

\begin{tw}[{\cite[Proposition A.7]{Arano__unit_sph_rep_Drinfeld_dbls}}]
A discrete unimodular quantum group has Property (T) if and only if it has Central Property (T).
\end{tw}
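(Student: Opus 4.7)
The forward implication is immediate from Theorem~\ref{thm:wstar_norm}, since central states form a subclass of all states. The interesting direction requires the Kac hypothesis: assuming $\G$ is discrete and unimodular (so $\hh\G$ is a compact quantum group of Kac type) and that Central Property~(T) holds, one wants to deduce Property~(T). My plan is to argue by contradiction: if Property~(T) fails, then by Theorem~\ref{thm:wstar_norm} there exists a net $(\mu_i)_{i \in \Ind}$ of states of $\CU{\hh\G}$ converging weakly$^*$ to $\hh\epsilon$ but not in norm, and I must produce from it a \emph{central} net of states with the same behaviour -- weak$^*$ convergence to $\hh\epsilon$, but no norm convergence -- so as to contradict Central Property~(T).

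My first attempt at a centralisation would be the naive one, $\tilde\mu := \mu \circ E$, where $E \colon \CU{\hh\G} \to Z(\CU{\hh\G})$ is the conditional expectation onto the centre. Such $E$ exists in the Kac case: it can be obtained by averaging against the adjoint action of $\hh\G$ on itself, using crucially that the Haar state of $\hh\G$ is tracial. A direct computation on matrix coefficients gives $\tilde\mu(u^\alpha_{ij}) = n_\alpha^{-1} \delta_{ij} \tr(\mu^\alpha)$, manifestly central, and the identity $\hh\epsilon\circ E = \hh\epsilon$ immediately yields the weak$^*$ convergence $\tilde\mu_i \to \hh\epsilon$. Unfortunately this crude centralisation loses too much information to transfer the \emph{failure} of norm convergence: one has $\|\tilde\mu_i - \hh\epsilon\| \leq \|\mu_i - \hh\epsilon\|$ but no inequality in the reverse direction, and indeed one can have $\tr(\mu_i^\alpha) \to n_\alpha$ uniformly in $\alpha$ (which gives $\tilde\mu_i \to \hh\epsilon$ in norm) while the matrices $\mu_i^\alpha$ stay far from $I_{n_\alpha}$ individually.

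The hard part will therefore be to refine the centralisation so that non-convergence in norm persists from $(\mu_i)$ to its centralised version. The route I would take, following Arano in \cite[Proposition~A.7]{Arano__unit_sph_rep_Drinfeld_dbls}, is to pass through the Drinfeld double $D(\hh\G)$: from the GNS data $(\pi_\mu, \H_\mu, \xi_\mu)$ of $\mu$, I would form the tensor product representation of Lemma~\ref{lem:tensprod} corresponding to $U_\mu \tp U_\mu^c$, acting on $\H_\mu \otimes \overline{\H_\mu}$, and define $\tilde\mu$ to be the vector state at a distinguished `maximally entangled' vector of this representation. In the Kac setting -- again via the trace property of the Haar state -- such a vector state turns out to be central while retaining enough quantitative information on $\mu$ (through Hilbert--Schmidt type data of the matrices $\mu^\alpha$) to ensure that failure of norm convergence of $(\mu_i)$ transfers to $(\tilde\mu_i)$. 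Central Property~(T) applied to the net $(\tilde\mu_i)$ would then force its norm convergence to $\hh\epsilon$, contradicting the construction and establishing Property~(T).
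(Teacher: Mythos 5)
First, a point of order: the paper does not actually prove this statement --- it is imported wholesale as \cite[Proposition A.7]{Arano__unit_sph_rep_Drinfeld_dbls} --- so there is no internal proof to compare against, and the full burden of the nontrivial implication rests on your sketch. Your forward implication is correct and is exactly the remark the paper makes before the statement. Your diagnosis of why the naive centralisation $\mu \mapsto \mu \circ E$ fails is also correct and worth having on record: one only gets $\|\mu\circ E - \hCou\| \leq \|\mu - \hCou\|$, and control of $\sup_\alpha |1 - \tr(\mu^\alpha)/n_\alpha|$ does not control $\sup_\alpha \|I_{n_\alpha} - \mu^\alpha\|$ without a uniform bound on the $n_\alpha$, i.e.\ without lowness of the dual --- which is precisely the hypothesis this theorem is meant to avoid.

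The gap is in the step you defer to. A ``maximally entangled'' vector of $\Hil_\mu \ot \ol{\Hil_\mu}$ does not exist when the GNS space $\Hil_\mu$ is infinite dimensional (its vector state would restrict to a normalised trace on $\B(\Hil_\mu)$), and the natural substitute, the vector state $\tilde\mu$ at $\xi_\mu \ot \ol{\xi_\mu}$ for $U_\mu \tp U_\mu^c$, is \emph{not} central: using \prettyref{lem:tensprod} and $\phi_{U^c} = \top \circ \phi_U \circ \widehat{R}^u$ one computes, in the Kac case, $\tilde\mu(u^\alpha_{ij}) = \sum_k \ol{\mu^\alpha_{ki}}\,\mu^\alpha_{kj}$, i.e.\ $\tilde\mu^\alpha = (\mu^\alpha)^*\mu^\alpha$, which is a scalar matrix only in very special cases. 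So the construction as described yields neither a central state nor, a fortiori, the quantitative lower bound $\|\tilde\mu_i - \hCou\| \geq c\bigl(\|\mu_i - \hCou\|\bigr)$ whose existence is the entire content of the hard direction. Repairing this requires genuinely passing to the Drinfeld double: one must realise $U_\mu \tp U_\mu^c$ (equivalently, the adjoint-type action on Hilbert--Schmidt operators) as the restriction of a representation of the double, replace $\xi_\mu \ot \ol{\xi_\mu}$ by its projection onto the subspace of spherical vectors --- whose matrix coefficients \emph{are} central --- and then prove that this projection does not collapse the distance to $\hCou$. None of that is routine, and none of it appears in the proposal; as written, the argument establishes only the trivial implication.
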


Note that for a discrete $\QG$ a state  $\mu \in \C^u(\hQG)^*$ is central if and only if all the matrices $\mu^{\alpha}$ ($\alpha \in \Irred{\hh\QG}$ -- see comments after Remark \prettyref{Remark:low}) are scalar multiples of the identity, if and only if the corresponding normalised positive-definite element (which in this case can be identified simply with $(\mu^\alpha)_{\alpha \in \Irred{\QG}} \in \M (\c0(\QG))$) is in the centre of $\M (\c0(\QG))$. Using this fact we can easily show, following the lines above, the next corollary.

\begin{cor} \label{cor:centralproppositiveuniform}
Let $\QG$ be a unimodular discrete quantum group. Then the following conditions are equivalent:
\begin{enumerate}
\item $\QG$ has Property (T);
\item if $(a_i)_{i \in \Ind}$ is a net of central normalised positive-definite elements in $\M(\c0(\QG))_+$ which converges strictly to $\one$, then
$\lim_{i \in \Ind} \| a_i - \one \| =0$.
\end{enumerate}
\end{cor}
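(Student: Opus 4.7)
The plan is to mimic the proof of Proposition~\ref{proppositiveuniform}, keeping track of centrality at every step and invoking Arano's theorem to translate between Property~(T) and Central Property~(T).

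The forward direction~(1)$\Rightarrow$(2) will be immediate: a net of central normalised positive-definite elements in $\M(\c0(\QG))_+$ is, in particular, such a net of normalised positive-definite elements, so the conclusion follows from Theorem~\ref{thm:wstar_norm} (or equivalently from Proposition~\ref{proppositiveuniform}).

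For~(2)$\Rightarrow$(1), I will argue by contraposition. Supposing that $\QG$ does not have Property~(T), Arano's theorem (which applies since $\QG$ is unimodular discrete) yields that $\QG$ does not have Central Property~(T), so one obtains a net $(\mu_i)_{i \in \Ind}$ of central states of $\C^u(\hQG)$ converging weak$^*$ to $\hh\epsilon$ but not in norm. Setting $a_i := (\i \tensor \mu_i)(\wW^*)$ and invoking the explicit identification $a_i = (\mu_i^\a)_\a$ recalled just before the corollary, each $a_i$ becomes a central normalised positive-definite element of $\M(\c0(\QG))$. The weak$^*$ convergence $\mu_i \to \hh\epsilon$ translates componentwise into strict convergence $a_i \to \one$ in $\M(\c0(\QG))$, while a Plancherel-type argument on the centre $Z(\C^u(\hQG))$ of the compact Kac quantum group $\hQG$ shows that the failure of $\mu_i \to \hh\epsilon$ in norm is equivalent to the failure of $a_i \to \one$ in $\M(\c0(\QG))$ norm.

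With such a net $(a_i)$ in hand, I will apply the manipulations of Proposition~\ref{proppositiveuniform}, observing that each preserves centrality. Since $\hQG$ is compact Kac, $\widehat{S}^u$ is bounded, so $a_i^*$ is again normalised positive-definite; thus so are $\Ree(a_i) = (a_i + a_i^*)/2$ and, via the convolution-exponential formula together with the Lindsay--Skalski results cited in the proposition, $b_i := \exp(\Ree(a_i) - \one) \in \M(\c0(\QG))_+$. Each of the operations $a \mapsto a^*$, $a \mapsto \Ree(a)$ and $a \mapsto \exp(a - \one)$ preserves the centre $Z(\M(\c0(\QG)))$, since this is closed under adjoints, convex combinations, and the continuous functional calculus for self-adjoint elements. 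Lemmas~\ref{real} and~\ref{positive} will then show that the central positive net $(b_i)$ converges strictly to $\one$ but not in norm, contradicting~(2). The hard part will be the Plancherel-type norm comparison for central elements, which rests on the structure of the centre of the compact Kac quantum group $\hQG$ and parallels the classical Pontryagin-dual equivalence between total-variation convergence of central measures and uniform convergence of their Fourier coefficients.
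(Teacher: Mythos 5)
Your architecture is the right one and matches what the paper intends: invoke Arano's theorem to replace Property (T) by Central Property (T), identify central states of $\C^u(\hQG)$ with central multipliers $(\mu^\alpha)_\alpha\in\M(\c0(\QG))$, and then run the $\Ree(\cdot)$ and $\exp(\cdot-\one)$ manipulations of Proposition \ref{proppositiveuniform}, all of which visibly preserve the centre. The direction (1)$\Rightarrow$(2), the strict convergence $a_i\to\one$, and the final massaging via Lemmas \ref{real} and \ref{positive} are all fine. The gap is precisely at the step you yourself flag as ``the hard part'': deducing that $a_i\not\to\one$ in norm from $\mu_i\not\to\hh\epsilon$ in norm. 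The easy inequality points the wrong way — slice maps are contractive, so $\|a_i-\one\|=\|(\id\otimes(\mu_i-\hh\epsilon))(\wW^*)\|\le\|\mu_i-\hh\epsilon\|$ — and the reverse estimate cannot be obtained by a ``Plancherel-type argument on the centre $Z(\C^u(\hQG))$'': the dual norm $\|\mu_i-\hh\epsilon\|_{\C^u(\hQG)^*}$ is a supremum over the whole $\cst$-algebra and is not controlled by the restriction of the functional to the centre (for $\G=\mathbb{F}_2$ the canonical trace and the counit of $C^*(\mathbb{F}_2)$ are both central, agree on the centre $\CC\one$, and are at distance $2$), nor does the dual norm admit a naive Plancherel expression in the matrices $\mu_i^\alpha$.

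The estimate that actually closes the gap is representation-theoretic and uses no centrality at all. Let $(\pi_i,\Hil_i,\xi_i)$ be the GNS triple of $\mu_i$, let $U_i$ be the associated representation of $\G$, and let $p^{U_i}$ be the projection onto $\inv(U_i)$. For every state $\omega\in\Lone{\G}$ one has $\la\xi_i,(\omega\otimes\id)(U_i)\xi_i\ra=\mu_i((\omega\otimes\id)(\wW))=\omega(a_i^*)$, so if $\|a_i-\one\|\le\varepsilon$ then $\Ree\la\xi_i,T\xi_i\ra\ge1-\varepsilon$ for every $T$ in the convex hull of $\{(\omega\otimes\id)(U_i):\omega\text{ a state}\}$. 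Since $p^{U_i}\xi_i$ lies in the closed convex hull of $\{(\omega\otimes\id)(U_i)\xi_i\}$ by Lemma \ref{lem:avg_semigp_one} and Proposition \ref{prop:ave_equals_proj_gen}, this gives $\|\xi_i-p^{U_i}\xi_i\|^2=1-\|p^{U_i}\xi_i\|^2\le\varepsilon$. The computation in the proof of \ref{enu:wstar_norm_one}$\implies$\ref{enu:wstar_norm_two} of Theorem \ref{thm:wstar_norm} (which uses only that $p^{U_i}\xi_i$ is invariant, not Property (T) or any Kazhdan pair) then yields $\|\mu_i-\hh\epsilon\|\le2\|\xi_i-p^{U_i}\xi_i\|\le2\sqrt{\varepsilon}$. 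Hence norm convergence $a_i\to\one$ would force $\mu_i\to\hh\epsilon$ in norm, which is exactly the contrapositive you need; with this lemma in place the rest of your argument goes through.
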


The following result now extends \cite[Theorem 5.1]{Kyed__cohom_prop_T_QG} by adding to it another equivalent condition, namely \prettyref{enu:UnboundedCentral_3} below.

\begin{tw}\label{thm:UnboundedCentral}
Let $\QG$ be a second countable discrete quantum group. The following are equivalent:
\begin{enumerate}
\item \label{enu:UnboundedCentral_1} $\QG$ has Property (T);
\item \label{enu:UnboundedCentral_2} $\QG$ is unimodular and $\Pol(\widehat{\QG})$ admits no unbounded generating functional;
\item \label{enu:UnboundedCentral_3} $\QG$ is unimodular and $\Pol(\widehat{\QG})$ admits no central strongly unbounded $\widehat{S}^u$-invariant generating functional.
\end{enumerate}
\end{tw}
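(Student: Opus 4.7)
The equivalence \prettyref{enu:UnboundedCentral_1}$\iff$\prettyref{enu:UnboundedCentral_2} is exactly \cite[Theorem~5.1]{Kyed__cohom_prop_T_QG}, and the implication \prettyref{enu:UnboundedCentral_2}$\Longrightarrow$\prettyref{enu:UnboundedCentral_3} is immediate: as observed in \prettyref{sub:compact_discrete}, a strongly unbounded generating functional is unbounded with respect to the universal norm on $\C^u(\hh\QG)$, so absence of unbounded generating functionals a fortiori rules out central, strongly unbounded, $\hh{S}^u$-invariant ones. The substantive new content is the implication \prettyref{enu:UnboundedCentral_3}$\Longrightarrow$\prettyref{enu:UnboundedCentral_1}, which the plan is to prove contrapositively. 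Since failure of unimodularity already precludes~\prettyref{enu:UnboundedCentral_3}, the non-trivial case is to assume that $\QG$ is unimodular, second countable, and lacks Property (T), and to produce a central, $\hh{S}^u$-invariant, strongly unbounded generating functional on $\Pol(\hh\QG)$.

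Under those assumptions Arano's theorem quoted above shows that $\QG$ also lacks Central Property (T), so \prettyref{cor:centralproppositiveuniform} produces a sequence $(a_n)_{n\in\bn}$ of central normalised positive-definite elements of $\M(\c0(\QG))_+$ converging strictly to $\one$ with $\|a_n-\one\|\ge\delta$ for some $\delta>0$. Let $\mu_n$ be the corresponding central state of $\C^u(\hh\QG)$. Centrality forces $\mu_n^\alpha=c_n^\alpha I_{n_\alpha}$ for each $\alpha\in\Irr(\hh\QG)$, and positivity of $a_n$ together with $\|a_n\|\le 1$ forces $c_n^\alpha\in[0,1]$. Strict convergence translates to the pointwise statement $c_n^\alpha\to 1$ for every $\alpha$, while the uniform lower bound yields, for each $n$, some $\alpha'_n\in\Irr(\hh\QG)$ with $c_n^{\alpha'_n}\le 1-\delta$. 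Enumerating $\Irr(\hh\QG)=\{\alpha^{(k)}\}_{k\in\bn}$ (by second countability) and passing to a subsequence via a diagonal extraction, I may moreover arrange $1-c_n^{\alpha^{(k)}}\le 2^{-n}n^{-2}$ whenever $k\le n$.

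Next set $L_n:=n(\hh\epsilon-\mu_n)|_{\Pol(\hh\QG)}$ and $L:=\sum_{n=1}^{\infty}L_n$. Each $L_n$ is a central generating functional: it vanishes at $\one$, is self-adjoint since $\mu_n$ is a state, and is conditionally negative definite since $\hh\epsilon(a^*a)=0$ and $\mu_n(a^*a)\ge 0$ for $a\in\ker\hh\epsilon$. Its scalar data are $L_n^\alpha=n(1-c_n^\alpha)I_{n_\alpha}\ge 0$. The diagonal bounds make the formal series $L(u^{\alpha^{(k)}}_{ij})=\delta_{ij}\sum_n n(1-c_n^{\alpha^{(k)}})$ absolutely convergent for every $k,i,j$, so $L:\Pol(\hh\QG)\to\bc$ is a well-defined linear functional which inherits centrality, self-adjointness, vanishing at $\one$ and conditional negativity by term-by-term arguments. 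Its scalars $c^\alpha:=\sum_n n(1-c_n^\alpha)\ge 0$ satisfy $c^{\alpha'_n}\ge n(1-c_n^{\alpha'_n})\ge n\delta\to\infty$, so $L$ is strongly unbounded.

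To arrange $\hh{S}^u$-invariance, put $\tilde L := \tfrac12(L+L\circ\hh{S}^u)$. As $\QG$ is unimodular, $\hh\QG$ is compact of Kac type and $\hh{S}^u$ is an involutive $*$-anti-automorphism of $\C^u(\hh\QG)$ satisfying $\hh\epsilon\circ\hh{S}^u=\hh\epsilon$ together with the standard anti-coalgebra identity; a direct check with these shows that $L\circ\hh{S}^u$ is a central generating functional, and hence so is $\tilde L$, which is moreover $\hh{S}^u$-invariant by involutivity. Its scalars read $\tilde c^\alpha=(c^\alpha+c^{\bar\alpha})/2\ge c^\alpha/2$, so $\tilde c^{\alpha'_n}\to\infty$ and $\tilde L$ remains strongly unbounded, contradicting~\prettyref{enu:UnboundedCentral_3}. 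The main obstacle will be the quantitative diagonal extraction, which must simultaneously secure fast enough pointwise decay of $1-c_n^{\alpha^{(k)}}$ (so that the series defining $L$ converges on every matrix coefficient) and the preservation of witnesses $\alpha'_n$ of large deviation (so that $L$ is not merely unbounded but strongly so); second countability of $\QG$ is used essentially here, while the remaining verifications that summation and $\hh{S}^u$-averaging preserve the generating-functional and centrality properties amount to routine bookkeeping in the Hopf $*$-algebra $\Pol(\hh\QG)$.
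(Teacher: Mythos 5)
Your proposal is correct and takes essentially the same route as the paper's proof: the contrapositive of \prettyref{enu:UnboundedCentral_3}$\implies$\prettyref{enu:UnboundedCentral_1} via \prettyref{cor:centralproppositiveuniform}, extraction of a sequence of central normalised positive-definite elements with a quantitative diagonal argument balancing fast decay on an exhaustion of $\Irr(\hQG)$ against retained witnesses of deviation from $\one$, and the weighted series $L=\sum_n w_n(\widehat{\epsilon}-\mu_n)$. The only (harmless) redundancy is your symmetrisation $\tfrac12(L+L\circ\widehat{S}^u)$: since each $\mu_n$ is a central state with non-negative scalars on a Kac-type dual, one has $\mu_n\circ\widehat{S}^u=\mu_n$ already, so $L$ itself is $\widehat{S}^u$-invariant, which is how the paper concludes.
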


\begin{proof}
\prettyref{enu:UnboundedCentral_1}$\implies$\prettyref{enu:UnboundedCentral_2}:
this is the implication  \prettyref{enu:UnboundedCentral_1}$\implies$\prettyref{enu:UnboundedCentral_2} in \cite[Theorem 5.1]{Kyed__cohom_prop_T_QG}; the first part dates back to \cite{Fima__prop_T}.

\prettyref{enu:UnboundedCentral_2}$\implies$\prettyref{enu:UnboundedCentral_3}: trivial.

\prettyref{enu:UnboundedCentral_3}$\implies$\prettyref{enu:UnboundedCentral_1}:
suppose that \prettyref{enu:UnboundedCentral_3} holds and yet (T) fails.  Let $(K_n)_{n=1}^\infty$ be an increasing family of finite subsets of $\Irred{\hQG}$  such that $\bigcup_{n=1}^{\infty} K_n =  \Irred{\hQG}$. We will argue as in the proof of the implication (a4)$\Longrightarrow$(b3) in \cite{Jolissaint__prop_T_pairs}, attributed there to \cite{Akemann_Walter__unb_neg_def_func}.

\prettyref{cor:centralproppositiveuniform} implies the existence of a net of central normalised positive-definite contractions $(a_i)_{i\in \Ind}$ in $\M(\c0(\QG))_+$ which is strictly convergent to $\one$, but not norm convergent to $\one$. As  $\Irred{\hQG}$ is assumed to be countable, we can replace this net by a sequence with the same properties. Consider the corresponding positive scalar matrices $a_k^{\alpha}\in M_{n_{\alpha}}$. There exists $\epsilon>0$ such that for each $k\in \bn$ there exists $l\geq k$ and  $\alpha_l \in\Irred{\hQG}$ such that $\|I_{n_{\alpha_l}} - a_l^{\alpha_l}\| \geq \epsilon$. Similarly for each $p\in \bn$ there exists $N\in \bn$ such that for all $k\geq N$ we have $\sup_{\alpha \in K_p} \|I_{n_{\alpha}} - a_k^{\alpha}\| \leq \frac{\epsilon}{4^p}$. Using these two facts we can, by passing to a subsequence constructed inductively, assume that in fact for each $l \in \bn$ there exists $\alpha_l\in \Irred{\hQG}$ such that
\begin{equation} \|I_{n_{\alpha_l}} - a_l^{\alpha_l}\| \geq \epsilon, \label{pointbig}\end{equation}
\begin{equation}\sup_{\alpha \in K_l} \|I_{n_{\alpha}} - a_l^{\alpha}\| \leq \frac{\epsilon}{4^l}. \label{small}\end{equation}
Define for each $\alpha \in \Irred{\hh\QG}$
\[L^{\alpha} = \sum_{l=1}^{\infty} 2^l (I_{n_{\alpha}} - a_l^{\alpha}).\]
Condition \eqref{small} assures that each $L_{\alpha}$ is well defined. For each $l \in \bn$ we have
$L^{\alpha_l} \geq 2^l (I_{n_{\alpha_l}} - a_l^{\alpha_l}) \geq 0$,
and thus condition \eqref{pointbig} implies that
$\|L^{\alpha_l}\|\geq 2^l\epsilon $. Finally it is easy to verify that the functional $L$ induced on $\Pol(\hQG)$ via matrices $L^{\alpha}$ is a generating functional, as it is defined via (pointwise convergent) series
\[ L=\sum_{l=1}^{\infty} 2^l (\widehat{\epsilon} - \mu_l),\]
where $\mu_l$ denotes the state of $\Pol(\hQG)$ corresponding to $a_l$, which satisfies $\mu_l \circ \widehat{S}^u = \mu_l$. This ends the proof, as $L$ is clearly strongly unbounded, central and $\widehat{S}^u$-invariant.
\end{proof}

\section{Implication (T)$^{1,1}\implies$(T) for discrete unimodular quantum groups with low duals}  \label{sec:TT11}

In this section we establish one of the main results of the whole paper, that is we show that for discrete unimodular  quantum groups with low duals Property (T) is equivalent to Property (T)$^{1,1}$ (and in fact to Property (T)$^{r,s}$ for any $r, s \in \bn$). The key tool will be \prettyref{thm:UnboundedCentral}, and we  begin with a series of technical lemmas regarding the behaviour of convolution semigroups of states at infinity implied by the properties of their generators.

\begin{lem}\label{KeyLemma0}
Let $\QG$ be a  discrete unimodular quantum group and let $L$ be a central strongly unbounded $\widehat{S}^u$-invariant generating functional on $\Pol(\hQG)$ (so that we have $L^{\alpha} = c_{\alpha} I_{n_{\alpha}} $ for all $\alpha \in \Irred{\hQG}$, where $c _{\alpha} \in \br_+$) and suppose that $(\gamma_l)_{l \in \bn}$ is a sequence of elements of $\Irred{\hQG}$ such that  we have $L^{\gamma_l}=d_l I_{M_{n_{\gamma_l}}}$ ($l \in \bn$), with $(d_l)_{l=1}^{\infty}$ a sequence of positive numbers increasing to infinity. Let $\alpha, \beta \in \Irred{\hQG}$ and define for each $l \in \bn$ the matrix $V^{(l)} \in M_{n_{\alpha}} \ot M_{n_{\gamma_l}} \ot M_{n_{\beta}}$ by the formula
\[ V^{(l)}_{(i,j,k),(p,r,s)} = L\left((u_{ip}^{\alpha})^*  u_{jr}^{\gamma_l} u_{ks}^{\beta}\right),\]
$i,p =1,\ldots, n_{\alpha},  j,r =1,\ldots, n_{\gamma_l},   k,s =1,\ldots, n_{\beta}$.
Then all matrices $V^{(l)}$ are selfadjoint, and there exists a sequence $(e_l)_{l=1}^{\infty}$ of positive numbers converging to infinity such that for each $l \in \bn$
\[ \sigma(V^{(l)}) \subseteq \{\lambda \in \bc: \Ree \lambda \geq e_l\}.\]
\end{lem}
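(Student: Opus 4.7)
The proof has two components: selfadjointness of $V^{(l)}$, and a spectral lower bound. The key idea for the bound is to use the Sch\"urmann cocycle realisation of $L$ to decompose $V^{(l)}$ as a scalar multiple of the identity of size $\sim d_l$ plus an error term of norm $O(\sqrt{d_l})$; the low-dual hypothesis $n_{\gamma_l}\le N$ is essential to keep this growth subdominant.

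\emph{Selfadjointness.} Since $\QG$ is discrete and unimodular, $\hQG$ is of Kac type, so $\widehat{S}^u$ is an involutive $*$-anti-homomorphism with $\widehat{S}^u(u^\gamma_{ij})=(u^\gamma_{ji})^*$. Combining $L(x^*)=\overline{L(x)}$, $L\circ\widehat{S}^u=L$, and the anti-multiplicativity of $\widehat{S}^u$, one computes
\[
\overline{V^{(l)}_{(p,r,s),(i,j,k)}}=L\bigl(\widehat{S}^u(u^\alpha_{pi})\,\widehat{S}^u((u^{\gamma_l}_{rj})^*)\,\widehat{S}^u((u^\beta_{sk})^*)\bigr)=V^{(l)}_{(i,j,k),(p,r,s)},
\]
establishing selfadjointness. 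The same style of manipulation, applied to $u^\gamma_{ij}-\delta_{ij}\one$ and summed over $i,j$, confirms that every $c_\gamma$ is a non-negative real and that $c_{\bar\gamma}=c_\gamma$.

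\emph{Sch\"urmann decomposition.} Invoke the standard cocycle realisation of a conditionally negative definite selfadjoint functional with $L(\one)=0$: there exist a $*$-representation $\pi\colon\Pol(\hQG)\to\mc B(H_\eta)$ and an $(\widehat{\epsilon},\pi)$-cocycle $\eta\colon\Pol(\hQG)\to H_\eta$, satisfying $\eta(ab)=\pi(a)\eta(b)+\eta(a)\widehat{\epsilon}(b)$ and
\[
L(cd)=\widehat{\epsilon}(c)L(d)+L(c)\widehat{\epsilon}(d)-\langle\eta(c^*),\eta(d)\rangle.
\]
Applying this twice to $c=u^{\bar\alpha}_{ip}u^{\gamma_l}_{jr}$, $d=u^\beta_{ks}$, using $\widehat{\epsilon}(u^\gamma_{ij})=\delta_{ij}$, $L(u^\gamma_{ij})=c_\gamma\delta_{ij}$, and the cocycle property, gives the crucial decomposition
\[
V^{(l)}=(c_\alpha+c_{\gamma_l}+c_\beta)\,I\;+\;E^{(l)},
\]
where $E^{(l)}$ is a sum of three ``cross'' terms whose matrix entries are inner products of cocycle values of $u^\alpha$, $u^{\bar\gamma_l}$ and $u^\beta$ (one of which carries an additional factor $\pi(u^{\bar\gamma_l})$), and with various Kronecker-delta factors. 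Selfadjointness from the first step forces $E^{(l)}$ to be selfadjoint.

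\emph{Norm estimate and conclusion.} The key identity is
\[
\sum_{i,j=1}^{n_\gamma}\|\eta(u^\gamma_{ij})\|^2=2\,n_\gamma c_\gamma,
\]
obtained by applying the Sch\"urmann formula to $(u^\gamma_{ij})^*u^\gamma_{ij}$, summing over $i,j$, and using the unitarity relation $\sum_{i,j}(u^\gamma_{ij})^*u^\gamma_{ij}=n_\gamma\one$ together with $L(\one)=0$. Combining this with the contractivity $\|\pi(u^{\bar\gamma_l}_{jr})\|\le 1$ and the low-dual bound $n_{\gamma_l}\le N$, straightforward Hilbert--Schmidt estimates yield $\|E^{(l)}\|\le C_1(\alpha,\beta,N)\sqrt{d_l}+C_2(\alpha,\beta,N)$. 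Hence
\[
V^{(l)}\;\ge\;\bigl(d_l+c_\alpha+c_\beta-C_1\sqrt{d_l}-C_2\bigr)\,I,
\]
and by selfadjointness $\sigma(V^{(l)})\subseteq[e_l,\infty)\subseteq\{\lambda:\Ree\lambda\ge e_l\}$ for any positive sequence $e_l$ dominated by the right-hand side, which tends to infinity as $l\to\infty$. The main obstacle is precisely the cross-term bound: without a uniform bound on $n_{\gamma_l}$ the Cauchy--Schwarz estimate on $\langle\eta(u^\alpha_{ip}),\eta(u^{\gamma_l}_{jr})\rangle$ would scale as $\sqrt{n_{\gamma_l}d_l}$ and could dominate $d_l$, defeating the argument --- so the low-dual hypothesis is indispensable.
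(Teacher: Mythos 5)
Your overall strategy is the same as the paper's: selfadjointness via $\widehat{S}^u$-invariance together with the Kac property, the Sch\"urmann cocycle realisation of $L$, and the decomposition $V^{(l)}=(c_\alpha+c_{\gamma_l}+c_\beta)I+E^{(l)}$ with $E^{(l)}$ built from the three cross terms. The selfadjointness computation and the identity $\sum_{i,j}\|\eta(u^{\gamma}_{ij})\|^2=2n_\gamma c_\gamma$ are both correct. The problem is the final norm estimate: you bound $\|E^{(l)}\|$ by entrywise Cauchy--Schwarz plus a Hilbert--Schmidt estimate, which produces a factor $\sqrt{n_{\gamma_l}}$ that you then kill using a uniform bound $n_{\gamma_l}\le N$. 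But the lemma as stated assumes only that $\QG$ is discrete and unimodular --- there is no low-dual hypothesis here (in the paper that hypothesis enters only in the \emph{next} lemma, where $e^{-tV^{(l)}}\to 0$ in norm is converted into a statement about sums of $n_{\gamma_l}$ matrix entries). So your argument proves a strictly weaker statement, and your closing claim that the low-dual hypothesis is indispensable for this lemma is incorrect.

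The missing idea is to estimate the cross terms in \emph{operator} norm rather than entrywise. Assemble the cocycle values into the column operator $T_l:\CC^{n_{\gamma_l}}\to H_\eta\otimes\CC^{n_{\gamma_l}}$, $e_j\mapsto\sum_a \eta(u^{\gamma_l}_{ja})\otimes e_a$ (and its variant $\wt{T_l}$ built from $\eta((u^{\gamma_l}_{aj})^*)$). The same computation that gives your summed identity in fact gives the exact operator identity $T_l^*T_l=2d_l\,I_{n_{\gamma_l}}$ (using $\sum_a(u^{\gamma_l}_{ia})^*u^{\gamma_l}_{ja}=\delta_{ij}\one$ and $L^{\gamma_l}=d_lI$), whence $\|T_l\|=\sqrt{2d_l}$ with no dependence on $n_{\gamma_l}$. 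Each cross term of $E^{(l)}$ is then a product of such operators (for the term involving $\rho$, one also uses that the full matrix $\rho^{(n_{\gamma_l})}(u^{\gamma_l})^{\mathrm t}$ is unitary in the Kac case, not merely that its entries are contractions), so $\|E^{(l)}\|\le C_1\sqrt{d_l}+C_2$ with constants depending only on $\alpha,\beta$. This yields the stated spectral bound for arbitrary discrete unimodular $\QG$.
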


\begin{proof}
The matrices $V^{(l)}$ are selfadjoint, because $L$ being $\widehat{S}^u$-invariant and $\hQG$ being of Kac type imply that for all suitable $i,j,k,p,r,s$,
\[
\begin{split}(V^{(l)*})_{(i,j,k),(p,r,s)} & =\overline{(V_{(p,r,s),(i,j,k)}^{(l)})}=\overline{L((u_{pi}^{\a})^{*}u_{rj}^{\gamma_{l}}u_{sk}^{\be})}=L((u_{sk}^{\be})^{*}(u_{rj}^{\gamma_{l}})^{*}u_{pi}^{\a})\\
 & =(L\circ\widehat{S}^{u})((u_{sk}^{\be})^{*}(u_{rj}^{\gamma_{l}})^{*}u_{pi}^{\a})=L((u_{ip}^{\a})^{*}u_{jr}^{\gamma_{l}}u_{ks}^{\be})=V_{(i,j,k),(p,r,s)}^{(l)}.
\end{split}
\]

Recall that if $\rho$ is a representation of $\C^u(\hQG)$ on a Hilbert space $\Hil$, a linear map $c : \Pol(\hQG) \to \Hil$ is a \emph{cocycle} for $\rho$ if it is a $\rho-\widehat{\epsilon}$-derivation, i.e., for all $a,b\in\Pol(\hQG)$,
\[ c(ab) = \rho(a)c(b) + c(a)\widehat{\epsilon}(b). \]
The `conditional' GNS construction for the generating functional $L$ (originally due to Sch\"urmann, we refer for example to  \cite[Subsection 7.2]{Daws_Fima_Skalski_White_Haagerup_LCQG} for details) yields the existence of a Hilbert space $\Hil$, a  representation $\rho: \C^u(\hQG) \to \B(\Hil)$, and a cocycle  $c: \Pol(\hQG) \to \Hil$ for $\rho$ such that for all $a, b \in \Pol(\hQG)$ we have
\[L(a^*b) = \ol{L(a)} \hCou(b) + \ol{\hCou(a)} L(b) - \la c(a), c(b) \ra. \]
(Remark that the scalars $c_l$ from the lemma's statement are indeed non-negative since $c$ is \emph{real} as $L$ is $\widehat{S}^u$-invariant, see \cite[Proposition 7.21, (7.10), and the succeeding paragraph]{Daws_Fima_Skalski_White_Haagerup_LCQG}.)
Expanding this equality and using the $\rho-\widehat{\epsilon}$ derivation property yields for all $a, b, d \in \Pol(\hQG)$
\begin{align*} L(a^*bd) &= \ol{L(a)} \hCou(bd) + \ol{\hCou(a)} L(bd) - \la c(a), c(bd) \ra \\&=
\ol{L(a)} \hCou(bd) + \ol{\hCou(a)} \left(  \ol{L(b^*)} \hCou(d) + \ol{\hCou(b^*)} L(d) - \la c(b^*), c(d) \ra \right)  - \la c(a), \rho(b) c(d) \ra - \hCou(d) \la  c(a), c(b) \ra
\\&= L(a^*) \hCou(bd) + L (b) \hCou(a^*d) + L(d) \hCou(a^*b) - \hCou(a^*) \la c(b^*), c(d) \ra - \la c(a), \rho(b) c(d) \ra - \hCou(d) \la  c(a), c(b) \ra.
\end{align*}
Using now the fact that $L$ is central and properties of the counit we obtain
\begin{multline*}  V^{(l)}_{(i,j,k),(p,r,s)} =
 \\ \qquad=\delta_{i,p} \delta_{j,r} \delta_{k,s} (c_{\alpha} + c_{l} + c_{\beta}) - \delta_{i,p} \la c((u_{jr}^{\gamma_l})^*), c(u_{ks}^{\beta}) \ra  - \delta_{k,s} \la c(u_{ip}^{\alpha}), c(u_{jr}^{\gamma_l})\ra - \la c(u_{ip}^{\alpha}), \rho(u_{jr}^{\gamma_l}) c(u_{ks}^{\beta})\ra.
\end{multline*}
The trick is now based on expressing the last equality in a matrix form. We will do it step by step. Define the following Hilbert space operators: $T_l, \wt{T_l}: \bc^{n_{\gamma_l}}\to \Hil \ot \bc^{n_{\gamma_l}}$, $\wt{T_\alpha}: \bc^{n_{\alpha}}\to \Hil \ot \bc^{n_{\alpha}}$, and $T_\beta: \bc^{n_{\beta}}\to \Hil \ot \bc^{n_{\beta}}$:
\[ T_l (e_j) = \sum_{a=1}^{n_{\gamma_l}} c(u^{\gamma_l}_{ja}) \ot e_a,\]
\[ \wt{T_l} (e_j) = \sum_{a=1}^{n_{\gamma_l}} c((u^{\gamma_l}_{aj})^*) \ot e_a,\]
\[ \wt{T_{\alpha}} (e_i) = \sum_{b=1}^{n_{\alpha}} c(u^{\alpha}_{bi}) \ot e_b,\]
\[ T_\beta (e_k) = \sum_{b=1}^{n_{\beta}} c(u^{\beta}_{kb}) \ot e_b,\]
for $j \in \{1, \ldots, n_{\gamma_l}\}$, $i \in \{1, \ldots, n_{\alpha}\}$, $k \in \{1, \ldots, n_{\beta}\}$,
where $(e_j)_{j=1}^{n_{\gamma_l}}$, $(e_i)_{i=1}^{n_{\alpha}}$ and $(e_k)_{k=1}^{n_{\beta}}$ are orthonormal bases of the respective spaces.

To simplify the notation we will now write $I_{\alpha}$ for $I_{M_{n_{\alpha}}}$, $I_l$ for $I_{M_{n_{\gamma_l}}}$ and $I_{\beta}$ for $I_{M_{n_{\beta}}}$ and denote the Hilbert space tensor flips by $\Sigma$ (with the position of the flipped arguments understood from the context).
Then we claim the following:
\begin{align*} V^{(l)} =& (c_{\alpha} + c_{l} + c_{\beta}) I_{\alpha} \ot I_l \ot I_{\beta} -
I_{\alpha} \ot \left((\wt{T_l}^* \ot I_{\beta}) \Sigma (I_l \ot T_{\beta})\right)^{\mathrm{t}}
\\& - \left((\wt{T_{\alpha}}^* \ot I_{l}) \Sigma (I_{\alpha} \ot T_l)\right)^{\mathrm{t}} \ot I_{\beta} -
\left[ (\wt{T_{\alpha}}^* \ot I_{l} \ot I_{\beta}) \Sigma (I_{\alpha} \ot \rho^{(n_{\gamma_l})}(u^{\gamma_l})^{\mathrm{t}}\ot I_{\beta}) (I_{\alpha} \ot I_l \ot T_{\beta}) \right]^{\mathrm{t}},
\end{align*}
where $\rho^{(n_{\gamma_l})}(u^{\gamma_l})$ is the unitary operator in $M_{n_{\gamma_l}} \ot B(\Hil)$ obtained via the matrix lifting of the representation $\rho$ and interpreted as an operator from $\bc^{n_{\gamma_l}} \ot \Hil$ to $\bc^{n_{\gamma_l}} \ot \Hil$, and ${\mathrm{t}}$ denotes matrix transposition. Note that $(u^{\gamma_l})^{\mathrm{t}}$ is unitary because $\G$ is of Kac type.

Analyse for example the expression $\left((\wt{T_l}^* \ot I_{\beta}) \Sigma (I_l \ot T_{\beta})\right)^{\mathrm{t}}_{(j,k), (r,s)}$. It is equal to
\begin{align*} \langle e_r \ot e_s, (\wt{T_l}^* \ot I_{\beta}) \Sigma (I_l \ot T_{\beta})& (e_j \ot e_k) \rangle =
\langle  (\wt{T_l} \ot I_{\beta})(e_r \ot e_s), \Sigma (I_l \ot T_{\beta}) (e_j \ot e_k) \rangle
\\&=
\sum_{a,b} \langle  c((u^{\gamma_l}_{ar})^*) \ot e_a \ot e_s, \Sigma (e_j \ot c(u^{\beta}_{kb}) \ot e_b) \ra
\\&=  \sum_{a,b} \langle c((u^{\gamma_l}_{ar})^*) \ot e_a \ot e_s, c(u^{\beta}_{kb}) \ot e_j \ot  e_b \ra =
\la c((u^{\gamma_l}_{jr})^*), c(u^{\beta}_{ks})\ra,
\end{align*}
and similar computations allow us to verify the `matricial' formula for $V^{(l)}$ given above.

Recalling that the sequence $(c_l)_{l=1}^{\infty}$ converges monotonically to infinity, to finish the proof of the lemma it suffices to note that the norms of $T_l$ and $\wt{T_l}$ are equal to $(2c_l)^{\frac{1}{2}}$. Indeed, compute for example
\begin{align*} (T_l^* T_l)_{i,j} &= \la T_l e_i, T_l e_j \ra = \sum_{a,b=1}^{n_{\gamma_l}} \la c(u^{\gamma_l}_{ia}) \ot e_a, c(u^{\gamma_l}_{jb}) \ot e_b \rangle
\\&= \sum_{a=1}^{n_{\gamma_l}}  \la c(u^{\gamma_l}_{ia}), c(u^{\gamma_l}_{ja}) \rangle =
\sum_{a=1}^{n_{\gamma_l}}  (- L((u^{\gamma_l}_{ia})^* u^{\gamma_l}_{ja}) + \hCou(u^{\gamma_l}_{ja}) \ol{L(u^{\gamma_l}_{ia})} + L(u^{\gamma_l}_{ja}) \ol{\hCou(u^{\gamma_l}_{ia})})
\\&= - L(\delta_{ij}\one) + \ol{L(u^{\gamma_l}_{ij})} + L(u^{\gamma_l}_{ji})= 2c_l \delta_{i,j},
\end{align*}
where we use the unitarity of $(u^{n_{\gamma}})^{\mathrm{t}}$ and the fact that $L(\one)=0$. Similarly we compute
\begin{align*} (\wt{T_l}^* \wt{T_l})_{i,j} &= \la \wt{T_l} e_i, \wt{T_l} e_j \ra = \sum_{a,b=1}^{n_{\gamma_l}} \la c((u^{\gamma_l}_{ai})^*) \ot e_a, c((u^{\gamma_l}_{bj})^*) \ot e_b \rangle
\\&= \sum_{a=1}^{n_{\gamma_l}} \la c((u^{\gamma_l}_{ai})^*), c((u^{\gamma_l}_{aj})^*) \rangle =
\sum_{a=1}^{n_{\gamma_l}}  (- L(u^{\gamma_l}_{ai} (u^{\gamma_l}_{aj})^*) + \ol{\hCou(u^{\gamma_l}_{aj})} L(u^{\gamma_l}_{ai}) + \ol{L(u^{\gamma_l}_{aj}) }
\hCou(u^{\gamma_l}_{ai}))
\\&= - L(\delta_{ij}\one)  + L(u^{\gamma_l}_{ji}) + \ol{L(u^{\gamma_l}_{ij})}
= 2c_l \delta_{i,j}. \qedhere
\end{align*}
\end{proof}

\begin{lem}\label{KeyLemma}
Let $\QG$ be a second countable discrete unimodular quantum group with a low dual and let $L$ be a central strongly unbounded $\widehat{S}^u$-invariant generating functional on $\Pol(\hQG)$. Consider the convolution semigroup of states $(\mu_t)_{t\geq 0}$ on $\CU{\hQG}$ generated by $L$ (see \prettyref{lem:convgen}). Let $\pi_t: \CU{\hQG}\to B(\Hil_t)$ be the corresponding GNS representation. Then for $t>0$, $\pi_t \star \pi_t	$ does not contain non-zero invariant vectors.
\end{lem}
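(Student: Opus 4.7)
The plan is to proceed by contradiction: assume $\xi\in\Hil_{t}\otimes\Hil_{t}$ is a non-zero invariant vector for $\pi_{t}\star\pi_{t}$, normalised so that $\|\xi\|=1$. By Lemma~\ref{lem:tensprod}, $\pi_{t}\star\pi_{t}=\phi_{V_{t}\tp V_{t}}$, where $V_{t}=(\id\otimes\pi_{t})(\wW)$ is the representation of $\QG$ corresponding to $\pi_{t}$; by Lemma~\ref{lem:inv_vec} invariance of $\xi$ is then equivalent to the state identity $\omega_{\xi}\circ(\pi_{t}\star\pi_{t})=\widehat{\epsilon}$ on $\CU{\hQG}$. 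The goal will be to extract from this identity a quadratic form on a finite-dimensional truncation of $\xi$ that is controlled by the matrices $V^{(l)}$ from Lemma~\ref{KeyLemma0}, thereby contradicting the divergence $e_{l}\to\infty$.

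I would first record structural preliminaries. Centrality of $L$ (so $L^{\alpha}=c_{\alpha}I_{n_{\alpha}}$) combined with the identity $(\nu_{1}\conv\nu_{2})^{\alpha}=\nu_{1}^{\alpha}\nu_{2}^{\alpha}$ for central functionals and Lemma~\ref{lem:convgen} yields $\mu_{t}^{\alpha}=e^{-tc_{\alpha}}I_{n_{\alpha}}$, so that $\mu_{t}$ is itself central and, from the corresponding property of $L$, also $\widehat{S}^{u}$-invariant. The lowness hypothesis supplies a uniform bound $N:=\sup_{\gamma\in\Irr(\hQG)}n_{\gamma}<\infty$, which is what keeps all the matrix blocks appearing in the argument of dimension bounded uniformly in $l$.

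The main computation is to exploit the invariance $(\pi_{t}\star\pi_{t})(u_{jr}^{\gamma_{l}})\xi=\delta_{jr}\xi$ together with the formula $(\pi_{t}\star\pi_{t})(u_{jr}^{\gamma_{l}})=\sum_{k}\pi_{t}(u_{kr}^{\gamma_{l}})\otimes\pi_{t}(u_{jk}^{\gamma_{l}})$ (an immediate consequence of Lemma~\ref{lem:tensprod} and~\prettyref{eq:fin_dim_reps}). For fixed $\alpha,\beta\in\Irr(\hQG)$, I would pair the resulting identity in $\Hil_{t}\otimes\Hil_{t}$ against vectors of the form $\pi_{t}(u_{ip}^{\alpha})\xi_{t}\otimes\pi_{t}(u_{ks}^{\beta})\xi_{t}$ (with $\xi_{t}$ the GNS cyclic vector of $\mu_{t}$), producing a scalar equation whose right-hand side involves the three-point evaluations of the form $\mu_{t}((u_{ip}^{\alpha})^{*}u_{kr}^{\gamma_{l}}x)\,\mu_{t}((u_{ks}^{\beta})^{*}u_{jk}^{\gamma_{l}}y)$, where $x,y$ encode the expansion of $\xi$ in the Peter--Weyl basis of $\Hil_{t}\otimes\Hil_{t}$. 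Using the expansion $\mu_{s}=\sum_{n=0}^{\infty}\tfrac{(-s)^{n}}{n!}L^{\conv n}$ from Lemma~\ref{lem:convgen} and extracting the part linear in the generator, one expects the leading non-trivial contribution, after an appropriate contraction in the indices $j,r,k$, to be exactly the entry $V^{(l)}_{(i,j,k),(p,r,s)}=L((u_{ip}^{\alpha})^{*}u_{jr}^{\gamma_{l}}u_{ks}^{\beta})$ of Lemma~\ref{KeyLemma0}.

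The contradiction then comes from the spectral information in Lemma~\ref{KeyLemma0}. If $P_{l}$ denotes the orthogonal projection of $\xi$ onto the finite-dimensional subspace spanned by the tensors chosen above (a space whose dimension is bounded by $n_{\alpha}^{2}N^{2}n_{\beta}^{2}$, hence uniformly in $l$), invariance of $\xi$ constrains the associated sesquilinear form to be bounded independently of $l$, while selfadjointness of $V^{(l)}$ and the spectral inclusion $\sigma(V^{(l)})\subseteq\{\Ree\lambda\geq e_{l}\}$ force this same form to be at least $e_{l}\|P_{l}\xi\|^{2}$; this is incompatible with $e_{l}\to\infty$ as soon as $\liminf_{l}\|P_{l}\xi\|>0$. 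The main obstacle, which will absorb most of the technical work, is precisely this last point: one must select $\alpha,\beta\in\Irr(\hQG)$ so that the truncation of $\xi$ remains non-trivial for infinitely many $l$, and here low\-ness of $\hQG$ is indispensable, because the uniform bound on $n_{\gamma}$ is what allows a single pair $(\alpha,\beta)$ to serve for the full sequence $(\gamma_{l})$.
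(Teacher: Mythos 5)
Your ingredients are the right ones (the matrices $V^{(l)}$ of Lemma \ref{KeyLemma0}, the coproduct formula for $\pi_t\star\pi_t$, pairing against vectors of the form $\pi_t(a)\Omega_t\otimes\pi_t(b)\Omega_t$, and lowness), but the mechanism of your contradiction does not work. Invariance of $\xi$ under $\pi_t\star\pi_t$ for the \emph{fixed} $t>0$ only yields identities in which the generating functional enters through $\mu_t=\exp_{\conv}(-tL)$, that is, through the entries of $\mathrm{e}^{-tV^{(l)}}$ (and of the analogous primed matrices); it gives no direct control of the entries of $V^{(l)}$ itself. Your plan to ``extract the part linear in the generator'' from the series $\sum_n\frac{(-t)^n}{n!}L^{\conv n}$ is unjustified and in fact backwards: since $\|V^{(l)}\|\to\infty$, the linear term is not the leading contribution -- the higher convolution powers dominate -- and the only usable consequence of selfadjointness together with $\sigma(V^{(l)})\subseteq\{\Ree\lambda\ge e_l\}$ (hence $\sigma(V^{(l)})\subseteq[e_l,\infty)$) is the \emph{upper} bound $\|\mathrm{e}^{-tV^{(l)}}\|\le\mathrm{e}^{-te_l}\to0$, not a divergent \emph{lower} bound on a quadratic form. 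A uniform-in-$l$ bound on a $V^{(l)}$-form built from $\xi$ is exactly the kind of statement one obtains by differentiating the whole semigroup at $t=0$, which is not the hypothesis here. On top of this, the non-degeneracy $\liminf_l\|P_l\xi\|>0$ that your contradiction needs is left unproven, and lowness does not supply it: its actual role is only to bound the number $n_{\gamma_l}\le N$ of summands in the contraction over the $\gamma_l$-indices, so that entrywise smallness of $\mathrm{e}^{-tV^{(l)}}$ implies smallness of the contracted sums.

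The workable route turns the estimate around. For a unit vector $\zeta=\sum_{i=1}^k\pi_t(a_i)\Omega_t\otimes\pi_t(b_i)\Omega_t$ in the dense subspace and $z_l:=u_{11}^{\gamma_l}$, one writes $\langle\zeta,(\pi_t\star\pi_t)(z_l)\zeta\rangle$ as a fixed finite linear combination of sums $\sum_{r=1}^{n_{\gamma_l}}\mu_t((u^{\alpha}_{ip})^*u^{\gamma_l}_{r1}u^{\beta}_{ks})\,\mu_t((u^{\alpha'}_{i'p'})^*u^{\gamma_l}_{1r}u^{\beta'}_{k's'})$, identifies these with entries of $\mathrm{e}^{-tV^{(l)}}$ and $\mathrm{e}^{-t(V')^{(l)}}$, and concludes using $n_{\gamma_l}\le N$ that this inner product tends to $0$, whence $\|(\pi_t\star\pi_t)(z_l)\zeta-\widehat{\epsilon}(z_l)\zeta\|^2\ge1-o(1)$. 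Since an invariant unit vector would satisfy $(\pi_t\star\pi_t)(z_l)\xi=\xi$ and $\|(\pi_t\star\pi_t)(z_l)\|\le1$, every such $\zeta$ stays at distance bounded below from it, contradicting density. Note that this also avoids expanding $\xi$ in a ``Peter--Weyl basis'' of $\Hil_t\otimes\Hil_t$, which is not available: $\mu_t$ is not the Haar state, so the vectors $\pi_t(u^{\alpha}_{ij})\Omega_t$ need not be orthogonal or independent.
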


\begin{proof}
Fix $t > 0$. Denote the GNS triple of $\mu_t$ by $(\pi_t, \Hil_t, \Omega_t)$. Then the representation $\pi_{t} \star \pi_{t}$ acts on the Hilbert space $\Hil_t \ot \Hil_t$ and $\Hil_0:=\linspan \{\pi_t(a) \Omega_t \ot \pi_t(b) \Omega_t: a, b \in \Pol(\hQG)\}$ is a dense subspace of $\Hil_t \ot \Hil_t$. It suffices then to show that the distance of the unit sphere of $\Hil_0$ from the set of invariant vectors for $\pi_t \star \pi_t$ of norm $1$ is non-zero (we will show it is in fact equal to $1$). To this end consider $\zeta \in \Hil_0$, $\zeta= \sum_{i=1}^k \pi_t(a_i) \Omega_t \ot \pi_t (b_i) \Omega_t$ for some $k \in \bn$, $a_1,\ldots, a_k, b_1, \ldots, b_k \in \Pol(\hQG)$, $\|\zeta \|=1$.

Let $(\gamma_l)_{l \in \bn}$ be a sequence of elements of $\Irred{\hQG}$ such that  we have $L^{\gamma_l}=c_lI_{M_{n_{\gamma_l}}}$ ($l \in \bn$), with $(c_l)_{l=1}^{\infty}$ a sequence of positive numbers increasing to infinity. Define $z_l:= u_{11}^{\gamma_l}$. We are interested in the following expression, calculated using \prettyref{lem:tensprod} and \prettyref{eq:fin_dim_reps}:
\begin{align*} \|(\pi_t \star \pi_t)(z_l)\zeta& - \widehat{\epsilon}(z_l) \zeta\|^2 =  \|(\pi_t \star \pi_t)(z_l)\zeta -  \zeta\|^2  
\\& =  \|\zeta\|^2 + \|(\pi_t \star \pi_t)(z_l)\zeta\|^2 - 2 \Ree \langle \zeta, (\pi_t \star \pi_t)(z_l) \zeta \rangle \\
& \geq 1 - 2 \sum_{i,j=1}^k \Ree \langle\pi_t(a_j) \Omega_t \ot \pi_t(b_j) \Omega_t, (\pi_t \star \pi_t)(z_l) [\pi_t(a_i) \Omega_t \ot  \pi_t(b_i) \Omega_t]  \rangle
\\&= 1 - 2 \sum_{i,j=1}^k \sum_{r=1}^{n_{\gamma_l}} \Ree \la \pi_t(a_j) \Omega_t \ot \pi_t(b_j) \Omega_t, (\pi_t (u_{r1}^{\gamma_l}) \ot \pi_t(u^{\gamma_l}_{1r})) [\pi_t(a_i) \Omega_t \ot \pi_t(b_i) \Omega_t]  \rangle \\
&= 1 - 2 \sum_{i,j=1}^k \sum_{r=1}^{n_{\gamma_l}} \Ree [\mu_t (a_j^* u_{r1}^{\gamma_l} a_i ) \mu_t (b_j^* u_{1r}^{\gamma_l} b_i)].
\end{align*}
The sum $\sum_{r=1}^{n_{\gamma_l}} \mu_t (a_j^* u_{r1}^{\gamma_l} a_i ) \mu_t (b_j^* u_{1r}^{\gamma_l} b_i)$  is a finite (independent of $l$, as we work with fixed $a_i$ and $b_i$) linear combination of the terms of the form
\[
\sum_{r=1}^{n_{\gamma_l}} \mu_t ((u_{ip}^{\alpha})^*  u_{r1}^{\gamma_l} u_{ks}^{\beta}) \mu_t( (u_{i'p'}^{\alpha'})^*  u_{1r}^{\gamma_l} u_{k's'}^{\beta'}),
\]
each of which can by \prettyref{eq:fin_dim_reps} be in turn expressed as
\[ \sum_{r=1}^{n_{\gamma_l}} (\tu{e}^{-t V^{(l)}})_{(i,r,k),(p,1,s)} (\tu{e}^{-t (V')^{(l)}})_{(i',1,k'),(p',r,s')}, \]
where the matrices $V^{(l)}$ and $(V')^{(l)}$  are of the type introduced in the last lemma.

Here we can finally use the lowness assumption: as there exists $N$ such that $n_{\gamma_l}\leq N$ for all $l \in \bn$, it suffices to show that in fact each suitable matrix entry of the matricial sequence $(\tu{e}^{-t V^{(l)}})$  (indexed by $l$) tends to 0. We know however that this sequence even converges to $0$ in norm by Lemma \ref{KeyLemma0}.
\end{proof}

We are now ready to formulate the main result of this section, generalising the result of Bekka and Valette \cite{Bekka_Valette__prop_T_amen_rep}, \cite[Theorem 2.12.9]{Bekka_de_la_Harpe_Valette__book} to discrete unimodular quantum groups with low duals.

\begin{tw}\label{thm:T11difficult}
Let $\QG$ be a second countable discrete unimodular quantum group with a low dual. Then $\QG$ has Property (T) if and only if $\QG$ has Property (T)$^{1,1}$.
\end{tw}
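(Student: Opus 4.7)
The direction Property (T) $\Rightarrow$ Property (T)$^{1,1}$ is immediate from \prettyref{prop:T11easy}. For the converse I argue contrapositively: assuming $\QG$ lacks Property (T), \prettyref{thm:UnboundedCentral} furnishes a central, strongly unbounded, $\widehat{S}^{u}$-invariant generating functional $L$ on $\Pol(\widehat{\QG})$. Let $(\mu_{t})_{t \geq 0}$ denote the convolution semigroup of states on $\C^{u}(\widehat{\QG})$ generated by $L$ via \prettyref{lem:convgen}, let $(\pi_{t}, \Hil_{t}, \Omega_{t})$ be the GNS triple of $\mu_{t}$, and let $U_{t} \in \M(\Cz{\G} \otimes \mc{K}(\Hil_{t}))$ be the representation of $\QG$ associated with $\pi_{t}$ under the correspondence of Subsection \ref{sub:reps}. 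Set $U := \bigoplus_{n \in \bn} U_{1/n}$, acting on $\Hil := \bigoplus_{n} \Hil_{1/n}$, so that $\phi_{U} = \bigoplus_{n} \pi_{1/n}$.

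The vectors $\Omega_{1/n}$, viewed canonically as unit vectors in $\Hil$, are almost invariant for $U$: for each $a \in \CzU{\widehat{\QG}}$ the identity
\[ \| \pi_{1/n}(a)\Omega_{1/n} - \widehat{\epsilon}(a)\Omega_{1/n} \|^{2} = \mu_{1/n}(a^{*}a) - 2\Ree\bigl(\overline{\widehat{\epsilon}(a)}\,\mu_{1/n}(a)\bigr) + |\widehat{\epsilon}(a)|^{2} \]
tends to $0$ by weak$^{*}$-continuity of $t \mapsto \mu_{t}$ at $t = 0$, so criterion \ref{enu:almost_inv_vect_4} of \prettyref{lem:almost_inv_vec} is met. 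Since $\widehat{\QG}$ is compact of Kac type the antipode satisfies $\widehat{S}^{u} = \widehat{R}^{u}$, so each $\mu_{t}$ is $\widehat{R}^{u}$-invariant; \prettyref{lem:Contragredient-vs-antipode} then produces an involutive anti-unitary on $\Hil_{1/n}$ witnessing $U_{1/n}^{c} \cong U_{1/n}$, and aggregating these over $n$ yields $U^{c} \cong U$. In particular $U \tp U^{c} \cong U \tp U$, so it suffices to prove that $U \tp U$ is ergodic.

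By \prettyref{lem:tensprod}, $\phi_{U \tp U} = \phi_{U} \star \phi_{U} = \bigoplus_{n,m \in \bn} (\pi_{1/n} \star \pi_{1/m})$, reducing the problem to the statement that $\pi_{s} \star \pi_{t}$ admits no non-zero invariant vector for every pair $s, t > 0$. This is the only substantive step and the main obstacle, but it should amount to a direct transcription of the proof of Lemma \ref{KeyLemma} allowing distinct parameters: the analogous lower estimate for $\|(\pi_{s} \star \pi_{t})(u_{11}^{\gamma_{l}})\zeta - \zeta\|^{2}$, applied to a unit vector $\zeta$ in the natural dense subspace, factors through matrix entries of $\tu{e}^{-s V^{(l)}}$ and $\tu{e}^{-t (V')^{(l)}}$ with $V^{(l)}, (V')^{(l)}$ built as in Lemma \ref{KeyLemma0}; their spectra have real parts tending to $+\infty$, so all such entries tend to $0$ as $l \to \infty$ for any fixed $s, t > 0$, while the low-dual hypothesis again provides the uniform bound on the summation index $r$. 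Hence $U$ is a representation with almost-invariant vectors for which $U \tp U^{c}$ is ergodic, contradicting Property (T)$^{1,1}$.
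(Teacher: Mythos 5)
Your proof is correct and follows essentially the same strategy as the paper: the same generating functional supplied by \prettyref{thm:UnboundedCentral}, the same direct sum of GNS representations of the associated convolution semigroup with almost-invariant cyclic vectors, the same appeal to \prettyref{lem:Contragredient-vs-antipode} for self-conjugacy, and the same matrix estimates in the final step. The one point of divergence is that you reduce to ergodicity of $\pi_{s}\star\pi_{t}$ for arbitrary pairs $s,t>0$, which needs a two-parameter extension of Lemma \ref{KeyLemma}, whereas the paper sidesteps mixed parameters entirely: non-ergodicity of some $U_{t_n}\tp U_{t_m}^{c}$ already forces $U_{t_n}$ to fail weak mixing (ergodicity of $U\tp Y$ for \emph{every} representation $Y$ being equivalent to weak mixing of $U$ when the scaling group is trivial), so only the equal-parameter case $\pi_{t_n}\star\pi_{t_n}$ of Lemma \ref{KeyLemma} is ever invoked. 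Your claim that the proof of Lemma \ref{KeyLemma} transcribes verbatim to distinct parameters is nonetheless correct: the parameter enters only through the exponentials $\tu{e}^{-sV^{(l)}}$ and $\tu{e}^{-t(V')^{(l)}}$, each of which tends to zero in norm for any fixed positive parameter by Lemma \ref{KeyLemma0}, and the low-dual bound on $n_{\gamma_l}$ controls the sum over $r$ exactly as before.
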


\begin{proof}
The forward implication does not require discreteness or unimodularity, and was noted in Proposition \ref{prop:T11easy}.

Assume then that $\QG$ does not have Property (T).  We will show that there exists a weakly mixing representation of $\QG$ which has almost-invariant vectors. Indeed,  by \prettyref{thm:UnboundedCentral} there exists a central  strongly unbounded $\widehat{S}^u$-invariant generating functional $L$ on $\Pol(\hQG)$. Let  $(\mu_t)_{t\geq 0}$ be the convolution semigroup of states  on $\C^u(\hQG)$ generated by $L$ and for each $t>0$ let $\pi_t: \C^u(\hQG)\to B(\Hil_t)$ denote the GNS representation of $\mu_t$, and $U_t$ the associated representation of $\QG$. Since $L$ is $\widehat{S}^u$-invariant, each of
the states $\mu_t$ is also $\widehat{S}^u$-invariant. By \prettyref{lem:Contragredient-vs-antipode}, each of the representations $U_t$ satisfies condition $\mathscr{R}$ of \prettyref{def:cond_R}, and is thus unitarily equivalent to its contragredient.

Choose a sequence $(t_n)_{n=1}^{\infty}$ of positive real numbers convergent to $0$. Then using pointwise convergence of $\mu_t$ as $t \longrightarrow 0^+$ we can verify that the representation  $V:=\bigoplus_{n \in \bn} U_{t_n}$ has almost-invariant vectors (recall that the passage between $U$ and $\pi$ respects direct sums). We claim that $V$ is weakly mixing. If that was not the case, $ V \tp V^c$ would not be ergodic. This implies that for some $n, m \in \bn$ the representation $ U_{t_n} \tp U_{t_m}^c$ is not ergodic. Hence, $U_{t_n}$ is not weakly mixing, that is, $U_{t_n} \tp U_{t_n}^c \cong U_{t_n} \tp U_{t_n}$ admits a non-zero invariant vector (see \prettyref{sec:inv_alm_inv_vects} for all this), and thus so does $\phi_{U_{t_n} \tp U_{t_n}}$. By Lemma \ref{lem:tensprod} the last map is exactly $\pi_{t_n} \star \pi_{t_n}$, and Lemma \ref{KeyLemma} yields a contradiction.
\end{proof}

\begin{rem}\label{rem:T_1_1_self_conjugate}
In the above proof, since each of the representations $U_t$, $t > 0$, satisfies condition $\mathscr{R}$, the same is true for $V$.
Therefore, for a discrete unimodular quantum group that satisfies the assumptions of
\prettyref{thm:T11difficult}, Property (T) is equivalent to the following weakening of Property (T)$^{1,1}$:
each of its representations which satisfies condition $\mathscr{R}$ and has almost-invariant vectors is not weakly mixing.
\end{rem}

\begin{rem}\label{rem:T_rs}
Note that the low dual assumption was only used in the last part of the proof of Lemma \ref{KeyLemma}, and that in fact an analogous proof shows that for second countable discrete unimodular quantum group with a low dual Property (T) is equivalent to Property (T)$^{r,s}$ for any (equivalently, all) $r,s \in \bn$.

Observe also that the original definition of Property (T)$^{r,s}$ for locally compact groups in \cite[p.~294]{Bekka_Valette__prop_T_amen_rep} translates in our setting to the following condition: for every representation $U$ of $\G$ such that $U^{\tpsmall r}\tp (U^{c})^{\tpsmall s}$ has almost invariant vectors, $U^{\tpsmall r}\tp (U^{c})^{\tpsmall s}$ has a non-zero invariant vector. Clearly, if $U$ has almost invariant vectors, so does $U^c$, and thus so does $U^{\tpsmall r}\tp (U^{c})^{\tpsmall s}$. Consequently, if $\G$ has Property (T)$^{r,s}$ in the sense of \cite{Bekka_Valette__prop_T_amen_rep} as explained above, it has Property (T)$^{r,s}$ in the sense of \prettyref{def:T_r_s}. Therefore, for a discrete unimodular quantum group that satisfies the assumptions of
\prettyref{thm:T11difficult} and $r,s \in \bn$, Property (T) is also equivalent to Property (T)$^{r,s}$ in the sense of \cite{Bekka_Valette__prop_T_amen_rep}.
\end{rem}

As a corollary, we obtain the following generalisation of part of \cite[Theorem 2.5]{Kerr_Pichot__asym_abel_WM_T} to discrete unimodular quantum groups with low duals. It says that lack of Property (T) is equivalent to the genericity of weak mixing for representations on a fixed infinite-dimensional separable Hilbert space.

\begin{tw}\label{thm:dense_gdelta}
Let $\G$ be a second countable discrete unimodular quantum group with a low dual. Then
$\G$ does not have Property (T) if and only if the weakly mixing representations
form a dense set in the space $\RepGH$. If $\G$ has Property (T), then the ergodic representations (hence also the weakly mixing ones) form a nowhere dense closed set in $\RepGH$.
\end{tw}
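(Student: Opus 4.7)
The plan is to handle the three assertions in turn, relying on \prettyref{thm:T11difficult} as the key input for the equivalence and on the topological structure of $\RepGH$ from \prettyref{sec:inv_alm_inv_vects}.

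First, for the easier direction---density of weakly mixing implying failure of Property (T)---I would argue as follows. If weakly mixing representations are dense in $\RepGH$, then the trivial representation $\one\in\RepGH$ is the strict limit of some sequence $(V_n)$ of weakly mixing representations. Strict convergence of unitary multipliers to $\one$ yields $V_nv\to v$ in norm for every $v\in\Ltwo{\G}\tensor\sH$ (by non-degeneracy of $\Cz{\G}\tensor\mc K(\sH)$ on this Hilbert space), so fixing any unit $\xi\in\sH$ and any finite $Q\subseteq\Ltwo{\G}$, the vector $\xi$ becomes $(Q,\e)$-invariant for large $V_n$. Any Kazhdan pair of type 1 from \prettyref{thm:kazpair} would then force $V_n$ to admit a non-zero invariant vector, contradicting weak mixing of $V_n$.

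For the converse, assume $\G$ does not have Property (T). By \prettyref{thm:T11difficult} there is a weakly mixing $V\in\RepGH$ with almost-invariant vectors $(\xi_n)$. Given $W\in\RepGH$ and a basic strict neighbourhood of $W$ determined by $A_i=\eta_i\tensor K_i\in\Cz{\G}\tensor\mc K(\sH)$ and $\e>0$, I plan to produce a weakly mixing $V'\in\RepGH$ in this neighbourhood. The construction: first fix a finite-dimensional $F\subseteq\sH$ containing the ranges of all $K_i$ and large enough that $\|(\one\tensor p_{F^\perp})W(\eta_i\tensor K_i)\|<\delta$ (possible since $W(\eta_i\tensor K_i)\in\Cz{\G}\tensor\mc K(\sH)$); next take a unitary $u:\sH\to\sH\tensor\sH$ extending the isometric embedding $f\mapsto\xi_n\tensor f$ on $F$ (for $n$ to be chosen); then set $V':=(\one\tensor u^*)(V\tp W)(\one\tensor u)$. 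Since $V$ is weakly mixing and $\G$ has trivial scaling group (being discrete unimodular), $V\tp W$ is weakly mixing by the discussion after \prettyref{def:wm}, and so is its unitary conjugate $V'$. The estimate $\|V'A_i-WA_i\|<\e$ is obtained by splitting $W(\eta_i\zeta\tensor K_i\chi)=w_F+w_{F^\perp}$: the $w_{F^\perp}$-contribution is $O(\delta)$ by isometry of $u^*$, whereas the $w_F$-contribution is controlled by $\max_k\|V(a_k\tensor\xi_n)-a_k\tensor\xi_n\|$ for coefficients $a_k\in\Ltwo{\G}$ arising from expanding $w_F$ in an orthonormal basis of $F$. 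The chief technical obstacle I anticipate is making this maximum small uniformly as $\zeta$ ranges over the unit ball of $\Ltwo{\G}$; the key observation is that the slice operators $\zeta\mapsto a_k(\zeta,\cdot)$ are multiplications by elements of $\Cz{\G}$, which act \emph{compactly} on $\Ltwo{\G}$ since $\G$ is discrete, so the relevant $a_k$'s form a relatively compact subset of $\Ltwo{\G}$. Combining pointwise convergence $\|V(\alpha\tensor\xi_n)-\alpha\tensor\xi_n\|\to 0$ with the Lipschitz bound $\big|\|V(\alpha\tensor\xi_n)-\alpha\tensor\xi_n\|-\|V(\beta\tensor\xi_n)-\beta\tensor\xi_n\|\big|\leq 2\|\alpha-\beta\|$ will give uniform convergence on this compact set, so choosing $n$ large delivers the estimate.

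For the final assertion, assume $\G$ has Property (T) and let $E\subseteq\RepGH$ denote the ergodic representations. To prove $E$ is closed, I would fix a Kazhdan pair $(Q,\e)$ of type 1 and take $V_k\in E$ converging strictly to $V$: if $V$ had a non-zero invariant unit vector $\xi$, then $\|V_k(\eta\tensor\xi)-\eta\tensor\xi\|\to 0$ for each $\eta\in Q$, making $\xi$ a $(Q,\e)$-invariant vector for large $V_k$ and hence forcing $V_k$ to admit a non-zero invariant vector in contradiction of $V_k\in E$. To prove $E$ has empty interior---an argument that does not use Property (T)---I would apply \prettyref{lem:dense_orbit} to obtain $U\in\RepGH$ with dense unitary-equivalence class; inspecting its construction, $U$ can be taken as $\bigoplus_nU_n$ for any dense sequence in $\RepGH$, and by adjoining the trivial representation $\one$ to the sequence we may arrange $U$ to be non-ergodic. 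The entire dense equivalence class of $U$ will then consist of non-ergodic representations, so $E$ has empty interior; the statement for weakly mixing follows from the inclusion in $E$.
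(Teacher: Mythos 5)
Your argument is correct, and it reaches the conclusion by a partly different route than the paper. The closedness of the ergodic set under Property (T) via a type 1 Kazhdan pair is identical to the paper's argument. The differences are in the other two parts. For the implication ``no Property (T) $\implies$ weakly mixing representations are dense'', the paper, after invoking \prettyref{thm:T11difficult} and the stability of weak mixing under tensoring, simply cites \cite[Lemma~5.3]{Daws_Fima_Skalski_White_Haagerup_LCQG} for the density; your proposal instead reproves that lemma from scratch with the conjugated tensor product $(\one\otimes u^*)(V\tp W)(\one\otimes u)$, handling the strict-topology estimates via finite-rank truncation and the compactness of multiplication operators by elements of $\c0(\G)$ on $\ltwo{\G}$. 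This is more self-contained but also more delicate: you should note that a strict neighbourhood also involves the seminorms $\|A_i(V'-W)\|$, which require the symmetric estimate for $V^*$ (available by \prettyref{rem:inv_vec}), and that the operator norm must be controlled on general vectors, not just elementary tensors -- both are routine given your finite-rank reduction, but worth recording. For the ``empty interior'' part, the paper uses Property (T) twice: a type 2 Kazhdan pair produces a neighbourhood of $\one$ consisting of non-ergodic representations, which the dense equivalence class of \prettyref{lem:dense_orbit} must meet; you instead modify the construction of \prettyref{lem:dense_orbit} by inserting the trivial representation into the dense sequence, so that the entire dense equivalence class is non-ergodic. Your variant is slightly cleaner and shows that the ergodic representations have empty interior \emph{without} Property (T) (only closedness needs it); your direct Kazhdan-pair argument that density of weak mixing precludes Property (T) is likewise sound, though the paper obtains that direction for free from the nowhere-density statement.
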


\begin{proof}
If $\G$ fails (T) then $\G$ fails (T)$^{1,1}$ by Theorem \ref{thm:T11difficult} and so there is a
representation $U$ of $\G$ which has almost-invariant vectors but which is
weakly mixing.
Recall from \prettyref{sub:reps} that weak mixing of representations of $\QG$ is stable under tensoring by
arbitrary representations.
In conclusion, by \cite[Lemma~5.3]{Daws_Fima_Skalski_White_Haagerup_LCQG}, the collection of weakly mixing representations is dense in $\RepGH$.

Suppose that $\G$ has Property (T).
Then the set of ergodic representations is closed (thus, so is the set of weakly mixing representations, by \prettyref{lem:tp_cts}). Indeed, let $(Q,\e)$ be a Kazhdan pair of type 1 for $\G$  (see \prettyref{thm:kazpair}). Suppose that $(U_n)_{n=1}^\infty$ and $U$ are in $\RepGH$, $U_n \to U$ and all $U_n$ are ergodic. If $\xi$ is a unit vector invariant under $U$, then since convergence in $\RepGH$ implies strong convergence on $\Ltwo{\G} \tensor \Hil$, there is $n$ such that $\| U_n(\eta \tensor \xi) - \eta \tensor \xi \| < \e$ for every $\eta \in Q$, contradicting the ergodicity of $U_n$.

Furthermore, using a suitable Kazhdan pair of type 2 for $\G$, we deduce that there exists a neighbourhood $A_1$ of $\one$ in $\RepGH$ all of whose elements are not ergodic. Assume by contradiction that the set of ergodic representations in $\RepGH$ contains an open subset $A_2$. From \prettyref{lem:dense_orbit} it follows that there exists $U \in \RepGH$ whose equivalence class is dense in $\RepGH$. In particular, it intersects both $A_1$ and $A_2$, which is absurd.
\end{proof}

\section{Spectral gaps}\label{sec:spectral_gaps}

In this section we define the notion of a spectral gap for representations and actions of locally compact quantum groups and relate it to ergodic properties of the relevant actions. In conjunction with the results of the previous section it gives a characterisation of Property (T) for a second countable discrete unimodular quantum group with a low dual in terms of the relation between arbitrary invariant states and \emph{normal} invariant states for actions of $\QG$, which extends the analogous classical result of \cite{Li_Ng__spect_gap_inv_states}.

\begin{defn}
We say that a representation $U$ of a locally compact quantum group has \emph{spectral gap}
if the restriction of $U$ to $\inv(U)^{\perp}$ does not have almost-invariant
vectors. An action of a locally compact quantum group on a von Neumann algebra is said to have
\emph{spectral gap} if its implementing unitary has spectral gap.\end{defn}

Evidently, a locally compact quantum group has Property (T) if and only if each of its representations has spectral gap. Recall that given a representation $U$, $p^U$ denotes the orthogonal projection onto the invariant vectors of $U$.

\begin{lem}
\label{lem:spectral_gap}Let $U$ be a representation of
a locally compact quantum group $\G$ on a Hilbert space $\H$ and $\phi_{U}$ be the associated
representation of $\CzU{\widehat{\G}}$. Then $U$ does not have spectral
gap if and only if there exists a state $\Psi$ of $\mathcal{B}(\H)$
satisfying $(\i\tensor\Psi)(U)=\one$ and $\Psi(p^{U})=0$, and if
$p^{U}\neq0$, this is equivalent to the condition $p^{U}\notin\Img\phi_{U}$.\end{lem}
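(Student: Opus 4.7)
The proof naturally splits into two equivalences. Part A establishes ``no spectral gap'' $\iff$ ``such a state $\Psi$ exists'', and Part B further characterises this as $p^U\notin\Img\phi_U$ when $p^U\neq 0$. The common set-up I would use is the block decomposition of $U$ with respect to $\inv(U)\oplus\inv(U)^\perp$: from $U(\one\tensor p^U)=\one\tensor p^U$ (immediate from \prettyref{lem:inv_vec}\prettyref{enu:inv_vec_1}) and unitarity of $U$, the subspace $\Ltwo{\G}\tensor\inv(U)^\perp$ is also $U$-invariant, so $V:=U|_{\Ltwo{\G}\tensor\inv(U)^\perp}$ is a representation of $\G$ on $\inv(U)^\perp$ and $U=I\oplus V$, giving
\[
(\omega\tensor\i)(U)=\omega(\one)\,p^U+(\omega\tensor\i)(V)\qquad(\omega\in\Lone{\G}),
\]
where $(\omega\tensor\i)(V)$ is viewed as supported on $\inv(U)^\perp$.

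For Part A, the forward direction goes as follows: if $U$ has no spectral gap, $V$ admits almost-invariant unit vectors $(\xi_i)_{i\in\Ind}\subseteq\inv(U)^\perp$. A weak$^*$ cluster point of $(\omega_{\xi_i})_{i\in\Ind}$ yields a state $\Psi_0$ on $\mathcal{B}(\inv(U)^\perp)$ with $(\i\tensor\Psi_0)(V)=\one$; the extension $\Psi(x):=\Psi_0((\one-p^U)x(\one-p^U))$ is a state of $\mathcal{B}(\H)$, clearly annihilates $p^U$, and satisfies $(\i\tensor\Psi)(U)=\one$ by the block decomposition above. Conversely, given $\Psi$ with $\Psi(p^U)=0$, Cauchy--Schwarz applied to the state $\Psi$ gives $\Psi(xp^U)=\Psi(p^Ux)=0$ for all $x\in\mathcal{B}(\H)$, so the restriction $\tilde\Psi$ of $\Psi$ to the corner $(\one-p^U)\mathcal{B}(\H)(\one-p^U)\cong\mathcal{B}(\inv(U)^\perp)$ is a state; using $\Psi(xp^U)=0$ and the block decomposition, $(\i\tensor\tilde\Psi)(V)=\one$, and \prettyref{lem:almost_inv_vec} (\prettyref{enu:almost_inv_vects_3}$\Rightarrow$\prettyref{enu:almost_inv_vects_1}) produces almost-invariant vectors for $V$.

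For Part B, assume $p^U\neq 0$. The starting observation (cf.\ the argument in the proof of \prettyref{lem:cty_consts}) is that $\phi_U$ leaves $\inv(U)^\perp$ invariant, so denoting by $\phi_V$ its compression to $\inv(U)^\perp$ one has $\phi_U(a)=\hat\epsilon(a)\,p^U+\phi_V(a)$ for every $a\in\CzU{\widehat{\G}}$. Hence $p^U\in\Img\phi_U$ iff there exists $a$ with $\hat\epsilon(a)=1$ and $\phi_V(a)=0$, equivalently, iff $\hat\epsilon|_{\ker\phi_V}\neq 0$. If $U$ has no spectral gap and $p^U\in\Img\phi_U$, the last sentence of \prettyref{lem:almost_inv_vec} applied to the $\Psi$ from Part A gives $p^U=p^U\cdot p^U=\Psi(p^U)p^U=0$, contradicting $p^U\neq 0$; so $p^U\notin\Img\phi_U$. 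Conversely, if $p^U\notin\Img\phi_U$ then $\hat\epsilon$ vanishes on $\ker\phi_V$ and descends to a state on the quotient $\CzU{\widehat{\G}}/\ker\phi_V\cong\Img\phi_V$ (positivity being automatic since $\hat\epsilon$ is a character of norm $1$), giving a state $\Psi_0$ on $\Img\phi_V$ with $\Psi_0\circ\phi_V=\hat\epsilon$, and \prettyref{lem:almost_inv_vec} (\prettyref{enu:almost_inv_vects_2}$\Rightarrow$\prettyref{enu:almost_inv_vects_1}) supplies almost-invariant vectors for $V$, i.e.\ $U$ has no spectral gap.

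The main (mild) obstacle is the bookkeeping with the block structure, specifically ensuring that the extension/restriction procedures for states respect positivity and that one has true equality $(\i\tensor\Psi)(U)=\one$ (rather than just on $\Img\phi_U$); both are handled by the decomposition of $U$ and by the Cauchy--Schwarz step $\Psi(p^U)=0\Rightarrow\Psi(xp^U)=0$. Everything else reduces to the equivalences recorded in \prettyref{lem:almost_inv_vec}.
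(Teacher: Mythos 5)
Your proof is correct and follows essentially the same route as the paper's: decompose $U$ as $\one\oplus U|_{\inv(U)^\perp}$, read the first equivalence off \prettyref{lem:almost_inv_vec}, use its final sentence to get $p^U=\Psi(p^U)p^U=0$ when $p^U\in\Img\phi_U$, and for the remaining direction produce a state on the image of the compressed representation composing with it to $\hh\epsilon$. The only (cosmetic) divergence is in that last step: the paper realises the state as $\om_\zeta\circ j^{-1}$ for the explicit injective $*$-homomorphism $j(\phi_U(a))=\phi_U(a)(\one-p^U)$ and a unit vector $\zeta\in\inv(U)$, whereas you let $\hh\epsilon$ descend through $\ker$ of the compression and invoke automatic positivity of nonzero characters --- both verifications rest on exactly the same use of the hypothesis $p^U\notin\Img\phi_U$.
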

\begin{proof}
Write $U_{1}$ for the restriction of $U$ to $\inv(U)^{\perp}$,
and let $\phi_{U_{1}}=\phi_{U}(\cdot)|_{\inv(U)^{\perp}}$ be the
associated representation of $\CzU{\widehat{\G}}$. Then $U\cong\one_{\mathcal{B}(\inv(U))}\oplus U_{1}$.
By \prettyref{lem:almost_inv_vec}, $U$ does not have spectral gap
if and only if there exists a state $\Psi$ of $\mathcal{B}(\inv(U)^{\perp})$
with $(\i\tensor\Psi)(U_{1})=\one_{\Linfty{\G}}$. This is plainly
the same as the existence of a state state $\Psi$ of $\mathcal{B}(\H)$
with $(\i\tensor\Psi)(U)=\one_{\Linfty{\G}}$ and $\Psi(p^{U})=0$.

Assume now that such a state $\Psi$ exists. If $p^{U}\in\Img\phi_{U}$, then
$0=\Psi(p^{U})p^{U}=p^{U}p^{U}=p^{U}$ by \prettyref{lem:almost_inv_vec}.
Hence $p^{U}=0$.

Conversely, if $p^{U}\notin\Img\phi_{U}$, consider the linear map
$j:\Img\phi_{U}\to\mathcal{B}(\H)$, $j(\phi_{U}(a)):=\phi_{U}(a)(\one-p^{U})=\phi_{U}(a)-\widehat{\epsilon}(a)p^{U}$,
$a\in\CzU{\widehat{\G}}$. It is an injective $*$-homomorphism (injectivity follows from $p^{U}\notin\Img\phi_{U}$) whose
image is $(\Img\phi_{U})(\one-p^{U})$, the subspace we identify with $\Img\phi_{U_{1}}$.
Therefore, fixing a unit vector $\z\in\inv(U)$, the state $\Psi_{1}:=\om_{\z}\circ j^{-1}$
of $\Img\phi_{U_{1}}$ satisfies $\Psi_{1}\circ\phi_{U_{1}}=\widehat{\epsilon}$.
Hence, from \prettyref{lem:almost_inv_vec}, $\phi_{U_{1}}$ has almost-invariant
vectors, namely, $U$ does not have spectral gap.\end{proof}

\begin{lem}
\label{lem:almost_inv_vec_act}Let $\a$ be an action of a locally compact quantum group $\G$
on a von Neumann algebra $N\subseteq B(\Kil)$. If $\a$ is implemented by a unitary $V \in \mathcal{B}(\Ltwo{\QG} \ot \Kil)$ and
a state $\Psi$ of $\mathcal{B}(\Kil)$ satisfies $(\i\tensor\Psi)(V)=\one$,
then the restriction $\Psi|_{N}$ is invariant under $\a$.\end{lem}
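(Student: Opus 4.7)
The plan is to prove this by passing to the GNS construction of $\Psi$ and reducing the hypothesis $(\i\tensor\Psi)(V)=\one$ to an honest invariance statement for a vector. Let $(\pi,\H_{\Psi},\xi_{\Psi})$ be the GNS triple of the state $\Psi$ on $\mathcal{B}(\Kil)$, and consider the unitary $\tilde{V}:=(\i\tensor\pi)(V)\in\mathcal{B}(\Ltwo{\G}\tensor\H_{\Psi})$. The hypothesis rewrites as $(\i\tensor\om_{\xi_{\Psi}})(\tilde{V})=\one$, equivalently
\[
\la \eta\tensor\xi_{\Psi},\tilde{V}(\eta\tensor\xi_{\Psi})\ra=\|\eta\|^{2}=\|\eta\tensor\xi_{\Psi}\|^{2}\qquad\forall_{\eta\in\Ltwo{\G}}.
\]

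First I would invoke the Cauchy--Schwarz equality case: since $\tilde{V}$ is unitary, $\|\tilde{V}(\eta\tensor\xi_{\Psi})\|=\|\eta\tensor\xi_{\Psi}\|$, and combined with the above equality of inner product and norm squared this forces $\tilde{V}(\eta\tensor\xi_{\Psi})=\eta\tensor\xi_{\Psi}$ for every $\eta\in\Ltwo{\G}$ (so also $\tilde{V}^{*}(\eta\tensor\xi_{\Psi})=\eta\tensor\xi_{\Psi}$). This is the only nontrivial step, but it is completely elementary.

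Next, I would compute $(\om\tensor\Psi)(\a(x))$ for $x\in N$ and a vector functional $\om=\om_{\eta_{1},\eta_{2}}\in\Lone{\G}$, using $\a(x)=V^{*}(\one\tensor x)V$ and the identification $\Psi=\om_{\xi_{\Psi}}\circ\pi$:
\[
\begin{split}
(\om_{\eta_{1},\eta_{2}}\tensor\Psi)(\a(x)) & =\la \eta_{1}\tensor\xi_{\Psi},\tilde{V}^{*}(\one\tensor\pi(x))\tilde{V}(\eta_{2}\tensor\xi_{\Psi})\ra\\
 & =\la \tilde{V}(\eta_{1}\tensor\xi_{\Psi}),(\one\tensor\pi(x))\tilde{V}(\eta_{2}\tensor\xi_{\Psi})\ra\\
 & =\la \eta_{1}\tensor\xi_{\Psi},(\one\tensor\pi(x))(\eta_{2}\tensor\xi_{\Psi})\ra=\la\eta_{1},\eta_{2}\ra\,\Psi(x)=\om(\one)\Psi(x).
\end{split}
\]
By linearity and weak$^{*}$ density of vector functionals in $\Lone{\G}$ this extends to every $\om\in\Lone{\G}$, yielding $(\i\tensor\Psi)(\a(x))=\Psi(x)\one$, which is precisely the assertion that $\Psi|_{N}$ is invariant under $\a$.

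I expect the only subtlety to be the passage from $(\i\tensor\Psi)(V)=\one$ to the pointwise fixing $\tilde{V}(\eta\tensor\xi_{\Psi})=\eta\tensor\xi_{\Psi}$; once this is in hand the rest is a direct slice-map computation using unitarity of $V$ (or $\tilde{V}$).
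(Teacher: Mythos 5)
Your underlying idea is the right one — the hypothesis $(\i\tensor\Psi)(V)=\one$, with $\one$ a unitary, forces the unital completely positive map $\i\tensor\Psi$ to be multiplicative on products involving $V$ and $V^*$, and your Cauchy--Schwarz equality-case argument is exactly the standard proof of that fact. Indeed the paper proves the lemma in one line by citing Choi's multiplicative domain theorem for the UCP map $\i\tensor\Psi$. However, your specific implementation has a genuine gap: the operator $\tilde V:=(\i\tensor\pi)(V)$ on which your entire argument rests is not well defined. The state $\Psi$ of $\mathcal{B}(\Kil)$ is not assumed normal (and in the intended applications, e.g.\ as a weak$^*$ cluster point of vector states along an almost-invariant net, it is typically singular), so its GNS representation $\pi$ is not normal. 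Consequently $\i\tensor\pi$ is only defined on the spatial C$^*$-tensor product $\B(\Ltwo{\G})\tensor\B(\Kil)$, which is a \emph{proper} subalgebra of $\B(\Ltwo{\G}\tensor\Kil)$, whereas the hypothesis only places $V$ in $\B(\Ltwo{\G}\tensor\Kil)$. For the same reason the identity $(\i\tensor\pi)\bigl(V^*(\one\tensor x)V\bigr)=\tilde V^*(\one\tensor\pi(x))\tilde V$, which you use implicitly in the final computation, has no meaning. Note by contrast that the slice $\i\tensor\Psi$ itself \emph{is} defined on all of $\B(\Ltwo{\G}\tensor\Kil)$, via $\la\xi,(\i\tensor\Psi)(T)\eta\ra:=\Psi\bigl((\om_{\xi,\eta}\tensor\i)(T)\bigr)$; the problem is only with tensoring the representation $\pi$ rather than the state.

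The fix is to run your argument one level up: take a Stinespring dilation $(\i\tensor\Psi)(\cdot)=W^*\rho(\cdot)W$ of the UCP map $\i\tensor\Psi$ on the C$^*$-algebra $\B(\Ltwo{\G}\tensor\Kil)$, where $\rho$ is a representation and $W$ an isometry. Then for any $\zeta$, $\la W\zeta,\rho(V)W\zeta\ra=\la\zeta,(\i\tensor\Psi)(V)\zeta\ra=\|W\zeta\|^2$ while $\|\rho(V)W\zeta\|=\|W\zeta\|$, so your Cauchy--Schwarz equality argument gives $\rho(V)W=W$, whence $(\i\tensor\Psi)(V^*(\one\tensor x)V)=W^*\rho(\one\tensor x)W=\Psi(x)\one$. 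This is precisely the content of the multiplicative domain theorem the paper invokes, so the cleanest repair is simply to cite it.
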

\begin{proof}
The argument is as in \cite[Theorem 3.2]{Bedos_Tuset_2003}. Observe
that for a state $\Psi$ of $\mathcal{B}(\Kil)$ as above, $V$ belongs
to the multiplicative domain of the unital completely positive map
$\i\tensor\Psi$ \cite[Theorem 3.18]{Paulsen__book_CB_maps_and_oper_alg}.
Consequently,
\[
(\i\tensor\Psi|_{N})\a(x)=(\i\tensor\Psi)(V^{*}(\one\tensor x)V)=(\i\tensor\Psi)(V^{*})(\i\tensor\Psi)(\one\tensor x)(\i\tensor\Psi)(V)=\Psi|_{N}(x)\one
\]
for every $x\in N$.\end{proof}

We are ready for the first of the main results of this section.

\begin{thm}
\label{thm:actions_spectral_gap}Let $\G$ be a locally compact quantum group acting on a von
Neumann algebra $N$ by an action $\a:N\to\Linfty{\G}\tensorn N$.
Write $U\in \M(\Cz{\G}\tensor\mathcal{K}(\Ltwo N))$ for the implementing
unitary of $\a$, and let $\phi_{U}$ be the associated representation
of $\CzU{\widehat{\G}}$. Consider the following conditions:
\begin{enumerate}
\item \label{enu:action_spectral_gap_1}$\a$ has spectral gap;
\item \label{enu:action_spectral_gap_2}every state $\Psi$ of $\mathcal{B}(\Ltwo N)$
with $(\i\tensor\Psi)(U)=\one$ satisfies $\Psi(p^{U})\neq0$;
\item \label{enu:action_spectral_gap_3}$p^{U}\in\Img\phi_{U}$;
\item \label{enu:action_spectral_gap_4}every state of $N$ that is invariant
under $\a$ is the weak$^{*}$-limit of a net of normal states of
$N$ that are invariant under $\a$.
\end{enumerate}

Then \prettyref{enu:action_spectral_gap_1} is equivalent to \prettyref{enu:action_spectral_gap_2},
and they are equivalent to \prettyref{enu:action_spectral_gap_3}
when $p^{U}\neq0$. If $\G$ is a discrete quantum group, then \prettyref{enu:action_spectral_gap_1}$\implies$\prettyref{enu:action_spectral_gap_4},
and \prettyref{enu:action_spectral_gap_4}$\implies$\prettyref{enu:action_spectral_gap_1}
when $d_{\left\Vert \cdot\right\Vert }(p^{U},N)<\frac{1}{2}$.

\end{thm}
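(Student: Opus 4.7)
The equivalences among (1), (2), and (3) are essentially a restatement of \prettyref{lem:spectral_gap}. First I would observe that $\alpha$ has spectral gap means precisely that $U$ has spectral gap. The lemma, by direct negation, says that $U$ has spectral gap if and only if no state $\Psi$ with $(\i\tensor\Psi)(U)=\one$ annihilates $p^{U}$, which is (2); and when $p^U\neq 0$, the final clause of the lemma translates directly into the equivalence with (3). So parts (1), (2) and the conditional equivalence with (3) require essentially no new work.

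For the implication (1)$\Rightarrow$(4) in the discrete setting, let $m$ be any state of $N$ invariant under $\a$. The plan is to invoke \prettyref{prop:act_pos_cone_applications}\prettyref{enu:act_pos_cone_applications_1} to produce a net $\left(\z_{\iota}\right)_{\iota\in\Ind}$ of unit vectors in the positive cone $\mathcal{P}$ that is almost invariant under $U$ and whose associated normal positive functionals $\om_{\z_{\iota}}$ converge weak$^{*}$ to $m$. Writing $\z_{\iota}=\a_{\iota}+\be_{\iota}$ with $\a_{\iota}\in\inv(U)$ and $\be_{\iota}\in\inv(U)^{\perp}$, I note that for any state $\om\in\Lone{\G}$, invariance of $\a_{\iota}$ gives $(\om\tensor\i)(U)\z_{\iota}-\z_{\iota}=(\om\tensor\i)(U)\be_{\iota}-\be_{\iota}$; if $\|\be_{\iota}\|$ did not converge to $0$, a normalised subnet would be almost invariant for the restriction of $U$ to $\inv(U)^{\perp}$, contradicting spectral gap. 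Hence $\|\z_{\iota}-\a_{\iota}\|\to 0$. Since $\a_{\iota}$ is an invariant vector, \prettyref{lem:inv_vec} and \prettyref{lem:almost_inv_vec_act} guarantee that (after obvious renormalisation, using $\|\a_{\iota}\|\to 1$) the functionals $\om_{\a_{\iota}}|_{N}$ are normal states invariant under $\a$, and they converge weak$^{*}$ to $m$.

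The main obstacle is the reverse implication (4)$\Rightarrow$(1) under the assumption $d_{\|\cdot\|}(p^{U},N)<1/2$: here the issue is that the invariant state $\Psi|_N$ extracted from failure of spectral gap lives on $N$ rather than on $\mathcal{B}(\Ltwo N)$, so we need a bridge between the two. I would argue by contrapositive: assume $\a$ fails spectral gap, apply \prettyref{lem:spectral_gap} to obtain a state $\Psi$ on $\mathcal{B}(\Ltwo N)$ with $(\i\tensor\Psi)(U)=\one$ and $\Psi(p^{U})=0$, and note by \prettyref{lem:almost_inv_vec_act} that $\Psi|_{N}$ is an invariant state of $N$. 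If (4) held there would be a net of normal invariant states $(\om_{i})_{i\in\Ind}$ with $\om_{i}\to\Psi|_{N}$ weak$^{*}$, and writing each $\om_{i}=\om_{\xi_{i}}|_{N}$ for the unique $\xi_{i}\in\mathcal{P}$, \prettyref{prop:act_pos_cone_applications}\prettyref{enu:act_pos_cone_applications_2} would force $\xi_{i}\in\inv(U)$, hence $\om_{\xi_{i}}(p^{U})=\|\xi_{i}\|^{2}=1$. Now choose $x\in N$ with $\e:=\|x-p^{U}\|<1/2$. Then $\om_{i}(x)\geq 1-\e$ for all $i$, while
\[
|\Psi|_{N}(x)|=|\Psi(x-p^{U})|\leq\e,
\]
so the weak$^{*}$ convergence $\om_{i}(x)\to\Psi|_{N}(x)$ gives $1-\e\leq\e$, contradicting $\e<1/2$. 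This trick using a near-projection in $N$ is where the distance hypothesis is essential, and it is the only delicate ingredient of the proof.
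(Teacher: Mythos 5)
Your proof is correct and follows essentially the same route as the paper: the equivalences of (a)--(c) via \prettyref{lem:spectral_gap}, the implication (a)$\Rightarrow$(d) via \prettyref{prop:act_pos_cone_applications}\prettyref{enu:act_pos_cone_applications_1} and projecting the almost-invariant net onto $\inv(U)$, and the converse via \prettyref{lem:almost_inv_vec_act}, part \prettyref{enu:act_pos_cone_applications_2} of the same proposition, and the near-projection $q\in N$ with $\|q-p^U\|<1/2$. The only (immaterial) difference is that you phrase (d)$\Rightarrow$(a) as a contrapositive deriving a contradiction from $\Psi(p^U)=0$, whereas the paper bounds $|\Psi(p^U)|$ from below directly.
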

\begin{proof}
The statements about the implications between \prettyref{enu:action_spectral_gap_1},
\prettyref{enu:action_spectral_gap_2} and \prettyref{enu:action_spectral_gap_3}
are just \prettyref{lem:spectral_gap}.

Assume henceforth that $\G$ is a discrete quantum group. For \prettyref{enu:action_spectral_gap_1}$\implies$\prettyref{enu:action_spectral_gap_4},
suppose that $\a$ has spectral gap and $m$ is an $\a$-invariant
state of $N$. By \prettyref{prop:act_pos_cone_applications}, \prettyref{enu:act_pos_cone_applications_1}, there
is a net of unit vectors $\left(\z_{i}\right)_{i \in \Ind}$ in $\Ltwo N$
that is almost invariant under $U$ and such that $\om_{\z_{i}} \stackrel{i \in \Ind}{\longrightarrow} m$
in the weak$^{*}$ topology of $N^{*}$. The net $\left((\one-p^{U})\z_{i}\right)_{i \in \Ind}$
in $\inv(U)^{\perp}$ must converge to zero for $U$ has spectral gap -- otherwise, by normalising and passing to a subnet, we get a net in $\inv(U)^{\perp}$ that is almost invariant under $U$. Therefore, letting
$\z_{i}':=\frac{1}{\left\Vert p^{U}\z_{i}\right\Vert }p^{U}\z_{i}$
for every $i$, $m$ is the weak$^{*}$-limit of the net $\left(\om_{\z_{i}'}\right)_{i \in \Ind}$
of normal states of $N$ that are invariant under $\a$.

\prettyref{enu:action_spectral_gap_4}$\implies$\prettyref{enu:action_spectral_gap_2}
when $d_{\left\Vert \cdot\right\Vert }(p^{U},N)<\frac{1}{2}$: a vector
$\z$ in the positive cone $\mathcal{P}$ of $N$ in $\Ltwo N$ is
invariant under $U$ if and only if $\om_{\z}$ is invariant under
$\a$. Indeed, the forward implication follows from the definitions for arbitrary
$\z\in\Ltwo N$, while the backward implication follows from \prettyref{prop:act_pos_cone_applications},
\prettyref{enu:act_pos_cone_applications_2}. If $\Psi$ is a state
of $\mathcal{B}(\Ltwo N)$ with $(\i\tensor\Psi)(U)=\one$, then from
\prettyref{lem:almost_inv_vec_act} and \prettyref{enu:action_spectral_gap_4},
$\Psi|_{N}$ is the weak$^{*}$-limit of a net of normal states of
$N$ that are invariant under $\a$, say $\left(\om_{\z_{i}}\right)_{i \in \Ind}$
with $\z_{i}\in\mathcal{P}$ for every $i\in \Ind$. So for every
$i\in \Ind$, $\z_{i}$ is invariant under $U$, i.e., $\om_{\z_{i}}(p^{U})=1$.
Let $q\in N$ be such that $\left\Vert p^{U}-q\right\Vert <\frac{1}{2}$.
Since $\Psi|_{N}=\text{weak}^{*}-\lim_{i \in \Ind}\om_{\z_{i}}$, we have $\Psi(q)-1=\lim_{i \in \Ind}\om_{\z_{i}}(q-p^{U})$,
hence $\left|\Psi(q)-1\right|<\frac{1}{2}$. We conclude that
\[
\left|\Psi(p^{U})\right|\geq1-\left|\Psi(p^{U}-q)\right|-\left|\Psi(q)-1\right|>1-\frac{1}{2}-\frac{1}{2}=0,
\]
yielding \prettyref{enu:action_spectral_gap_2}.\end{proof}

\begin{rem}
In the proof of \prettyref{enu:action_spectral_gap_1}$\implies$\prettyref{enu:action_spectral_gap_4}
we were limited to discrete quantum groups only because of \prettyref{prop:act_pos_cone_applications}.
However, \prettyref{prop:act_pos_cone_applications} holds more generally
than it is stated (although we do not know precisely to what extent). This does not contradict the example at the top
of \cite[p.~4919]{Li_Ng__spect_gap_inv_states}, explaining that the analogue of \prettyref{enu:action_spectral_gap_1}$\implies$\prettyref{enu:action_spectral_gap_4} fails when $\a$ is the left translation action of the circle group, because the notion of invariance of (non-normal) states under an action used in \cite{Li_Ng__spect_gap_inv_states} is weaker than ours; see \prettyref{rem:inv_states}. Using our terminology, \prettyref{enu:action_spectral_gap_1}$\implies$\prettyref{enu:action_spectral_gap_4} holds trivially whenever $\G$ is a compact quantum group and $\a := \Delta$.
\end{rem}

The next theorem characterises Property (T) of certain discrete unimodular quantum groups in terms of their actions on von Neumann algebras.

\begin{thm}
\label{thm:property_T_chars}Let $\G$ be a second countable discrete unimodular quantum group with a low dual. Then the following are equivalent:
\begin{enumerate}
\item \label{enu:property_T_chars_1}$\G$ has Property (T);
\item \label{enu:property_T_chars_2}for every action $\a$ of $\G$ on
a von Neumann algebra $N$, every state $\Psi$ of $\mathcal{B}(\Ltwo N)$
with $(\i\tensor\Psi)(U)=\one$ satisfies $\Psi(p^{U})\neq0$, where
$U$ is the unitary implementation of $\a$;
\item \label{enu:property_T_chars_3}for every action $\a$ of $\G$ on
a von Neumann algebra $N$, every state of $N$ that is invariant
under $\a$ is the weak$^{*}$-limit of a net of normal states of
$N$ that are invariant under $\a$;
\item \label{enu:property_T_chars_4}every action $\a$ of $\G$ on $\mathcal{B}(\K)$,
for some Hilbert space $\K$, having no $\a$-invariant normal states
of $\mathcal{B}(\K)$, has no $\a$-invariant states of $\mathcal{B}(\K)$.
\end{enumerate}
\end{thm}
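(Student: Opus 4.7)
The scheme is to circle through the implications $(1)\Leftrightarrow(2)$ and $(1)\Rightarrow(3)\Rightarrow(4)\Rightarrow(1)$. The first three will fall out of the preceding section's machinery essentially for free, whereas the harder implications $(2)\Rightarrow(1)$ and $(4)\Rightarrow(1)$ will require the full force of \prettyref{thm:T11difficult} combined with the positive-cone description of the canonical unitary implementation of an action from \prettyref{sec:actions}.

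The implications $(1)\Rightarrow(2)$ and $(1)\Rightarrow(3)$ fall straight out of \prettyref{thm:actions_spectral_gap}: Property (T) is the assertion that every representation of $\G$ has spectral gap, so in particular the unitary implementation of every action does. Applying \prettyref{thm:actions_spectral_gap}, \prettyref{enu:action_spectral_gap_1}$\Leftrightarrow$\prettyref{enu:action_spectral_gap_2} yields (2), and applying \prettyref{thm:actions_spectral_gap}, \prettyref{enu:action_spectral_gap_1}$\Rightarrow$\prettyref{enu:action_spectral_gap_4} (available because $\G$ is discrete) yields (3). The implication $(3)\Rightarrow(4)$ is formal: were some $\a$-invariant state of $\B(\K)$ to exist under the hypothesis of (4), then (3) would furnish it as a weak$^*$-limit of \emph{normal} $\a$-invariant states, contradicting precisely that hypothesis.

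For the remaining direction, suppose $\G$ does not have Property (T). By \prettyref{thm:T11difficult} the equivalent Property (T)$^{1,1}$ also fails, and indeed the proof of that theorem explicitly produces a representation $V\in\M(\Cz{\G}\tensor\mathcal{K}(\K))$ that is weakly mixing yet admits a sequence $(\xi_n)_{n\in\N}$ of almost-invariant unit vectors. Put $\a(x):=V^*(\one\tensor x)V$, an action of $\G$ on $\B(\K)$; by \prettyref{lem:V_V_bar_unit_impl} its canonical unitary implementation is $V\tpr V^c$, and since $\G$ has trivial scaling group (being discrete unimodular, hence Kac) weak mixing of $V$ forces $V\tpr V^c$ to be ergodic, so $p^U=0$. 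A weak$^*$-cluster point $\Psi\in\B(\K)^*$ of $(\omega_{\xi_n})_{n\in\N}$ will satisfy $(\i\tensor\Psi)(V)=\one$, and \prettyref{lem:almost_inv_vec_act} will promote $\Psi$ to an $\a$-invariant state of $\B(\K)$. On the other hand, by \prettyref{prop:act_pos_cone_applications}, \prettyref{enu:act_pos_cone_applications_2}, any normal $\a$-invariant positive functional on $\B(\K)$ would correspond to a non-zero invariant vector for $V\tpr V^c$ lying in the positive cone of $\B(\K)$ inside $\K\tensor\overline{\K}$, which is impossible by ergodicity. This simultaneously breaks (4), and, by instead taking a weak$^*$-cluster point of the normal states $\omega_{\xi_n\tensor\overline{\xi_n}}$ on $\B(\K\tensor\overline{\K})$ (which are almost-invariant for $V\tpr V^c$), it also breaks (2), since the resulting $\Psi$ fulfils $(\i\tensor\Psi)(V\tpr V^c)=\one$ while $\Psi(p^U)=\Psi(0)=0$.

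The principal obstacle is the last step: converting "weak mixing of $V$" into "no normal $\a$-invariant state on $\B(\K)$". A naive GNS-style argument would merely rule out invariant vectors inside a chosen cyclic Hilbert space, which would be insufficient. What is really required is the exact bijective correspondence, via the positive cone of the standard form, between normal $\a$-invariant positive functionals on $\B(\K)$ and invariant vectors of the canonical unitary implementation of $\a$ -- this is the content of \prettyref{prop:act_pos_cone_applications}, \prettyref{enu:act_pos_cone_applications_2}, and its availability in the concrete form $V\tpr V^c$ is supplied by \prettyref{lem:V_V_bar_unit_impl}. These are the technical reasons for the development of \prettyref{sec:actions}.
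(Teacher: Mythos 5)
Your proposal is correct and follows essentially the same route as the paper: the easy implications via \prettyref{thm:actions_spectral_gap}, and the closing implication by using \prettyref{thm:T11difficult} to produce a weakly mixing representation $V$ with almost-invariant vectors, forming the action $\a(x)=V^*(\one\tensor x)V$ on $\mc B(\K)$, identifying its unitary implementation as $V\tpr V^c$ via \prettyref{lem:V_V_bar_unit_impl}, and excluding normal invariant states through \prettyref{prop:act_pos_cone_applications}\,\prettyref{enu:act_pos_cone_applications_2}. Your additional direct refutation of (b) via the vectors $\xi_n\tensor\overline{\xi_n}$ is valid but redundant, since (b)$\implies$(c)$\implies$(d)$\implies$(a) already closes the cycle.
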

\begin{proof}
By the definition of Property (T) and \prettyref{thm:actions_spectral_gap},
we have \prettyref{enu:property_T_chars_1}$\implies$\prettyref{enu:property_T_chars_2}$\implies$\prettyref{enu:action_spectral_gap_3}.
Also \prettyref{enu:action_spectral_gap_3} implies \prettyref{enu:property_T_chars_4}.

\prettyref{enu:property_T_chars_4}$\implies$\prettyref{enu:property_T_chars_1}:
If $\G$ does not have Property (T), then by \prettyref{thm:T11difficult},
$\G$ does not have Property (T)$^{1,1}$. Let then $V\in\M(\cz{\G}\tensor\mathcal{K}(\K))$
be a representation of $\G$ on a Hilbert space $\K$ which
has an almost-invariant net $\left(\z_{i}\right)_{i \in \Ind}$ of unit vectors
but is weakly mixing. Consider the action $\a$ of $\G$ on $\mathcal{B}(\K)$
given by $\a(x):=V^{*}(\one\tensor x)V$, $x\in\mathcal{B}(\K)$.
Then any weak$^{*}$-cluster point of $\left(\om_{\z_{i}}\right)_{i \in \Ind}$
is an $\a$-invariant state of $\mathcal{B}(\K)$.

Nevertheless, $\a$ does not admit any normal invariant state. Indeed, by \prettyref{lem:V_V_bar_unit_impl}, the (canonical)
unitary implementation of $\a$ on $\K\tensor\overline{\K}$ is $V\tpr V^{c}\in\M(\cz{\G}\tensor\mathcal{K}(\K\tensor\overline{\K}))$.
Weak mixing of $V$ is equivalent to ergodicity of $V\tpr V^{c}$.
By \prettyref{prop:act_pos_cone_applications}, \prettyref{enu:act_pos_cone_applications_2},
any normal $\a$-invariant state would give a $V\tpr V^{c}$-invariant
unit vector in $\K\tensor\overline{\K}$. Hence, such a state cannot
exist.
\end{proof}

\section{A non-commutative Connes--Weiss theorem}\label{sec:Connes_Weiss}

In this short last section we provide a non-commutative version of a classical result of Connes and Weiss from \cite{Connes_Weiss}, showing that for a certain class of discrete quantum groups $\QG$, Property (T) may be characterised by the properties of the trace-preserving actions of $\QG$ on the von Neumann algebra $\textup{VN}(\mathbb{F}_{\infty})$. The key role in this result is played by the construction of `canonical actions' of quantum groups on the free Araki--Woods factors, due to Vaes.

Let us then recall the construction of Vaes \cite[Section 3]{Vaes_strict_out_act}, yielding
canonical actions, induced by certain representations,
of locally compact quantum groups on the free Araki--Woods factors of Shlyakhtenko \cite{Shlyakhtenko__free_quasi_free_states}
(see also \cite{Vaes__etats_quasi_libres_libres}). Let $\Tinv$ be an
involution on a Hilbert space $\K$, that is, a closed, densely-defined,
injective anti-linear operator on $\K$ that satisfies $\Tinv=\Tinv^{-1}$,
and let $\Tinv=\J Q^{1/2}$ be its polar decomposition. On the (full)
Fock space
\[
\mathcal{F}(K):=\CC\Omega\oplus\bigoplus_{n=1}^{\infty}\K^{\tensor n}
\]
(the unit vector $\Omega$ is called the vacuum vector) consider the left creation (shift) operators $\ell(\z)$, $\z\in\K$,
given by $\Omega\mapsto\z$ and $\K^{\tensor n}\ni\eta\mapsto\z\tensor\eta\in\K^{\tensor(n+1)}$,
and the operators
\[
s(\z):=\ell(\z)+\ell(\Tinv\z)^{*},\qquad\z\in D(\Tinv).
\]
The von Neumann algebra $\Gamma(\K_{\J},Q^{it})'':=\{s(\z):\z\in D(\Tinv)\}''$
on $\mathcal{F}(K)$ is called the \emph{free Araki--Woods von Neumann
algebra} associated with $\Tinv$. The vacuum vector $\Omega$ is generating and separating for $\Gamma(\K_{\J},Q^{it})''$.
Thus, the \emph{free quasi-free state} $\om_{\Omega}$ of $\Gamma(\K_{\J},Q^{it})''$
is faithful, and the above representation of $\Gamma(\K_{\J},Q^{it})''$
on $\mathcal{F}(\K)$ can be seen as its GNS representation with respect
to $\om_{\Omega}$.

When the dimension $\dim\K_{\J}$ of the real Hilbert space $\K_{\J}:=\{\z\in\K:\J\z=\z\}$
is at least $2$, the von Neumann algebra $\Gamma(\K_{\J},Q^{it})''$
is a factor. In particular, when $Q=\one$ and $n:=\dim\K_{\J}\geq2$,
we have $(\Gamma(\K_{\J},Q^{it})'',\om_{\Omega})\cong(\VN{\mathbb{F}_{n}},\tau)$,
where $\mathbb{F}_{n}$ is the free group on $n$ generators, $\VN{\mathbb{F}_{n}}$
is its group von Neumann algebra and $\tau$ is the (unique) tracial
state of $\VN{\mathbb{F}_{n}}$.
\begin{prop}[{\cite[Proposition 3.1]{Vaes_strict_out_act}}]
\label{prop:free_Araki_Woods_factor_action}In the above setting,
let $U\in\M(\Cz{\G}\tensor\mathcal{K}(\K))$ be a representation
of the locally compact quantum group $\G$ on $\K$ that satisfies
\begin{equation}
(\om\tensor\i)(U^{*})\Tinv\subseteq \Tinv(\overline{\om}\tensor\i)(U^{*})\qquad \forall_{\om\in\Lone{\G}}.\label{eq:free_Araki_Woods_factor_action_condition}
\end{equation}
Then the representation of $\G$ on $\mathcal{F}(\K)$ given
by
\[
\mathcal{F}(U):=\bigoplus_{n=0}^{\infty}U^{\tprsmall n},
\]
where $U^{\tprsmall0}:=\one\in\M(\Cz{\G}\tensor\mathcal{K}(\CC\Omega))$
and $U^{\tprsmall n}=\underset{n\text{ times}}{\underbrace{U\tpr\cdots\tpr U}}=U_{1(n+1)}\cdots U_{13}U_{12}\in\M(\Cz{\G}\tensor\mathcal{K}(\K^{\tensor n}))$,
$n\geq1$, induces an action $\a$ of $\G$ on $\Gamma(\K_{\J},Q^{it})''$
given by
\begin{equation}
\a_{U}(x):=\mathcal{F}(U)^{*}(\one\tensor x)\mathcal{F}(U),\qquad x\in\Gamma(\K_{\J},Q^{it})''.\label{eq:free_Araki_Woods_factor_action_def}
\end{equation}
Furthermore, the free quasi-free state is invariant under $\a_{U}$.
\end{prop}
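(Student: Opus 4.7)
The first task is to verify that $\mathcal{F}(U)$ is a unitary representation on $\mathcal{F}(\K)$. Since iterated $\tpr$-tensor products of representations are representations (see \prettyref{sub:reps}) and direct sums preserve the equation $(\Com \tensor \i)V = V_{13} V_{23}$, this is immediate. Consequently $\a_U$ as defined by \eqref{eq:free_Araki_Woods_factor_action_def} is an injective normal unital $*$-homomorphism $\B(\mathcal{F}(\K)) \to \Linfty{\G} \tensorn \B(\mathcal{F}(\K))$ satisfying the action equation. What actually must be checked is that $\a_U$ leaves $\Gamma(\K_\J, Q^{it})''$ globally invariant. Since this algebra is generated by the selfadjoint operators $s(\z) = \ell(\z) + \ell(\Tinv\z)^*$, $\z \in D(\Tinv)$, and $\a_U$ is weak$^*$-continuous, it is enough to prove the slice identity
\begin{equation}\label{eq:vaesproof-key}
(\om \tensor \i)\a_U(s(\z)) = s\bigl((\om \tensor \i)(U^*)\z\bigr) \qquad \forall_{\om \in \Lone{\G}};
\end{equation}
the right-hand side makes sense because condition \eqref{eq:free_Araki_Woods_factor_action_condition}, applied with $\overline{\om}$ in place of $\om$, gives $(\om \tensor \i)(U^*)\z \in D(\Tinv)$.

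To obtain \eqref{eq:vaesproof-key}, the plan is to first establish, level by level on the Fock space, the formula
\[
\a_U(\ell(\z))|_{\Ltwo{\G} \tensor \K^{\tensor n}}(\xi \tensor \eta) = U_{12}^* \, (\xi \tensor \z \tensor \eta) \qquad (\xi \in \Ltwo{\G},\ \eta \in \K^{\tensor n}).
\]
This will rest on the factorisation $U^{\tprsmall(n+1)} = (U^{\tprsmall n})_{1, 3, \ldots, n+2}\, U_{12}$ corresponding to the splitting $\K^{\tensor(n+1)} = \K \tensor \K^{\tensor n}$: after applying $U^{\tprsmall n}$ and then $\one \tensor \ell(\z)$, the factor $(U^{\tprsmall n})^*_{1,3,\ldots,n+2}$ cancels $U^{\tprsmall n}$ on the last $n$ $\K$-coordinates, leaving only $U_{12}^*$ acting on $\Ltwo{\G} \tensor \K$ with $\z$ freshly inserted in position $2$. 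Taking matrix coefficients against $\om_{\xi, \xi'} \in \Lone{\G}$ and comparing with $\la \xi \tensor \alpha,\, U^*(\xi' \tensor \z)\ra$ will yield $(\om \tensor \i)\a_U(\ell(\z)) = \ell\bigl((\om \tensor \i)(U^*)\z\bigr)$. Taking adjoints, and bearing in mind that $\ell(\cdot)^*$ is antilinear in its argument, will give $(\om \tensor \i)\a_U(\ell(\Tinv\z)^*) = \ell\bigl((\overline{\om} \tensor \i)(U^*)\Tinv\z\bigr)^*$; applying \eqref{eq:free_Araki_Woods_factor_action_condition} with $\overline{\om}$ in place of $\om$ will then rewrite the argument as $\Tinv(\om \tensor \i)(U^*)\z$. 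Adding the two pieces yields \eqref{eq:vaesproof-key}.

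Finally, invariance of $\om_\Omega$ is straightforward once the preceding is in place: since $U^{\tprsmall 0} = \one$, we have $\mathcal{F}(U)(\xi \tensor \Omega) = \xi \tensor \Omega$ for every $\xi \in \Ltwo{\G}$, hence for $x \in \Gamma(\K_\J, Q^{it})''$ and $\xi, \eta \in \Ltwo{\G}$
\[
\bigl\la \xi,\, \bigl[(\i \tensor \om_\Omega)\a_U(x)\bigr]\eta \bigr\ra = \bigl\la \xi \tensor \Omega,\, \mathcal{F}(U)^*(\one \tensor x)\mathcal{F}(U)(\eta \tensor \Omega)\bigr\ra = \om_\Omega(x)\la \xi, \eta \ra,
\]
so $(\i \tensor \om_\Omega)\a_U(x) = \om_\Omega(x)\one$. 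The principal technical obstacle is the bookkeeping behind \eqref{eq:vaesproof-key}: one must keep track of exactly which position of the iterated $\tpr$-product each creation operator acts on, and must combine the antilinearity of $\ell(\cdot)^*$ with the intertwining condition \eqref{eq:free_Araki_Woods_factor_action_condition} precisely so that the two summands of $\a_U(s(\z))$ reassemble into a single $s(\cdot)$.
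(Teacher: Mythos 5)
This proposition is imported verbatim from Vaes \cite[Proposition 3.1]{Vaes_strict_out_act} and the paper supplies no proof of its own, so the only meaningful comparison is with the cited argument: your reconstruction is correct, and its centrepiece, the slice identity $(\om\tensor\i)\a_U(s(\z))=s\bigl((\om\tensor\i)(U^{*})\z\bigr)$, is exactly the formula the paper itself later quotes from Vaes's proof (in the proof of \prettyref{thm:Connes_Weiss}). The supporting steps all check out — the factorisation $U^{\tprsmall(n+1)}=(U^{\tprsmall n})_{1,3,\ldots,n+2}\,U_{12}$, the resulting cancellation giving $\a_U(\ell(\z))(\xi\tensor\eta)=U_{12}^{*}(\xi\tensor\z\tensor\eta)$, the passage to the annihilation part via $(\om\tensor\i)(T^{*})=\bigl((\overline{\om}\tensor\i)(T)\bigr)^{*}$ combined with \prettyref{eq:free_Araki_Woods_factor_action_condition} applied to $\overline{\om}$, and the vacuum computation for invariance of $\om_{\Omega}$.
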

We will need the following simple observation: when the assumptions
of \prettyref{prop:free_Araki_Woods_factor_action} hold, the implementing
unitary of $\a$ on $\mathcal{F}(\K)$ with respect to the vacuum
vector $\Omega$ is $\mathcal{F}(U)$. Indeed, by the foregoing and \prettyref{eq:unit_impl_inv_state_1}, one
should show that
\[
(\i\tensor\om_{y\Omega,x\Omega})(\mathcal{F}(U)^{*})=(\i\tensor\om_{y\Omega,\Omega})(\a(x))
\]
for every $x,y\in\Gamma(\K_{\J},Q^{it})''$. But this is clear from
the definition of $\mathcal{F}(U)$.

\begin{lem}\label{lem:cond_R_trivial_tau}
Let $\G$ be a locally compact quantum group with trivial scaling group and $U$ a representation of $\G$ on a Hilbert space $\K$. Then for an involutive anti-unitary $\J$ on $\K$, $U$ satisfies condition $\mathscr{R}$ of \prettyref{def:cond_R} with respect to $\J$ if and only if $U$ fulfils the assumptions of \prettyref{prop:free_Araki_Woods_factor_action}
with $\Tinv := \J$.
\end{lem}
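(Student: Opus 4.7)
The plan is to reduce both conditions to a common sliced form and check they coincide. The only serious use of the hypothesis that the scaling group of $\G$ is trivial is that it makes the antipode $S=R$ everywhere-defined and bounded, so \eqref{eq:reps_S} becomes the global identity
\[ (\omega\circ R\otimes\id)(U) = (\omega\otimes\id)(U^{*}) \qquad \forall_{\omega\in L^{1}(\G)}. \]
From this, everything else is a routine symbolic manipulation.

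First I would rewrite condition $\mathscr{R}$. Since $R$ is a $\sigma$-weakly continuous anti-automorphism of $L^{\infty}(\G)$ and $j$ is a bounded $*$-anti-automorphism of $\mc K(\K)$, the equation $(R\otimes j)(U)=U$ is equivalent, by slicing the first leg, to
\[ j\bigl((\omega\circ R\otimes\id)(U)\bigr) = (\omega\otimes\id)(U) \qquad \forall_{\omega\in L^{1}(\G)}, \]
and by the displayed identity above this becomes $j((\omega\otimes\id)(U^{*})) = (\omega\otimes\id)(U)$. Spelling out $j(x)=\J x^{*}\J$ and using $((\omega\otimes\id)(U^{*}))^{*}=(\overline{\omega}\otimes\id)(U)$, condition $\mathscr{R}$ becomes
\begin{equation*}
\J\,(\overline{\omega}\otimes\id)(U)\,\J = (\omega\otimes\id)(U) \qquad \forall_{\omega\in L^{1}(\G)}. \tag{$\ast$}
\end{equation*}

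Next I would show $(\ast)$ is equivalent to \eqref{eq:free_Araki_Woods_factor_action_condition} with $\Tinv:=\J$. Taking adjoints in $(\ast)$, and using that for any linear operator $X$ and any involutive anti-unitary $\J$ one has $(\J X\J)^{*}=\J X^{*}\J$, gives the equivalent identity $\J\,(\omega\otimes\id)(U^{*})\,\J = (\overline{\omega}\otimes\id)(U^{*})$. Left-multiplying by $\J$ (and using $\J^{2}=\one$) rearranges this to
\[ (\omega\otimes\id)(U^{*})\,\J = \J\,(\overline{\omega}\otimes\id)(U^{*}) \qquad \forall_{\omega\in L^{1}(\G)}, \]
after replacing $\omega$ by $\overline{\omega}$. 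This is precisely \eqref{eq:free_Araki_Woods_factor_action_condition}; the inclusion there is automatically an equality because $D(\J)=\K$. All the steps above are reversible, so both implications are obtained simultaneously.

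The only point requiring a little care — and what I would regard as the main (very mild) obstacle — is the first manipulation, namely justifying that $(R\otimes j)(U)$ may be sliced as above. This is where triviality of the scaling group is essential: without it, $R$ is only densely defined and unbounded, so $(R\otimes j)(U)$ cannot be interpreted directly and one would need to pass to strongly continuous one-parameter considerations. With $\tau\equiv\id$ one has $(R\otimes\id)(U)=U^{*}$ as an honest equality in $L^{\infty}(\G)\tensorn\mc B(\K)$, after which applying $(\id\otimes j)$ and then slicing becomes elementary and the argument closes.
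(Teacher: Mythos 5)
Your argument is correct and essentially the paper's own: both proofs reduce condition $\mathscr{R}$ and \eqref{eq:free_Araki_Woods_factor_action_condition} to sliced identities (passing through the same intermediate form $(\omega\otimes\id)(U)=\J(\overline{\omega}\otimes\id)(U)\J$) and match them via \eqref{eq:reps_S} together with $S=R$, which is exactly where triviality of the scaling group enters. One small slip in your closing remark: the unitary antipode $R$ is always a bounded normal anti-automorphism (it is $S=R\circ\tau_{-i/2}$ that is unbounded in general), so $(R\otimes j)(U)$ can always be formed and sliced; what genuinely requires the trivial scaling group is only the identity $(\omega\circ R\otimes\id)(U)=(\omega\otimes\id)(U^{*})$.
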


\begin{proof}
As before, write $j$ for the $*$-anti-isomorphism on $\mc K(\K)$ given by $j(x) := \J x^* \J$, $x\in \mc K(\K)$. Now $(R \tensor j)(U) = U$ if and only if
$$(\om \tensor \rho)(U) =(\om\circ R)[(\i\tensor(\rho\circ j))(U)]\qquad \forall_{\om \in \Lone{\G}} \forall_{\rho \in \B(\K)_*},$$
while $U$ satisfies \eqref{eq:free_Araki_Woods_factor_action_condition} for $\Tinv := \J$ if and only if $(\om\tensor\i)(U)=\J(\overline{\om}\tensor\i)(U)\J$ for all $\om \in \Lone{\G}$, if and only if
$$(\om \tensor \rho)(U) = \om[(\i\tensor(\rho\circ j))(U^{*})] \qquad \forall_{\om \in \Lone{\G}} \forall_{\rho \in \B(\K)_*}.$$
Since the scaling group of $\G$ is trivial, we have $S = R$, so by \eqref{eq:reps_S} the right-hand sides of the last two equations are equal.
\end{proof}

The following is a non-commutative analogue of the Connes--Weiss theorem
on discrete groups \cite{Connes_Weiss}, \cite[Theorem 6.3.4]{Bekka_de_la_Harpe_Valette__book},
in which $\VN{\mathbb{F}_{\infty}} := \VN{\mathbb{F}_{\aleph_0}}$ substitutes commutative von Neumann
algebras. Recall Definitions \ref{def:ergodicity} and \ref{def:asympt_inv}.

\begin{thm}
\label{thm:Connes_Weiss}Let $\G$ be a second countable discrete unimodular quantum group
with a low dual. Then the following conditions are
equivalent:
\begin{enumerate}
\item \label{enu:Connes_Weiss_T}$\G$ has Property (T);
\item \label{enu:Connes_Weiss__erg}for every von Neumann algebra $N$ with
a faithful normal state $\theta$, every ergodic $\theta$-preserving
action of $\G$ on $N$ is strongly ergodic;
\renewcommand{\theenumi}{\tu{(b$'$)}}\renewcommand{\labelenumi}{\theenumi}
\item \label{enu:Connes_Weiss__erg_faw}condition \prettyref{enu:Connes_Weiss__erg}
is satisfied for $(N,\theta)=(\VN{\mathbb{F}_{\infty}},\tau)$;\renewcommand{\theenumi}{\tu{(c)}}
\item \label{enu:Connes_Weiss_wm}for every von Neumann algebra $N$ with
a faithful normal state $\theta$, every weakly mixing $\theta$-preserving
action of $\G$ on $N$ is strongly ergodic;\renewcommand{\theenumi}{\tu{(c$'$)}}
\item \label{enu:Connes_Weiss__wm_faw}condition \prettyref{enu:Connes_Weiss_wm}
is satisfied for $(N,\theta)=(\VN{\mathbb{F}_{\infty}},\tau)$.
\end{enumerate}
\end{thm}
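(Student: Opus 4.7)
The implications (a)$\Rightarrow$(b)$\Rightarrow$(b'), (b)$\Rightarrow$(c)$\Rightarrow$(c') should follow with only routine work. For (a)$\Rightarrow$(b), as recorded just after \prettyref{def:asympt_inv}, a non-trivial asymptotically invariant net in a $\theta$-preserving action yields, after normalising a subnet, almost invariant vectors for the implementing unitary $U$ living in $\Ltwo{N,\theta}\ominus\CC\gnsmap_\theta(\one)$; Property (T) would then produce a non-scalar $\a$-invariant element, contradicting ergodicity. The restriction implications (b)$\Rightarrow$(b') and (c)$\Rightarrow$(c') hold by definition, while (b)$\Rightarrow$(c) is immediate because weak mixing is stronger than ergodicity. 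Hence the only serious content is (c')$\Rightarrow$(a).

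For (c')$\Rightarrow$(a) I would argue by contraposition. Assume $\G$ lacks Property (T). By \prettyref{thm:T11difficult} it also lacks Property (T)$^{1,1}$, and the construction in its proof, together with \prettyref{lem:Contragredient-vs-antipode}, produces a weakly mixing representation $V=\bigoplus_{n\in\bn}U_{t_n}$ on a separable Hilbert space $\K$ whose almost invariant unit vectors $\xi_n$ are the GNS cyclic vectors $\Omega_{t_n}$ of states $\mu_{t_n}$ coming from a central $\widehat{S}^u$-invariant generating functional. Each $U_{t_n}$ satisfies condition $\mathscr{R}$ with respect to an involutive anti-unitary $\J_n$ that fixes $\Omega_{t_n}$, so $V$ satisfies condition $\mathscr{R}$ with $\J=\bigoplus\J_n$ and $\J\xi_n=\xi_n$. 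Since $\G$ is discrete unimodular its scaling group is trivial; \prettyref{lem:cond_R_trivial_tau} with $\Tinv=\J$ and $Q=\one$ then gives \eqref{eq:free_Araki_Woods_factor_action_condition}, so \prettyref{prop:free_Araki_Woods_factor_action} yields a $\tau$-preserving action $\a_V$ of $\G$ on $\Gamma(\K_\J,\one)''\cong\VN{\mathbb{F}_\infty}$ (the real dimension of $\K_\J$ equals the complex dimension of $\K$, which is countably infinite), with implementing unitary $\mathcal{F}(V)=\bigoplus_{n\geq0}V^{\tprsmall n}$ on $\mathcal{F}(\K)$.

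It remains to verify that $\a_V$ is weakly mixing but not strongly ergodic. For weak mixing, $\mathcal{F}(V)$ restricted to $\mathcal{F}(\K)\ominus\CC\Omega=\bigoplus_{n\geq1}\K^{\tensor n}$ equals $\bigoplus_{n\geq1}V^{\tprsmall n}$; by the discussion following \prettyref{def:wm}, triviality of the scaling group makes each $V^{\tprsmall n}$ weakly mixing, and a projection argument (using that, in the trivial scaling case, every finite-dimensional sub-representation is admissible, so a finite-dimensional invariant subspace of $\bigoplus_i W_i$ projects to a non-zero finite-dimensional invariant subspace of some $W_i$) shows the direct sum is itself weakly mixing. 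For failure of strong ergodicity, set $x_n:=s(\xi_n)\in\Gamma(\K_\J,\one)''$, which is self-adjoint because $\J\xi_n=\xi_n$, satisfies $\|x_n\|\leq2$, and has $\tau(x_n)=0$ and $\tau(x_n^2)=\|\xi_n\|^2=1$, so $(x_n)$ is bounded and non-trivial. Using $\gnsmap_\tau(x_n)=s(\xi_n)\Omega=\xi_n$ and the fact that $\mathcal{F}(V)$ restricted to the $\K$-summand of $\mathcal{F}(\K)$ is $V$, one computes for every normal state $\om\in\Lone{\G}$
\[
\bigl\|\gnsmap_\tau\bigl((\om\otimes\i)\a_V(x_n)-x_n\bigr)\bigr\|=\|(\om\otimes\i)(V^*)\xi_n-\xi_n\|\xrightarrow{n\to\infty}0,
\]
and boundedness upgrades this $\Ltwo{}$-convergence to strong convergence in the tracial von Neumann algebra, so $(x_n)$ is asymptotically invariant. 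This contradicts (c') and completes the proof.

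The main obstacle is the weak mixing assertion for the Fock representation; once that is reduced to weak mixing of $V^{\tprsmall n}$ and a direct-sum argument, the rest is mostly bookkeeping combining the Vaes construction with the Kac-type almost invariant $\J$-fixed vectors already supplied by the proof of \prettyref{thm:T11difficult}.
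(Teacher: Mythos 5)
Your proposal is correct and follows essentially the same route as the paper: the easy implications are handled identically, and (c$'$)$\Rightarrow$(a) runs through \prettyref{thm:T11difficult}, condition $\mathscr{R}$, \prettyref{lem:cond_R_trivial_tau} and \prettyref{prop:free_Araki_Woods_factor_action}, with $s(\xi_n)$ supplying the non-trivial asymptotically invariant net and weak mixing of the Fock restriction coming from stability of weak mixing under tensoring. The only (harmless) deviations are that you arrange $\J\xi_n=\xi_n$ so that $s(\xi_n)$ is self-adjoint, which the paper does not need since asymptotically invariant nets need not be self-adjoint, and that you prove weak mixing of $\bigoplus_{n\ge1}V^{\tprsmall n}$ by a direct-sum/projection argument where the paper computes $Y\tpr Y^{c}$ directly --- both rest on the same fact that $U\tpr Z$ is ergodic whenever $U$ is weakly mixing.
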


\begin{proof}
\prettyref{enu:Connes_Weiss_T}$\implies$\prettyref{enu:Connes_Weiss__erg}:
if $\left(x_{\iota}\right)_{\iota \in \Ind}$ is a net of elements of $N$ which is asymptotically invariant and not
trivial, then letting $y_{\iota}:=x_{\iota}-\theta(x_{\iota})\one$,
we get $\gnsmap_{\theta}(y_{\iota})\in\gnsmap_{\theta}(\one)^{\perp}$
for all $\iota$ and $y_{\iota}\stackrel{\iota \in \Ind}{\longarrownot\longrightarrow}0$ strongly. By the paragraph
succeeding \prettyref{def:asympt_inv}, the restriction
of the implementing unitary of $\a$ to $\gnsmap_{\theta}(\one)^{\perp}$,
which is ergodic by assumption, has almost-invariant vectors, contradicting
Property (T).

\prettyref{enu:Connes_Weiss__erg} implies \prettyref{enu:Connes_Weiss__erg_faw}
and \prettyref{enu:Connes_Weiss_wm}, and either of them implies \prettyref{enu:Connes_Weiss__wm_faw}.

\prettyref{enu:Connes_Weiss__wm_faw}$\implies$\prettyref{enu:Connes_Weiss_T}:
suppose that $\G$ does not have Property (T). By \prettyref{thm:T11difficult},
$\G$ does not have Property (T)$^{1,1}$. So there exists a
representation $U\in\M(\cz{\G}\tensor\mathcal{K}(\K))$ of $\G$
on a Hilbert space $\K$ that satisfies condition $\mathscr{R}$, has a net $\left(\z_{i}\right)_{i \in \Ind}$ of unit vectors that is
almost-invariant under $U$ and is weakly mixing (see \prettyref{rem:T_1_1_self_conjugate}).
Since $\G$ is unimodular, the assumptions of \prettyref{prop:free_Araki_Woods_factor_action}
are fulfilled with $\Tinv$ being a suitable involutive anti-unitary $\J$ on $\K$ by \prettyref{lem:cond_R_trivial_tau}.
Consider the induced action $\a:=\a_{U}$ of $\G$ on the free Araki--Woods
factor $\Gamma(\K_{\J},\i)''\cong \VN{\mathbb{F}_{\infty}}$ given by
\prettyref{eq:free_Araki_Woods_factor_action_def}, and the canonical
free quasi-free state (the vector state $\om_{\Omega}$ of the vacuum
vector $\Omega$), which is invariant under $\a$, and which, in this
case, is just the canonical trace $\tau$ on $\VN{\mathbb{F}_{\infty}}$.

Consider the bounded net $\left(s(\z_{i})\right)_{i \in \Ind}$ in $\Gamma(\K_{\J},\i)''$.
For every $i\in\Ind$ we have $s(\z_{i})\Omega=\z_{i}$, so that
$\tau(s(\z_{i}))=0$ but $s(\z_{i}) \stackrel{i \in \Ind}{\longarrownot\longrightarrow} 0$ strongly.
In the proof of \cite[Proposition 3.1]{Vaes_strict_out_act}
it is observed that for all $\om\in\lone{\G}$ and $\z\in\K$,
\[
(\om\tensor\i)\a(s(\z))=s((\om\tensor\i)(U^{*})\z).
\]
Therefore, by almost invariance of $\left(\z_{i}\right)_{i \in \Ind}$ under $U$, we have
\[
(\om\tensor\i)\a(s(\z_{i}))-s(\z_{i})=s((\om\tensor\i)(U^{*})\z_{i}-\z_{i})\stackrel{i \in \Ind}{\longrightarrow}0
\]
in norm for every normal state $\om$ of $\Linfty{\G}$. In conclusion, $\left(s(\z_{i})\right)_{i \in \Ind}$ is asymptotically invariant under $\a$ and is non-trivial, and thus $\a$ is not strongly ergodic.

It is left to prove that the action $\a$ is weakly mixing. By the
observation succeeding \prettyref{prop:free_Araki_Woods_factor_action},
$\mathcal{F}(U)$ is the implementing unitary of $\a$. Restricting
$\mathcal{F}(U)$ to $\Omega^{\perp}\subseteq\mathcal{F}(\K)$, we
get the operator $Y:=\bigoplus_{n=1}^{\infty}U^{\tprsmall n}$. Now
$Y\tpr Y^{c}=\bigoplus_{n=1}^{\infty}U\tpr(U^{\tprsmall(n-1)}\tpr Y^{c})$.
But $U$ is weakly mixing, which, since $\G$ is of Kac type, is
equivalent to $U\tpr Z$ being ergodic for every representation
$Z$ of $\G$. Consequently, $Y\tpr Y^{c}$ is ergodic, i.e., $Y$
is weakly mixing (again, $\G$ being of Kac type).\end{proof}

Note that to the best of our knowledge the equivalence between \prettyref{enu:Connes_Weiss_T} and \prettyref{enu:Connes_Weiss__wm_faw} above is new also for classical discrete groups.

\renewcommand{\theequation}{A.\arabic{equation}}

\renewcommand{\thesection}{A}

\renewcommand{\thetw}{A.\arabic{tw}}

\section*{Appendix}

\addtocounter{section}{1}
\setcounter{tw}{0}

In this appendix we discuss the notion of a morphism between locally compact quantum groups having \emph{dense image}, as proposed in Definition \ref{Def:dense_image}. Begin by the following easy observation (see \cite[Theorem 1.1]{Daws_Fima_Skalski_White_Haagerup_LCQG}): for locally compact Hausdorff spaces $X$ and $Y$, there is a one-to-one correspondence between continuous maps $\phi: X \to Y$ and morphisms $\Phi:\C_0(Y)\to \C_b(X)$ given by $\Phi(f) = f \circ \phi$, $f \in \C_0(Y)$; a map $\phi$ has dense image if and only if the associated $\Phi$ is injective. Considering this fact one might expect that for locally compact quantum groups $\QG$ and $\QH$, a morphism from $\QH$ to $\G$ should be viewed as having a dense image if the associated morphism $\pi \in \Mor(\C_0^u(\QG), \C_0^u(\QH))$ is injective. This is however not satisfactory, as there exists a lattice $\Gamma$ in a Lie group $G$ such that the natural `inclusion' morphism in $\Mor(\cst(\Gamma), \cst(G))$ is not injective (\cite{Bekka_Valette__lattices_semi_simple_Lie}) (and we would like to view the natural morphism from $\hh G$ to $\hh \Gamma$ as having dense image; this is indeed the case -- see below).

This motivates the apparently more complicated Definition \ref{Def:dense_image}; in this appendix we present its several equivalent characterisations.

\begin{tw}\label{thm:dense_image_char}
Let $\QG$, $\QH$ be locally compact quantum groups and consider a morphism from $\QH$ to $\QG$ represented by $\pi \in \Mor(\C_0^u(\QG), \C_0^u(\QH))$ intertwining the respective coproducts,
with associated bicharacter $V\in \M(\C_0(\QH)\ot \C_0(\hQG))$.  The following
are equivalent:
\begin{enumerate}
\item\label{sp:one} the morphism in question has a dense image in the sense of the Definition \ref{Def:dense_image}, i.e.\ the map $\alpha:\Lone{\hQG}\rightarrow \M(\C_0^u(\QH))$,
$\omega\mapsto \pi( (\id\otimes\omega)\Ww^\G )$, is injective;
\item\label{sp:two} the map $\beta:\Lone{\hQG}\rightarrow \M(\C_0(\QH))$,
$\omega\mapsto \Lambda_\QH(\alpha(\omega)) = (\id\otimes\omega)(V)$, is injective;
\item\label{sp:three} $\cA_V := \{ (\zeta\otimes\id)(V) : \zeta\in
\Lone{\QH} \}$ is dense in the weak$^*$ topology of $L^\infty(\hQG)$;
\item\label{sp:four} the dual morphism $\widehat\pi:\C_0^u(\hQH) \rightarrow
\M (\C_0^u(\hQG))$ has strictly dense range;
\item\label{sp:five} the map $\gamma:\C_0^u(\hQG)^* \rightarrow \M(\C_0(\QH))$,
$\mu \mapsto \Lambda_\QH \pi \big( (\id\otimes\mu)\WW^\G \big)$ is
injective;
\item\label{sp:six} the map $\Lambda_{\hQG}\widehat\pi:\C_0^u(\hQH) \rightarrow
\M (\C_0(\hQG))$ has strictly dense range.
\end{enumerate}
\end{tw}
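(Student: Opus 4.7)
The strategy is to reformulate each of the six conditions as a density statement about the subspace $\widehat\pi(\C_0^u(\hQH))\subseteq \M(\C_0^u(\hQG))$ (or its image under $\Lambda_{\hQG}$), and then verify that all these density statements coincide. The first, routine step is to derive working formulas. Starting from $V = (\Lambda_\QH\pi\otimes\id)\Ww^\G$, the identity $(\pi\otimes\id)\WW^\G = (\id\otimes\widehat\pi)\WW^\QH$, together with $\Ww^\G = (\id\otimes\Lambda_{\hQG})\WW^\G$ and $\wW^\QH = (\Lambda_\QH\otimes\id)\WW^\QH$, a direct manipulation gives
\[
V = (\id\otimes\Lambda_{\hQG}\widehat\pi)(\wW^\QH),\quad
\alpha(\omega) = (\id\otimes \omega\circ\Lambda_{\hQG}\circ\widehat\pi)(\WW^\QH),\quad
\gamma(\mu) = (\id\otimes \mu\circ\widehat\pi)(\wW^\QH).
\]
Combining these with the standard injectivity of the slice maps $\lambda\mapsto(\id\otimes\lambda)\WW^\QH$ and $\lambda\mapsto(\id\otimes\lambda)\wW^\QH$ on $\C_0^u(\hQH)^*$ --- both consequences of the norm-density of $\{(\zeta\otimes\id)\wW^\QH:\zeta\in\Lone{\QH}\}$ in $\C_0^u(\hQH)$ --- I obtain the key identifications
\[
\ker\alpha = \ker\beta = \{\omega\in\Lone{\hQG}:\omega|_{\Lambda_{\hQG}\widehat\pi(\C_0^u(\hQH))}=0\},\qquad
\ker\gamma = \{\mu\in\C_0^u(\hQG)^*:\mu|_{\widehat\pi(\C_0^u(\hQH))}=0\}.
\]
Since $\cA_V$ and $\Lambda_{\hQG}\widehat\pi(\C_0^u(\hQH))$ have equal norm closures (the former is the image of a norm-dense subspace of $\C_0^u(\hQH)$ under the norm-continuous map $\Lambda_{\hQG}\widehat\pi$), condition (3) is exactly the statement that $\Lambda_{\hQG}\widehat\pi(\C_0^u(\hQH))$ is weak$^*$-dense in $L^\infty(\hQG)$.

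From these computations the equivalences \ref{sp:one}$\Leftrightarrow$\ref{sp:two}$\Leftrightarrow$\ref{sp:three} are immediate: all three assert that no non-zero $\omega\in\Lone{\hQG}$ annihilates $\Lambda_{\hQG}\widehat\pi(\C_0^u(\hQH))$. The equivalence \ref{sp:four}$\Leftrightarrow$\ref{sp:five} follows from the general fact that a subspace $Y\subseteq\M(A)$ is strictly dense if and only if no non-zero element of $A^*$ vanishes on $Y$, applied to the formula above for $\ker\gamma$. The implications \ref{sp:four}$\Rightarrow$\ref{sp:six}$\Rightarrow$\ref{sp:three} are then straightforward: $\Lambda_{\hQG}$ is strictly continuous on bounded sets and surjective onto $\C_0(\hQG)$ (which is itself strictly dense in $\M(\C_0(\hQG))$), so it carries strictly dense subsets of $\M(\C_0^u(\hQG))$ to strictly dense subsets of $\M(\C_0(\hQG))$, and strict density on bounded sets implies weak$^*$-density in $L^\infty(\hQG)$.

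The only remaining, non-formal step is \ref{sp:three}$\Rightarrow$\ref{sp:four}, which I expect to be the main technical hurdle: condition \ref{sp:three} tests density against the small dual $\Lone{\hQG}$, whereas \ref{sp:four} requires density tested against the much larger space $\C_0^u(\hQG)^*$. To bridge this gap I will use the lifted bicharacter
\[
V^u := (\id\otimes\widehat\pi)(\wW^\QH) = (\Lambda_\QH\pi\otimes\id)(\WW^\G) \in \M(\C_0(\QH)\otimes \C_0^u(\hQG)),
\]
which is unitary and satisfies $(\id\otimes\hh\Delta_\G^u)V^u = V^u_{13}V^u_{12}$. Slicing this identity in the first leg by $\zeta\in\Lone{\QH}$ expresses $\hh\Delta_\G^u\bigl((\zeta\otimes\id)V^u\bigr)$ as a sum of products from $\widehat\pi(\C_0^u(\hQH))\otimes \C_0^u(\hQG)$, and combined with unitarity of $V^u$ this will allow me to show that $\widehat\pi(\C_0^u(\hQH))\cdot\C_0^u(\hQG)$ is norm-dense in $\C_0^u(\hQG)$ whenever \ref{sp:three} holds; the latter is precisely strict density of $\widehat\pi(\C_0^u(\hQH))$ in $\M(\C_0^u(\hQG))$. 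This is where genuine use of the quantum group structure --- as opposed to purely formal duality --- enters the argument.
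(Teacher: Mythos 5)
Your reductions of \ref{sp:one}, \ref{sp:two}, \ref{sp:three} and \ref{sp:five} to annihilator statements about $\widehat\pi(\C_0^u(\hQH))$ and $\Lambda_{\hQG}\widehat\pi(\C_0^u(\hQH))$ are correct, and in places tidier than the paper's route: the paper proves \ref{sp:one}$\implies$\ref{sp:three} by a direct computation with $(\Delta_\QH\otimes\id)(V)=V_{13}V_{23}$, and only establishes \ref{sp:four}$\implies$\ref{sp:five}, closing the loop elsewhere, whereas your appeal to the duality between $A^*$ and the strictly continuous functionals on $\M(A)$ gives \ref{sp:four}$\iff$\ref{sp:five} in one stroke. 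The passage \ref{sp:four}$\implies$\ref{sp:six}$\implies$\ref{sp:three} also matches the paper (\prettyref{lem:stdensity} and strict Kaplansky density).

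The gap is in \ref{sp:three}$\implies$\ref{sp:four}, which you rightly identify as the crux but whose proposed endgame fails. You reduce strict density of $\widehat\pi(\C_0^u(\hQH))$ in $\M(\C_0^u(\hQG))$ to norm-density of $\widehat\pi(\C_0^u(\hQH))\cdot\C_0^u(\hQG)$ in $\C_0^u(\hQG)$. These are not equivalent: the latter is exactly non-degeneracy of the $*$-homomorphism $\widehat\pi$, which holds by definition for every quantum group morphism, whereas \ref{sp:four} is precisely the non-automatic condition being characterised. (In general a $\cst$-subalgebra $X\subseteq\M(A)$ with $\overline{XA}=A$ need not be strictly dense; $X=\CC\one$ already shows this.) What must actually be shown is that $\C_0^u(\hQG)$ itself lies in the \emph{strict closure} of $\cA_{\Vv}$, where $\Vv=(\id\otimes\widehat\pi)(\wW^{\QH})$. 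Moreover, the identity $(\id\otimes\Delta_u^{\hQG})(\Vv)=\Vv_{13}\Vv_{12}$ holds unconditionally, so slicing it in the first leg cannot by itself make use of hypothesis \ref{sp:three}. The paper instead applies $\Lambda_{\hQG}$ to one leg of that identity to get $\Vv_{12}=V_{13}^*(\Ww^{\hh\G}_{23})^*V_{13}\Ww^{\hh\G}_{23}$, which exhibits the universal coefficients as slices $(\id\otimes\omega\circ\Lambda_{\hQG})\Delta_u^{\hQG}(y)$ with $\Lambda_{\hQG}(y)\in\cA_V$; hypothesis \ref{sp:three} then enters through Kaplansky density in $\Linfty{\hQG}$, and a dedicated continuity statement (\prettyref{prop:uni_str_dense}) is needed to convert $*$-strong approximation of $\Lambda_{\hQG}(y)$ at the reduced level into strict approximation of the universal slices. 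None of these ingredients is present in, or follows from, your sketch, so the implication remains unproved.
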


Let $G,H$ be locally compact groups. In the commutative case, let a morphism from $H$ to $G$ be represented by $\pi \in \Mor(\Cz{G},\Cz{H})$ with associated continuous map $\phi : H \to G$. As explained above, if $\phi$ has dense image, then the morphism has dense image in our sense. Conversely, if $\phi$ does not have dense image, let $H' := \overline{\phi(H)}$, and view $\VN{H'}$ as a proper weak$^*$-closed subspace (indeed, a von Neumann subalgebra) of $\VN{G} = \Linfty{\hh G}$. By Hahn--Banach separation, there is a function $f \neq 0$ in the Fourier algebra $A(G) \cong \VN{G}_*$ (see \cite{Eymard__Fourier_alg}), canonically embedded in $\Cz{G}$, with $f(H') = \{0\}$. Hence $\pi(f) = f \circ \phi = 0$. Therefore, the morphism does not have dense image in our sense.

In the cocommutative case, let a morphism from $\hh{G}$ to $\hh{H}$ be represented by $\pi \in \Mor(\cst(H),\cst(G))$. The dual morphism $\hh{\pi} \in \Mor(\Cz{G}, \Cz{H})$ is associated with a continuous map from $H$ to $G$, which is injective if and only if $\hh{\pi}$ has strictly dense image (again, see \cite[Theorem 1.1]{Daws_Fima_Skalski_White_Haagerup_LCQG}), if and only if the original morphism has dense image in our sense by \prettyref{thm:dense_image_char}.

We postpone the proof of \prettyref{thm:dense_image_char} to present first some technical results.  Note that the condition \prettyref{sp:three} in the above Theorem implies that the notion of the dense image is compatible with the concept of the \emph{closure of the image of a quantum group morphism}, as defined recently in \cite{Kasprzak_Khosravi_Soltan__int_act_QGs} -- indeed, according to \prettyref{sp:three} a morphism between $\QH$ and $\QG$ has dense image in the sense of Definition \ref{Def:dense_image} if and only if the closure of its image in the sense of \cite{Kasprzak_Khosravi_Soltan__int_act_QGs} is equal to $\QG$.

Let us now turn to look at the strict topology on multiplier algebras.

\begin{lem}\label{lem:stdensity}
Let $\sA,\sB$ be \cst-algebras and let $\theta:\sA\rightarrow\sB$ be
a surjective $*$-homomorphism.  Writing $\overline\theta$ for the strict
extension of $\theta$, if $X\subseteq \M(\sA)$ is a strictly dense \cst-algebra,
then also $\overline\theta(X)$ is strictly dense in $\M(\sB)$.
\end{lem}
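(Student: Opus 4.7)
The strategy will be to combine the strict density of $X$ in $\M(\sA)$ with surjectivity of $\theta$, using an approximate unit argument for $\sB$ as the bridge. The plan is to fix $n \in \M(\sB)$, $b \in \sB$, $\epsilon > 0$, and produce $x \in X$ such that $\|(\overline{\theta}(x) - n)b\| < \epsilon$ and $\|b(\overline{\theta}(x) - n)\| < \epsilon$; since the strict topology on $\M(\sB)$ is generated by these two families of seminorms as $b$ ranges over $\sB$, this will suffice.

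The first step will be to replace $n$ by an approximation lying in the image $\theta(\sA) = \sB$. Taking an approximate unit $(e_\lambda)$ for $\sB$, I set $b_1 := n e_\lambda \in \sB$ for $\lambda$ chosen sufficiently large. Then $\|b_1 b - nb\| \leq \|n\| \cdot \|e_\lambda b - b\| \to 0$ and $\|b b_1 - bn\| = \|(bn) e_\lambda - bn\| \to 0$, using that $bn \in \sB$; so for large enough $\lambda$ both quantities are below $\epsilon/2$.

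The second step will exploit the surjectivity of $\theta$: I choose $m_1, a \in \sA$ with $\theta(m_1) = b_1$ and $\theta(a) = b$. Since $m_1 \in \sA \subseteq \M(\sA)$ and $X$ is strictly dense in $\M(\sA)$, I can find $x \in X$ with $\|(x - m_1)a\| < \epsilon/2$ and $\|a(x - m_1)\| < \epsilon/2$. The strict extension $\overline{\theta}$ is characterised by the relations $\overline{\theta}(m)\theta(c) = \theta(mc)$ and $\theta(c)\overline{\theta}(m) = \theta(cm)$ for $c \in \sA$ and $m \in \M(\sA)$, and reduces to $\theta$ on $\sA$; in particular $\overline{\theta}(m_1) = b_1$. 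Therefore $(\overline{\theta}(x) - b_1)b = \theta((x - m_1)a)$, whose norm is at most $\|(x - m_1)a\| < \epsilon/2$ by contractivity of $\theta$, and symmetrically for $b(\overline{\theta}(x) - b_1)$. The triangle inequality then yields the required estimate on $\|(\overline{\theta}(x) - n)b\|$ and $\|b(\overline{\theta}(x) - n)\|$.

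I do not anticipate any substantial obstacle. The only conceptual point worth flagging is that $\overline{\theta}:\M(\sA)\to\M(\sB)$ need not itself be surjective, so one cannot simply lift $n$ to a multiplier of $\sA$ and then apply strict density of $X$ directly; the approximate-unit step is precisely what sidesteps this issue, by first approximating $n$ strictly on the single test vector $b$ by an element that \emph{does} lift. Note also that no boundedness of the approximating net in $X$ is required, since we only ever invoke strict density of $X$ against the single fixed element $m_1 \in \sA$.
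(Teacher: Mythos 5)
Your proof is correct, and it takes a genuinely different route from the paper's, which disposes of the lemma in one line by citing the strict version of the Kaplansky density theorem. The point of the Kaplansky route is that $\overline\theta$ is strictly continuous only on \emph{bounded} subsets of $\M(\sA)$; so to push a strictly convergent approximating net for a lift $a\in\sA$ of $b\in\sB$ through $\overline\theta$, one must first replace it by a bounded net inside $X$, which is exactly what strict Kaplansky provides, and one then concludes from the strict density of $\theta(\sA)=\sB$ in $\M(\sB)$. Your argument sidesteps strict continuity of $\overline\theta$ altogether: you never pass a net through $\overline\theta$, but instead use the algebraic identity $\overline\theta(m)\theta(c)=\theta(mc)$ together with contractivity of $\theta$ to convert the norm estimates $\|(x-m_1)a\|,\ \|a(x-m_1)\|<\epsilon/2$ in $\sA$ directly into the corresponding estimates against $b=\theta(a)$ in $\sB$ --- which is precisely why, as you correctly flag, no boundedness of the approximant is needed. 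The approximate-unit step cleanly handles the genuine obstruction that $n\in\M(\sB)$ itself need not lift to $\M(\sA)$. The one cosmetic omission is that a basic strict neighbourhood of $n$ is determined by \emph{finitely many} elements of $\sB$, not a single $b$; your argument extends verbatim (choose $\lambda$ large enough for all of them at once, lift each, and invoke strict density of $X$ against the finitely many lifts simultaneously), so this is not a real gap. Both approaches are sound; yours is more elementary and self-contained, at the cost of being longer than a citation.
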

\begin{proof}
Easy consequence of the strict version of the Kaplansky density theorem.
\end{proof}

\begin{prop}\label{prop:uni_str_dense}
Let $\G$ be a locally compact quantum group, let $(x_i)_{i \in \Ind}$ be a bounded net
in $\M(\C_0^u(\G))$, and let $x \in \M(\C_0^u(\G))$ be such that $\Lambda_\G(x_i)
\rightarrow \Lambda_\G(x)$ in $L^\infty(\G)$ for the $*$-strong topology.
Fix $\omega_0 \in L^1(\G)$ and set $y_i = (\id\otimes\omega_0\circ\Lambda_\G)
\Delta_u^\G(x_i)\in \M(\C_0^u(\G))$ and $y = (\id\otimes\omega_0\circ\Lambda_\G)
\Delta_u^\G(x) \in \M(\C_0^u(\G))$.  Then $y_i \stackrel{{i \in \Ind}}{\longrightarrow} y$ in the
strict topology on $\M(\C_0^u(\G))$.
\end{prop}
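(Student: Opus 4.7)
The plan is to reduce the statement via \eqref{reducedimplementation} to a convergence question for slices of a sandwiched multiplicative unitary. Applying $(\id \otimes \omega_0)$ to both sides of \eqref{reducedimplementation} (extended strictly to multipliers, using that $\Delta_u^\G$ and $\id \otimes \Lambda_\G$ are both morphisms) yields
\[
y_i - y \;=\; (\id \otimes \omega_0)\bigl(\Ww^{*}(\one \otimes S_i)\Ww\bigr), \qquad S_i := \Lambda_{\G}(x_i - x),
\]
where $(S_i)_{i\in\Ind}$ is a bounded net in $L^\infty(\G)$ converging to $0$ in the $*$-strong topology. In particular, $y_i - y$ depends on the lifts $x_i$ only through their reductions, so the problem reduces to showing that the map $\Psi : m \mapsto (\id \otimes \omega_0)(\Ww^{*}(\one \otimes m)\Ww)$ sends bounded $*$-strongly null nets in $L^\infty(\G)$ to strictly null nets in $\M(\C_0^u(\G))$.

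The key quantitative input is that $*$-strong (not merely weak) convergence $S_i \to 0$ forces the $L^1$-norm convergences $\|S_i \omega_0\|_{L^1(\G)} \to 0$ and $\|\omega_0 S_i\|_{L^1(\G)} \to 0$, where $S_i\omega_0$ and $\omega_0 S_i$ denote the canonical left/right $L^\infty(\G)$-module actions on $L^1(\G)$. Realizing $\omega_0 = \omega_{\xi,\eta}$ as a vector functional on $L^\infty(\G)$ (with the general case following by a norm-summable polar decomposition), one obtains directly
\[
\|S_i\omega_0\|_{L^1(\G)} \le \|\xi\|\,\|S_i\eta\|, \qquad \|\omega_0 S_i\|_{L^1(\G)} \le \|\eta\|\,\|S_i^{*}\xi\|,
\]
and both right-hand sides tend to $0$ by the $*$-strong convergence of $(S_i)$.

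To establish strict convergence, fix $a \in \C_0^u(\G)$; by the cancellation property $\Delta_u(z)(a \otimes \one) \in \C_0^u(\G) \otimes \C_0^u(\G)$ and \eqref{reducedimplementation}, the products $(y_i - y)a$ and $a(y_i - y)$ actually lie in $\C_0^u(\G)$, so one must estimate a norm in a concrete $\mathrm{C}^{*}$-algebra. The central manipulation is to reorganize the sandwich $\Ww^{*}(\one \otimes S_i)\Ww\,(a \otimes \one)$ so that $S_i$ acts on $\omega_0$ from the appropriate side. For this I would approximate $\Ww(a \otimes \one)$ strictly by elements $\Ww(a \otimes e_\lambda) \in \C_0^u(\G) \otimes \C_0(\hh\G)$, where $(e_\lambda)$ is a bounded approximate identity of $\C_0(\hh\G)$ (so that $\Ww(a \otimes e_\lambda)$ lies in the algebra by the Podle\'s density condition). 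At each approximation level, the slice can be rewritten as a norm-convergent expression involving $S_i\omega_0$, yielding a bound
\[
\|(y_i - y)a\| \;\le\; C(a,\lambda)\,\|S_i\omega_0\|_{L^1(\G)} + \varepsilon_\lambda,
\]
with $\varepsilon_\lambda$ controlled uniformly in $i$ by the boundedness of the net. Passing first to the limit in $i$ (using the $L^1$-convergence above) and then in $\lambda$ along the BAI gives $\|(y_i - y)a\| \to 0$; the symmetric argument based on $\|\omega_0 S_i\|_{L^1(\G)} \to 0$ handles $\|a(y_i - y)\|$.

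The main obstacle is precisely that $S_i$ does not commute with $\Ww$ in general (as is already visible on reduction to the abelian group case), so the rewriting cannot be achieved by a single algebraic manipulation. The bounded-approximate-identity reduction, combined with the fact that the full $*$-strong hypothesis yields convergence of \emph{both} $S_i\omega_0$ and $\omega_0 S_i$ in $L^1$-norm, is the mechanism that allows the non-commutation to be absorbed into a controlled error term; without having both one-sided module-action convergences available, the sandwich could not be split and only weak convergence of slices would be recovered.
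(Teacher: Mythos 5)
Your overall route is the same as the paper's: reduce via \eqref{reducedimplementation} to $y_i-y=(\id\otimes\omega_0)\bigl(\Ww^*(\one\otimes S_i)\Ww\bigr)$ with $S_i=\Lambda_\G(x_i-x)$, realise $\omega_0$ as a vector functional $\omega_{\xi_0,\eta_0}$ (standard form makes the summable decomposition unnecessary), and close the estimate by an approximation inside $\M(\C_0^u(\G)\otimes\mc K(\Ltwo{\G}))$. However, the pivotal quantitative claim — that the estimate for $\|(y_i-y)a\|$ can be closed using only $\|S_i\omega_0\|_{\Lone{\G}}\to 0$ and $\|\omega_0 S_i\|_{\Lone{\G}}\to 0$ — is false, and the bounded approximate identity does not repair it. Already for $\G=\mathbb{Z}$ (so $\C_0(\hh\G)$ is unital and $\varepsilon_\lambda=0$) take $\omega_0=\delta_0$ and $a=\delta_n$: then $\|S_i\omega_0\|_{\ell^1}=\|\omega_0S_i\|_{\ell^1}=|S_i(0)|$ while $\|(y_i-y)a\|=|S_i(n)|$, so no inequality of the form $\|(y_i-y)a\|\le C(a,\lambda)\|S_i\omega_0\|_{\Lone{\G}}+\varepsilon_\lambda$ can hold uniformly in $i$. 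The point is that after writing $\Ww(a\otimes e_\lambda)$ (or any element of $\C_0^u(\G)\otimes\C_0(\hh\G)$) as a norm-limit of sums $\sum_j b_j\otimes c_j$, the functional appearing in the $j$-th slice is $\omega_0(\,\cdot\;S_ic_j)=\omega_{\xi_0,\,S_ic_j\eta_0}$, whose norm is controlled by $\|S_i c_j\eta_0\|$ and \emph{not} by $\|c_j\|\,\|S_i\omega_0\|$; the conjugation by $\Ww$ forces $S_i$ to act on vectors $c_j\eta_0$ that depend on $a$, which is exactly the non-commutation you flag but do not actually absorb.

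The repair is what the paper does: choose a norm-one $\theta_0\in\mc K(\Ltwo{\G})$ with $\theta_0\eta_0=\eta_0$ (this plays the role of your $e_\lambda$ in one step), approximate $\Ww(a\otimes\theta_0)$ up to $\e$ in norm by a finite sum $\sum_{j=1}^n b_j\otimes\theta_j\in\C_0^u(\G)\odot\mc K(\Ltwo{\G})$, and then invoke the strong convergence $\|S_i\theta_j\eta_0\|\to 0$ for each of the finitely many vectors $\theta_j\eta_0$ — equivalently, $\|S_i(c_j\omega_0)\|_{\Lone{\G}}\to 0$ for finitely many $c_j$ depending on $a$, rather than $\|S_i\omega_0\|_{\Lone{\G}}\to 0$ alone. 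The error from the approximation is uniform in $i$ by boundedness of $(S_i)_{i\in\Ind}$, which is the legitimate use of that hypothesis. Your identification of where the $*$-strong (as opposed to merely strong) hypothesis enters — namely the adjoint estimate needed for $\|a(y_i-y)\|\to 0$ — is correct and matches the paper.
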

\begin{proof}
Recall that by \cite[Proposition 6.2]{Kustermans__LCQG_universal},
\[ (\id\otimes\Lambda_\G)\Delta_u^\G(x) = \Ww^*(\one\otimes \Lambda_\G(x))\Ww
\qquad (x\in \M(\C_0^u(\G))). \]
We may suppose that $\omega_0 = \omega_{\xi_0,\eta_0}$ for some $\xi_0, \eta_0 \in L^2(\G)$.
Fix $\e>0$, fix $a\in \C_0^u(\G)$, and choose a norm one compact operator
$\theta_0\in \mc K(L^2(\G))$ with $\theta_0(\eta_0)=\eta_0$.  As $\Ww\in \M(\C_0^u(\G)\otimes\mc K(L^2(\G)))$
we can find $\sum_{j=1}^n b_j \otimes \theta_j \in \C_0^u(\G)\otimes\mc K(L^2(\G))$ with
\[ \Big\| \Ww(a\otimes\theta_0) - \sum_{j=1}^n b_j \otimes \theta_j \Big\|
\leq \e. \]
We may suppose that $\C_0^u(\G)$ acts faithfully on a Hilbert space $\H$.
Then, for any $\xi,\eta\in\H$ with $\|\xi\| \|\eta\|=1$, $i \in \Ind$, 
\begin{align*}
\la \xi , (y_i-y)a \eta \ra
&= \la \xi \otimes \xi_0, \Ww^*(\one\otimes\Lambda_\G(x_i-x))
\Ww(a\eta\otimes\theta_0\eta_0) \ra.
\end{align*}
Up to an error not greater than  $\e \|\xi_0\| \|\eta_0\| \|x_i-x\|$, this is
\begin{align*}
\sum_{j=1}^n \la \xi \otimes \xi_0, \Ww^*(\one\otimes\Lambda_\G(x_i-x))
(b_j\eta \otimes \theta_j\eta_0) \ra.
\end{align*}
If $i$ is sufficiently large, then for each $j=1,\ldots,n$ we have that
$\|\Lambda_\G(x_i-x)\theta_j\eta_0\| \leq \e n^{-1} \|b_j\|^{-1}$, and
so the absolute value of the sum is dominated by
\[  \sum_{j=1}^n \|\xi_0\| \| b_j\| \|\Lambda_\G(x_i-x)\theta_j\eta_0\|
\leq \|\xi_0\| \e. \]
Thus
\[ |\la \xi , (y_i-y)a \eta \ra| \leq \e \|\xi_0\| (\|\eta_0\|
\|x_i-x\| + 1) \]
and so $\|(y_i-y)a\|$ is small for sufficiently large $i$, as required.

An analogous argument, using that $\Lambda_\G(x_i^*) \rightarrow \Lambda_\G(x^*)$ strongly,
shows that $\|a(y_i-y)\|\stackrel{{i \in \Ind}}{\longrightarrow}0$ as well.  Hence $y_i\stackrel{{i \in \Ind}}{\longrightarrow}y$ in
the strict topology, as required.
\end{proof}

\begin{proof}[Proof of Theorem~\ref{thm:dense_image_char}]
The equivalence \ref{sp:two}$\iff$\ref{sp:three} follows as for any element $\omega \in \Lone{\hQG}$ we have that $(\id \ot \omega)(V)= 0$ if and only if $\omega|_{\cA_V}=0$. The implication
\ref{sp:two}$\implies$\ref{sp:one} is obvious.
The canonical embedding of $\Lone{\hQG}$ into $\C_0^u(\hh\G)^*$ is
the composition of the restriction map with the adjoint
of the reducing morphism $\Lambda_{\hh\G}:\C_0^u(\hh\G)
\rightarrow \C_0(\hh\G)$.  Thus $\beta$ is the restriction of $\gamma$
to $\Lone{\hQG}$ (see \eqref{eq:morphs_bichars}), and hence \ref{sp:five}$\implies$\ref{sp:two}. The implication \ref{sp:four}$\implies$\ref{sp:six} follows from Lemma~\ref{lem:stdensity}.

If \ref{sp:four} holds, then as
$\{ (\omega\circ\Lambda_\QH \ot \id)\WW^\QH : \omega\in \Lone{\QH} \}$
is dense in $\C_0^u(\hh\QH)$, and as $(\id \ot \widehat\pi)\WW^\QH =
(\pi \ot \id)\WW^\G$, we see that
\[ \{ \widehat\pi((\omega\circ\Lambda_\QH\otimes\id)\WW^\QH)
: \omega\in \Lone{\QH} \}
= \{ (\omega\circ\Lambda_\QH\circ\pi \ot \id)\WW^\G)
: \omega\in \Lone{\QH} \} \]
is strictly dense in $\M (\C_0^u(\hh\G))$, and thus  also weak$^*$-dense as a subspace of $\C_0^u(\hh\G)^{**}$.  So for non-zero
$\mu\in \C_0^u(\hh\G)^*$, there is $\omega\in \Lone{\QH}$ with
\[ 0 \not= \mu\left((\omega\circ\Lambda_\QH\circ\pi \ot \id)\WW^\G\right)
= \gamma(\mu)(\omega), \]
and so $\gamma(\mu)\not=0$.  Hence \ref{sp:four}$\implies$\ref{sp:five} (so by the earlier reasoning \ref{sp:four}$\implies$\ref{sp:one}). In a similar way we can prove that \ref{sp:six}$\implies$\ref{sp:two}, arguing via a `reduced' version of the map $\gamma$ appearing in \ref{sp:five}.

Suppose now that \ref{sp:three} does not hold, so there is a non-zero
$\omega_0\in \Lone{\hQG}$ with $(\zeta\ot \omega_0)(V)=0$ for all
$\zeta\in \Lone{\QH}$.  As $(W^\QH_{12})^* V_{23} W^\QH_{12}
= (\Delta_\QH \ot \id)(V) = V_{13} V_{23}$ we see that for
$\zeta\in \Lone{\QH}$ and $\zeta'\in \B(\Ltwo{\QH})_*$,
\begin{align*}
(\zeta \ot\zeta' \ot \omega_0) (V_{23}W^{\QH}_{12} V_{23}^*)
=(\zeta \ot\zeta' \ot \omega_0) (W^{\QH}_{12} V_{13})
= (\zeta \ot\omega_0)((x \ot \one)V)
= (\zeta x\ot\omega_0)(V)= 0, \end{align*}
where in the above computation $x = (\id \ot \zeta')(W^\QH) \in \C_0(\QH)$.

Now let $\mu\in \C_0^u(\QH)^*$ and set $\tilde{x} = (\mu \ot\id)(\Ww^\QH)
\in \M(\C_0(\hh\QH)) \subseteq \Linfty{\hh\QH}$.  Then we can find a net
$(\zeta_i)_{i \in \Ind}$ in $\Lone{\QH}$ such that $(\zeta_i\otimes\id)(W^\QH) \rightarrow
\tilde{x}$ in the weak$^*$ topology.  It follows that for any $\zeta'\in \B(\Ltwo{\QH})_*$,
\[ (\mu \ot\zeta' \ot \omega_0) (V_{23}\Ww^{\QH}_{12} V_{23}^*) =
 (\zeta' \ot \omega_0)(V(\tilde{x} \ot \one) V^*)= \lim_{i \in \Ind}  (\zeta_i \ot\zeta' \ot \omega_0) (V_{23}W^{\QH}_{12} V_{23}^*)=0.\]
As $\Ww^\QH$ is a unitary, this also shows that that for any $\mu\in \C_0^u(\QH)^*$ and $\zeta'\in \B(\Ltwo{\QH})_*$ we have
\[ (\mu \ot\zeta' \ot \omega_0) ((\Ww^{\QH}_{12})^* V_{23}\Ww^{\QH}_{12} V_{23}^*)
= 0.  \]

Let $U = (\pi \ot \id)(\Ww^\G)$ so that $V=(\Lambda_\QH \ot \id)(U)$, and so
\[(\Ww^{\QH}_{12})^* V_{23}\Ww^{\QH}_{12} = (\id\otimes\Lambda_\QH\otimes\id)
(\Delta_u^{\QH} \ot \id)(U) = (\id\otimes\Lambda_\QH\otimes\id)(U_{13}U_{23})
=  U_{13} V_{23}. \]
Thus, for any state $\zeta'\in \B(\Ltwo{\QH})_*$, and any $\mu\in \C_0^u(\QH)^*$,
\[ 0 =  (\mu \ot\zeta' \ot \omega_0)(U_{13})=  (\mu \ot \omega_0)(U) = \mu ((\id \ot \omega_0)(U)) =
\mu (\pi ((\id \ot \omega_0)(\Ww^{\G})) )= \mu (\alpha(\omega_0)).\]
Hence $\alpha(\omega_0)=0$ and so \ref{sp:one} does not hold.
Thus \ref{sp:one}$\implies$\ref{sp:three}.

Thus it remains to show \ref{sp:three}$\implies$\ref{sp:four}.
We assume that $\cA_V$ is weak$^*$-dense in $L^\infty(\hh\G)$, and aim to show
that the morphism $\widehat\pi$ has strictly dense range.
Let now
\[ \Vv := (\Lambda_\QH\pi \ot \id)(\WW^\G) = (\id\otimes\hh\pi)(\wW^\QH) \]
denote the `other-sided' (compared to $U$ above) lift
of the bicharacter $V$, so that $\Vv \in \M(\C_0(\QH) \ot \C_0^u(\hQG))$ and $(\id \ot \Lambda_{\hh\G})(\Vv)=V$.  As
$\{ (\zeta\ot\id)(\wW^\QH) : \zeta\in \Lone\QH \}$ is dense in $\C_0^u(\hh\QH)$,
we see that
\[ \cA_{\Vv}:=\{ (\zeta \ot \id)(\Vv) : \zeta\in \Lone{\QH} \}\subseteq \M(\C_0^u(\hQG)) \]
is dense in the image of $\hh\pi$.  We conclude that $\hh\pi$ has strictly dense range
if and only if $\cA_{\Vv}$ is strictly dense, and that the (norm) closure of
$\cA_{\Vv}$ is a C$^*$-subalgebra of $\M(\C^u_0(\hh\G))$.
Notice also that $\cA_V = \Lambda_{\hQG}(\cA_{\Vv})$.

Now, $(\id \ot \Delta^{\hQG}_u)(\Vv) = \Vv_{13} \Vv_{12}$,  so applying
$(\id\ot\id\ot\Lambda_{\hh\G})$ by \eqref{reducedimplementation} we see that
$ V_{13} \Vv_{12} = (\Ww^{\hh\G}_{23})^* V_{13} \Ww^{\hh\G}_{23}$, so also $ \Vv_{12} = V_{13}^* (\Ww^{\hh\G}_{23})^* V_{13} \Ww^{\hh\G}_{23}. $
It follows that
\[ \cA_{\Vv} = \{ (\zeta \ot \id\ot \mu)\big(
 V_{13}^* (\Ww^{\hh\G}_{23})^* V_{13} \Ww^{\hh\G}_{23} \big) :
\mu\in\B(L^2(\G))_*, \zeta\in \Lone{\QH} \}. \]
As $V$ is unitary, the closure of $\cA_{\Vv}$ equals the closed linear span of
\begin{align*} \{ (\zeta \ot \id\ot \mu)\big(
& (\Ww^{\hh\G}_{23})^* V_{13} \Ww^{\hh\G}_{23} \big) :
\mu\in\B(L^2(\G))_*, \zeta\in \Lone{\QH} \} \\
&= \{ (\id\ot \mu)\big(
(\Ww^{\hh\G})^* (\one\otimes x) \Ww^{\hh\G} \big) :
\mu\in\B(L^2(\G))_*, x\in\cA_V \},
\end{align*}
and further is equal to the closed linear span of
\[ \{(\id \ot (\omega\circ\Lambda_{\hh\G})) \Delta_u^{\hQG}(y) :
y\in M(\C_0^u(\hh\G)), \Lambda_{\hh\G}(y)\in\cA_V, \omega\in \Lone{\hQG} \},\]
again using that $(\Ww^{\hh\G})^*(1\otimes\Lambda_{\hh\G}(\cdot))\Ww^{\hh\G}
= (\id\otimes\Lambda_{\hh\G})\Delta_u^{\hh\G}(\cdot)$.

Pick $\omega_1 \in L^1(\G)$ and set $y = (\omega_1\ot\id)(\wW^\G)
\in \C_0^u(\hh\G)$.  As \ref{sp:three} holds, the norm closure of $\cA_V$, which is a $\cst$-algebra, is
weak$^*$-dense in $L^\infty(\hh\G)$.  By the Kaplansky density theorem,
we can find a bounded net $(x_i)_{i \in \Ind}$ in $\cA_V$ with $x_i\stackrel{i \in \Ind}{\longrightarrow} \Lambda_{\hh\G}(y)$
in the $*$-strong topology.  We can `lift' $(x_i)_{i \in \Ind}$ to find a bounded net
$(y_i)_{i \in \Ind}$ in $\M(\C_0^u(\hh\G))$ with $\Lambda_{\hh\G}(y_i)=x_i$ for each $i$.
For any $\omega_0\in L^1(\hh\G)$, by Proposition~\ref{prop:uni_str_dense},
we know that
\[ (\id\ot\omega_0\circ\Lambda_{\hh\G})\Delta_u^{\hh\G}(y_i)
\stackrel{i \in \Ind}{\longrightarrow} (\id\ot\omega_0\circ\Lambda_{\hh\G})\Delta_u^{\hh\G}(y) \]
in the strict topology on $\M(\C_0^u(\hh\G))$.

By the above, we know that $(\id\ot\omega_0\circ\Lambda_{\hh\G})\Delta_u^{\hh\G}(y_i)$
is a member of the closure of $\cA_{\Vv}$ for each $i\in \Ind$.  Furthermore,
\begin{align*}
(\id\ot\omega_0\circ\Lambda_{\hh\G})\Delta_u^{\hh\G}(y)
&= (\omega_1\ot\id\ot\omega_0\circ\Lambda_{\hh\G})
	(\id\otimes\Delta_u^{\hh\G})(\wW^\G) \\
&= (\omega_1\ot\id\ot\omega_0\circ\Lambda_{\hh\G})( \wW^\G_{13} \wW^\G_{12} ) \\
&= (\omega_1z \ot \id)(\wW^\G),
\end{align*}
say, where $z = (\id\ot\omega_0\circ\Lambda_{\hh\G})(\wW^\G) =
(\id\ot\omega_0)(W^\G) \in \C_0(\G)$.  Thus, as $\omega_0$ and $\omega_1$ vary,
we see that $(\id\ot\omega_0\circ\Lambda_{\hh\G})\Delta_u^{\hh\G}(y)$
takes values in a dense subset of $\C_0^u(\hh\G)$.

We conclude that the strict closure of $\cA_{\Vv}$ contains all of $\C_0^u(\hh\G)$,
and hence that $\cA_{\Vv}$ is strictly dense in $\M(\C_0^u(\hh\G))$, as required.
\end{proof}

\bibliographystyle{plain}

\end{document}